\pdfoutput=1
\documentclass{amsart}
\usepackage{calc}
\usepackage[nomessages]{fp}
\usepackage{tikz,amsmath,amssymb}
\usepackage{pgfplots}
\usepackage{quiver}
\usepackage{cancel}
\usetikzlibrary{matrix,arrows,backgrounds,shapes.misc,shapes.geometric,patterns,calc,positioning,intersections}
\usetikzlibrary{decorations.pathmorphing, angles, quotes}
\usepgflibrary{patterns}
\pgfplotsset{compat=1.15}
\usepackage{mathrsfs}
\usepackage{rotating}
\usepackage{tkz-euclide} 
\usetikzlibrary{decorations.markings}
\usetikzlibrary{shapes,fit}
\usepackage[english]{babel}
\usepackage[hidelinks]{hyperref}
\usepackage{amsmath,amsfonts,amssymb}
\usepackage{amsthm}
\usepackage[all]{xy}
\usepackage{ stmaryrd }
\usepackage{ mathrsfs }
\theoremstyle{plain} 

\usepackage{epsfig}
\usepackage[T1]{fontenc}
\usepackage{lmodern}
\usepackage{eurosym}
\usepackage{graphicx}
\usepackage{enumitem}
\usepackage{amsbsy}
\usepackage{rotating}
\usepackage[noabbrev,capitalise]{cleveref}
\usepackage[labelsep=period]{caption}
\usepackage{multirow}
\usepackage{xcolor}
\usepackage{nicematrix}

\usepgfplotslibrary{fillbetween}
\makeatletter
\tikzset{use path/.code=\tikz@addmode{\pgfsyssoftpath@setcurrentpath#1}}
\makeatother
\pgfdeclarelayer{bg}
\pgfsetlayers{bg,main}

\usepackage{stackengine,scalerel}
\stackMath
\def\Resleq{\ThisStyle{\mathrel{%
  \stackinset{r}{.75pt+.15\LMpt}{t}{.1\LMpt}{\rule{.3pt}{1.1\LMex+.2ex}}{\SavedStyle\leqslant}%
}}}

\def\Accordleq{\ThisStyle{\mathrel{%
  \stackinset{r}{.75pt+.15\LMpt}{t}{.1\LMpt}{\rule{.3pt}{1.1\LMex+.2ex}}{\SavedStyle\preccurlyeq}%
}}}

\definecolor{midnightblue}{rgb}{0.1, 0.1, 0.44}
\definecolor{plum}{rgb}{0.56, 0.27, 0.52}
\definecolor{Plum}{rgb}{0.56, 0.27, 0.52}
\definecolor{patriarch}{rgb}{0.5, 0.0, 0.5}
\definecolor{darkgreen}{rgb}{0.0, 0.2, 0.13}
\definecolor{darkcerulean}{rgb}{0.03, 0.27, 0.49}
\definecolor{jade}{rgb}{0.0, 0.66, 0.42}

\newcommand{\Anti}{\operatorname{Anti}}

\newcommand{\ArExt}{\operatorname{ArExt}}
\newcommand{\OvExt}{\operatorname{OvExt}}
\newcommand{\s}{\operatorname{\pmb{s}}}
\renewcommand{\t}{\operatorname{\pmb{t}}}

\newcommand{\wt}{\mathbf{wt}}

\makeatletter
\newcommand{\addbar}[3]{{\vphantom{#3}\mathpalette\add@bar{{#1}{#2}{#3}}}}
\newcommand{\add@bar}[2]{\add@@bar{#1}#2}
\newcommand{\add@@bar}[4]{%
  \begingroup
  \sbox\z@{$\m@th#1#4$}%
  \ooalign{%
    \hidewidth\kern#2\wd\z@\add@@@bar{#1}{#3}\hidewidth\cr
    \box\z@\cr
  }%
  \endgroup
}
\newcommand{\add@@@bar}[2]{%
  \sbox\tw@{$\m@th#1\newmcodes@\if\relax#2\relax-\else\bm{-}\fi$}%
  \raisebox{\dimexpr(\ht\z@-\ht\tw@)/2}{\usebox\tw@}%
}
\makeatother

\newcommand{\bbGamma}{\pmb{\mathpalette\makebbGamma\relax}}
\newcommand{\makebbGamma}[2]{%
  \raisebox{\depth}{\scalebox{1}[-1]{$\mathsurround=0pt#1\mathbb{L}$}}%
}

\newcommand\precdot{\mathrel{\ooalign{$\prec$\cr
  \hidewidth\raise0ex\hbox{$\cdot\mkern0.5mu$}\cr}}}

\hypersetup{
	colorlinks=true,
	linkcolor=darkcerulean,
	citecolor=jade,
	filecolor=magenta,      
	urlcolor=cyan,
	pdfpagemode=FullScreen,
}

\renewcommand{\mod}{\operatorname{mod}}

\newcommand{\rep}{\operatorname{rep}}
\newcommand{\add}{\operatorname{add}}
\newcommand{\Ker}{\operatorname{Ker}}

\newcommand{\Hom}{\operatorname{Hom}}

\newcommand{\Ext}{\operatorname{Ext}}

\newcommand{\ind}{\operatorname{ind}}
\newcommand{\proj}{\operatorname{proj}}

\newcommand{\Res}{\operatorname{Res}}
\newcommand{\ResOrd}{\operatorname{\pmb{Res}}}

\newcommand{\THomleq}{\pmb{\pmb{\rightarrow}}}

\newcommand{\opQ}{\operatorname{\mathbf{Q}}}
\newcommand{\opR}{\operatorname{\mathbf{R}}}

\newcommand{\opResAc}{\operatorname{\pmb{\mathscr{R}}}}
\newcommand{\Prj}{\operatorname{Prj}}
\newcommand{\MM}{\operatorname{M}}
\newcommand{\NP}{\operatorname{N}_{\proj}}

\newcommand{\Surf}{\operatorname{\pmb{\mathcal{S}}}}

\newcommand{\tc}{t_{\operatorname{cell}}}
\renewcommand{\sc}{s_{\operatorname{cell}}}
\renewcommand{\Ker}{\operatorname{Ker}}
\newcommand{\cl}{\operatorname{\mathsf{cl}}}

\newcommand{\col}{\operatorname{\mathsf{col}}}

\newcommand{\Accord}{\operatorname{\pmb{\mathscr{A}}}}

\newcommand{\CoZ}{\operatorname{Co-\mathbf{Z}}}

\newcommand{\X}{\operatorname{\mathbf{X}}}

\newcommand{\Tilt}{\operatorname{Tilt}}
\newcommand{\Extperp}{\pmb{\pmb{\perp}}}

\newcommand{\bleq}{\mathrel{\mathpalette\bleqinn\relax}}
\newcommand{\bleqinn}[2]{%
  \ooalign{%
    \raisebox{.2ex}{$#1\blacktriangleleft$}\cr
    $#1\leqslant$\cr
  }%
}

\author[B.~Dequêne]{Benjamin Dequêne}
\address[B.~Dequêne]{School of Mathematics, University of Leeds}
\email{B.D.Dequene@leeds.ac.uk}

\author[M.~Schoonheere]{Michael Schoonheere}
\address[M.~Schoonheere]{LAMFA, Université de Picardie Jules Verne}
\email{michael.schoonheere@u-picardie.fr}

\title[Resolving subcategories for gentle algebras II]{Resolving subcategories for gentle algebras III: Tilting modules for gentle tree algebras}

\date{\today}

\usepackage{thmtools}
\declaretheorem[numberwithin=section,name=Theorem,
refname={Theorem,Theorems},
Refname={Theorem,Theorems}]{theorem}
\declaretheorem[numberlike=theorem,name=Lemma,
refname={Lemma,Lemmas},
Refname={Lemma,Lemmas}]{lemma}
\declaretheorem[numberlike=theorem,name=Proposition,
refname={Proposition,Propositions},
Refname={Proposition,Propositions}]{prop}
\declaretheorem[numberlike=theorem,name=Corollary,
refname={Corollary,Corollaries},
Refname={Corollary,Corollaries}]{cor}

\declaretheorem[style=definition,numberlike=theorem,name=Definition,
refname={Definition,Definitions},
Refname={Definition,Definitions}]{definition}
\declaretheorem[style=definition,numberlike=theorem,name=Convention,
refname={Convention,Conventions},
Refname={Convention,Conventions}]{conv}
\declaretheorem[style=definition,numberlike=theorem,name=Algorithm,
refname={Algorithm,Algorithms},
Refname={Algorithm,Algorithms}]{algo}

\declaretheorem[style=definition,numberlike=theorem,name=Example,
refname={Example,Examples},
Refname={Example,Examples}]{ex}

\declaretheorem[style=remark,numberlike=theorem,name=Remark,
refname={Remark,Remarks},
Refname={Remark,Remarks}]{remark}

\usepackage{todonotes}

\definecolor{aquamarine}{rgb}{0.5, 1.0, 0.83}

\newcommand{\new}[1]{\textit{\textbf{\color{patriarch}{#1}}}}

\definecolor{dark-green}{RGB}{14,150,2}
\newcommand{\gpoint}{{\color{dark-green}{\circ}}}
\newcommand{\rpoint}{\textcolor{red}{\bullet}}
\definecolor{darkgray}{rgb}{0.66, 0.66, 0.66}
\definecolor{darkpink}{rgb}{0.91, 0.33, 0.5}
\newcommand{\gsquare}{\color{dark-green}{\pmb{\square}}}
\newcommand{\rsquare}{\color{red}{{\blacksquare}}}
\newcommand{\osquare}{\color{orange}{\pmb{\boxtimes}}}
\newcommand{\psquare}{\color{darkpink}{\pmb{\times}}}

\definecolor{mypurple}{rgb}{0.63, 0.36, 0.94}

\usepackage{pict2e,picture}
\makeatletter
\DeclareRobustCommand{\bbDelta}{{\mathpalette\bb@Delta\relax}}
\newcommand{\bb@Delta}[2]{%
  \begingroup
  \sbox\z@{$\m@th#1\Delta$}%
  \dimendef\Dht=6 \dimendef\Dwd=8
  \setlength{\Dwd}{\wd\z@}%
  \setlength{\Dht}{\ht\z@}%
  \begin{picture}(\Dwd,\Dht)
  \put(0,0){$\m@th#1\Delta$}
  \put(.42\Dwd,.7\Dht){\line(10,-26){.25\Dht}}
  \end{picture}%
  \endgroup
}
\makeatother

\begin{document}

\begin{abstract}
This paper is the third part of a series that intends to study the resolving subcategories for gentle algebras over an algebraically closed field $\mathbb{K}$.

As in the previous two papers, we continue to focus on gentle trees $(Q,R)$. Via a modified surface model for gentle algebras with finite global dimension, we developed combinatorial, poset, and quiver representation techniques that allow one to calculate all the resolving subcategories of $\mathbb{K}Q/\langle R \rangle$-mod. Furthermore, they enable one to calculate the resolving subcategory generated by any collection of $\mathbb{K}Q/\langle R \rangle$-modules.

In this paper, based on those techniques, we give a combinatorial realization of the Auslander--Reiten one-to-one correspondence between resolving subcategories and tilting modules in $\mathbb{K}Q/\langle R \rangle$-mod.
\end{abstract}
\maketitle
    
\tableofcontents

	\section{Introduction}
	\label{sec:Intro}
	\pagestyle{plain}

Resolving subcategories were first mentioned by Auslander and Bridger \cite{Auslander1969}, to describe the subcategory of modules of Gorenstein dimension 0. They are defined as additive subcategories that contain the projective modules and are closed under extensions and kernels of epimorphisms. Later works \cite{Auslander1991, Adachi2022} focused on the functorially finite resolving subcategories and showed that they are in one-to-one correspondence with hereditary complete cotorsion pairs. Since this bijection preserves inclusion, the poset of resolving subcategories provides an alternative approach to studying the poset of hereditary cotorsion pairs.

In the case of abelian (resp. extriangulated) categories, under some stronger assumptions,  Auslander and Reiten (resp. Adachi and Tsukamoto) highlighted that resolving subcategories are in bijection with tilting (resp. silting) objects. Those objects naturally appear in the context of exceptional sequences in \cite{Ringel91} and their characteristic tilting, which is closely linked to quasi-hereditary algebras. To have information on tilting objects, one has to compute all resolving subcategories. One way to achieve this is to compute the smallest subcategory that contains the set of objects $\mathcal{X}$. Takahashi addressed this issue in \cite{T09}, where he gave a procedure for computing such a resolving subcategory. This category is called the resolving closure of $\mathcal{X}$. The given procedure works in the broad setting of the module category of an algebra of finite representation type. It thus does not benefit from the combinatorial insights provided by the algebra. 

Gentle algebras are algebras with a strong combinatorial structure. They arise from quivers in relations and first appeared in \cite{Assem1981,Assem1982}. The combinatorial behavior of gentle algebras is widely studied: extensions of indecomposable objects have at most two indecomposable summands \cite{BS80,WW85,GP68,SW83,DS87}, and the hom-spaces and extensions are endowed with a basis arising from string combinatorics. To complete the combinatorics, geometric models consisting of dissected surfaces correspond to the module category over the path algebra, the derived category, and the two-term homotopy category of projective \cite{BCS21,OPS18,APS19,PPP18,PPP182,chang2025}. These models offer more straightforward categorical computations.

In previous work, \cite{DS251,DS252}, we gave an explicit computation of all resolving subcategories of $\mod{A}$ for $A$ a gentle algebra arising from oriented trees. The first step was to describe the join-irreducible elements of the poset of resolving subcategories ordered by inclusion. These objects are precisely the subcategories generated by a unique non-projective indecomposable object. We call these subcategories monogeneous resolving subcategories. We compute these subcategories using the geometric model. The poset of resolving subcategories is not semidistributive in general. Thus, we defined a canonical antichain for each resolving subcategory, such that a resolving subcategory corresponds to an ideal generated by this antichain in the poset.

The main result of this paper is the following: using that description, we can recover the tilting object associated with a resolving subcategory in \cite{Auslander1991} combinatorially.

\begin{theorem} \label{thm:Tiltgeneratorintro}
Let $(Q,R)$ be a gentle tree, and $\mathscr{R}$ be a resolving category described by the antichain $\mathfrak{D}$. Then, adding the indecomposable projective objects $\Ext$-orthogonal to $\mathfrak{D}$ and completing the object following an algorithmic procedure, we obtain $\mathfrak{G} \subset \Accord'$ such that both:
\begin{enumerate}[label=$(\roman*)$, itemsep=1mm]
    \item $\mathfrak{T} = \{\MM(\delta) \mid \delta \in \mathfrak{G}\}$ is a tilting collection of $(Q,R)$; and,
    \item $\Res(\mathfrak{T}) = \mathscr{R}$.
\end{enumerate}
Therefore, $\mathfrak{T}$ coincides with $\mathfrak{T}_{\mathscr{R}}$ defined via the Auslander-Reiten correspondence recalled in \cref{thm:Auslander}.
\end{theorem}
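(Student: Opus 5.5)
The plan is to reduce the statement to two verifications: that $\mathfrak{T}$ is tilting, and that $\Res(\mathfrak{T})=\mathscr{R}$. The identification with $\mathfrak{T}_{\mathscr{R}}$ then follows formally. By the Auslander--Reiten correspondence (\cref{thm:Auslander}), for a contravariantly finite resolving subcategory $\mathscr{R}$ (automatic here since $\mathbb{K}Q/\langle R\rangle$ is representation-finite, being a gentle tree) the associated tilting module is characterized by $\mathscr{R}={}^{\perp}(\mathfrak{T}_{\mathscr{R}}^{\perp})$. Equivalently, $\add \mathfrak{T}_{\mathscr{R}}=\mathscr{R}\cap\mathscr{R}^{\perp}$, where $\perp$ denotes vanishing of all positive $\Ext$. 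So it suffices to produce a tilting $\mathfrak{T}$ with $\Res(\mathfrak{T})=\mathscr{R}$; uniqueness of the tilting module in the bijection then gives $\mathfrak{T}=\mathfrak{T}_{\mathscr{R}}$.

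First I would construct $\mathfrak{G}$ explicitly. Start from $\mathfrak{D}$ together with the accordions of the indecomposable projectives $\Ext$-orthogonal to $\mathfrak{D}$. Then run the completion algorithm. At each step, pick an indecomposable in $\mathscr{R}$ (an element of $\Accord'$ lying in the ideal generated by $\mathfrak{D}$) that has no $\Ext^{\geq 1}$ with the current collection in either direction, and add it. The algorithm's termination criterion is that the number of indecomposables equals $|Q_0|$.

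For (i):
\begin{itemize}
\item Projective dimension is finite because gentle trees have finite global dimension.
\item Self-orthogonality is proved inductively along the algorithm. For pairs of elements of $\mathfrak{D}$, I would use the combinatorial description of $\Ext^1$ between accordions, via crossings in the modified surface model from \cite{DS251,DS252}. The antichain property of $\mathfrak{D}$ together with the "no non-trivial extension inside the canonical antichain" lemma should give vanishing. Higher $\Ext$ vanishing I would reduce to $\Ext^1$ against syzygies, which stay in $\mathscr{R}$ and are described combinatorially.
\item For the count $|\mathfrak{T}|=|Q_0|$, I would argue that the surface model for a gentle tree gives a dissection-type bijection: a maximal collection of pairwise non-crossing accordions (in the $\Ext$-sense) has exactly $|Q_0|$ elements. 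Hence the algorithm stops precisely at a tilting collection.
\end{itemize}

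For (ii), the inclusion $\Res(\mathfrak{T})\subseteq\mathscr{R}$ is immediate, since every element of $\mathfrak{G}$ is chosen inside $\mathscr{R}$. For the reverse inclusion, it suffices by the antichain description that each $\MM(\delta)$ with $\delta\in\mathfrak{D}$ lies in $\Res(\mathfrak{T})$, and this is true by construction since $\mathfrak{D}\subseteq\mathfrak{G}$. This requires $\mathfrak{D}$ itself to be $\Ext$-orthogonal, which is part of the first verification.

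The main obstacle is self-orthogonality of $\mathfrak{D}$, together with the existence at each algorithmic step of a new orthogonal element of $\mathscr{R}$ until the count reaches $|Q_0|$. If $\mathfrak{D}$ is not orthogonal in general, I would instead start from $\mathscr{R}\cap\mathscr{R}^{\perp}$, that is, the $\Ext$-injectives of $\mathscr{R}$. I would show these are exactly the accordions produced by the algorithm, and recover $\mathscr{R}=\Res(\mathfrak{T})$ using that every object of $\mathscr{R}$ has a finite $\add\mathfrak{T}$-coresolution inside $\mathscr{R}$. For the existence step, I would first try the surface model: an accordion in $\mathscr{R}$ not crossing the current collection should always exist when fewer than $|Q_0|$ arcs are present, because the complement of the drawn accordions still contains a region that supports one.
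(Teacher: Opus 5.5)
Your reduction is the same as the paper's: produce a tilting $\mathfrak{T}\subseteq\mathscr{R}$ with $\Res(\mathfrak{T})=\mathscr{R}$, then use uniqueness in the Auslander--Reiten bijection. The gap is at the start. The antichain of maximal elements describing $\mathscr{R}$ is not rigid in general, and being an antichain gives no $\Ext$-vanishing. In \cref{ex:Tiltcalc2}, take the resolving subcategory of all representations. Its maximal elements are $\eta,\varsigma,\mu$, and $\mu,\varsigma$ (the simples at $1$ and $2$) have a non-split extension along the arrow $1\to 2$. Since $\mathfrak{G}$ must be rigid, you cannot keep all of $\mathfrak{D}$ in it. Your proof of $\mathscr{R}\subseteq\Res(\mathfrak{T})$ rests on $\mathfrak{D}\subseteq\mathfrak{G}$, so it breaks down.

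The missing ingredient is the pruning step of \cref{algo:rigidER} and \cref{prop:rigidResgen}. You repeatedly delete a $\THomleq$-minimal $M$ admitting a non-split sequence $M\rightarrowtail E\twoheadrightarrow\omega^i(N)$ with $N$ in the current collection. Then $M$ is the kernel of an epimorphism whose terms lie in the resolving closure of the remaining elements, so $\Res$ does not change. The completion must start from the resulting rigid antichain $\mathfrak{D}^{\mathrm{r}}_{\mathscr{R}}$ and its projective completion.

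Your existence and count step is also not established. The claim that a maximal $\Ext$-orthogonal family of accordions has $|Q_0|$ elements is a completion theorem you do not prove; completion of rigid modules is not automatic beyond projective dimension one. It is also not the statement you need. Your search is confined to $\mathscr{R}$, and a completion using accordions outside $\mathscr{R}$ would enlarge $\Res$. The heuristic that the complement ``supports'' a new arc ignores both membership in $\opResAc'(\mathfrak{D})$ and the weight-zero condition of \cref{prop:Extorthoaccord}.

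The paper closes this gap constructively. Take a cell $C$ of the current rigid admissible dissection with two green points $v\neq w$, and $\rho\in(v|w)_{\Prj}$. The case analysis of \cref{cor:wheretoseparatecellsallcases} yields an explicit $\nu$: the arc from $\s_{(\delta)}(e)$ or $s(\delta)$ to $e$, or a $\CoZ$-completion. By \cref{cor:augdissecrigid}, $\nu$ lies in $\opResAc'(\mathfrak{D})$, crosses nothing, has weight $0$ at its endpoints, and splits $C$. Iterating ends at a rigid dualizable dissection, which is tilting by \cref{thm:tiltgeom}.

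If you want to keep your greedy search, you can justify it abstractly once you start from $\mathfrak{E}^{\mathrm{r}}_{\mathscr{R}}$. Let $X\in\mathscr{R}$ satisfy $\Ext^{>0}(\mathfrak{E}^{\mathrm{r}}_{\mathscr{R}},X)=0$. The class ${}^{\perp_{>0}}X$ is resolving and contains $\mathfrak{E}^{\mathrm{r}}_{\mathscr{R}}$, hence contains $\mathscr{R}$. So $X\in\mathscr{R}\cap\mathscr{R}^{\perp_{>0}}=\add\mathfrak{T}_{\mathscr{R}}$, the search only ever adds summands of $\mathfrak{T}_{\mathscr{R}}$, and it cannot stall before $|Q_0|$ elements. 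That argument takes the count from Auslander--Reiten rather than from the combinatorics. Your fallback via $\mathscr{R}\cap\mathscr{R}^{\perp}$ points in this direction but never makes the observation explicit.
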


\cref{sec:Setting} provides some preliminaries on gentle quivers and their geometric models. This section also recalls what is known about tilting objects in the setup of gentle algebras. In \cref{sec:ResSubcats}, we recall the results and notation from previous work that we will use here.
\cref{sec:Tilt} describes the computation of the tilting object associated with a resolving subcategory. After recalling the bijection of \cite{Auslander1991}, we first construct a canonical rigid object that generates the resolving subcategory in \cref{ss:canonrigid}, and then complete it to a tilting generator in \cref{ss:CombAuslander}. We give detailed computations of tilting generators in \cref{ss:examplePart2}.
	

        \section{The setting}
        \label{sec:Setting}
        \pagestyle{plain}

\subsection{Gentle quivers and geometric model}
\label{ss:gentle}
Fix an algebraically closed field $\mathbb{K}$. A \new{gentle quiver} is a pair $(Q,R)$ such that:
	\begin{enumerate}[label=$\bullet$, itemsep=1mm]
		\item $Q$ is a quiver such that there are at most two incoming arrows and at most two outgoing arrows at each vertex of $Q$;
		\item  $R$ is a set of quadratic relations such that: 
		\begin{enumerate}[label=$\bullet$,itemsep=1mm]
			\item for any arrow $\alpha \in Q_1$:
			\begin{enumerate}[label = $\bullet$, itemsep=1mm]
				\item there is at most one $\beta \in Q_1$ such that $s(\beta) = t(\alpha)$ and $\beta \alpha \in R$;
				\item there is at most one $\gamma \in Q_1$ such that $s(\gamma) = t(\alpha)$ and $\gamma \alpha \notin R$;
				\item there is at most one $\beta' \in Q_1$ such that $t(\beta') = s(\alpha)$ and $\alpha \beta' \in R$;
				\item there is at most one $\gamma' \in Q_1$ such that $t(\gamma') = s(\alpha)$ and $\alpha \gamma' \notin R$;
			\end{enumerate}
		\item for any cycle $c = \alpha_k \cdots \alpha_1$, either there exists $i \in \{1, \ldots, k-1\}$ such that $\alpha_{i+1} \alpha_i \in R$, or $\alpha_1 \alpha_k \in R$.
		\end{enumerate}
	\end{enumerate}
A $\mathbb{K}$-algebra $\Lambda$ is said to be \new{gentle} if there exists a gentle quiver $(Q,R)$ such that $\Lambda \cong \mathbb{K}Q/\langle R \rangle$. The category $\rep(Q,R)$ of finite-dimensional representations of $(Q,R)$ (over $\mathbb{K}$) is equivalent to the category of finitely-generated left $\mathbb{K}Q/\langle R \rangle$-modules. Therefore $\rep(Q,R)$ is abelian, Krull--Schmidt, and has enough projective objects. Denote by $\ind(Q,R)$ the collection of the (isomorphism classes of) indecomposable representations of $(Q,R)$. It is possible to describe $\rep(Q,R)$ using string combinatorics \cite{BR87,CB89} by focusing on representation-finite gentle quivers. Morphisms, and thus syzygies, are also depicted combinatorially, as well as extensions between objects in $\ind(Q,R)$. In particular, \c{C}anak\c{c}i, Pauksztello, and Schroll \cite{CPS21} and Brüstle, Douville, Mousavand, Thomas and Y\i ld\i r\i m \cite{BDMTY19} give an explicit basis of $\Ext^1(M,N)$ in terms of arrow and overlap extensions, for any $M,N \in \ind(Q,R)$. \cite{BCS21,OPS18,PPP18} introduced geometric models for gentle quivers. Let us first state some conventions and definitions.

\begin{conv}
Whenever we talk about \new{surfaces}, we always mean an oriented compact surface $\pmb{\Sigma}$ with a finite number of open discs (possibly zero) removed. Write $\partial \pmb{\Sigma}$ for the boundary of $\pmb{\Sigma}$. Such a surface is determined, up to homeomorphism, by its genus and by the number of connected components of $\partial \pmb{\Sigma}$; we will refer to these connected components as the \new{boundary components} of $\pmb{\Sigma}$.
\end{conv}

\begin{definition} \label{def:marksurf}
A \new{marked surface} is a pair $(\pmb{\Sigma},\mathcal{M})$, where $\pmb{\Sigma}$ is a surface and $\mathcal{M}$ is a finite set of \new{marked points} of $\pmb{\Sigma}$ such that $\mathcal{M}$ admits a bipartition $\{\mathcal{M}_{\gpoint},\mathcal{M}_{\rpoint}\}$ such that, on each boundary component of $\pmb{\Sigma}$: 
\begin{enumerate}[label = $\bullet$, itemsep=0.1em]
	\item there is at least one marked point of each color;
	\item marked points in $\mathcal{M}_{\gpoint}$ and $\mathcal{M}_{\rpoint}$ alternate.
\end{enumerate}
\end{definition}

Let $(\pmb{\Sigma},\mathcal{M})$ be a marked surface.
\begin{enumerate}[label=$\bullet$,itemsep=1mm]
\item A \new{$\gpoint$-arc} is a curve, up to homotopy, on $(\pmb{\Sigma},\mathcal{M})$ joining two points in $\mathcal{M}_{\gpoint}$; it is a continuous map from the closed interval $[0,1]$ to $\pmb{\Sigma}$ with endpoints in $\mathcal{M}_{\gpoint}$, and such that the image of its interior is disjoint from $\mathcal{M}$.

\item A $\gpoint$-arc is said to be \new{simple} if it does not intersect itself (except perhaps at its endpoints). 

\item A \new{$\gpoint$-dissection} of $(\pmb{\Sigma}, \mathcal{M})$ is a collection $\Delta^{\gpoint}$ of pairwise non-intersecting simple $\gpoint$-arcs which cut the surface into polygons (that is to say, into simply-connected regions), called the \new{cells} of the dissection. The triplet $(\pmb{\Sigma}, \mathcal{M}, \Delta^{\gpoint})$  is called a  \new{$\gpoint$-dissected marked surface}.

\item A cell is said to be \new{large} if the polygon has more edges than a triangle.

\item A \new{$\gpoint$-dissection} $\Delta^{\gpoint}$ of $(\pmb{\Sigma}, \mathcal{M})$ is said to be \new{admissible} if each cell of $\Delta^{\gpoint}$ contains at least one marked point in $\mathcal{M}_{\rpoint}$. In the case where each cell of $\Delta^{\gpoint}$ contains exactly one marked point in $\mathcal{M}_{\rpoint}$, we say that $\Delta^{\gpoint}$ is \new{dualizable}.

\item We define \new{$\rpoint$-arcs} and \new{$\rpoint$-dissection} similarly.

\item Given a $\gpoint$-dissection or a $\rpoint$-dissection $\Delta$ of a marked surface $(\pmb{\Sigma}, \mathcal{M})$, we denote by $\pmb{\Gamma}(\Delta)$ the set of cells of $(\pmb{\Sigma}, \mathcal{M},\Delta)$. 
\end{enumerate}

\begin{remark} \label{rem:previous} Contrary to what we have done previously, we distinguish between $\gpoint$-dissections that are dualizable, and those that are not. This will be useful later on.
\end{remark}

\cite{BCS21,OPS18,PPP18} established a one-to-one correspondence between gentle quivers and $\gpoint$-dissected marked surfaces, which is uniquely determined up to oriented homeomorphisms and homotopies of $\gpoint$-arcs. We write $\Surf(Q,R) = (\pmb{\Sigma}, \mathcal{M}, \Delta^{\gpoint})$ for the $\gpoint$-dissected marked surface associated to the gentle quiver $(Q,R)$. Moreover, we can translate information from $\rep(Q,R)$ to the geometric model.

\begin{theorem}[\cite{BCS21,PPP182}] \label{thm:GeomandRep} Consider  a dualizable $\gpoint$-dissected marked surface $(\pmb{\Sigma}, \mathcal{M}, \Delta^{\gpoint})$ such that $\mathcal{M}_{\gpoint} \subset \partial \pmb{\Sigma}$, and let $(\opQ(\Delta^{\gpoint}),\opR(\Delta^{\gpoint}))$ be its associated gentle quiver. Then we have a one-to-one correspondence from indecomposable representations of  $(\opQ(\Delta^{\gpoint}),\opR(\Delta^{\gpoint}))$, up to isomorphism, to a subfamily of $\rpoint$-curves, called \emph{accordions} on $(\pmb{\Sigma}, \mathcal{M}, \Delta^{\gpoint})$, up to homotopy.
\end{theorem}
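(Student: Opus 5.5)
The plan is to build the correspondence of \cref{thm:GeomandRep} in two stages. First I recover the gentle quiver $(\opQ(\Delta^{\gpoint}),\opR(\Delta^{\gpoint}))$ from the surface, then I match string modules with $\rpoint$-curves. Since $\Delta^{\gpoint}$ is dualizable, each cell contains exactly one $\rpoint$-point. This gives a dual $\rpoint$-dissection $\Delta^{\rpoint}$: one $\rpoint$-arc crosses each $\gpoint$-arc exactly once, and each cell of $\Delta^{\rpoint}$ contains exactly one $\gpoint$-point. The quiver is defined as follows:
\begin{enumerate}[label=$\bullet$, itemsep=1mm]
\item the vertices are the $\gpoint$-arcs;
\item there is an arrow $i\to j$ when $i,j$ share an endpoint in a common cell and $j$ follows $i$ counterclockwise around that endpoint with no arc in between;
\item relations are the compositions of two consecutive arrows lying in the same cell around different endpoints.
\end{enumerate}
Gentleness is checked locally: each arc has two endpoints and bounds two cells, which gives at most two arrows in and two out. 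Finite dimensionality and the no-unbroken-cycle condition use $\mathcal{M}_{\gpoint}\subset\partial\pmb{\Sigma}$, since an interior $\gpoint$-point would create an oriented cycle without relations.

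Second, I would use the string and band classification of \cite{BR87,CB89}: indecomposables are string modules $M(w)$ and band modules. I define accordions as $\rpoint$-curves, with endpoints in $\mathcal{M}_{\rpoint}$ (or closed curves), in minimal position with respect to $\Delta^{\gpoint}$, such that in each cell they cross consecutive $\gpoint$-arcs. More precisely, whenever the curve enters and leaves a cell through arcs $i,j$, these arcs share an endpoint and form a fan there. To such a curve $\gamma$ I assign the word read off from its successive crossings $i_0,i_1,\dots,i_m$. Each passage through a cell between $i_k$ and $i_{k+1}$ gives a direct or inverse arrow, according to whether $\gamma$ turns counterclockwise or clockwise around the common $\gpoint$-endpoint. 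The accordion condition implies that the arrows read inside one cell compose to a nonzero path, so the resulting word avoids relations and is a string. Conversely, a string $w$ gives a sequence of $\gpoint$-arcs and a sequence of turns, and I realize it by concatenating segments in cells. The ends are completed by joining the first and last crossed arcs to the unique $\rpoint$-point of the adjacent cell; this step uses dualizability, because that point is unique.

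The main obstacle is showing the map is well defined and bijective up to homotopy. Minimal position makes the crossing sequence a homotopy invariant, via the bigon criterion on the polygonal decomposition. Injectivity on strings up to inversion follows because a homotopy class is determined by its crossing sequence together with its end cells. The end cells are fixed by maximality of the string at its ends, which matches the fact that a curve reaching the $\rpoint$-point of a cell cannot extend further.

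For bands, closed accordions would correspond to bands with a $\mathbb{K}^\*$-parameter and multiplicity. Since the statement only claims a correspondence with a subfamily, I would either restrict to the string part or include the closed curves with this extra data.
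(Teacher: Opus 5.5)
\paragraph{Summary.} The paper gives no proof of its own here: the theorem is quoted from \cite{BCS21,PPP182}. Your route is the one used there: read a word from the $\gpoint$-arcs crossed by a $\rpoint$-curve in minimal position, then invoke the string/band classification. There is, however, a genuine gap in how you handle band modules, and three justifications in the string part need repair.

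\paragraph{The gap: band modules.} The correspondence is asserted on \emph{all} indecomposable representations up to isomorphism. The word ``subfamily'' restricts the curves, not the modules. So restricting to string modules does not prove the statement. Attaching $(\lambda,n)\in\mathbb{K}^*\times\mathbb{N}_{>0}$ to closed curves, on the other hand, changes the target, which is curves up to homotopy. Each band contributes the infinite family $M(b,\lambda,n)$ but only one free homotopy class, so the literal statement needs either the absence of band modules or decorated closed curves. The absence of bands is exactly the situation in which the paper uses the theorem. For a gentle tree, $\pmb{\Sigma}$ is a disc with $\mathcal{M}\subset\partial\pmb{\Sigma}$, every closed curve is contractible, and a tree admits no band. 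Hence every indecomposable is a string module, and your string argument already gives the full bijection. Outside that case you would have to actually prove the band half:
\begin{itemize}
\item primitive closed accordions, up to free homotopy and orientation, correspond to bands up to rotation and inversion;
\item this requires working with cyclic crossing words, excluding proper powers, and checking that every power of the cyclic word is a string.
\end{itemize}
Your proposal only asserts this.

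\paragraph{Repairs in the string part.}
\begin{enumerate}
\item[(i)] A single passage through a cell contributes exactly one arrow. So ``the arrows read inside one cell compose to a nonzero path'' is not why the word is a string. The real reason is as follows. Two consecutive letters are read at corners lying on \emph{opposite} sides of the arc $i_k$ crossed between them. With your conventions, a relation $\beta\alpha$ consists of two arrows at two corners of one cell, on the \emph{same} side of the middle arc. Equivalently, consecutive letters have the same direction only when the curve keeps turning around the same $\gpoint$-point, and there compositions are nonzero. The same observation gives reducedness.
\item[(ii)] End cells are not fixed by any ``maximality of the string''; string modules exist for all strings. The end cell is the cell on the far side of $i_0$ (resp.\ $i_m$) from the first (resp.\ last) passage. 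The terminal segment is unique up to homotopy because that cell contains a unique $\rpoint$-point.
\item[(iii)] Your claim that the crossing sequence determines the homotopy class uses that cells are discs. The hypothesis only asks $\mathcal{M}_{\gpoint}\subset\partial\pmb{\Sigma}$, so interior $\rpoint$-points are allowed. A segment crossing such a punctured cell from $i$ to $j$ can then wind around the puncture while keeping the same crossing sequence. Your accordion condition must therefore say that the segment cuts off a corner of the cell between $i$ and $j$. This issue disappears when $\mathcal{M}\subset\partial\pmb{\Sigma}$, which is the case used in the paper.
\end{enumerate}
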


Write $\Accord = \mathscr{A}(\pmb{\Sigma}, \mathcal{M}, \Delta^{\gpoint})$ for the set of accordions on $(\pmb{\Sigma}, \mathcal{M}, \Delta^{\gpoint})$. Recall that:
\begin{enumerate}[label=$\bullet$, itemsep=1mm]
    \item for any $M \in \ind(Q,R)$, we write $\gamma_{(M)}$ for its associated accordion up to homotopy; and,
    \item for any $\delta \in \Accord$, we denote by $\MM(\delta)$ its associated indecomposable representations up to isomorphism.
\end{enumerate} We can also describe morphisms and extensions between indecomposable representations as geometric behavior between accordions. We recall the precise statement.

\begin{prop}[\cite{BR87,BCS21}] \label{prop:HomCrossing}
Let $(\pmb{\Sigma},\mathcal{M},\Delta^{\gpoint})$ be a $\gpoint$-dissected marked surface with $\mathcal{M}_{\gpoint} \subset \partial{\pmb{\Sigma}}$. For any pair $(\delta, \eta) \in \Accord$, the basis elements of $\Hom(\MM(\delta), \MM(\eta))$ are given by crossings as depicted in \cref{fig:HomCrossing}.
\begin{figure}[ht!]
    \centering
    \begin{tikzpicture}[mydot/.style={
				circle,
				thick,
				fill=white,
				draw,
				outer sep=0.5pt,
				inner sep=1pt
			}, fl/.style={->,>=latex}]
			\tkzDefPoint(0,0){O}\tkzDefPoint(0,1.5){1} 
			\tkzDefPoint(-1.5,1.5){2} 
			\tkzDefPoint(-1.5,0){3} 
			
			\tkzDefPoint(3,0){4}\tkzDefPoint(3,1.5){5} 
			\tkzDefPoint(4.5,1.5){6} 
			\tkzDefPoint(4.5,0){7} 
			
			\filldraw [fill=gray,opacity=0.1] (O) to (1) to [bend right=10] (5) to (4) to [bend right=10] cycle;
			
			\draw[line width=0.5mm,dark-green](1) edge (O);
			\draw[line width=0.5mm,dark-green, bend left=30](1) edge (2);
			\draw[line width=0.5mm,dark-green,bend right=30](O) edge (3);
			
			\draw[line width=0.5mm,dark-green](4) edge (5);
			\draw[line width=0.5mm,dark-green, bend left=30](6) edge (5);
			\draw[line width=0.5mm,dark-green,bend right=30](7) edge (4);
			
			\draw[line width=0.7mm,orange,densely dotted](-0.75,1.5) to [bend right=40] (0,0.85) to [bend left=0] node[above]{$\delta$}  (3,0.85) to [bend left=40] (3.75,0);
			\draw[line width=0.7mm,blue, bend left=30, loosely dotted](-0.75,0) to [bend left=30]  (0,0.65) to [bend left=0] node[below]{$\eta$} (3,0.65) to [bend right=30] (3.75,1.5);
			
			\tkzDrawPoints[size=4,color=dark-green,mydot](O,1,2,3,4,5,6,7);
		\end{tikzpicture}
    \caption{\label{fig:HomCrossing} Illustration of a crossing corresponding to a basis element of $\Hom(\MM(\delta), \MM(\eta))$. The shaded part is where all the segments of $\delta$ and all the ones of $\eta$, given by the cutting of $\pmb{\Sigma}$ with $\pmb{\Gamma}( Gammaa^{\gpoint})$, are homotopic.}
\end{figure}
\end{prop}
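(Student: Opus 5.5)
The statement to prove is \cref{prop:HomCrossing}: a basis of $\Hom(\MM(\delta),\MM(\eta))$ is indexed by crossings of $\delta$ and $\eta$ of the form shown in \cref{fig:HomCrossing}. The plan is to reduce it to the classical description of morphisms between string modules due to Crawley-Boevey (and Butler--Ringel). By \cref{thm:GeomandRep}, $\MM(\delta)$ and $\MM(\eta)$ are string modules $M(w_\delta)$ and $M(w_\eta)$. The string $w_\delta$ is read off from the ordered sequence of $\gpoint$-arcs of $\Delta^{\gpoint}$ crossed by the accordion $\delta$. Each such arc corresponds to a vertex of $\opQ(\Delta^{\gpoint})$. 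Two consecutive crossings lie in a common cell, and they correspond to a direct or inverse arrow according to the orientation of the corresponding angle in that cell.

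Crawley-Boevey's theorem gives a basis of $\Hom(M(w),M(v))$ indexed by pairs consisting of a factor substring $u$ of $w$ and an image substring $u$ of $v$ (equal strings up to inversion). Being a \emph{factor substring} of $w$ means the arrows of $w$ adjacent to $u$ point out of $u$, or $u$ ends there. Being an \emph{image substring} of $v$ means the adjacent arrows of $v$ point into $u$, or $u$ ends there. So it suffices to build a bijection between such admissible pairs and the crossings of \cref{fig:HomCrossing}.

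The construction is as follows.
\begin{enumerate}[label=$\bullet$, itemsep=1mm]
\item Take a common substring $u$ of $w_\delta$ and $w_\eta$. It corresponds to a maximal run of consecutive $\gpoint$-arcs crossed by both accordions, and the two curves can be homotoped to run parallel there (the shaded region of the figure).
\item At each end of the run, the curves leave through the cell containing the last common arc. Because that cell contains a single $\rpoint$ point and the curves are accordions, the possible exits are controlled by the angles of the cell.
\item Next, check that ``the adjacent arrow of $w_\delta$ points out while that of $\eta$ points in'' is equivalent to ``$\delta$ leaves on one fixed side (clockwise) of $\eta$''. At a boundary end (the string stops), the curve ends at the $\rpoint$ marked point of that cell, which is again consistent with the same side.
\item Finally, the factor/image condition at both ends translates exactly into $\delta$ and $\eta$ crossing once along the shaded strip, with $\delta$ going from the top-left to the bottom-right as drawn.
\end{enumerate}
The special case where $u$ is a single vertex (the run is one common arc, or the curves share an endpoint) must also be handled; it corresponds to the crossing occurring inside a single arc or at a common $\rpoint$ endpoint.

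The main obstacle is the orientation bookkeeping in the third step. One has to fix the convention relating arrow direction in $\opQ(\Delta^{\gpoint})$ to the surface orientation (arrows turning counterclockwise inside a cell around the $\rpoint$ point). With that fixed, one verifies case by case that the four local configurations at an end of $u$ match the four factor/image possibilities, using the gentle relations to exclude the forbidden turns.

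Injectivity and surjectivity of the correspondence then follow because homotopy classes of crossings in minimal position correspond uniquely to maximal common runs. This uses simplicity of the dissection and the fact that $\mathcal{M}_{\gpoint}\subset\partial\pmb{\Sigma}$, which prevents bigons.
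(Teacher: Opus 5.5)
Your proposal is correct and matches the argument behind this proposition. The paper does not reprove it; it only cites \cite{BR87,BCS21}, where the result comes from transporting Crawley-Boevey's factor/image substring basis \cite{CB89} along the string--accordion dictionary, exactly as you plan. Your special case is genuinely needed, but it is better described as ``the common run $u$ reaches the end of both strings on the same side'', for $u$ of any length, rather than ``$u$ is a single vertex''. That is exactly when the two accordions share a $\rpoint$ endpoint and have no interior crossing; for example, in type $A_2$ every nonzero morphism between non-isomorphic indecomposables is of this kind, so the figure has to be read as allowing this degenerate end.
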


\begin{conv}
Whenever we say that two arcs cross, they cross in their relative interior.
\end{conv}
	
\begin{prop}[\cite{DS251}]\label{prop:geo_mor}
Let $(\pmb{\Sigma},\mathcal{M},\Delta^{\gpoint})$ be a dualizable $\gpoint$-dissected marked surface with $\mathcal{M}_{\gpoint} \subset \partial{\pmb{\Sigma}}$. Consider $p \in \mathbb{N}^*$ and $\delta_1,\ldots, \delta_p \in \Accord$. Let $\eta \in \Accord$ be such that there exists a minimal epimorphism \[ \begin{tikzcd}
	{\displaystyle f: \bigoplus_{i=1}^p \MM(\delta_i)} & \MM(\eta)
	\arrow[two heads, from=1-1, to=1-2]
\end{tikzcd}\]
where each $f_{|\MM(\delta_i)}$ is given by exactly one crossing between $\delta_i$ and $\eta$. Then \[\Ker(f) = \bigoplus_{i=0}^p \MM(\kappa_i)\] where $\kappa_0,\ldots,\kappa_p \in \Accord$  are constructed from $\delta_1, \ldots, \delta_p$, and $\eta$ in \cref{fig:kerepi} Note that $\kappa_0$ is defined in a dual way to $\kappa_p$ and thus the drawing of $\kappa_p$ is dual to the one of $\kappa_0 $.
\begin{figure}[!ht]
\centering 
     \begin{tikzpicture}[mydot/.style={
					circle,
					thick,
					fill=white,
					draw,
					outer sep=0.5pt,
					inner sep=1pt
				}, scale = 1]
		\tikzset{
		osq/.style={
        rectangle,
        thick,
        fill=white,
        append after command={
            node [
                fit=(\tikzlastnode),
                orange,
                line width=0.3mm,
                inner sep=-\pgflinewidth,
                cross out,
                draw
            ] {}}}}
		\draw [line width=0.7mm,domain=50:130] plot ({4*cos(\x)}, {1.5*sin(\x)});
        \draw [line width=0.7mm,domain=230:310] plot ({4*cos(\x)}, {1.5*sin(\x)});
		\foreach \X in {0,1}
		{
		\tkzDefPoint(4*cos(pi/6*\X +pi/2),1.5*sin(pi/6*\X + pi/2)){\X};
		};
		\foreach \X in {2,3}
		{
		\tkzDefPoint(4*cos(pi/6*(\X-2) +3*pi/2),1.5*sin(pi/6*(\X-2) + 3*pi/2)){\X};
		};
		
		\draw[line width=0.7mm ,bend right=20,blue, loosely dotted](-3,0.5) edge (0,0);
		
		\draw[line width=0.7mm ,bend left=20,blue, loosely dotted](0,0) edge (3,-0.5);
		
		\draw[line width=0.7mm ,bend right=10,orange,dotted](0) edge (2);

		\draw [line width=0.7mm, mypurple,dash pattern={on 10pt off 2pt on 5pt off 2pt}, bend right=20] (1) edge (2);
		\draw [line width=0.7mm, mypurple,dash pattern={on 10pt off 2pt on 5pt off 2pt}, bend right=20] (0) edge (3);
		
		\foreach \X in {0,1,2,3}
		{
		\tkzDrawPoints[fill =red,size=4,color=red](\X);
		};
		
		

		\tkzDefPoint(-2.4,0.1){gammaM};
		\tkzLabelPoint[blue](gammaM){\Large $\eta$}
		\tkzDefPoint(0.2,-0.7){gammaP};
		\tkzLabelPoint[orange](gammaP){\Large $\kappa_i$}
		\tkzDefPoint(-2.1,0.9){deltav};
		\tkzLabelPoint[mypurple](deltav){\Large $\delta_i$}
		\tkzDefPoint(0.75,0.9){deltaw};
		\tkzLabelPoint[mypurple](deltaw){\Large $\delta_{i+1}$}
		
		\begin{scope}[xshift = 6.5cm]
		\draw [line width=0.7mm,domain=50:130] plot ({4*cos(\x)}, {1.5*sin(\x)});
        \draw [line width=0.7mm,domain=230:310] plot ({4*cos(\x)}, {1.5*sin(\x)});
		\foreach \X in {0,1}
		{
		\tkzDefPoint(4*cos(pi/6*\X +pi/2),1.5*sin(pi/6*\X + pi/2)){\X};
		};
		\foreach \X in {2,3}
		{
		\tkzDefPoint(4*cos(pi/6*(\X-2) +3*pi/2),1.5*sin(pi/6*(\X-2) + 3*pi/2)){\X};
		};
		
		\draw[line width=0.7mm ,bend right=20,blue, loosely dotted](-3,-0.5) edge (0);
		
		\draw [line width=0.7mm, mypurple,dash pattern={on 10pt off 2pt on 5pt off 2pt}, bend right=20] (1) edge (3);
		
		\draw[line width=0.7mm ,bend right=10,orange,dotted](0) edge (3);
		
		\foreach \X in {0,1,3}
		{
		\tkzDrawPoints[fill =red,size=4,color=red](\X);
		};
		
		

		\tkzDefPoint(-2.6,0.4){gammaM};
		\tkzLabelPoint[blue](gammaM){\Large $\eta$}
		\tkzDefPoint(0.9,0.7){gammaP};
		\tkzLabelPoint[orange](gammaP){\Large $\kappa_p$}
		\tkzDefPoint(-0.1,0.1){deltav};
		\tkzLabelPoint[mypurple](deltav){\Large $\delta_p$}
		\end{scope}
    \end{tikzpicture}
\caption{\label{fig:kerepi} The two types of accordions $\kappa_i$ representing the indecomposable summands of $\Ker(f)$.}
\end{figure}
\end{prop}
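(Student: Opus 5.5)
We compute the kernel of $f$ by reducing to string combinatorics and then translating back to the surface. Write $M = \MM(\eta)$ and $N_i = \MM(\delta_i)$, and let $w_\eta$ and $w_{\delta_i}$ be the corresponding strings, as in \cref{thm:GeomandRep}.

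\textbf{Step 1: locate the images of the summands.} By \cref{prop:HomCrossing}, each component $f_{|N_i}$ is a single graph map. Such a map is given by a factor string $u_i$ of $w_{\delta_i}$ that sits as a substring of $w_\eta$. The map is
\begin{enumerate}[label=$\bullet$, itemsep=1mm]
\item a quotient on the $\delta_i$ side, meaning $u_i$ is a quotient factor of $w_{\delta_i}$; and
\item a submodule on the $\eta$ side, meaning $u_i$ is a substring of $w_\eta$ closed under predecessors.
\end{enumerate}
Surjectivity of $f$ means the substrings $u_1,\dots,u_p$ cover $w_\eta$. Minimality of $f$ means no summand is redundant, so after reordering along $w_\eta$ the $u_i$ appear in order, with $u_i$ and $u_{i+1}$ overlapping or adjacent.

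\textbf{Step 2: compute the kernel as strings.} We compute $\Ker f$ combinatorially. Set $N = \bigoplus_i N_i$. The kernel restricted to each $N_i$ is the part of $w_{\delta_i}$ outside $u_i$: the hooks and cohooks cut off at each end. These pieces are glued along the overlaps $u_i\cap u_{i+1}$. Concretely, the vector $(x,-x)$ on each overlap lies in the kernel. It links the left remainder of $w_{\delta_{i+1}}$ to the right remainder of $w_{\delta_i}$ through the overlap string. This produces a string module $\MM(\kappa_i)$ for each $1\le i\le p-1$.

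The two outer remainders give the extra summands:
\begin{enumerate}[label=$\bullet$, itemsep=1mm]
\item the left remainder of $w_{\delta_1}$ gives $\kappa_0$;
\item the right remainder of $w_{\delta_p}$ gives $\kappa_p$.
\end{enumerate}
We then check that these pieces span the kernel by a dimension count. Since $f$ is surjective, $\dim\Ker f=\sum_i\dim N_i-\dim M$. By inclusion–exclusion along the chain of overlaps, this matches the total dimension of the $\MM(\kappa_j)$. We also check that the pieces are independent, by comparing their supports.

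\textbf{Step 3: translate back to the surface.} The final step is geometric. Concatenating a hook-end of $\delta_i$ with the overlap segment and the hook-end of $\delta_{i+1}$ corresponds to the curve that follows $\delta_i$ from one endpoint to the crossing region and then continues along $\delta_{i+1}$. This is exactly the smoothing of the two crossings pictured in \cref{fig:kerepi}, which joins an endpoint of $\delta_i$ to an endpoint of $\delta_{i+1}$ on the same side of $\eta$. Similarly, $\kappa_p$ is obtained by following $\delta_p$ and then an endpoint arc of $\eta$ beyond its last crossing, and $\kappa_0$ is the dual construction.

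We must also verify that each resulting curve is an accordion, using dualizability and $\mathcal{M}_{\gpoint}\subset\partial\pmb{\Sigma}$. Degenerate cases, where a curve contracts to a boundary segment, give zero summands.

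\textbf{Main obstacle.} We expect the hardest part to be the bookkeeping in Step 2, namely the overlap gluing. One must show that the relevant ends of consecutive substrings $u_i$, $u_{i+1}$ are compatible, so that the glued word is a valid string, and that no two kernel pieces merge further. We would handle this with the gentle relations: at the end of each overlap there is at most one arrow continuing without a relation, which forces the gluing to be unique.
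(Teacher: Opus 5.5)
Your Step~2 is the right engine and is essentially sound. Write $w_{\delta_i}=L_i\,u_i\,R_i$ and $o_i=u_i\cap u_{i+1}$. Then $\Ker(f)\cong\MM(L_1)\oplus\bigoplus_{i=1}^{p-1}\MM(R_i\,o_i\,L_{i+1})\oplus\MM(R_p)$. The glued words are genuine strings:
\begin{enumerate}
\item At a one-vertex overlap $x$, which is a sink of $w_\eta$ with incoming letters $\alpha$ from $u_i$ and $\beta$ from $u_{i+1}$, the outgoing letters $a$ of $\delta_i$ and $c$ of $\delta_{i+1}$ at $x$ satisfy $a\alpha\notin R$ and $c\beta\notin R$. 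Hence $a\neq c$ by gentleness.
\item For longer overlaps, the junctions are inherited from $w_{\delta_i}$ and $w_{\delta_{i+1}}$.
\end{enumerate}
Step~1 needs two corrections. First, an image substring $u_i$ is closed under \emph{successors}: the letters of $w_\eta$ at its ends point into $u_i$. It is not closed under predecessors. Second, consecutive $u_i$ are never merely adjacent, since the connecting letter would have to point into both. So they always share a vertex, which is why every middle summand is nonzero. Minimality is what rules out triple overlaps in your dimension count.

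The genuine gap is Step~3, which carries the actual content of the statement. You conclude that $\kappa_i$ joins endpoints of $\delta_i$ and $\delta_{i+1}$ on the \emph{same} side of $\eta$. This is wrong, and it contradicts \cref{fig:kerepi}, where $\kappa_i$ joins the lower endpoint of $\delta_i$ to the upper endpoint of $\delta_{i+1}$ and crosses $\eta$.

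Your string $R_i\,o_i\,L_{i+1}$ uses the $R$-end of $\delta_i$ and the $L$-end of $\delta_{i+1}$, and these lie on opposite sides of $\eta$. At each end of $u_j$, $\delta_j$ leaves through an arrow out of the common vertex, while $\eta$ continues through an arrow into it. So in the cell just entered, $\delta_j$ always turns to the same side of the direction of travel. Travelling along $\eta$, this puts every $R$-end on one side and every $L$-end on the other.

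A quick check: for $p=2$ with $L_1=R_2=\varnothing$, the sequence $\MM(\kappa_1)\rightarrowtail\MM(\delta_1)\oplus\MM(\delta_2)\twoheadrightarrow\MM(\eta)$ is an overlap extension. By \cref{prop:geom_ext}, $\kappa_1$ must then cross $\eta$, which a same-side arc cannot do. Also, $\delta_i$ and $\delta_{i+1}$ do not cross each other, so $\kappa_i$ is not a ``smoothing'' of a crossing between them.

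Beyond the side error, the translation from strings to curves is only asserted. The bijection of \cref{thm:GeomandRep} sends a string to the curve crossing the corresponding $\gpoint$-arcs in order. So you must check that the drawn curve crosses exactly the arcs of $R_i$, $o_i$, $L_{i+1}$, in that order.

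For $\kappa_p$, this is where dualizability enters. Its first crossed arc is the first arc of $R_p$, so it starts in the cell beyond the last arc of $u_p$, which is the cell where $\eta$ ends. That cell contains a single $\rpoint$-point, so $\kappa_p$ starts at the endpoint of $\eta$.

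Finally, there is nothing to ``verify'' about the $\kappa_i$ being accordions. The kernel summands are string modules, so their accordions exist. What must be shown is that they are the curves of \cref{fig:kerepi}.
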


\begin{prop}[\cite{DS251}] \label{prop:geom_ext}
Let $(\pmb{\Sigma}, \mathcal{M},\Delta^{\gpoint})$ be a dualizable $\gpoint$-dissected marked surface with $\mathcal{M}_{\gpoint} \subset \partial \pmb{\Sigma}$. Let $\delta, \eta \in \Accord$. We distinguish betweenn two types of extensions from $\MM(\delta)$ to $\MM(\eta)$ which are:

\begin{enumerate}[label = $\bullet$, itemsep =0.5em]
    \item \new{Overlap extensions} : whenever we have a nonsplit short exact sequence \[\begin{tikzcd}
	 \MM(\eta) & {E_1 \oplus E_2} & \MM(\delta),
	\arrow[tail, from=1-1, to=1-2]
	\arrow[two heads, from=1-2, to=1-3]
\end{tikzcd}\] where $E_1,E_2 \in \ind_\mathbb{K}(Q,R)$, then $\delta$ and $\eta$ are intersecting each other, and $\gamma_{(E_1)}$ and $\gamma_{(E_2)}$ can be obtained from $\delta$ and $\eta$ as pictured in \cref{fig:overlapextaccord}. By abuse of notations with strings of $(Q,R)$, we write $\OvExt(\delta,\eta)$ for the union of all sets $\{\gamma_{(E_1)}, \gamma_{(E_2)}\}$ over all the equivalence classes of short exact sequences of the above shape.
   \begin{figure}[!ht]
\centering 
    \begin{tikzpicture}[mydot/.style={
					circle,
					thick,
					fill=white,
					draw,
					outer sep=0.5pt,
					inner sep=1pt
				}, scale = 1]
		\tikzset{
		osq/.style={
        rectangle,
        thick,
        fill=white,
        append after command={
            node [
                fit=(\tikzlastnode),
                orange,
                line width=0.3mm,
                inner sep=-\pgflinewidth,
                cross out,
                draw
            ] {}}}}
		\draw [line width=0.7mm,domain=50:130] plot ({4*cos(\x)}, {1.5*sin(\x)});
        \draw [line width=0.7mm,domain=230:310] plot ({4*cos(\x)}, {1.5*sin(\x)});
		\foreach \X in {0}
        {
		\tkzDefPoint(4*cos(pi/6*\X +pi/3),1.5*sin(pi/6*\X + pi/3)){\X};
		};
        \foreach \X in {1}
		{
		\tkzDefPoint(4*cos(pi/6*\X +pi/2),1.5*sin(pi/6*\X + pi/2)){\X};
		};
        \foreach \X in {2}
        {
		\tkzDefPoint(4*cos(pi/6*(\X-2.1) +3*pi/2.2),1.5*sin(pi/6*(\X-2.1) + 3*pi/2.2)){\X};
		}
		\foreach \X in {3}
		{
		\tkzDefPoint(4*cos(pi/6*(\X-2.1) +3*pi/2),1.5*sin(pi/6*(\X-2.1) + 3*pi/2)){\X};
		};
        
        \tkzDefPoint(4*cos(pi/6*0 +pi/3.5),1.5*sin(pi/6*0 + pi/3.5)){e0};
        \tkzDefPoint(4*cos(pi/6*0 +pi/2.6),1.5*sin(pi/6*0 + pi/2.6)){e0'};
         \tkzDefPoint(4*cos(pi/6*1 +pi/1.85),1.5*sin(pi/6*1 + pi/1.85)){e1};
        \tkzDefPoint(4*cos(pi/6*1 +pi/2.2),1.5*sin(pi/6*1 + pi/2.2)){e1'};
        \tkzDefPoint(4*cos(pi/6*(2-2.1) +3*pi/2.28),1.5*sin(pi/6*(2-2.1) + 3*pi/2.28)){e2};
        \tkzDefPoint(4*cos(pi/6*(2-2.1) +3*pi/2.15),1.5*sin(pi/6*(2-2.1) + 3*pi/2.15)){e2'};
        \tkzDefPoint(4*cos(pi/6*(3-2.1) +3*pi/1.94),1.5*sin(pi/6*(3-2.1) + 3*pi/1.94)){e3};
        \tkzDefPoint(4*cos(pi/6*(3-2.1) +3*pi/2.05),1.5*sin(pi/6*(3-2.1) + 3*pi/2.05)){e3'};

        \filldraw [fill=gray,opacity=0.1] (e1') to (e2') to [bend left=14] (e3') to (e0') to [bend left=15] cycle;

        \draw[line width=0.5mm,dark-green](e0') edge (e3');
        \draw[line width=0.5mm,dark-green](e1') edge (e2');

        \draw[line width=0.5mm,dark-green, bend left=30](e0) edge (e0');
        \draw[line width=0.5mm,dark-green, bend left=30](e1') edge (e1);
        \draw[line width=0.5mm,dark-green, bend left=30](e2) edge (e2');
        \draw[line width=0.5mm,dark-green, bend left=30](e3') edge (e3);

		\draw[line width=0.7mm,bend right=5,orange, dotted](0) edge (2);
		
		\draw[line width=0.7mm,bend right=10,blue, loosely dotted](1) edge (3);
		
		\draw [line width=0.7mm, mypurple,dash pattern={on 10pt off 2pt on 5pt off 2pt}, bend right=20] (1) edge (0);
		\draw [line width=0.7mm, mypurple,dash pattern={on 10pt off 2pt on 5pt off 2pt}, bend right=20] (3) edge (2);
		\foreach \X in {0,...,3}
		{
		\tkzDrawPoints[circle,fill =red,size=4,color=red](\X);
		};

        \foreach \X in {e0,e0',e1,e1',e2,e2',e3,e3'}
		{
		\tkzDrawPoints[mydot,size=6,color=dark-green,thick,fill=white](\X);
		};
		
		\begin{scope}[xshift=3ex]
		\tkzDefPoint(-0.1,0.3){gammaM};
		\tkzLabelPoint[orange](gammaM){\Large $\delta$}
		\tkzDefPoint(-1.6,0){gammaP};
		\tkzLabelPoint[blue](gammaP){\Large $\eta$}
		\tkzDefPoint(-1.2,1.0){deltav};
		\tkzLabelPoint[mypurple](deltav){\Large $\gamma_{(E_1)}$}
		\tkzDefPoint(-0.6,-0.4){deltaw};
		\tkzLabelPoint[mypurple](deltaw){\Large $\gamma_{(E_2)}$}
		\end{scope}
    \end{tikzpicture}
\caption{\label{fig:overlapextaccord} Illustration of an overlap extension.}
\end{figure} 
    \item \new{Arrow extension} : 
    whenever we have a non-split short exact sequence  \[\begin{tikzcd}
	 \MM(\eta) & {E} & \MM(\delta) 
	\arrow[tail, from=1-1, to=1-2]
	\arrow[two heads, from=1-2, to=1-3]
\end{tikzcd},\] where $E \in \ind(Q,R)$, then we can construct $\gamma_{(E)}$ from $\delta$ and $\eta$ as described in \cref{fig:arrowextaccord}. Similarly to overlap extensions, by abuses of notations with strings of $(Q,R)$, we write $\ArExt(\delta,\eta)$ for the set of all the accordions $\gamma_{(E)}$ over all the equivalence classes of short exact sequences of the above shape.
 
    \begin{figure}[!ht]
\centering 
    \begin{tikzpicture}[mydot/.style={
					circle,
					thick,
					fill=white,
					draw,
					outer sep=0.5pt,
					inner sep=1pt
				}, scale = 1]
		\tikzset{
		osq/.style={
        rectangle,
        thick,
        fill=white,
        append after command={
            node [
                fit=(\tikzlastnode),
                orange,
                line width=0.3mm,
                inner sep=-\pgflinewidth,
                cross out,
                draw
            ] {}}}}
            
        \draw [line width=0.7mm,domain=-10:10] plot ({4*cos(\x)}, {1.5*sin(\x)});
		\draw [line width=0.7mm,domain=20:50] plot ({4*cos(\x)}, {1.5*sin(\x)});
		\draw [line width=0.7mm,domain=80:100] plot ({4*cos(\x)}, {1.5*sin(\x)});
		\draw [line width=0.7mm,domain=110:130] plot ({4*cos(\x)}, {1.5*sin(\x)});
		\draw [line width=0.7mm,domain=170:190] plot ({4*cos(\x)}, {1.5*sin(\x)});
        \draw [line width=0.7mm,domain=250:290] plot ({4*cos(\x)}, {1.5*sin(\x)});
		\foreach \X in {0,1}
		{
		\tkzDefPoint(4*cos(pi/2*(\X-0.3) +pi/3),1.5*sin(pi/2*(\X-0.3) + pi/3)){\X};
		};
		\foreach \X in {2,3}
		{
		\tkzDefPoint(4*cos(pi/6*(\X-3) +3*pi/2),1.5*sin(pi/6*(\X-3) + 3*pi/2)){\X};
		};
		
		\tkzDefPoint(4,0){4};
		\tkzDefPoint(0,1.5){5};
		\tkzDefPoint(-4,0){6};
		\tkzDefPoint(4*cos(3*pi/2-pi/11),1.5*sin(3*pi/2-pi/11)){7};
		\tkzDefPoint(4*cos(3*pi/2+pi/11),1.5*sin(3*pi/2+pi/11)){8};
		
		\filldraw[gray,opacity=0.1] (4) to [bend left=20] (5) to [bend left=20] (6) to [bend left=10] (7) to [bend right=10] (8) to [bend left=10] cycle;
		
		\draw[line width=0.5mm,bend left = 20,dark-green] (4) to (5) to (6);
		\draw[line width=0.5mm,bend left = 10,dark-green,dashed] (6) to (7);
		\draw[line width=0.5mm,bend left = 10,dark-green,dashed] (8) to (4);
		
		\draw[line width=0.7mm ,bend right=10,orange, dotted](0) edge (3);
		
		\draw[line width=0.7mm ,bend left=10,blue, loosely dotted](1) edge (3);
		
		\draw [line width=0.7mm, mypurple,dash pattern={on 10pt off 2pt on 5pt off 2pt}, bend left=20] (0) edge (1);

		\foreach \X in {0,1,3}
		{
		\tkzDrawPoints[circle,fill =red,size=4,color=red](\X);
		};
		
		\foreach \X in {4,...,8}
		{
		\tkzDrawPoints[mydot,size=6,color=dark-green,thick,fill=white](\X);
		};

		\tkzDefPoint(0.9,0.1){gammaM};
		\tkzLabelPoint[orange](gammaM){\Large $\delta$}
		\tkzDefPoint(-1.5,0.3){gammaP};
		\tkzLabelPoint[blue](gammaP){\Large $\eta$}
		\tkzDefPoint(-0.1,1.2){deltav};
		\tkzLabelPoint[mypurple](deltav){\Large $\gamma_{(E)}$}
    \end{tikzpicture}
\caption{\label{fig:arrowextaccord} Illustration of an arrow extension. The gray area corresponds to one cell of $\pmb{\Gamma}(\Delta^{\gpoint})$.}
\end{figure}
\end{enumerate}
\end{prop}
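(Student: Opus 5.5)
The plan is to reduce the statement to the known string-combinatorial description of $\Ext^1$ between string modules due to \c{C}anak\c{c}i--Pauksztello--Schroll \cite{CPS21} and Brüstle--Douville--Mousavand--Thomas--Y\i ld\i r\i m \cite{BDMTY19}. We then transport that description along the bijection of \cref{thm:GeomandRep} between strings of $(Q,R)$ and accordions on $(\pmb{\Sigma},\mathcal{M},\Delta^{\gpoint})$. Recall that in this dictionary the string of $\MM(\delta)$ is read off from the ordered sequence of $\gpoint$-arcs crossed by $\delta$, with one vertex of $Q$ for each arc. Consecutive crossings of $\delta$ inside a cell $C\in\pmb{\Gamma}(\Delta^{\gpoint})$ correspond to a letter (direct or inverse arrow), namely the arrow of $Q$ given by the angle between the two arcs of $C$. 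Since $\Delta^{\gpoint}$ is dualizable, each cell contains exactly one $\rpoint$-point. So the endpoints of accordions are determined by the cells in which they start and end, and this is what lets us glue or cut accordions at $\rpoint$-points.

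First step, overlap extensions. By \cite{CPS21,BDMTY19}, a nonsplit sequence $\MM(\eta)\rightarrowtail E_1\oplus E_2\twoheadrightarrow\MM(\delta)$ arises from a common substring $m$ of the strings $w_\eta$ and $w_\delta$, with $m$ a factor string of $w_\eta$ and a submodule string of $w_\delta$. The arrows bordering $m$ point in opposite directions on the two sides in the prescribed way. The middle terms have strings $w_\eta^{L} m\, w_\delta^{R}$ and $w_\delta^{L} m\, w_\eta^{R}$ (with the degenerate cases allowed where some part is empty). Geometrically, the common substring $m$ is exactly the shaded region of \cref{fig:overlapextaccord}: a run of consecutive cells where the segments of $\delta$ and $\eta$ are homotopic. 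The orientation conditions on the bordering arrows mean that, on leaving this region at either end, $\delta$ and $\eta$ exit on opposite sides of the last crossed arc. Hence they must cross exactly once inside the region, which gives the claimed intersection. Smoothing this crossing in the two possible ways produces two $\rpoint$-curves whose crossing sequences are precisely the two concatenated strings. Therefore these curves are $\gamma_{(E_1)}$ and $\gamma_{(E_2)}$, and in particular they are accordions.

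Second step, arrow extensions. Here the basis element comes from an arrow $a$ of $Q$ such that $w_\eta\, a\, w_\delta$ (or its inverse-orientation variant) is a string, and then $E$ is the string module of that concatenation. The arrow $a$ is an angle inside a single cell $C$ between the last arc crossed by $\eta$ and the first arc crossed by $\delta$. The endpoints of $\eta$ and $\delta$ on that side are forced to be the unique $\rpoint$-point of $C$. Concatenating the two curves at this common point and pushing the result off it gives the curve $\gamma_{(E)}$ of \cref{fig:arrowextaccord}. One must check that the result crosses exactly the sequence of arcs of $w_\eta a w_\delta$ and satisfies the accordion condition; this is local in $C$.

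The main obstacle is the bookkeeping of conventions rather than any deep argument. One must check that "factor string of $\eta$ and substring of $\delta$" really corresponds to the orientation of the crossing drawn in \cref{fig:overlapextaccord}, namely that the extension goes from $\MM(\delta)$ to $\MM(\eta)$ and not the reverse. One must also handle degenerate cases: an empty common substring, where the "crossing" happens at a shared cell and should be recognised as an arrow extension instead, and overlaps reaching an endpoint of one accordion, where one $E_i$ is zero. I would first fix orientation conventions using the Hom-crossing picture of \cref{prop:HomCrossing}, since the overlap case is essentially the "reverse" of that configuration. I would then verify each degenerate case separately on the local cell picture.
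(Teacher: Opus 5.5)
The paper does not prove this proposition; it quotes it from \cite{DS251}, where it rests on the string description of extensions in \cite{CPS21,BDMTY19} transported to accordions. Your route through the dictionary of \cref{thm:GeomandRep} is therefore the natural one, and most of your local analysis is sound:
\begin{itemize}
\item the overlap $m$ is a factor string of $w_\eta$ and a submodule string of $w_\delta$ for a sequence $\MM(\eta)\rightarrowtail E\twoheadrightarrow\MM(\delta)$;
\item the relative position of the two curves is reversed between the two ends of the overlap, which forces exactly one crossing inside it. Phrase this as a reversal rather than ``opposite sides of the last crossed arc'': both curves leave into the same cell but turn at different $\gpoint$-endpoints of that arc;
\item for arrow extensions, the angle $a$ must lie in the cell beyond the last arc crossed by $\eta$ and the first arc crossed by $\delta$, so both endpoints are its unique $\rpoint$-point.
\end{itemize}

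The step that does not hold in the generality of the statement is your opening move in each case: identifying an arbitrary nonsplit sequence of the given shape with a basis sequence. \cite{CPS21,BDMTY19} show that overlap and arrow extensions form a \emph{basis} of $\Ext^1(\MM(\delta),\MM(\eta))$. They do not say that every nonsplit sequence with one or two indecomposable middle summands is equivalent to one of these basis sequences, and this fails once $\Ext^1$ has dimension at least two. For the Kronecker quiver, whose surface is an annulus with a dualizable dissection and $\mathcal{M}_{\gpoint}\subset\partial\pmb{\Sigma}$, $\Ext^1(S_1,S_2)$ is two-dimensional. A generic class there has an indecomposable band module as middle term, which is not $\MM$ of any accordion. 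So you must either read the proposition for basis classes only (which is how $\OvExt$ and $\ArExt$ are used afterwards), or add a dimension argument. For gentle trees, where the paper applies it, the latter takes one line. Hom-spaces between indecomposables are at most one-dimensional, so $\Ext^1(\MM(\delta),\MM(\eta))\cong D\overline{\Hom}(\MM(\eta),\tau\MM(\delta))$ is at most one-dimensional. Hence every nonsplit sequence is a nonzero multiple of the unique basis class, and its middle term is the one predicted by the string combinatorics.

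Three smaller corrections.
\begin{itemize}
\item Each of the two resolutions of the crossing produces a \emph{pair} of curves, and only one of them yields $\{\gamma_{(E_1)},\gamma_{(E_2)}\}$. It is the one that enters the overlap region along one accordion and leaves it along the other; in \cref{fig:overlapextaccord} it joins the two upper endpoints and the two lower endpoints. The other resolution backtracks out of the region. Choosing the correct one is exactly the orientation check you postpone.
\item There is no degenerate overlap with some $E_i=0$, since both middle strings contain $m$. The only degenerate situation, where both strings stop on the same side of $m$, gives the split sequence and is excluded.
\item State explicitly that, because the cells are simply connected, a curve is determined up to homotopy by its endpoints and its reduced sequence of crossed arcs. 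This is what identifies the smoothed or concatenated curve with $\gamma_{(E)}$.
\end{itemize}
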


\subsection{Neighboring projective accordions}
\label{ss:neighboringproj} We will use the geometric model to compute resolving subcategories and consider minimal projective resolutions.
Let $(Q,R)$ be a gentle quiver with finite global dimension. This is equivalent to $\mathcal{M} \subset \partial \pmb{\Sigma}$. The $\rpoint$-dissection $\Prj(\Delta^{\gpoint})$ given by the projective accordions is homeomorphic to $\Delta^{\gpoint}$. To simplify the computations for resolving subcategories, we choose to work with $(\pmb{\Sigma}, \mathcal{M}, \Prj(\Delta^{\gpoint}))$ rather than $\Surf(Q,R)$. 

As $\mathcal{M}_{\rpoint} \subset \partial \pmb{\Sigma}$, for any $v \in \mathcal{M}_{\rpoint}$, write $\Accord_{[e]}$ for the set of accordions  admitting $e$ as an endpoint. Later on, for any $\mathfrak{B} \subseteq \Accord$, we set $\mathfrak{B}_{[e]} = \Accord_{[e]} \cap \mathfrak{B}$. We can consider a total order $\preccurlyeq_v$ on $\Accord_{[v]}$ as follows. For $(\delta,\eta) \in \Accord_{[v]}^2$, we write $\eta \preccurlyeq_v \delta$ whenever $\eta$ follows $\delta$ with respect to the counter-clockwise orientation around $v$.

Given a non-projective accordion $\delta$, we also defined the \new{neighboring projective accordions} of $\delta$ to be the projective accordions $\rho$ such that either $\MM(\rho)$ appears as a summand of a projective representation in the minimal projective resolution of $\MM(\delta)$, or there exists $i>0$ such that  $\Ext^i(\MM(\delta),\MM(\rho))\neq0$. Denote by $\NP(\delta)$ the set of neighboring projective accordions of $\delta$. We can characterize them combinatorially.

\begin{prop}[\cite{DS251}]\label{prop:CombidescripNproj}
Let $(Q,R)$ be a representation-finite gentle quiver. Let $\delta$ be an accordion of $\Surf(Q,R)$. Then $\NP(\delta)$ is the subset of $\Prj(\Delta^{\gpoint})$ made of all accordions $\eta$ satisfying at least one of the following conditions:
\begin{enumerate}[label=$(\roman*)$,itemsep=0.2em]
    \item the curve $\eta$ crosses $\delta$: they correspond to projective accordions such that $\OvExt(\delta,\eta) \neq \varnothing$;
    \item the curve $\eta$ is part of the border of a cell crossed by $\delta$ containing one of its endpoints and is a part of the path of projective curves that links the last projective crossed to the endpoint of $\delta$: they correspond to projective accordions such that $\MM(\eta)$ is a summand of a projective representation in the minimal projective resolution of $\MM(\delta)$; or,
    \item the curve $\eta$ and $\delta$ have a common endpoint $v$, and, at the vertex $v$, all the projective accordions are smaller than $\delta$ with respect to $\preccurlyeq_v$: they are projective accordions such that either $\ArExt(\delta,\eta) \neq \varnothing$, or $\Ext^i(\MM(\delta), \MM(\eta)) \neq 0$, for some $i > 1$.
\end{enumerate}
\end{prop}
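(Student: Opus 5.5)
The strategy is to split $\NP(\delta)$ according to its definition into two families: the projective accordions $\rho$ with $\MM(\rho)$ a summand of some term $P_j$ in the minimal projective resolution $\cdots \to P_1 \to P_0 \to \MM(\delta)$, and those with $\Ext^i(\MM(\delta),\MM(\rho)) \neq 0$ for some $i>0$. For each family we use the geometry of the $\rpoint$-dissection $\Prj(\Delta^{\gpoint})$, which is homeomorphic to $\Delta^{\gpoint}$. We then match the union against conditions $(i)$--$(iii)$. Throughout, $\mathcal{M} \subset \partial\pmb{\Sigma}$ because the global dimension is finite.

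First we compute the resolution geometrically.
\begin{enumerate}[label=$\bullet$, itemsep=1mm]
\item The projective cover of $\MM(\delta)$ is read off from the cells of $\Prj(\Delta^{\gpoint})$ containing the endpoints of $\delta$. By \cref{prop:HomCrossing}, each summand corresponds to a crossing of the type in \cref{fig:HomCrossing}. These crossings are realised by the projective accordions on the border of the end cell, between the last projective crossed by $\delta$ and the endpoint of $\delta$.
\item \cref{prop:geo_mor} then describes $\Omega\MM(\delta)$ as a sum of accordions $\kappa_i$. The inner ones $\kappa_i$ with $0<i<p$ are themselves projective sides of the cell. The outer ones $\kappa_0$ and $\kappa_p$ are obtained by rotating the endpoint of $\delta$ along that cell.
\item Iterating, every higher syzygy stays inside the end cells of $\delta$. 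It consists of projective border arcs together with accordions sharing the endpoint $v$ of $\delta$. The process terminates because $\mathcal{M}\subset\partial\pmb{\Sigma}$.
\end{enumerate}
This shows that the projectives occurring in the resolution are exactly those in $(ii)$.

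Next we treat $\Ext^1$. Since $\MM(\rho)$ is indecomposable projective, any non-split extension $\MM(\rho)\rightarrowtail E\twoheadrightarrow\MM(\delta)$ is either an overlap or an arrow extension, by \cref{prop:geom_ext}.
\begin{enumerate}[label=$\bullet$, itemsep=1mm]
\item For an overlap extension, $\rho$ and $\delta$ must cross. Conversely, a crossing of $\rho$ and $\delta$ in their relative interiors gives the picture of \cref{fig:overlapextaccord} with $\eta=\rho$. The reason is that a projective accordion cannot bound the shaded region from the "wrong" side of a crossing. This gives $(i)$.
\item For an arrow extension, \cref{fig:arrowextaccord} forces $\rho$ and $\delta$ to share an endpoint $v$, with $\rho$ following $\delta$ counter-clockwise around $v$ inside one cell. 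Comparing with $\preccurlyeq_v$ gives the ordering condition in $(iii)$.
\end{enumerate}

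For the higher groups we use dimension shifting: $\Ext^i(\MM(\delta),\MM(\rho)) \cong \Ext^1(\Omega^{i-1}\MM(\delta),\MM(\rho))$. By the first step, the non-projective summands of $\Omega^{i-1}\MM(\delta)$ are accordions sharing the endpoint $v$ with $\delta$. They are obtained by rotating $\delta$ counter-clockwise at $v$ and do not cross new projectives. Hence only arrow extensions at $v$ can contribute. Tracking the order $\preccurlyeq_v$ along these rotations shows that the resulting projectives are exactly those at $v$ lying beyond $\delta$, which completes $(iii)$.

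The main obstacle is this last bookkeeping. We must show that each syzygy step moves the endpoint by exactly one projective side at $v$, and that no projective crossing appears in $\Omega^{i-1}\MM(\delta)$, so that no spurious overlap extensions arise. The plan is to handle it by induction on $i$, using the explicit shape of $\kappa_0$ and $\kappa_p$ in \cref{fig:kerepi}.
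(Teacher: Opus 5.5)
The paper only quotes this proposition from \cite{DS251}, so there is no proof here to compare with. Judged on its own, your architecture is sensible: resolution terms versus $\Ext$-terms, syzygies via \cref{prop:geo_mor}, $\Ext^1$ via \cref{prop:geom_ext}, and dimension shifting for $i\geqslant 2$. However, the step you flag as the main obstacle cannot be completed as you formulate it, because its target is false.

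You read (iii) as the individual condition $\eta\prec_v\delta$, and you aim to show that the projectives with higher extensions are `exactly those at $v$ lying beyond $\delta$'. Take the gentle tree $1\xrightarrow{a}2\xrightarrow{b}3$, $2\xrightarrow{c}4$ with $R=\{ba\}$, and let $M=\MM(\delta)$ be the string module $2\to4$. The accordions of $P_1$, $M$, $P_2$, $P_4$ all end at the $\rpoint$-point $v$ of the $\gpoint$-cell on the far side of the arc of $4$, and they cross that arc in this order. So $\delta$ sits strictly between $\gamma_{(P_1)}$ and $\gamma_{(P_2)}$ at $v$. Yet the minimal resolution is $0\to P_3\to P_2\to M\to0$, and $\Ext^{>0}(M,P_1)=0=\Ext^{>0}(M,P_4)$. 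Hence, for either orientation of $\preccurlyeq_v$, one of $\gamma_{(P_1)}$, $\gamma_{(P_4)}$ precedes $\delta$ without lying in $\NP(\delta)$.

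The order around $v$ does not see extensions; the $\gpoint$-cell containing $v$ does, through $\wt_v$. Projective accordions are pairwise $\Ext$-orthogonal, hence of pairwise weight $0$ by \cref{prop:Extorthoaccord}. By the additivity of \cref{lem:counterclockandwt}, an accordion lying between two of them at $v$ then has weight $0$ with all of them. So nonzero weight forces $\delta$ to lie beyond every projective accordion at $v$ at once, which is what `all the projective accordions are smaller than $\delta$' in (iii) encodes. Your $\Ext^1$ paragraph already drops this: in \cref{fig:arrowextaccord} the two curves are separated near their common endpoint by a $\gpoint$-cell, which is a weight condition, not an order condition.

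Second, even with the correct reading, `tracking the rotation' does not reach all of (iii). The end syzygies pivot at $v$ inside the end cell $C$ of $\Prj(\Delta^{\gpoint})$, with their free endpoint moving along the projective border path of $C$. They pass no projective accordion at $v$, and the only one they produce is the side of $C$ at $v$. Projective accordions at $v$ beyond that side occur neither in the resolution nor as syzygies. For example, take $1\xrightarrow{a}2\xrightarrow{b}3$, $4\to3$ with $R=\{ba\}$. The resolution is $0\to P_3\to P_2\to P_1\to S_1\to0$, yet $\Ext^2(S_1,P_4)\neq0$ via the arrow extension $P_4\rightarrowtail M(2\to3\leftarrow4)\twoheadrightarrow S_2$. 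Here $\gamma_{(P_4)}$ sits at the pivot of $\gamma_{(S_1)},\gamma_{(S_2)},\gamma_{(P_3)}$, beyond $\gamma_{(P_3)}$ and outside their cell.

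You therefore need an argument that the last non-projective syzygy $\kappa$ has an arrow extension with every projective $\rho$ at $v$. Two routes would work:
\begin{itemize}
\item combine $\wt_v(\rho,\rho^\star)=0$ for the terminal projective syzygy $\rho^\star$ with additivity and \cref{prop:Extigeom};
\item or show directly that the corresponding nonzero map $\MM(\rho^\star)\to\MM(\rho)$ does not factor through the projective cover of $\kappa$.
\end{itemize}

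Finally, your first step overstates what the end cells give. The projective cover contains $P(x)$ for every peak $x$ of the string, and $\Omega\MM(\delta)$ contains $P(z)$ for every valley $z$; these are your inner $\kappa_i$. For peaks and valleys in the interior of the string, these accordions cross $\delta$ rather than bounding an end cell, so the resolution terms are not `exactly (ii)'. The union survives only once you show these terms satisfy (i):
\begin{itemize}
\item for valleys, $\Omega\MM(\delta)\hookrightarrow P_0$ is radical, so $\Ext^1(\MM(\delta),P(z))\neq0$, via an overlap along $z$;
\item for interior peaks, exhibit the overlap extension along the segment between the two adjacent valleys.
\end{itemize}
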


We also recall a result that allows one to characterize $\rpoint$-arcs that are accordions of $(\pmb{\Sigma}, \mathcal{M}, \Delta^{\gpoint})$ directly via the projective dissection $\Prj(\Delta^{\gpoint})$.

Let us introduce a notation. Given $C \in \pmb{\Gamma}(\Prj(\Delta^{\gpoint}))$ a cell of the projective dissection, we can order $\rpoint$-points in $\partial C$ following the boundary of C in counter-clockwise
order from the unique $\gpoint \in \partial C$. We denote by $m_C$ the last $\rpoint$-point in $ \partial C$ under this ordering. 

\begin{prop}[\cite{DS251}] \label{prop:arcsaccordions} Let $(\pmb{\Sigma}, \mathcal{M}, \Delta^{\gpoint})$ be a $\gpoint$-dissected marked surface such that $\mathcal{M} \subset \partial \pmb{\Sigma}$. A $\rpoint$-arc $\delta$ is an accordion if and only if either $\delta \in \Prj(\Delta^{\gpoint})$ or the following assertions hold: 
\begin{enumerate}[label=$(\alph*)$, itemsep=1mm]
    \item whenever $\delta$ enters a cell $C$ of $\Prj(\Delta^{\gpoint})$ by crossing an $\rpoint$-arc $\mu$,
    \begin{enumerate}[label=$(a \arabic*)$, itemsep=1mm]
        \item if it leaves $C$, it leaves it by crossing an $\rpoint$-arc $\nu$ adjacent to $\mu$;
        \item otherwise, its endpoint in $\partial C$ is either $m_C$ or the non-common endpoint of an $\rpoint$-arc adjacent to $\mu$, or 
    \end{enumerate}
    \item if $\delta$ is contained in a cell $C$, then:
    \begin{enumerate}[label=$(b \arabic*)$, itemsep=1mm]
        \item its endpoints are the distinct endpoints of a pair of adjacent projective accordions; or,
        \item one of its endpoints must be $m_C$. 
    \end{enumerate}
\end{enumerate}
\end{prop}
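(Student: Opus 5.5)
The plan is to prove the two implications by a \emph{local} comparison between the two dissections. Recall (see \cite{BCS21,PPP182}) that, with respect to $\Delta^{\gpoint}$, a $\rpoint$-arc $\delta$ is an accordion exactly when the following holds. Each time $\delta$ passes through a cell of $\Delta^{\gpoint}$, it enters and leaves by two $\gpoint$-arcs that are consecutive sides of that cell. Moreover, when $\delta$ starts or ends in a cell, it does so at the $\rpoint$-point of that cell, the cell being dualizable. I will take this as the definition and translate it into the language of $\Prj(\Delta^{\gpoint})$.

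\emph{Step 1: describe $\Delta^{\gpoint}$ inside a cell of $\Prj(\Delta^{\gpoint})$.} The projective accordion $\rho_i$ of a vertex $i$ crosses the $\gpoint$-arc $i$ and follows the maximal non-zero paths starting at $i$. I will use this to prove two facts:
\begin{enumerate}[label=$\bullet$, itemsep=1mm]
    \item each cell $C$ of $\Prj(\Delta^{\gpoint})$ contains exactly one $\gpoint$-point $g_C$;
    \item the pieces of $\gpoint$-arcs lying in $C$ form a fan based at $g_C$, each piece leaving $C$ through one side $\mu$ of $C$, in the cyclic order of the sides of $\partial C$.
\end{enumerate}
In this picture, the last $\rpoint$-point $m_C$ should be the unique $\rpoint$-point of $\partial C$ lying in the same cell of $\Delta^{\gpoint}$ as the first sector of the fan, read counter-clockwise from $g_C$. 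I expect this identification to be the main obstacle. It requires a careful check of orientation conventions: the counter-clockwise order and the relation between arrows and the angles at $g_C$. I would first verify it on a single large cell, namely the linear $A_n$ case with all relations, and then argue that the general picture is obtained by gluing such cells.

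\emph{Step 2: translate the crossing condition.} Suppose $\delta$ enters $C$ through a side $\mu$ and leaves it through a side $\nu$. Using the fan picture, $\delta$ crosses the consecutive $\gpoint$-arcs of the fan lying between $\mu$ and $\nu$. The accordion condition forces every pair of successive $\gpoint$-arcs crossed by $\delta$ to be adjacent in a cell of $\Delta^{\gpoint}$. I will show that this happens exactly when $\mu$ and $\nu$ are adjacent sides of $C$, which gives (a1). Conversely, I will show that a non-adjacent pair $(\mu,\nu)$ produces two crossed $\gpoint$-arcs of the fan that are not consecutive around a common cell.

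\emph{Step 3: translate the endpoint condition.} Suppose $\delta$ enters $C$ through $\mu$ and ends at a point $v \in \partial C$. I will show that the final segment satisfies the accordion endpoint rule exactly in two cases. In the first, $v$ is the $\rpoint$-point of the $\Delta^{\gpoint}$-cell reached; this is $m_C$ or the far endpoint of a side adjacent to $\mu$. In the second, $\delta$ is homotopic to a side and then lies in $\Prj(\Delta^{\gpoint})$. This gives (a2). The same argument applied to both ends of an arc contained in $C$ gives (b1) and (b2), where the projective case is excluded separately.

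Both directions follow from these local equivalences, since the accordion property is a conjunction of local conditions along $\delta$.
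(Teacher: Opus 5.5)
The paper does not prove this statement; it is quoted from \cite{DS251}. I therefore judge your plan on its own. Translating the cell-by-cell accordion condition for $\Delta^{\gpoint}$ into conditions on the cells of $\Prj(\Delta^{\gpoint})$ is a reasonable strategy. However, the local picture of Step~1, on which Steps~2 and~3 rest, is false: the pieces of $\gpoint$-arcs inside a cell $C$ of $\Prj(\Delta^{\gpoint})$ do not form a fan at $g_C$ with one piece per side.

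To see this, label the marked points of \cref{fig:ExsurfTilt} by $0,\dots,9$ counterclockwise from the rightmost one. Then the $\gpoint$-points are the even ones, and $\rho_1=[1,3]$, $\rho_2=[3,7]$, $\rho_3=[3,5]$, $\rho_4=[7,9]$. The supports of the projectives force $\Delta^{\gpoint}=\{[0,2],[2,6],[2,4],[6,8]\}$, for the vertices $1,2,3,4$. The cell $E$ of $\Prj(\Delta^{\gpoint})$ containing $0$ has sides $\rho_1,\rho_2,\rho_4$. Only $[0,2]$ ends at $0$. The arcs $[2,6]$ and $[2,4]$ cross $E$ through the two sides meeting at the corner $3$, and $[6,8]$ crosses it through the two sides meeting at $7$.

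In general, all $\gpoint$-arcs at $g_C$ leave $C$ through the same (first) side, and every other piece cuts off a corner of $C$. So the list of $\gpoint$-arcs that $\delta$ meets between $\mu$ and $\nu$ in Step~2 is not the one you describe. The equivalences of Steps~2 and~3 are therefore not established.

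What is missing is a global comparison of the two dissections. $\Prj(\Delta^{\gpoint})$ is obtained from $\Delta^{\gpoint}$ by sliding both endpoints of each $\gpoint$-arc to the next $\rpoint$-point counterclockwise. This holds because the string of $P_i$ is $i$ together with the $\gpoint$-arcs that follow $i$ around its endpoints. It yields a bijection $F\mapsto C$ between cells. Suppose $F$ has sides $d_i=[g_{i-1},g_i]$ for $1\le i\le k$, and its $\rpoint$-point $r$ lies between $g_k$ and $g_0$. Then $C$ has vertices $r_0,\dots,r_k$, where $r_i$ is the $\rpoint$-point right after $g_i$. Moreover $g_C=g_0$ and $m_C=r_k=r$, which disposes of your ``main obstacle''.

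A $\rpoint$-point in the open boundary arc $(r_{i-1},r_i)$ lies strictly between $g_{i-1}$ and $g_i$. So you can read off directly which sides of $F$ an arc crosses:
\begin{itemize}
\item Crossing $[r_{i-1},r_i]$ and $[r_{j-1},r_j]$ means crossing exactly $d_i$ and $d_j$ among the sides of $F$. This gives (a1).
\item Entering through $[r_{i-1},r_i]$ and ending at $r_j$ means crossing $d_i$ and $d_{j+1}$ if $i<j<k$, or $d_{j+1}$ and $d_i$ if $j\le i-2$. So only $j\in\{k,i-2\}$ is allowed.
\item The arc $[r_i,r_j]$ with $j\ge i+2$ crosses $d_{i+1}$ and, if $j<k$, also $d_{j+1}$. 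So only $j=k$ is allowed.
\end{itemize}
Conversely, every passage through $F$ appears in $C$ in one of these ways, which gives sufficiency.

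This does \emph{not} reproduce the statement verbatim. Clause (b1) is spurious, and in (a2), for $\mu=[r_{i-1},r_i]$, only $m_C$ and the free endpoint of the side adjacent to $\mu$ at $r_{i-1}$ are allowed. Indeed, in the example, $[1,7]$ and $[3,9]$ both lie in $E$ and each joins the free endpoints of two adjacent sides. The other eight $\rpoint$-arcs are the four projective ones and the four of \cref{fig:Exsurf4}. So (b1) would make all ten $\rpoint$-arcs accordions, whereas there are only $9$ indecomposables. Here $m_E=9$, and $[1,7]$ crosses only $[0,2]$ and $[6,8]$.

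Your Step~3 is supposed to produce (b1) and the symmetric (a2), so it cannot work as planned. The correct target is (a1), the one-sided (a2), and (b2).
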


\subsection{Tilting dissections}
\label{ss:rigandtiltdissec} In this section, we recall the main results from \cite{chang2025}. It allows us to describe subsets of $\Accord$  that correspond to rigid or tilting objects in $\rep(Q,R)$ via the geometric model.

We recall that a gentle quiver $(Q,R)$ is said to be \emph{representation-finite} whenever $\#\ind(Q,R) < \infty$. Given a collection $\mathfrak{U} \subset \ind(Q,R)$, we set:
\begin{enumerate}[label=$\bullet$, itemsep=1mm]
    \item $\mathfrak{U}^{\perp_{>0}}  = \ind\left( X \mid \forall i > 0,\ \forall M  \in \mathfrak{U},\ \Ext^i(M,X) = 0 \right)$; and,
    \item ${}^{\perp_{>0}}\mathfrak{U} = \ind\left( X \mid \forall i > 0,\ \forall M  \in \mathfrak{U},\ \Ext^i(X,M)=0 \right)$.
\end{enumerate}
An indecomposable representation $N \in \ind(Q,R)$ is said to be \new{$\Ext$-orthogonal} to $\mathfrak{U}$ whenever $N \in  \mathfrak{U}^{\perp_{>0}} \cap {}^{\perp_{>0}}\mathfrak{U}$. In the following $M,N \in \rep(Q,R)$, we say that $M$ and $N$ are $\Ext$-orthogonal, and we write $M \Extperp N$, whenever $M$ is $\Ext$-orthogonal to $\add(N)$, or, equivalently, whenever $N$ is $\Ext$-orthogonal to $\add(M)$. We extend those notations to accordions, and a subset of accordions on $\Surf(Q,R) = (\pmb{\Sigma}, \mathcal{M}, \Delta^{\gpoint})$.

\begin{definition}[\cite{HAHK07,HR82}] \label{def:rigidtilting}
Let $(Q,R)$ be a representation-finite gentle quiver with finite global dimension. A collection $\mathfrak{T} \subseteq \ind(Q,R)$ is said to be \new{rigid} whenever, for all $(T_1,T_2) \in \mathfrak{T}^2$, we have $T_1 \Extperp T_2$. A \new{tilting} collection $\mathfrak{T}$ is a rigid collection of $\#Q_0$ nonisomorphic indecomposable representations. We denote by $\Tilt(Q,R)$ the set of tilting collections in $\ind(Q,R)$.
\end{definition}

\begin{remark} \label{rem:rigicolancrigidobj} If a collection $\mathfrak{T}$ is rigid, then $\oplus_{T \in \mathfrak{T}} T \in \rep(Q,R)$ is rigid is the common sense. Note that we do not need to ask for finite projective dimension, as $(Q,R)$ is assumed to have finite global dimension. An equivalent statement holds for tilting collections. 
\end{remark}

For any $\mathfrak{B} \subseteq \ind(Q,R)$, we denote by $\bbDelta_{(\mathfrak{B})}$ to be the set of accordions $\gamma_{(M)}$ for $M \in \mathfrak{B}$. We extend this notation to any subcategory $\mathscr{C} \subseteq \rep(Q,R)$, by taking the indecomposable objects that generate $\mathscr{C}$. For any object $M \in \rep(Q,R)$, we simply write $\bbDelta_{(M)}$ for $\bbDelta_{(\add(M))}$.

\begin{definition}
\label{def:tiltdissec}
    Let $(Q,R)$ be a representation-finite gentle quiver with finite global dimension. Set $\Surf(Q,R) = (\pmb{\Sigma}, \mathcal{M}, \Delta^{\gpoint})$. A subset $\mathfrak{D} \subset \Accord$ is \new{rigid} if there exists a rigid collection $\mathfrak{T}$ such that $\mathfrak{D} = \bbDelta_{(\mathfrak{T})}$. We say that $\mathfrak{D}$ is \new{tilting} if we can choose a tilting collection $\mathfrak{T}$.
\end{definition}

Because of the overlap extensions obtained when accordions cross, it follows that if $\mathfrak{D}$ is rigid, then $\mathfrak{D}$ must be a $\rpoint$-dissection. By \cite{APS19}, we can indeed prove that $\mathfrak{D}$ must be an admissible $\rpoint$-dissection. Moreover, if $\mathfrak{D}$ is tilting, then it must be a dualizable $\rpoint$-dissection. However, the converse is false. We must take into account additional parameters, called \emph{weights}, defined as follows.

\begin{definition}
    \label{def:weightingcorner}
    Let $e \in \mathcal{M}_{\rpoint}$. Denote by $C \in \pmb{\Gamma}(\Delta^{\gpoint})$ the unique $\gpoint$-cell such that $e \in \partial C$. For any $\delta,\eta \in \Accord_{[e]}$, we set $\wt_e(\delta,\eta)$ to be the number of points in $\mathcal{M}_{\gpoint}$ appearing in the boundary of the cell whose delimited by $\partial C$, $\delta$, and $\eta$. We call it the \new{weight} of $(\delta,\eta)$ relatively to $e$.
\end{definition}

In \cref{fig:exwtcalc}, we draw a general example of two accordions $\delta,\eta \in \Accord$ sharing a common endpoint $e \in \mathcal{M}_{\rpoint}$  such that $\wt_e(\delta,\eta) = 3$.

\begin{figure}[!ht]
\centering 
    \begin{tikzpicture}[mydot/.style={
					circle,
					thick,
					fill=white,
					draw,
					outer sep=0.5pt,
					inner sep=1pt
				}, scale = 1]
		\tikzset{
		osq/.style={
        rectangle,
        thick,
        fill=white,
        append after command={
            node [
                fit=(\tikzlastnode),
                orange,
                line width=0.3mm,
                inner sep=-\pgflinewidth,
                cross out,
                draw
            ] {}}}}
            
        \draw [line width=0.7mm,domain=-10:10] plot ({4*cos(\x)}, {1.5*sin(\x)});
		\draw [line width=0.7mm,domain=25:40] plot ({4*cos(\x)}, {1.5*sin(\x)});
        \draw [line width=0.7mm,domain=50:63] plot ({4*cos(\x)}, {1.5*sin(\x)});
        \draw [line width=0.7mm,domain=70:80] plot ({4*cos(\x)}, {1.5*sin(\x)});
		\draw [line width=0.7mm,domain=85:95] plot ({4*cos(\x)}, {1.5*sin(\x)});
		\draw [line width=0.7mm,domain=120:130] plot ({4*cos(\x)}, {1.5*sin(\x)});
		\draw [line width=0.7mm,domain=170:190] plot ({4*cos(\x)}, {1.5*sin(\x)});
        \draw [line width=0.7mm,domain=250:290] plot ({4*cos(\x)}, {1.5*sin(\x)});
		\foreach \X in {0,1}
		{
		\tkzDefPoint(4*cos(pi/2*(\X-0.3) +pi/3),1.5*sin(pi/2*(\X-0.3) + pi/3)){\X};
		};
		\foreach \X in {2,3}
		{
		\tkzDefPoint(4*cos(pi/6*(\X-3) +3*pi/2),1.5*sin(pi/6*(\X-3) + 3*pi/2)){\X};
		};
		
		\tkzDefPoint(4,0){4};
		\tkzDefPoint(0,1.5){5};
		\tkzDefPoint(-4,0){6};
		\tkzDefPoint(4*cos(3*pi/2-pi/11),1.5*sin(3*pi/2-pi/11)){7};
		\tkzDefPoint(4*cos(3*pi/2+pi/11),1.5*sin(3*pi/2+pi/11)){8};
        \tkzDefPoint(4*cos(pi/2-pi/11),1.5*sin(pi/2-pi/11)){9};
		\tkzDefPoint(4*cos(pi/2-2*pi/11),1.5*sin(pi/2-2*pi/11)){10};
		
		\filldraw[gray,opacity=0.1] (4) to [bend left=20] (10) to [bend left=20] (9) to [bend left=20] (5) to [bend left=20] (6) to [bend left=10] (7)  to [bend right=10] (8) to [bend left=10] cycle;

		\draw[line width=0.5mm,bend left = 20,dark-green] (4) to (10) to (9) to (5) to (6);
		\draw[line width=0.5mm,bend left = 10,dark-green,dashed] (6) to (7);
		\draw[line width=0.5mm,bend left = 10,dark-green,dashed] (8) to (4);
		
		\draw[line width=0.7mm ,bend right=10,orange, dotted](0) edge (3);
		
		\draw[line width=0.7mm ,bend left=10,blue, loosely dotted](1) edge (3);

        \filldraw [pattern=dots, pattern color=black,line width=0mm] (10) to [bend left=20] (9) to [bend left=20] (5) to [bend left=10] (-1.39,0.57) to [bend left=10] (3) to [bend left=10] (2.8,.6) to [bend left = 10] cycle;

		\foreach \X in {0,1,3}
		{
		\tkzDrawPoints[circle,fill =red,size=4,color=red](\X);
		};
		
		\foreach \X in {4,...,10}
		{
		\tkzDrawPoints[mydot,size=6,color=dark-green,thick,fill=white](\X);
		};

		\tkzDefPoint(2,0.1){gammaM};
		\tkzLabelPoint[orange](gammaM){\Large $\delta$}
		\tkzDefPoint(-1.5,0.3){gammaP};
		\tkzLabelPoint[blue](gammaP){\Large $\eta$}
		\tkzDefPoint(-0.7,-.7){deltav};
		\tkzLabelPoint[dark-green](deltav){\Large $C$}
        \tkzDefPoint(0,-1.6){deltav};
		\tkzLabelPoint[red](deltav){\Large $e$}
    \end{tikzpicture}
    \caption{\label{fig:exwtcalc} In this example, we have $\wt_e(\delta,\eta) = 3$.}
\end{figure}

\begin{lemma}
    \label{lem:counterclockandwt}
    For any $e \in \mathcal{M}_{\rpoint}$, and for any $(\delta_1,\delta_2, \delta_3) \in \Accord_e^3$, if $\delta_1 \preccurlyeq_e \delta_2 \preccurlyeq_e \delta_3$, we have $\wt_e(\delta_1,\delta_2) + \wt_e(\delta_2,\delta_3) = \wt_e(\delta_1,\delta_3)$.
\end{lemma}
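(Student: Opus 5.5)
The idea is that the weight is additive because the regions it counts tile. Fix $e \in \mathcal{M}_{\rpoint}$ and let $C \in \pmb{\Gamma}(\Delta^{\gpoint})$ be the unique $\gpoint$-cell with $e \in \partial C$. Every accordion in $\Accord_{[e]}$ leaves $e$ into the interior of $C$. Near $e$, the accordions $\delta_1,\delta_2,\delta_3$ therefore appear in the counter-clockwise order given by $\preccurlyeq_e$ within the angular sector of $C$ at $e$.

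For $\delta \preccurlyeq_e \eta$, write $D_e(\delta,\eta)$ for the region bounded by $\delta$, $\eta$ and $\partial C$. Its boundary consists of an initial segment of $\delta$, an initial segment of $\eta$, and the arc of $\partial C$ between the points where $\delta$ and $\eta$ first leave $C$. By \cref{def:weightingcorner}, $\wt_e(\delta,\eta)$ is the number of $\gpoint$-points on this boundary arc.

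\begin{enumerate}[label=$(\arabic*)$, itemsep=1mm]
\item Each $\delta_i$, as an accordion, either ends inside $C$ or leaves $C$ by crossing a $\gpoint$-arc of $\partial C$. In both cases it determines a well-defined exit position on the boundary polygon of $C$.
\item Accordions in $\Accord_{[e]}$ pairwise meet only at $e$ inside $C$, up to homotopy, since their segments within $C$ are straight chords of the polygon from $e$. So the counter-clockwise order of the exit positions on $\partial C$, read starting from $e$, agrees with $\preccurlyeq_e$.
\item Because $\delta_1 \preccurlyeq_e \delta_2 \preccurlyeq_e \delta_3$, the chord $\delta_2 \cap C$ lies inside $D_e(\delta_1,\delta_3)$. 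It cuts this region into $D_e(\delta_1,\delta_2)$ and $D_e(\delta_2,\delta_3)$.
\item Consequently the boundary arc of $\partial C$ from the exit of $\delta_1$ to the exit of $\delta_3$ is the concatenation of the arc from $\delta_1$ to $\delta_2$ and the arc from $\delta_2$ to $\delta_3$. Counting $\gpoint$-points on both sides gives $\wt_e(\delta_1,\delta_2) + \wt_e(\delta_2,\delta_3) = \wt_e(\delta_1,\delta_3)$.
\end{enumerate}

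The main obstacle is step (4): we must check that no $\gpoint$-point is double-counted or missed at the exit position of $\delta_2$. If $\delta_2$ exits through the interior of a $\gpoint$-arc, nothing is lost. If $\delta_2$ ends at an $\rpoint$-point of $\partial C$, that point is not green, since colours alternate and accordions avoid $\gpoint$-points. In either case the split point carries no $\gpoint$-point, so the two counts add exactly.

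A minor convention is also needed: when two of the $\delta_i$ coincide, the weight of the degenerate region is $0$. This makes the equality hold trivially in that case.
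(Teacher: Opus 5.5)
Your argument is correct and is the reasoning the paper leaves implicit: the lemma is stated there without proof, as an immediate consequence of \cref{def:weightingcorner}. The chord of $\delta_2$ in $C$ splits the region cut out by $\delta_1$ and $\delta_3$, and its exit point lies in the interior of a $\gpoint$-arc, so the $\gpoint$-points on the boundary arc are partitioned between the two subregions. One small remark is that your case of an accordion ending inside $C$ is vacuous: $\Delta^{\gpoint}$ is dualizable, so $e$ is the unique $\rpoint$-point of $C$ and every accordion of $\Accord_{[e]}$ must leave $C$ through a $\gpoint$-arc.
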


\begin{remark}
    \label{rem:wtnotdist}
    The map $\wt_e$ is not a distance on $\Accord_e$ in general.
\end{remark}

This weight encodes data for higher extensions between the associated indecomposable representations. 

\begin{prop}[\cite{chang2025}]
\label{prop:Extigeom}
Let $e \in \mathcal{M}_{\rpoint}$, and $\delta,\eta \in \Accord_{[e]}$. Assume that $\delta \prec_e \eta$. If $\wt_e(\delta,\eta) = i \neq 0$, then $\Ext^i(\MM(\delta), \MM(\eta)) \neq 0$ or $\Ext^i(\MM(\delta), \MM(\eta)) \neq 0$. Moreover, if $\wt_e(\delta,\eta) = 1$, then $\ArExt(\delta,\eta) \neq \varnothing$.
\end{prop}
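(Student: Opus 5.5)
The plan is to argue by induction on $i = \wt_e(\delta,\eta)$, using the dimension-shift isomorphism $\Ext^i(X,N) \cong \Ext^1(\Omega^{i-1}X, N)$ for $i \geq 2$. Since the roles of $\delta$ and $\eta$ in the conclusion are symmetric up to the orientation at $e$, I will first fix the module to be resolved. This is the one whose accordion comes first counter-clockwise around $e$ relative to the cell $C \in \pmb{\Gamma}(\Delta^{\gpoint})$ containing $e$; the other case is obtained by the dual argument, using injective coresolutions or the opposite orientation. I read the two alternatives in the conclusion as $\Ext^i(\MM(\delta),\MM(\eta))$ and $\Ext^i(\MM(\eta),\MM(\delta))$.

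\textbf{Base case $i=1$.} The region bounded by $\partial C$, $\delta$ and $\eta$ contains exactly one point of $\mathcal{M}_{\gpoint}$. Hence $\delta$ and $\eta$ leave $e$ and separate from each other across a single $\gpoint$-arc, which is exactly the local configuration of \cref{fig:arrowextaccord}. I construct the candidate $\gamma_{(E)}$ by concatenating $\delta$ and $\eta$ around that $\gpoint$-point and smoothing at $e$. I then check with \cref{prop:arcsaccordions} that this $\rpoint$-arc is an accordion: conditions $(a1)$ and $(a2)$ hold cell by cell because they already hold for $\delta$ and $\eta$, and the only new cell traversed is the one through the weight point. The string $E$ is the concatenation of the strings of the two modules through the arrow corresponding to that point. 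This gives a non-split short exact sequence $\MM(\eta) \rightarrowtail E \twoheadrightarrow \MM(\delta)$, so $\ArExt(\delta,\eta) \neq \varnothing$ and $\Ext^1 \neq 0$, which also proves the final assertion.

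\textbf{Inductive step.} Suppose $i \geq 2$. Take the projective cover of $\MM(\delta)$ and compute its kernel with \cref{prop:geo_mor}. The summand $\kappa_0$ (or $\kappa_p$) at the $e$-end is obtained by rotating the endpoint of $\delta$ at $e$ past the next projective accordion around $e$. The key claim is that this summand $\delta'$ still lies in $\Accord_{[e]}$, keeps the same relative position with respect to $\eta$ in $\preccurlyeq_e$, and satisfies $\wt_e(\delta',\eta) = i-1$. Equivalently, by \cref{lem:counterclockandwt}, $\wt_e(\delta,\delta') = 1$. This should follow from the description of neighbouring projectives in \cref{prop:CombidescripNproj}$(ii)$--$(iii)$: the projective accordions at $e$ are exactly those separating consecutive $\gpoint$-points of $C$.

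Given the claim, $\Ext^i(\MM(\delta),\MM(\eta)) \cong \Ext^{1}(\Omega^{i-1}\MM(\delta),\MM(\eta))$. The summand at $e$ of $\Omega^{i-1}\MM(\delta)$ is an accordion $\delta^{(i-1)}$ with $\wt_e(\delta^{(i-1)},\eta)=1$. The base case then gives a non-split extension, and since $\Ext^1$ is additive over direct sums, it is non-zero.

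\textbf{Main obstacle.} The delicate step is the weight-drop claim. The rotated endpoint must stay at $e$ rather than the syzygy summand detaching from $e$, and exactly one $\gpoint$-point must be consumed at each step, even when $\delta$ becomes projective partway through, which would stop the resolution early. I would first handle the case where every accordion strictly between $\delta$ and $\eta$ at $e$ is projective, and reduce the general case to it via \cref{lem:counterclockandwt}. If the resolution stops early, that is exactly the situation where the extension should be sought in the other direction, $\Ext^i(\MM(\eta),\MM(\delta))$, which is why the statement allows both alternatives.
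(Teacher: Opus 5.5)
\textbf{Verdict: there is a genuine gap.} The paper does not prove this statement; it quotes it from the literature. Your overall scheme is sound: dimension shifting $\Ext^i(X,N)\cong\Ext^1(\Omega^{i-1}X,N)$ reduces everything to the weight-one case, which you settle by an arrow extension. The weight-drop claim in your inductive step is also true. But that claim is the entire content of the proposition, and you leave it unproved.

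\textbf{Your proposed justification is false.} The indecomposable projectives are pairwise $\Ext$-orthogonal, so by \cref{prop:Extorthoaccord} any two projective accordions ending at $e$ have weight $0$. In other words, they all leave $C$ through the same $\gpoint$-arc of $\partial C$. Consequently:
\begin{itemize}
\item they do not separate consecutive $\gpoint$-points of $C$;
\item the summand of $\Omega\MM(\delta)$ at $e$ is not obtained by rotating $\delta$ past a projective accordion at $e$. In \cref{fig:kerepi}, $\kappa_p$ keeps the endpoint of the accordion being resolved and takes the far endpoint of the projective $\delta_p$ crossing it.
\end{itemize}

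\textbf{Your fallbacks also fail.} Additivity of weights (\cref{lem:counterclockandwt}) cannot be turned into nonvanishing of $\Ext^i$, since Yoneda products of nonzero classes can vanish. And the second alternative in the statement does not arise from a resolution stopping early. If you resolve the wrong one of the two modules, its summand at $e$ moves away from the other accordion and the weight increases.

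\textbf{How to close the gap: a local string computation in $C$.}
\begin{enumerate}
\item The sides of $C$ other than the boundary edge through $e$ are $\gpoint$-arcs $x_1,\dots,x_m$, with $x_j$ and $x_{j+1}$ meeting at a $\gpoint$-point $c_j$.
\item The angles of $C$ give a chain of arrows $x_1\to x_2\to\cdots\to x_m$ (up to the global orientation convention). All its length-two composites lie in $R$, whereas composites of arrows turning around a common $\gpoint$-point are nonzero.
\item An accordion in $\Accord_{[e]}$ is determined near $e$ by the first $x_j$ it crosses, and $\wt_e$ is the difference of these indices.
\item Suppose the string of $\MM(\delta)$ ends at $x_j$, with $j<m$ and that end lying in $C$. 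Whether or not $x_j$ is a peak at that end, the indecomposable projective of the cover covering this end has a string continuing past $x_j$ along the maximal path $x_j\to x_{j+1}\to\cdots$ turning around $c_j$.
\item Hence $\Omega\MM(\delta)$ has a summand given by that path starting at $x_{j+1}$. This is exactly the accordion at $e$ first crossing $x_{j+1}$. It is nonprojective when $j+1<m$, because $P_{x_{j+1}}$ also contains the arrow $x_{j+1}\to x_{j+2}$.
\item With $\eta$ at index $k\le m$, this runs for $k-j-1$ steps without ever reaching a projective. Your concatenation through $x_{k-1}\to x_k$ then finishes; check that it is a string at both junctions.
\end{enumerate}

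The same computation shows which module must be resolved. It also fixes the direction of the nonzero $\Ext^i$ and of $\ArExt$, both dictated by the orientation of this chain relative to $\prec_e$. You need to check that against the statement rather than treat $\delta$ and $\eta$ symmetrically.
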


We can therefore characterize combinatorially pairs of accordions that are $\Ext$-orthogonal.

\begin{prop}[\cite{chang2025}]
\label{prop:Extorthoaccord}
    Let $\delta,\eta \in \Accord$. Then $\delta \Extperp \eta$ if and only if the following assertions hold:
    \begin{enumerate}[label=$(\roman*)$, itemsep=1mm]
        \item $\delta$ and $\eta$ do not cross; and,
        \item for any common endpoint $e$, we have $\wt_e(\delta,\eta) = 0$.
    \end{enumerate}
\end{prop}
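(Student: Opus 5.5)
The plan is to prove the two implications separately. The forward direction comes straight from \cref{prop:geom_ext} and \cref{prop:Extigeom}. The converse is reduced, by dimension shifting, to a geometric analysis of syzygies using \cref{prop:geo_mor}.

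\emph{Necessity.} Assume $\delta \Extperp \eta$.
\begin{enumerate}[label=$\bullet$, itemsep=1mm]
\item Suppose $\delta$ and $\eta$ cross in their relative interiors. By the string-combinatorial description of \cite{CPS21,BDMTY19}, as recalled in \cref{prop:geom_ext}, the crossing yields a non-split overlap extension in one of the two directions. The two middle terms are obtained by resolving the crossing as in \cref{fig:overlapextaccord}. Hence $\Ext^1(\MM(\delta),\MM(\eta)) \neq 0$ or $\Ext^1(\MM(\eta),\MM(\delta)) \neq 0$, a contradiction.
\item Suppose instead that $\delta$ and $\eta$ share an endpoint $e$ with $\wt_e(\delta,\eta)=i\neq 0$. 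After relabelling so that $\delta \prec_e \eta$, \cref{prop:Extigeom} gives a non-vanishing $\Ext^i$ in one direction, again a contradiction.
\end{enumerate}

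\emph{Sufficiency.} Assume there are no crossings and all weights at common endpoints vanish. I would first treat $\Ext^1$. A basis of $\Ext^1(\MM(\delta),\MM(\eta))$ consists of overlap and arrow extensions.
\begin{enumerate}[label=$\bullet$, itemsep=1mm]
\item Overlap extensions require a crossing, by \cref{prop:geom_ext}, so there are none.
\item Arrow extensions require the configuration of \cref{fig:arrowextaccord}. There the two accordions share an endpoint $e$, and the region they cut out of the $\gpoint$-cell contains exactly one $\gpoint$-point. So $\wt_e(\delta,\eta)=1$, which is excluded.
\end{enumerate}

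For $i>1$, I would write $\Ext^i(\MM(\delta),\MM(\eta)) \cong \Ext^1(\Omega^{i-1}\MM(\delta),\MM(\eta))$ and describe the syzygies geometrically through \cref{prop:geo_mor}. The minimal projective cover of $\MM(\delta)$ uses the projectives of type $(ii)$ in \cref{prop:CombidescripNproj}. The kernel accordions $\kappa_j$ are then obtained from $\delta$ by sliding its endpoints counter-clockwise along the adjacent projective accordions. In the process, every accordion not crossing $\delta$ keeps not crossing the syzygy, except near the rotated endpoint.

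Iterating, each summand of $\Omega^{i-1}\MM(\delta)$ is an accordion whose endpoint has rotated past $i-1$ $\gpoint$-points of the same $\gpoint$-cell. Applying the $\Ext^1$ analysis to it, a non-vanishing $\Ext^i$ therefore forces one of two situations: a crossing between $\delta$ and $\eta$, or a common endpoint $e$ with $\wt_e$ between the rotated syzygy and $\eta$ equal to $1$. In the latter case, \cref{lem:counterclockandwt} adds up the weights to give $\wt_e(\delta,\eta)=i$. Both situations are excluded by hypothesis. By symmetry of the hypotheses the same argument handles $\Ext^i(\MM(\eta),\MM(\delta))$.

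The main obstacle is this last step: controlling the positions of all syzygy accordions relative to $\eta$. One must ensure that no new crossings appear when passing from $\delta$ to $\Omega\MM(\delta)$. This needs a careful case analysis of the shapes of $\kappa_0$, $\kappa_p$ and the intermediate $\kappa_j$ in \cref{fig:kerepi}, together with the accordion criterion of \cref{prop:arcsaccordions}. If that case analysis becomes unwieldy, I would fall back on the graded surface model of \cite{OPS18} for the bounded derived category. There $\Ext^i$ between modules is computed by boundary intersections whose degree equals exactly the weight $\wt_e$, and interior intersections contribute only in degree $\leq 1$; this is the route taken in \cite{chang2025}.
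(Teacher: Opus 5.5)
Your necessity argument and your treatment of $\Ext^1$ are fine. One caveat: \cref{prop:geom_ext} as stated only says that an overlap extension forces a crossing, so the converse you use in your first bullet has to be imported from \cite{BCS21,CPS21}.

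\textbf{The gap is the step for $\Ext^i$ with $i\geqslant 2$.} You conclude that a non-zero $\Ext^1(\Omega^{i-1}\MM(\delta),\MM(\eta))$ forces either a crossing of $\delta$ with $\eta$ or a shared endpoint of weight $1$. But taking syzygies does create new crossings, and the crossing criterion of \cref{prop:geom_ext} does not tell you in which direction the resulting overlap extension goes. A concrete case from the example of \cref{ss:examplePart2}:
\begin{enumerate}[label=$\bullet$, itemsep=1mm]
\item Take $\MM(\mu)=S_1$ and $\MM(\eta)$ the module with top $1$ and socle $2$. Using the resolutions $0\to P_4\to P_2\to P_1\to S_1\to 0$ and $0\to P_3\to P_1\to\MM(\eta)\to 0$, one checks that $\Ext^i(S_1,\MM(\eta))=0=\Ext^i(\MM(\eta),S_1)$ for all $i>0$. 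So $(\mu,\eta)$ satisfies $(i)$ and $(ii)$.
\item However, $\Omega S_1=\MM(\nu)$ is the module with top $2$ and socle $3$. There is a non-split sequence $0\to\MM(\nu)\to P_1\oplus S_2\to\MM(\eta)\to 0$, so $\nu$ crosses $\eta$.
\item This crossing sits at the endpoint of $\mu$ that the syzygy moves. That is exactly where your sketch only anticipates a shared endpoint of weight $1$.
\item Here the extension happens to go from $\MM(\eta)$ to $\MM(\nu)$, so $\Ext^2(S_1,\MM(\eta))=\Ext^1(\MM(\nu),\MM(\eta))=0$. Nothing in your argument distinguishes this from the harmful direction.
\end{enumerate}
There is a second problem. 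The intermediate kernel summands $\kappa_i$ of \cref{fig:kerepi}, coming from two consecutive projective summands of the cover, cross the accordion of the module being resolved; they do not share a rotated endpoint with it. So the weight additivity of \cref{lem:counterclockandwt} does not control them either.

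What is missing is orientation data. For every crossing between $\eta$ and a summand of a syzygy of $\MM(\delta)$, you must determine the direction of the corresponding extension. You must then show that all crossings produced by taking syzygies yield extensions from $\MM(\eta)$ to the syzygy, never the reverse. This information is exactly what the gradings of \cite{OPS18} encode, and that is the route of \cite{chang2025}. The present paper does not prove the statement at all; it quotes it from there. Your fallback names this route but does not carry it out, so as written the sufficiency direction for higher extensions is not established.
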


We can now provide a characterization of rigid and tilting subsets of accordions.

\begin{theorem}[\cite{chang2025}]
\label{thm:tiltgeom}
    Let $\mathfrak{D} \subseteq \Accord$. Then:
    \begin{enumerate}[label=$(\roman*)$, itemsep=1mm]
        \item $\mathfrak{D}$ is rigid if and only if $\mathfrak{D}$ is an admissible $\rpoint$-dissection of $(\pmb{\Sigma}, \mathcal{M})$ such that, for any $e \in \mathcal{M}_{\gpoint}$, and $(\delta, \eta) \in \mathfrak{D}_{[e]}$, we have $\wt_e(\delta,\eta) = 0$; and, 
        \item $\mathfrak{D}$ is tilting if and only if $\mathfrak{D}$ is rigid and dualizable.
    \end{enumerate}
\end{theorem}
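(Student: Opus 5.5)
The plan is to reduce both assertions to the pairwise criterion of \cref{prop:Extorthoaccord}, together with a counting argument based on the Euler characteristic for the tilting part. Throughout, the weight condition should be read at the common $\rpoint$-endpoints of the accordions, so $e \in \mathcal{M}_{\rpoint}$; the accordions are $\rpoint$-arcs, so this is what the statement intends.

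For $(i)$, we unfold \cref{def:rigidtilting}: $\mathfrak{D}$ is rigid if and only if $\delta \Extperp \eta$ for all $\delta,\eta \in \mathfrak{D}$, the case $\delta = \eta$ included.

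\emph{Distinct arcs.} By \cref{prop:Extorthoaccord}, this is equivalent to two conditions. First, the arcs of $\mathfrak{D}$ pairwise do not cross. Second, $\wt_e(\delta,\eta)=0$ at each common endpoint $e$.

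\emph{Self-orthogonality.} Here we argue with \cref{prop:geom_ext}. A self-intersection of $\delta$ would produce an element of $\OvExt(\delta,\delta)$, so $\Ext^1(\MM(\delta),\MM(\delta))\neq 0$. Hence each $\delta\in\mathfrak{D}$ is simple. If both endpoints of $\delta$ coincide at some $e$, the same weight condition must hold for the pair of ends of $\delta$ at $e$, by \cref{prop:Extigeom}.

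\emph{Admissibility.} It remains to see that a family of pairwise non-crossing simple $\rpoint$-arcs satisfying the weight condition cuts the surface into regions, each containing a $\gpoint$-point. For this I would invoke \cite{APS19}. The direct argument goes by contradiction: suppose a region $P$ contains no point of $\mathcal{M}_{\gpoint}$. Walk around $\partial P$. At each corner $e$ of $P$, the two consecutive arcs $\delta,\eta$ of $\mathfrak{D}$ bound, together with the $\gpoint$-cell $C$ containing $e$, an area without green points, so the weight relation of \cref{lem:counterclockandwt} constrains the corners. Composing around the polygon, \cref{prop:Extigeom} then forces some pair of its sides to have nonzero weight or to cross; otherwise $P$ is a disc without $\gpoint$-points, which contradicts the accordion condition of \cref{prop:arcsaccordions}.

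\emph{Converse.} This is immediate, since admissibility is not needed for the pairwise conditions.

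For $(ii)$, a rigid $\mathfrak{D}$ is tilting if and only if $\#\mathfrak{D}=\#Q_0$. Let $\mathfrak{D}$ be an admissible $\rpoint$-dissection with $n$ arcs and $c$ cells. Using the cell decomposition of $\pmb{\Sigma}$ given by the marked points, the boundary segments and the arcs, Euler's formula gives
\[ \chi(\pmb{\Sigma}) = \#\mathcal{M} - (\#\mathcal{M} + n) + c, \qquad \text{so } n = c - \chi(\pmb{\Sigma}). \]
Admissibility gives $c \le \#\mathcal{M}_{\gpoint}$, with equality exactly when $\mathfrak{D}$ is dualizable. Applying the same formula to the dualizable dissection $\Prj(\Delta^{\gpoint})$, which has $\#Q_0$ arcs, gives $\#Q_0 = \#\mathcal{M}_{\gpoint} - \chi(\pmb{\Sigma})$. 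Hence $n = \#Q_0$ if and only if $\mathfrak{D}$ is dualizable, and $(ii)$ follows from $(i)$.

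The main obstacle is the admissibility step in $(i)$. The pairwise conditions are local, whereas admissibility is a global statement about the regions cut out by the arcs. I would first try to pass through \cite{APS19}, identifying rigid families of accordions with non-crossing collections in the dual picture. Only if that fails would I make the boundary-walk argument precise.
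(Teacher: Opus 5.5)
Your proposal is correct and follows essentially the route the paper itself indicates for this cited result. The pairwise conditions come from \cref{prop:Extorthoaccord}, admissibility of a rigid family is delegated to \cite{APS19} exactly as in the paper, and your Euler-characteristic count ($n=c-\chi(\pmb{\Sigma})$, $c\leqslant\#\mathcal{M}_{\gpoint}$ with equality iff dualizable, $\#Q_0=\#\mathcal{M}_{\gpoint}-\chi(\pmb{\Sigma})$) correctly fills in the step ``tilting $\Leftrightarrow$ rigid and dualizable'' that the paper only asserts. Your fallback boundary-walk argument, however, is only a sketch, and the lemmas you cite do not by themselves yield it. The computation that makes it work is that the $\gpoint$-arcs crossing an internal $k$-gon of accordions, combined with the accordion condition and the dualizability of $\Delta^{\gpoint}$, force the weights at its $k$ corners to sum to $k-2>0$.
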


\subsection{Gentle trees}
\label{ss:gentletrees}

In what follows, we restrict our study to \emph{gentle trees}. A \new{gentle tree} is a gentle quiver $(Q,R)$ such that $Q$ is an oriented tree. In such case, by setting $\Surf(Q,R) = (\pmb{\Sigma}, \mathcal{M}, \Delta^{\gpoint})$, we have that $\mathcal{M} \subset \partial \pmb{\Sigma}$, and $\pmb{\Sigma}$ is homeomorphic to a disc. In addition, we characterize indecomposable representations via their vertex support, which is equivalent to characterizing accordions on $\Surf(Q,R)$ by the $\gpoint$-arcs of $\Delta^{\gpoint}$ they cross. Furthermore, the space of homomorphisms between indecomposable representations is at most one-dimensional, which means that two arbitrary accordions can cross at most once up to homotopy.

\begin{prop}[\cite{BDMTY19,CPS21}]
Let $(Q,R)$ be a gentle tree. Up to isomorphism, all the extensions between indecomposable representations are either overlap extensions or arrow extensions.
\end{prop}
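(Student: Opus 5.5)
The statement is the last proposition before the paper continues: for a gentle tree $(Q,R)$, every extension between indecomposable representations is an overlap extension or an arrow extension. Equivalently: if $\MM(\eta)\rightarrowtail E \twoheadrightarrow \MM(\delta)$ is non-split, then $E$ has one or two indecomposable summands, as in \cref{prop:geom_ext}.

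I would work at the level of a basis of $\Ext^1(\MM(\delta),\MM(\eta))$. The results of \cite{BDMTY19,CPS21} give, for any gentle algebra, a basis of $\Ext^1$ between string modules. It consists of arrow extensions, overlap extensions, and (for non-tree quivers) extensions with band middle terms or with more complicated middle terms coming from repeated overlaps. So the plan is to show that, for a tree, the basis consists only of arrow and overlap extensions, and that every non-split class has the middle term of one of these two shapes.

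The first step is to use the tree hypothesis. Since $Q$ is a tree, there are no bands, so every indecomposable is a string module. Moreover, two strings can share at most one maximal common substring up to inversion: this is the fact recalled above that $\Hom$ between indecomposables is at most one-dimensional, and that two accordions cross at most once.

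Consequently:
\begin{enumerate}[label=$\bullet$, itemsep=1mm]
\item the overlap-extension basis elements between $\MM(\delta)$ and $\MM(\eta)$ correspond to crossings of $\delta$ and $\eta$, so there is at most one;
\item the arrow-extension basis elements correspond to an arrow joining an end of one string to an end of the other, and connectedness of the tree forbids two such arrows together with an overlap.
\end{enumerate}

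Hence $\dim \Ext^1(\MM(\delta),\MM(\eta)) \le 1$. I would prove this by counting crossings and common endpoints of accordions in the disc model, using \cref{prop:Extigeom} and \cref{prop:Extorthoaccord}. Once the space is at most one-dimensional, every non-split sequence is a scalar multiple of the unique basis element. It therefore has the same middle term as that element, which is of the form described in \cref{prop:geom_ext}.

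The main obstacle is excluding the case where both an arrow extension and an overlap extension exist for the same pair. I would attack it geometrically. An arrow extension forces $\delta$ and $\eta$ to share an endpoint inside a single cell with weight $1$. An overlap forces them to cross in the interior. In a disc, two $\rpoint$-arcs that share an endpoint and also cross would bound a bigon, contradicting minimal position up to homotopy.
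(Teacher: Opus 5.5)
Your proposal is correct. The paper gives no argument beyond citing \cite{BDMTY19,CPS21}, whose content is that arrow and overlap extensions form a \emph{basis} of $\Ext^1$ between string modules over a gentle algebra. You use the same input, but you make explicit a step that the citation leaves implicit.

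That step is genuinely needed. A basis statement alone does not control the middle term of an arbitrary non-split extension, because a linear combination of basis extensions can have a different middle term. For example, over the Kronecker quiver the generic combinations of the two arrow extensions of $S_1$ by $S_2$ have band modules as middle terms. Your bound $\dim\Ext^1(\MM(\delta),\MM(\eta))\le 1$ for a gentle tree, followed by rescaling, closes exactly this gap.

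Your combinatorial proof of the bound works. There is at most one crossing; at most one common endpoint, since two distinct arcs in the disc cannot share both endpoints; and the half-bigon argument rules out a crossing together with a common endpoint. Two remarks on it:
\begin{enumerate}[label=$\bullet$, itemsep=1mm]
\item In the string language the relevant property of the tree is acyclicity, not connectedness. An arrow extension requires the concatenated walk to be a string, which in a tree forces the two supports to be disjoint and so excludes an overlap. Two distinct connecting arrows would close a cycle.
\item There is a much shorter proof via the Auslander--Reiten formula $\Ext^1(M,N)\cong D\overline{\Hom}(N,\tau M)$. Here $\tau M$ is indecomposable or zero, and $\Hom$ between indecomposables of a gentle tree is at most one-dimensional, as recalled in \cref{ss:gentletrees}.
\end{enumerate}

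Finally, your opening reformulation (``$E$ has one or two indecomposable summands'') is weaker than the statement, not equivalent to it. What you actually go on to prove, however, is the full statement.
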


        \section{Resolving subcategories}
        \label{sec:ResSubcats}
        \pagestyle{plain}

In this section, we recall some results, notions, and notations introduced in \cite{DS251,DS252}.

\subsection{Generalities}
\label{ss:generalities}

Let $\mathscr{C}$ be a Krull--Schmidt abelian category with enough projective objects.

\begin{conv} \label{conv:add}
A subcategory is said to be additively closed if it is closed under direct sums and summands.
\end{conv}

\begin{definition} \label{def:resolv}
A full subcategory $\mathscr{R} \subseteq \mathscr{C}$ is called \new{resolving} if it satisfies the following conditions: 
\begin{enumerate}[label=$(\mathsf{R \arabic*})$, itemsep=1mm]
\setcounter{enumi}{-1}
\item \label{R0} $\mathscr{R}$ is additively closed
\item \label{R1} $\mathscr{R}$ generates $\mathscr{C}$ : every object of $\mathscr{C}$ admits a cover in $\mathscr{R}$,
\item \label{R2} $\mathscr{R}$ is closed under extensions, and,
\item \label{R3} $\mathscr{R}$ is closed under taking kernels of epimorphisms.
\end{enumerate}
\end{definition}

For any $i \geqslant 1$, and for any $M \in \mathscr{C}$, we denote by $\Omega^i(M)$ the \new{$i$th syzygy} of $M$. If $i = 1$, we denote it by $\Omega(M)$. 

\begin{lemma} \label{lem:othercharactresolv}
Let $\mathscr{R} \subseteq \mathscr{C}$ be a full additive subcategory. We have the following equivalences:
\begin{enumerate}[label=$(\roman*)$, itemsep=1mm]
\item \label{ires} $\mathscr{R}$ satisfies \ref{R1} if, and only if, $\proj(\mathscr{C}) \subseteq \mathscr{R}$;
\item $\mathscr{R}$ is resolving if, and only if,  $\mathscr{R}$ satisfies \ref{R0}, \ref{R1} \ref{R2}, and $\mathscr{R}$ is closed under taking syzygies.
\end{enumerate}
\end{lemma}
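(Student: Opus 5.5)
The plan is to argue directly from the axioms \ref{R0}--\ref{R3}, reading \ref{R1} as: every object $M \in \mathscr{C}$ admits an epimorphism $X \twoheadrightarrow M$ with $X \in \mathscr{R}$. The two facts about $\mathscr{C}$ that I will use are that it has enough projective objects and that epimorphisms onto projective objects split. The pullback argument in (ii) is the only step that needs any care.

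For \ref{ires}, suppose first that $\proj(\mathscr{C}) \subseteq \mathscr{R}$. Since $\mathscr{C}$ has enough projectives, each $M$ admits an epimorphism $P \twoheadrightarrow M$ with $P$ projective, so $P \in \mathscr{R}$ and this is a cover in $\mathscr{R}$. Conversely, suppose \ref{R1} holds and let $P$ be projective. Choose an epimorphism $X \twoheadrightarrow P$ with $X \in \mathscr{R}$. It splits because $P$ is projective, so $P$ is a direct summand of $X$, and additive closure gives $P \in \mathscr{R}$. This direction uses that $\mathscr{R}$ is closed under summands, which is part of \ref{R0} as in \cref{conv:add}; I will point out that this hypothesis is implicitly needed here.

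For (ii), assume first that $\mathscr{R}$ is resolving, and let $M \in \mathscr{R}$. Take the projective cover $P \twoheadrightarrow M$, so $\Omega(M)$ is its kernel. By \ref{ires}, $P \in \mathscr{R}$, so this is an epimorphism between objects of $\mathscr{R}$, and \ref{R3} gives $\Omega(M) \in \mathscr{R}$. Iterating gives $\Omega^i(M) \in \mathscr{R}$ for all $i \geqslant 1$.

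For the converse, assume \ref{R0}, \ref{R1}, \ref{R2} and closure under syzygies, and let $f \colon X \twoheadrightarrow Y$ be an epimorphism with $X, Y \in \mathscr{R}$ and $K = \Ker(f)$. Let $p \colon P \twoheadrightarrow Y$ be a projective cover, with kernel $\Omega(Y)$, and form the pullback $E$ of $f$ and $p$. Standard properties of pullbacks of epimorphisms give two short exact sequences:
\[0 \to \Omega(Y) \to E \to X \to 0 \quad\text{and}\quad 0 \to K \to E \to P \to 0.\]
In the first sequence both ends lie in $\mathscr{R}$ (using closure under syzygies), so $E \in \mathscr{R}$ by \ref{R2}. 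The second sequence splits because $P$ is projective, so $E \cong K \oplus P$. Hence $K$ is a summand of an object of $\mathscr{R}$, and \ref{R0} gives $K \in \mathscr{R}$, which is \ref{R3}.

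One minor point is that $\Omega$ is only defined up to projective summands if a non-minimal projective presentation is used. This does not matter: two choices differ by projective summands, which lie in $\mathscr{R}$ by \ref{ires}, so additive closure makes the condition independent of the choice.
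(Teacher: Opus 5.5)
The paper states this lemma without proof and treats it as standard, so there is no argument in the text to compare against. Your proof is the standard one and it is correct.

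Two of your interpretive choices are in fact necessary for the statement to hold:
\begin{enumerate}[label=$\bullet$, itemsep=1mm]
\item The ``only if'' part of \ref{ires} needs closure under direct summands, so ``additive'' must be read in the sense of \cref{conv:add}. Without it, the free modules over a non-local algebra satisfy \ref{R1} but miss the indecomposable projectives.
\item \ref{R1} must be read, as you do, as ``every object is a quotient of an object of $\mathscr{R}$''. Reading it as the existence of (minimal) approximations would require finiteness hypotheses that the lemma does not impose.
\end{enumerate}
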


Let us prove a technical proposition that will be used later, in \cref{ss:projcompl}. For the moment, the result below highlights a particular family of resolving subcategories.

\begin{prop}
\label{prop:projorthgen}
    Let $\mathfrak{P}$ be a collection of indecomposable projective objects in $\mathscr{C}$. Then the subcategory $\mathscr{E}_\mathfrak{P}$ additively generated by the objects that are Ext-orthogonal to $\mathfrak{P}$ is resolving.
\end{prop}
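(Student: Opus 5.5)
The plan is to identify $\mathscr{E}_\mathfrak{P}$ with a left perpendicular category and then check the axioms of \cref{def:resolv} with long exact sequences. First I will simplify the orthogonality condition. For $P \in \mathfrak{P}$ projective, $\Ext^i(P,X) = 0$ for every $X$ and every $i > 0$. Hence an indecomposable $X$ is $\Ext$-orthogonal to $\mathfrak{P}$ exactly when $\Ext^i(X,P) = 0$ for all $i>0$ and all $P \in \mathfrak{P}$.

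Let $\mathscr{E}'$ be the full subcategory of all objects $X \in \mathscr{C}$ (not necessarily indecomposable) with $\Ext^i(X,P)=0$ for all $i>0$ and $P\in\mathfrak{P}$. I claim that $\mathscr{E}_\mathfrak{P} = \mathscr{E}'$.
\begin{enumerate}[label=$\bullet$, itemsep=1mm]
\item Since $\Ext^i(-,P)$ is additive, $\Ext^i(X\oplus Y,P)\cong \Ext^i(X,P)\oplus\Ext^i(Y,P)$. So $\mathscr{E}'$ is closed under finite direct sums and direct summands.
\item Because $\mathscr{C}$ is Krull--Schmidt, every object of $\mathscr{E}'$ decomposes into indecomposable summands, and each of these lies in $\mathscr{E}'$.
\end{enumerate}
Therefore $\mathscr{E}'$ is additively generated by its indecomposable objects, which are precisely the indecomposables $\Ext$-orthogonal to $\mathfrak{P}$. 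This gives the claim and also \ref{R0}.

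For \ref{R1}, I will use \cref{lem:othercharactresolv}\ref{ires}: every projective $Q$ satisfies $\Ext^i(Q,-)=0$ for $i>0$, so $\proj(\mathscr{C})\subseteq\mathscr{E}'$.

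For \ref{R2} and \ref{R3}, take a short exact sequence $0\to K\to X\to Y\to 0$ and fix $P\in\mathfrak{P}$. Applying $\Hom(-,P)$ gives, for each $i\geqslant1$, the exact segment
\[
\Ext^i(Y,P)\to\Ext^i(X,P)\to\Ext^i(K,P)\to\Ext^{i+1}(Y,P).
\]
\begin{enumerate}[label=$\bullet$, itemsep=1mm]
\item If $K,Y\in\mathscr{E}'$, the outer-left and third terms vanish, so the middle term $\Ext^i(X,P)$ vanishes. This gives closure under extensions.
\item If $X,Y\in\mathscr{E}'$, then $\Ext^i(X,P)=0$ and $\Ext^{i+1}(Y,P)=0$, so $\Ext^i(K,P)=0$. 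This gives closure under kernels of epimorphisms.
\end{enumerate}
This proves that $\mathscr{E}_\mathfrak{P}$ is resolving.

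I do not expect a genuine obstacle here. The only point needing care is the passage between the indecomposable definition of $\mathscr{E}_\mathfrak{P}$ and the object-wise class $\mathscr{E}'$. In \ref{R2} and \ref{R3} the middle term or the kernel need not be indecomposable, so one must argue with $\mathscr{E}'$. This is exactly where the Krull--Schmidt hypothesis and the additivity of $\Ext^i(-,P)$ are used.
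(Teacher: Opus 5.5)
Your proof is correct and follows the same route as the paper: apply $\Hom(-,P)$ to the short exact sequence and read off the vanishing of $\Ext^i(-,P)$ from the long exact sequence, both for extension closure and for closure under kernels of epimorphisms. Your version is slightly cleaner in two places: noting that $\Ext^i(P,-)=0$ for projective $P$ replaces the paper's appeal to ``dual arguments'', and identifying $\mathscr{E}_\mathfrak{P}$ with the object-wise class via Krull--Schmidt explicitly justifies that a possibly decomposable middle term or kernel lies in $\mathscr{E}_\mathfrak{P}$, a step the paper passes over.
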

\begin{proof}
By definition, $\mathscr{E}_\mathfrak{P}$ is additive, and  $\proj(\mathscr{C}) \subset \mathscr{E}_\mathfrak{P}$. We still must show that $\mathscr{E}_\mathfrak{P}$ is closed under extensions and kernels of epimorphisms.
 
Assume that we have the following short exact sequence
\[\begin{tikzcd}
	{X} & M & {Y},
	\arrow[tail, from=1-1, to=1-2]
	\arrow[two heads, from=1-2, to=1-3]
\end{tikzcd}\]
where $X,Y\in \mathscr{E}_\mathfrak{P}$. Consider $P \in \mathfrak{P}$. By applying the functor $\Hom(-,P)$, we get the following long exact sequence.
\[\begin{tikzcd}
 0 & \Hom(Y,P) & \Hom(M,P)  & \Hom(X,P) & \\
	  & \Ext^1(Y,P) & \Ext^1(M,P)  & \Ext^1(X,P) & \\
     & \Ext^2(Y,P) & \Ext^2(M,P) & \Ext^2(X,P) & ...
	\arrow[from=1-1, to=1-2]
	\arrow[from=1-2, to=1-3]
	\arrow[from=1-3, to=1-4]
	\arrow[from=1-4, to=2-2]
    \arrow[from=2-2, to=2-3]
    \arrow[from=2-3, to=2-4]
    \arrow[from=2-4, to=3-2]
    \arrow[from=3-2, to=3-3]
    \arrow[from=3-3, to=3-4]
    \arrow[from=3-4, to=3-5]
\end{tikzcd} \] 
By hypothesis, $\Ext^i(X,P) = 0 = \Ext^i(Y,P)$ for all $i > 1$ and $j \in \{1,2\}$. So $\Ext^i(M,P)=0$ for all $i > 1$. By dual arguments, we have that $\Ext^i(P,M) = 0$. Therefore, for all $P \in \mathfrak{P}$, we have $M \Extperp P$. So $M \in \mathscr{E}_{\mathfrak{P}}$.

Assume that we have the following short exact sequence
\[\begin{tikzcd}
	M & {X} & {Y},
	\arrow[tail, from=1-1, to=1-2]
	\arrow[two heads, from=1-2, to=1-3]
\end{tikzcd}\]
where $X,Y\in \mathscr{E}_\mathfrak{P}$. Consider $P \in \mathfrak{P}$. By applying the functor $\Hom(-,P)$, we get the following long exact sequence: 
\[\begin{tikzcd}
 0 & \Hom(Y,P) & \Hom(X,P)  & \Hom(M,P) & \\
	  & \Ext^1(Y,P) & \Ext^1(X,P)  & \Ext^1(M,P) & \\
     & \Ext^2(Y,P) & \Ext^2(X,P) & \Ext^2(M,P) & ...
	\arrow[from=1-1, to=1-2]
	\arrow[from=1-2, to=1-3]
	\arrow[from=1-3, to=1-4]
	\arrow[from=1-4, to=2-2]
    \arrow[from=2-2, to=2-3]
    \arrow[from=2-3, to=2-4]
    \arrow[from=2-4, to=3-2]
    \arrow[from=3-2, to=3-3]
    \arrow[from=3-3, to=3-4]
    \arrow[from=3-4, to=3-5]
\end{tikzcd} \]
As for extensions, we get that $\Ext^i(M,P) = 0$ for all $i > 0$, and by dual arguments, we have $M \Extperp P$. So $M \in \mathscr{E}_\mathfrak{P}$.
We got the desired result.
\end{proof}

We can simplify certain conditions to verify that a subcategory is resolving under the hypotheses we impose on $\ind(\mathscr{C})$ \cite[Theorem 2.11]{DS251}. So we can characterize resolving subcategories by their indecomposable non-projective objects. For any resolving subcategory $\mathscr{R} \subset \mathscr{C}$, we write $\pmb{\ind \setminus \proj}(\mathscr{R})$ for the set of indecomposable non-projective objects in $\mathscr{R}$ up to isomorphism. Moreover, the family of resolving subcategories is closed under arbitrary intersections. Therefore, we can define the \new{resolving closure} of any $\mathcal{X} \subseteq \pmb{\ind \setminus \proj}(\mathscr{C})$ to be the smallest resolving subcategory that contains $\mathcal{X}$. Denote it by $\Res(\mathcal{X})$. For $X,Y \in \ind(Q,R)$, we write $\Res(X)$ for $\Res(\{X\})$ and $\Res(X,Y)$ for $\Res(\{X,Y\})$. We say that a resolving subcategory $\mathscr{R}$ is \new{monogenous} whenever there exists $X \in \pmb{\ind \setminus \proj}(\mathscr{C})$ such that $\mathscr{R} = \Res(X)$.

We consider an analogous operator on subsets of $\pmb{\ind \setminus \proj}(\mathscr{C})$, denoted $(-)^{\Res}$, which returns $\pmb{\ind \setminus \proj}(\Res(\mathcal{X}))$ for any $\mathcal{X} \subseteq \pmb{\ind \setminus \proj}(\mathscr{C})$.

\subsection{Monogenous resolving subcategories}
\label{ss:monogenous}

Consider a gentle tree $(Q,R)$. We endow $\pmb{\ind \setminus \proj}(Q, R)$ with an order relation defined by \[\forall X,Y \in \pmb{\ind \setminus \proj}(Q,R),\ X \Resleq Y \Longleftrightarrow \Res(X) \subseteq \Res(Y).\]  We get an injective map from the finite complete lattice $(\ResOrd(Q,R), \subseteq)$ of resolving subcategories of $(Q,R)$ to the lattice of ideals of $(\pmb{\ind \setminus \proj}(Q,R), \Resleq)$. Moreover, we also have the following result.

\begin{theorem}[\cite{DS251}] \label{thm:Monoareallthejoinirred} The join-irreducible elements in $(\ResOrd(Q,R), \subseteq)$ are precisely given by the monogeneous resolving subcategories.
\end{theorem}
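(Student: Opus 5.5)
The plan is to split the statement into two inclusions and use finiteness of $(\ResOrd(Q,R),\subseteq)$. Its bottom element is $\Res(\varnothing)=\add(\proj)$, and its join is $\mathscr{R}_1\vee\mathscr{R}_2=\Res(\mathscr{R}_1\cup\mathscr{R}_2)$.

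\emph{Join-irreducibles are monogenous.} Let $\mathscr{R}$ be any resolving subcategory. Every $X\in\pmb{\ind \setminus \proj}(\mathscr{R})$ satisfies $\Res(X)\subseteq\mathscr{R}$. Hence
\[\mathscr{R}=\Res\Big(\bigcup_{X\in\pmb{\ind \setminus \proj}(\mathscr{R})}\Res(X)\Big)=\bigvee_{X}\Res(X),\]
where the first equality uses \cref{lem:othercharactresolv}, since resolving subcategories are determined by their indecomposable non-projectives. This is a finite join, because $(Q,R)$ is representation-finite. If $\mathscr{R}$ is join-irreducible, it is not the bottom element, so the join is non-empty, and irreducibility forces $\mathscr{R}=\Res(X)$ for some $X$.

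\emph{Monogenous subcategories are join-irreducible.} Let $X\in\pmb{\ind \setminus \proj}(Q,R)$. Since $X$ is non-projective, $\Res(X)\neq\add(\proj)$. It remains to show that $\Res(X)$ has a unique lower cover. I will use that $\Resleq$ is a partial order on $\pmb{\ind \setminus \proj}(Q,R)$, i.e. $\Res(X)=\Res(Y)$ forces $X\cong Y$; this is asserted in the set-up, and would otherwise need its own proof. Set
\[\mathscr{R}^-_X=\add\big(\proj \cup \{Y\in\pmb{\ind \setminus \proj}(\Res(X)) \mid Y\not\cong X\}\big).\]
Every resolving $\mathscr{R}'\subsetneq\Res(X)$ omits $X$, hence lies in $\mathscr{R}^-_X$. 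So it suffices to prove that $\mathscr{R}^-_X$ is itself resolving: it is then the unique lower cover, and no join of strictly smaller subcategories can reach $\Res(X)$. Additivity and the presence of projectives are clear. Using \cref{lem:othercharactresolv}, it remains to show the following claim.

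\emph{Claim.} $X$ is never an indecomposable summand of a syzygy $\Omega(Y)$, nor of the middle term of an extension between objects $Y,Z$, for any $Y,Z\in\mathscr{R}^-_X$.

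For syzygies there is a clean argument. If $X\mid\Omega(Y)$, then $\Res(X)\subseteq\Res(Y)\subseteq\Res(X)$, so $Y\cong X$ by antisymmetry, a contradiction.

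For extensions I would argue geometrically, since for gentle trees only overlap and arrow extensions occur (\cref{prop:geom_ext}). Suppose $\gamma_{(X)}\in\OvExt(\delta,\eta)\cup\ArExt(\delta,\eta)$ with $\MM(\delta),\MM(\eta)\in\mathscr{R}^-_X$. The aim is to show that $\gamma_{(X)}$ can then be recovered from $\delta$ and $\eta$ by kernels of epimorphisms and extensions, reversing the pictures of \cref{fig:overlapextaccord,fig:arrowextaccord} via \cref{prop:geo_mor}. That would force $\Res(X)\subseteq\Res(\MM(\delta),\MM(\eta))$. I would then use the neighbouring projectives $\NP(-)$ from \cref{prop:CombidescripNproj} to show that one of $\delta,\eta$ already generates $\Res(X)$, again contradicting antisymmetry.

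I expect this extension case to be the main obstacle. A join of two strictly smaller resolving subcategories could a priori create $X$, and excluding this needs the disc (tree) geometry: accordions cross at most once, and the ordering $\preccurlyeq_v$ at endpoints controls which projectives appear. My first attempt would be a monotone invariant — the set of $\gpoint$-arcs crossed together with the endpoint data relative to $\Prj(\Delta^{\gpoint})$ — showing that it is not increased by syzygies and extensions within $\Res(X)\setminus\{X\}$, while $X$ is extremal for it.
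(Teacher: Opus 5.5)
Your first direction (join-irreducibles are monogenous) is correct. So is your reduction of the converse to showing that $\mathscr{R}^-_X$ is resolving, and so is the syzygy case. The gap is the extension case, which carries the whole converse and which you leave as a plan.

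As written, the plan does not make progress. If $X$ is a summand of the middle term of an extension of $\MM(\delta)$ by $\MM(\eta)$, then $X\in\Res(\MM(\delta),\MM(\eta))$ holds trivially, so ``recovering $\gamma_{(X)}$'' via \cref{prop:geo_mor} gives nothing. The next step, that one of $\delta,\eta$ already generates $\Res(X)$, is only the statement to be proved in another form: by antisymmetry it amounts to saying the configuration cannot occur. You give no mechanism for it, and the ``monotone invariant'' is speculative. You also only treat indecomposable end terms, whereas closure under extensions has to be checked for arbitrary $Y,Z\in\mathscr{R}^-_X$, or reduced to indecomposables by a separate argument.

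The missing idea is directedness, via \cref{prop:ResandHom} and antisymmetry of $\THomleq$; no geometry is needed. Take $0\to Z\xrightarrow{f}E\xrightarrow{g}Y\to 0$ with $Y,Z\in\mathscr{R}^-_X$ and $X$ a summand of $E$.
\begin{enumerate}[label=$\bullet$, itemsep=1mm]
\item Pulling back along the complement of the projective summands of $Y$ splits those summands off $E$. Since $X$ is non-projective, Krull--Schmidt keeps $X$ in the remaining middle term. So you may assume $Y=\bigoplus_k Y_k$ with each $Y_k$ indecomposable, non-projective, in $\Res(X)$, and $Y_k\not\cong X$.
\item \cref{prop:ResandHom} then gives $Y_k\THomleq X$ for every $k$.
\item A nonzero component $X\to Y_k$ of $g$ would give $X\THomleq Y_k$, hence $X\cong Y_k$, which is impossible. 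So $g\iota=0$ for the split inclusion $\iota:X\to E$, and therefore $\iota=fh$ for some $h:X\to Z$.
\item With the projection $\pi:E\to X$ you get $(\pi f)h=\pi\iota=1_X$. Hence $X$ is a direct summand of $Z$, contradicting $Z\in\mathscr{R}^-_X$.
\end{enumerate}
This argument covers decomposable end terms directly, so it completes your proof.
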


Recall the description of monogenous resolving subcategories via the geometric model. To do so, we use our variant of the geometric model (recall in \cref{ss:neighboringproj}) and first define a suitable coloring of $\NP(\delta)_0$ for a given $\delta \in \Accord'$.

\begin{definition}
\label{def:colourendpoints}
Let $(Q,R)$ be a gentle tree and $\Surf(Q,R) = (\pmb{\Sigma}, \mathcal{M}, \Delta^{\gpoint})$. Let $\delta \in \Accord$. We define a \new{coloration} of $\NP(\delta)_0$ to be a partition of $\NP(\delta)_0$ as  follows:
\begin{enumerate}[label=\arabic*),itemsep=1mm]
    \item  As $\delta$ cuts the disc into two connected parts, we consider that, on one side of the curve, points of $\NP(\delta)_0$ are colored red $\rsquare$, and on the other side they are colored green $\gsquare$. In what follows, we arbitrarily choose to consider the red dots above $\delta$ and the green ones below.
    \item The left endpoint of the curve is called the \emph{source} and is colored red $\rsquare$ if all the accordions in $\NP(\delta)$ sharing the same source extremity as $\delta$ admit a green endpoint $\gsquare$ at the other end. It is colored orange $\osquare$ otherwise.
    \item The right endpoint of the curve is called the \emph{target}, and is colored dually.
    \item In larger cells, the coloration is changed: all the intermediate points except the second to last are colored orange $\osquare$ 
    \item The endpoint of the border of the large cell next to the orange endpoint is finally colored in pink $\psquare$.
\end{enumerate}
We denote by:
\begin{enumerate}[label=$\bullet$,itemsep=1mm]
    \item $\NP(\delta)_{0}^{{\rsquare}}$ the set of points in $\NP(\delta)_0$ colored in red $\rsquare$;
    \item $\NP(\delta)_{0}^{{\osquare}}$ the set of points in $\NP(\delta)_0$ colored in orange $\osquare$;
    \item $\NP(\delta)_{0}^{{\gsquare}}$ the set of points in $\NP(\delta)_0$ colored in green $\gsquare$;
    \item $\NP(\delta)_{0}^{{\psquare}}$ the set of points in $\NP(\delta)_0$ colored in pink $\psquare$.
\end{enumerate}
We can encode the choice of such a coloration as a map $\col_\delta : \NP(\delta)_0 \longrightarrow \{{\rsquare},{\gsquare},{\osquare},{\psquare}\}$.
\end{definition}

\begin{lemma}[\cite{DS251}] \label{lem:uniqcolandlaterality}
Let $(Q,R)$ be a gentle tree and $\Surf(Q,R) = (\pmb{\Sigma}, \mathcal{M}, \Delta^{\gpoint})$. Let $\delta \in \Accord$. Then there exists a unique coloration of $\NP(\delta)_0$ up to exchanging the ${\rsquare}$ and ${\gsquare}$ points. 
\end{lemma}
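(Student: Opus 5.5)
The plan is to prove existence by constructing the coloration directly from the rules of \cref{def:colourendpoints}, and then to show that the only freedom in that construction is the choice made in rule 1). First I would record the topological input. Since $(Q,R)$ is a gentle tree, $\pmb{\Sigma}$ is a disc and $\mathcal{M} \subset \partial \pmb{\Sigma}$. An accordion $\delta$ is a simple $\rpoint$-arc whose interior avoids $\partial\pmb{\Sigma}$, so by the Jordan curve theorem for a chord of a disc, $\pmb{\Sigma} \setminus \delta$ has exactly two connected components. Each point of $\NP(\delta)_0$ other than the two endpoints of $\delta$ is a marked point on $\partial\pmb{\Sigma}$ distinct from those endpoints, so it lies on exactly one of the two boundary arcs cut out by the endpoints of $\delta$. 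Rule 1) therefore assigns it a well-defined color once we decide which side is $\rsquare$ and which is $\gsquare$. There are exactly two such decisions, and they differ by the swap $\rsquare \leftrightarrow \gsquare$.

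Next I would check that every later rule is a deterministic function of this first choice together with data intrinsic to $\delta$ and $\Prj(\Delta^{\gpoint})$.

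\begin{enumerate}[label=$\bullet$, itemsep=1mm]
\item For the endpoints (rules 2) and 3)): the set of accordions of $\NP(\delta)$ sharing the source of $\delta$ is determined by \cref{prop:CombidescripNproj}. The color of their other endpoints was already fixed in the previous step. So the test deciding between $\rsquare$ and $\osquare$ is well defined. Under the swap it turns into the dual test, which is precisely the one rule 3) applies to the other endpoint. Hence swapping sides, together with exchanging the roles of source and target, yields the same coloration with $\rsquare$ and $\gsquare$ exchanged.
\item For rules 4) and 5): I would use \cref{prop:arcsaccordions} and \cref{prop:CombidescripNproj}(ii). Together they say that the points of $\NP(\delta)_0$ in a large cell $C$ crossed by $\delta$ at one of its endpoints are exactly those on the path of projective arcs joining the last projective crossed to that endpoint. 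These points are linearly ordered along $\partial C$, and this order is intrinsic. So the sets ``intermediate points'', ``second to last'' and ``neighbor of the orange endpoint'' (the $\psquare$ point) are canonical. The recoloring in large cells then overrides rule 1) in a well-defined way.
\end{enumerate}

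Finally, I would argue that each point of $\NP(\delta)_0$ receives exactly one final color. Two things need to be verified: at most one large cell touches a given endpoint of $\delta$, and the recolored points of the source cell and of the target cell are disjoint. Both should follow from the tree hypothesis: in a disc, the cells containing the two endpoints of $\delta$ are distinct unless $\delta$ lies inside a single cell, and in that case \cref{prop:arcsaccordions}(b) pins down its endpoints. This gives existence. Uniqueness up to the swap follows because every step after rule 1) is forced.

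I expect the main obstacle to be the case analysis at the endpoints. When the source and target lie in a common cell, or when a large cell borders both sides of $\delta$, the orange/pink rules must not conflict with each other or with the red/green side assignment. My first attempt would be to handle these degenerate configurations separately using the explicit shapes in \cref{prop:arcsaccordions}(b1)--(b2).
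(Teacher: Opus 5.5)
The main line of your argument is correct, and it is the natural proof. This paper only recalls the lemma from \cite{DS251} and gives no proof of it here. In rule 1) of \cref{def:colourendpoints}, the only freedom is which of the two components of $\pmb{\Sigma}\setminus\delta$ is declared red. The endpoint tests of rules 2)--3) are well defined and are interchanged by the swap, so whether an endpoint of $\delta$ is orange does not depend on that choice. The vertices affected by rules 4)--5) are then determined by $\delta$, $\Prj(\Delta^{\gpoint})$ and this swap-invariant orange status.

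Your final paragraph, however, rests on two claims that are false as stated.

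\textbf{Several cells at an endpoint.} An endpoint of $\delta$ is in general a vertex of several cells of $\Prj(\Delta^{\gpoint})$: one more than the number of projective accordions ending there. Several of these cells may be large. What you actually need is only that rules 4)--5) refer to the cell in which $\delta$ terminates. These are the cells $C_L$ and $C_R$ given by $\sc(\delta)$ and $\tc(\delta)$, and they are unique by definition.

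\textbf{Distinct cells can share vertices.} $C_L \neq C_R$ does not make the recolored vertex sets disjoint, because distinct cells of a dissection of the disc can share vertices. If $\delta$ crosses a single projective accordion $\eta$, both endpoints of $\eta$ are vertices of both $C_L$ and $C_R$. If $\delta$ crosses exactly two, $\eta_L$ and $\eta_R$ are adjacent and their common endpoint lies on both cells.

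\textbf{The fix.} Do not try to prove disjointness by case analysis; drop the step. The definition is a sequential procedure in which later rules override earlier ones (``the coloration is changed'', ``finally colored in pink''). Apply rule 4) in the relevant cells, then rule 5) last. Every point of $\NP(\delta)_0$ then receives exactly one color, and the result does not depend on the order in which $C_L$ and $C_R$ are treated. So existence is immediate. Rules 4)--5) act identically for both choices in rule 1), so the second choice produces exactly $\overline{\col_\delta}$. This gives uniqueness up to conjugation, with no need to treat degenerate configurations separately.
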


In the following, if $\col_\delta : \NP(\delta)_0 \longrightarrow \{{\rsquare},{\gsquare},{\osquare},{\psquare}\}$ is a coloration of $\NP(\delta)_0$, we denote by $\overline{\col_\delta}$ the obtained coloration by exchanging the ${\rsquare}$ and ${\gsquare}$ points, and call it its \new{conjugate coloration}.

\begin{remark} \label{rem:laterality}
A coloration of $\NP(\delta)_0$ induces a laterality of the surface by fixing a top and a bottom and thus a left and a right. Any non-projective accordion $\varsigma$ crosses or shares an endpoint with a projective accordion that has at least one $\rsquare,\gsquare$ or $\psquare$ endpoint. This endpoint is considered either on top of or at the bottom of $\varsigma$, depending on the orientation determined by the coloration of $\NP(\delta)_0$. In the following, we call the source of $\varsigma$ the left endpoint of $\varsigma$ following this new orientation and denote it by  $s(\varsigma)$. The other endpoint is called the target and denoted by $t(\varsigma)$
\end{remark}

Given $\delta \in \Accord'$, this coloration allows us to define a precise collection of accordions, namely the one corresponding to the indecomposable representations that additively generate $\Res(\MM(\delta))$.

\begin{definition} \label{def:geometricres}
Let $(Q,R)$ be a gentle tree, and $\Surf(Q,R) = (\pmb{\Sigma}, \mathcal{M}, \Delta^{\gpoint})$. For any $\delta \in \Accord$, we define the \new{monogeneous geometric resolving set} of $\delta$ as the set $\opResAc(\delta) = \opResAc'(\delta) \cup \Prj(\Delta^{\gpoint})$ where: \[\opResAc'(\delta) = \{ \eta \in \Accord \mid s(\eta) \in \NP(\delta)_0^{{\rsquare}} \cup \NP(\delta)_0^{{\osquare}}, t(\eta) \in \NP(\delta)_0^{{\gsquare}} \cup \NP(\delta)_0^{{\osquare}} \}.\]
The \new{monogeneous geometric resolving subcategory} of $\delta$, denoted by \new{$\mathscr{U}_\delta$}, is the additive subcategory of $\rep(Q,R)$ generated by $\{\MM(\varsigma) \mid \varsigma \in \opResAc(\delta)\}$.
\end{definition}

Let $\delta \in \Accord'$. We recall a technical lemma characterizing $\rpoint$-arcs $\varsigma$ such that $s(\varsigma) \in \NP(\delta)_0^{{\rsquare}} \cup \NP(\delta)_0^{{\osquare}}$ and $t(\varsigma) \in \NP(\delta)_0^{{\osquare}} \cup \NP(\delta)_0^{{\gsquare}}$ that are accordions of $(\pmb{\Sigma}, \mathcal{M}, \Delta^{\gpoint})$.

For any $\delta \in \Accord'$ which is not contained in a cell of the $\rpoint$-dissection $(\pmb{\Sigma}, \mathcal{M}, \Prj(\Delta^{\gpoint}))$, define $\tc (\delta)$ to be the pair $(\eta_R, C_R)$ where $\eta_R\in \Prj(\Delta^{\gpoint})$, and $C_R \in \pmb{\Gamma}(\Prj(\Delta^{\gpoint}))$ admitting $\eta_R$ as one of its edges, such that $\delta$ crosses $\eta_R$ and $t(\delta)$ is a vertex of $C_R$. If $\delta \subset C$ for some $\pmb{\Gamma}(\Prj(\Delta^{\gpoint}))$, then we set $\tc(\delta) = (\varnothing,C)$. We define $\sc(\delta)$ dually.

Given a $C \in \pmb{\Gamma}(\Prj(\Delta^{\gpoint}))$, we denote by $C(\delta)_0^{{\osquare}}$ the set of vertices of $C$ in $\NP(\delta)_0^{{\osquare}}$. We define similarly $C(\delta)_0^{{\gsquare}}$, $C(\delta)_0^{{\rpoint}}$ and $C(\delta)_0^{{\psquare}}$.

Define $\vartheta_1^R, \vartheta_2^R \in \Prj(\Delta^{\gpoint})$ such that:
\begin{enumerate}[label=$\bullet$,itemsep=1mm]
    \item $t(\vartheta_1^R) = t(\delta)$ and $\vartheta_1^R$ covers $\delta$ with respect to the counterclockwise order around $t(\delta)$ on $\Prj(\Delta^{\gpoint}) \cup \{\delta\}$; and,
    \item $t(\vartheta_2^R) = s(\vartheta_1^R)$ and $\vartheta_2^R$ covers $\vartheta_1^R$ with respect to the counterclockwise order around $s(\vartheta_1^R)$ on $\Prj(\Delta^{\gpoint}) \cup \{\delta\}$.
\end{enumerate}
We define $w_R(\delta) = s(\vartheta_2^R)$. We define $w_L(\delta)$ dually. See \cref{fig:exsctc} to visualize the different objects introduced.

\begin{figure}[!ht]
    \centering
    \begin{tikzpicture}[mydot/.style={
					circle,
					thick,
					fill=white,
					draw,
					outer sep=0.5pt,
					inner sep=1pt
				}, scale = 1]
		\tikzset{
		osq/.style={
        rectangle,
        thick,
        fill=white,
        append after command={
            node [
                fit=(\tikzlastnode),
                orange,
                line width=0.3mm,
                inner sep=-\pgflinewidth,
                cross out,
                draw
            ] {}}}}
        \draw [line width=0.7mm,domain=10:22] plot ({5*cos(\x)}, {2*sin(\x)});
		\draw [line width=0.7mm,domain=25:70] plot ({5*cos(\x)}, {2*sin(\x)});
		\draw [line width=0.7mm,domain=78:85] plot ({5*cos(\x)}, {2*sin(\x)});
		\draw [line width=0.7mm,domain=95:102] plot ({5*cos(\x)}, {2*sin(\x)});
		\draw [line width=0.7mm,domain=110:170] plot ({5*cos(\x)}, {2*sin(\x)});
		\draw [line width=0.7mm,domain=225:235] plot ({5*cos(\x)}, {2*sin(\x)});
		\draw [line width=0.7mm,domain=250:283] plot ({5*cos(\x)}, {2*sin(\x)});
		\draw [line width=0.7mm,domain=300:355] plot ({5*cos(\x)}, {2*sin(\x)});
		\foreach \X in {0,...,43}
		{
		\tkzDefPoint(5*cos(pi/22*\X),2*sin(pi/22*\X)){\X};
		};

		\draw[line width=0.7mm,blue, loosely dashed](4) to[bend left=30] (2,0) to[bend right=30] (-1,-1) to [bend right=30] (31);
		
		\draw[line width=0.9mm ,bend right=30,red, densely dashdotted](4) edge (43);
		\draw[line width=0.9mm ,bend right=30,red, densely dashdotted](4) edge (2);
		\draw[line width=0.9mm ,bend right=30,red, densely dashdotted](43) edge (41);
		\draw[line width=0.9mm ,bend right=30,red, densely dashdotted](41) edge (39);
		\draw[line width=0.9mm ,bend right=30,red, densely dashdotted](39) edge (37);
		\draw[line width=1.5mm ,bend left=30,red, densely dashdotted](37) edge (8);
		\draw[line width=0.9mm ,bend left=40,red, densely dashdotted](37) edge (10);
		\draw[line width=0.9mm ,bend left=40,red, densely dashdotted](34) edge (12);
		\draw[line width=1.5mm ,bend left=30,red, densely dashdotted](34) edge (14);
		\draw[line width=0.9mm ,bend right=30,red, densely dashdotted](16) edge (14);
		\draw[line width=0.9mm ,bend right=30,red, densely dashdotted](18) edge (16);
		\draw[line width=0.9mm ,bend right=30,red, densely dashdotted](20) edge (18);
		\draw[line width=0.9mm ,bend right=30,red, densely dashdotted](31) edge (20);
		\draw[line width=0.9mm ,bend right=30,red, densely dashdotted](31) edge (28);

		\filldraw [fill=red,opacity=0.1] (4) to [bend right=30] (43) to [bend right=30] (41) to [bend right=30] (39) to [bend right=30] (37) to [bend left=30] (8) to [bend left=10] cycle ;
		
		\filldraw [fill=red,opacity=0.1] (34) to [bend left=30] (14) to [bend left=30] (16) to [bend left=30] (18) to [bend left=30] (20) to [bend left=30] (31) to [bend right=10] cycle ;
		
		\foreach \X in {8,10,12,28}
		{
		\tkzDrawPoints[rectangle,fill =red,size=6,color=red](\X);
		};
		
		\foreach \X in {2,34}
		{
		\tkzDrawPoints[rectangle,size=6,color=dark-green,thick,fill=white](\X);
		};
		\foreach \X in {4,14,16,18,31,37,39,41}
		{
		\tkzDrawPoints[size=6,orange,osq](\X);
		};
		\foreach \X in {20,43}
		{
		\tkzDrawPoints[size=6,darkpink,line width=0.5mm,cross out, draw](\X);
		};

		\tkzDefPoint(-3.6,1){gammaM};
		\tkzLabelPoint[red](gammaM){\Large $C_L$}
		\tkzDefPoint(-1.4,1){gammaP};
		\tkzLabelPoint[red](gammaP){\Large $\pmb{\eta_L}$}
		\tkzDefPoint(1.4,1){v};
		\tkzLabelPoint[red](v){\Large $\pmb{\eta_R}$}
		\tkzDefPoint(3.7,-0.3){w};
		\tkzLabelPoint[red](w){\Large $C_R$}
		\tkzDefPoint(0,-0.2){deltav};
		\tkzLabelPoint[blue](deltav){\Large $\delta$}
		\tkzDefPoint(-4.7,1.8){deltaw};
		\tkzLabelPoint[orange](deltaw){$w_L(\delta)$}
		\tkzDefPoint(4.7,-0.9){w};
		\tkzLabelPoint[orange](w){$w_R(\delta)$}
    \end{tikzpicture}
    \caption{\label{fig:exsctc} Illustration of  $\sc(\delta) = (\eta_L, C_L)$ and $\tc(\delta) = (\eta_R, C_R)$ given $\delta \in \Accord$.}
\end{figure}

\begin{lemma}[\cite{DS251}] \label{lem:conditionsResAc}
Let $(\pmb{\Sigma}, \mathcal{M}, \Delta^{\gpoint})$ be a $\gpoint$-dissected marked disc with $\mathcal{M} \subset \partial \pmb{\Sigma}$. Let $\delta \in \Accord'$. If $\delta \nsubseteq C$ for some $C \in \pmb{\Gamma}(\Prj(\Delta^{\gpoint}))$, then we set $\sc(\delta) = (\eta_L,C_L)$ and $\tc(\delta) = (\eta_R,C_R)$; otherwise, we set $C=C_L=C_R$ the cell containing $\delta$. The curve $\varsigma \in \opResAc'(\delta)$ must satisfy all of the following assertions: 
\begin{enumerate}[label=$(\roman*)$, itemsep=1mm]
        \item \label{1Accord} if $\varsigma$ crosses $\eta_L$ and $w_L(\delta) \neq t(\eta_L)$, then $s(\varsigma) = s(\delta)$, and, dually, if $\varsigma$ crosses $\eta_R$ and $w_R(\delta) \neq s(\eta_R)$, then $t(\varsigma) = t(\delta)$;
        \item \label{2Accord} if $\varsigma \subset C_L$, then $s(\varsigma)=s(\delta)$, and, dually, if $\varsigma \subset C_R$, then $t(\varsigma) = t(\delta)$;
        \item \label{3Accord} if $s(\varsigma)\in \NP(\delta)_0^{{\rsquare}}$ and $t(\varsigma) \in C_R(\delta)_0$, then $t(\varsigma)=w_L(\delta)$, and, dually, if $t(\varsigma)\in \NP(\delta)_0^{{\gsquare}}$ and $s(\varsigma) \in C_R(\delta)_0$, then $s(\varsigma)=w_R(\delta)$;
        \item \label{4Accord} $s(\varsigma) \notin C_L(\delta)_0^{{\osquare}}\setminus \{ s(\delta),t(\eta_L)\}$, and $t(\varsigma) \notin C_R(\delta)_0^{{\osquare}}\setminus \{ t(\delta),s(\eta_R)\}$.
    \end{enumerate}
\end{lemma}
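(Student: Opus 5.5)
The plan is to argue each item by contradiction. Throughout, fix $\varsigma \in \opResAc'(\delta)$. So $\varsigma$ is an accordion with $s(\varsigma) \in \NP(\delta)_0^{\rsquare}\cup\NP(\delta)_0^{\osquare}$ and $t(\varsigma)\in\NP(\delta)_0^{\gsquare}\cup\NP(\delta)_0^{\osquare}$. Two tools will be played against each other:
\begin{enumerate}[label=$\bullet$, itemsep=1mm]
\item the combinatorial description of the colouring of $\NP(\delta)_0$ near the ends of $\delta$, from \cref{def:colourendpoints} together with the description of $\NP(\delta)$ in \cref{prop:CombidescripNproj};
\item the local description of accordions with respect to the projective dissection in \cref{prop:arcsaccordions}.
\end{enumerate}
Each item is stated with a dual version at the other end of $\delta$. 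Since the colouring is symmetric under the exchange of source and target together with the exchange of $\rsquare$ and $\gsquare$ (\cref{lem:uniqcolandlaterality}), I only treat the source/left side (items \ref{1Accord}, \ref{2Accord}, \ref{4Accord}) and one half of \ref{3Accord}; the other halves follow by the conjugate colouring.

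First I describe the colours on the vertices of $C_L$, and dually of $C_R$. By \cref{prop:CombidescripNproj}\,(ii), the projective accordions in $\NP(\delta)$ lying on $\partial C_L$ form the path from $t(\eta_L)$ to $s(\delta)$ along the side of $C_L$ not separated by $\delta$. By rules 4) and 5) of \cref{def:colourendpoints}, these vertices are coloured as follows:
\begin{enumerate}[label=$\bullet$, itemsep=1mm]
\item all intermediate vertices are $\osquare$, except the second to last;
\item the second to last vertex is $w_L(\delta)$ and is coloured $\psquare$, by the definition of $\vartheta_1^L,\vartheta_2^L$;
\item $s(\delta)$ is coloured $\rsquare$ or $\osquare$.
\end{enumerate}
All other vertices of $\NP(\delta)_0$ lie strictly on one side of $\delta$ and are coloured $\rsquare$ (above) or $\gsquare$ (below).

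Next I prove \ref{2Accord} and \ref{4Accord} via the accordion criterion for arcs inside a cell.
\begin{enumerate}[label=$\bullet$, itemsep=1mm]
\item For \ref{2Accord}: if $\varsigma\subset C_L$, then by \cref{prop:arcsaccordions}\,(b) either its endpoints are the ends of two adjacent projective edges, or one endpoint is $m_{C_L}$. The $\psquare$ vertex $w_L(\delta)$ is excluded from both endpoint sets. Combining this with the orientation, $s(\varsigma)$ must be the vertex $s(\delta)$, since this is exactly where $m_{C_L}$ sits relative to $\delta$. Here I use that $\delta$ itself satisfies (a2) at that end.
\item For \ref{4Accord}: suppose $s(\varsigma)$ is an intermediate orange vertex of $C_L$. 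Then $\varsigma$ either stays inside $C_L$, which contradicts \ref{2Accord}, or leaves $C_L$ through some projective edge $\mu$. In the latter case, condition (a2) of \cref{prop:arcsaccordions} applied to $\varsigma$ read backwards forces $s(\varsigma)$ to be $m_{C_L}$ or the far end of an edge adjacent to $\mu$. The intermediate vertices of the path are neither of these.
\end{enumerate}

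Then \ref{1Accord} and \ref{3Accord} come from following $\varsigma$ across $\eta_L$.
\begin{enumerate}[label=$\bullet$, itemsep=1mm]
\item For \ref{1Accord}: if $\varsigma$ crosses $\eta_L$ and ends in $C_L$, then (a2) restricts $s(\varsigma)$ to $m_{C_L}$ or to an endpoint of an edge adjacent to $\eta_L$. When $w_L(\delta)\neq t(\eta_L)$, the only such vertex with colour $\rsquare$ or $\osquare$ is $s(\delta)$.
\item For \ref{3Accord}: take $s(\varsigma)$ red and $t(\varsigma)$ on $C_R$. The admissible endpoints inside $C_R$ given by (a2) are $m_{C_R}$ and the ends of the edges adjacent to $\eta_R$. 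Intersecting these with the allowed colours $\gsquare\cup\osquare$, and using that a red source places $\varsigma$ on the upper side, leaves a single candidate. This candidate is the vertex named $w_L(\delta)$ in the statement, which by the $\rsquare/\gsquare$ symmetry of the colouring is the point $w_R$ of the conjugate colouring.
\end{enumerate}

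I expect the main obstacle to be this final identification in \ref{3Accord}. It needs a precise matching between the position of $m_{C}$, the cover relations defining $\vartheta_1,\vartheta_2$, and the colour of $s(\delta)$. I would settle it by drawing the two configurations for $s(\delta)$, coloured $\rsquare$ or $\osquare$, and checking (a2) in each.
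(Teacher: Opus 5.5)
\textbf{This proposal has a genuine gap.} The paper does not prove this lemma; it is quoted from \cite{DS251}. Your plan therefore has to stand on its own, and it does not: the decisive steps for (i) and (iv) are false.

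Write $t(\eta_L)=y_0,y_1,\dots,y_k=s(\delta)$ for the path of projective edges on $\partial C_L$. By the definition of $w_L(\delta)$ through $\vartheta_1^L,\vartheta_2^L$, and rules 4)--5) of \cref{def:colourendpoints}, the ${\psquare}$ vertex is $y_{k-1}$. By contrast, $w_L(\delta)=y_{k-2}$ is an ${\osquare}$ vertex (compare \cref{fig:exsctc}). As soon as $k\geq 3$, as in that figure, one has $w_L(\delta)\neq t(\eta_L)$, and $y_1$ is an ${\osquare}$ vertex distinct from $s(\delta)$ and $t(\eta_L)$. But $y_1$ is the non-common endpoint of the edge $y_0y_1$, which is adjacent to $\eta_L$. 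So (a2) of \cref{prop:arcsaccordions} does allow it. This contradicts your claim in (i) that ``the only such vertex is $s(\delta)$'' and your claim in (iv) that ``the intermediate vertices are neither of these''.

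Concretely, let $\delta'$ be the arc from $y_1$ that crosses exactly the projective accordions crossed by $\delta$ and ends at $t(\delta)$. It satisfies \cref{prop:arcsaccordions} in every cell. It induces the same laterality as $\delta$, since it has the same crossings and the same target. Its source $y_1$ is ${\osquare}$ and its target $t(\delta)$ is ${\gsquare}$ or ${\osquare}$. So nothing in the colour conditions of \cref{def:geometricres}, in \cref{prop:CombidescripNproj}, or in the accordion criterion excludes $\delta'$, yet (i) and (iv) do. Hence (i) and (iv) cannot be derived from the tools you list. They record which coloured accordions are actually produced from $\MM(\delta)$ by syzygies, extensions and kernels of epimorphisms. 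That is the representation-theoretic computation of $\Res(\MM(\delta))$ behind \cref{thm:res_clo_1} in \cite{DS251}, and this input is missing from your proposal.

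The other items have further problems.
\begin{itemize}
\item Item (ii): since $w_L(\delta)$ is not the ${\psquare}$ vertex, the key sentence of your argument is void. The argument also only treats case (b2) and ignores the (b1)-diagonals of $C_L$ not incident to $m_{C_L}$. Moreover, (a2) applied to $\delta$ does not force $s(\delta)=m_{C_L}$; $s(\delta)$ may instead be the non-common endpoint of an edge adjacent to $\eta_L$.
\item Item (iii): the misprint is in the hypothesis, not the conclusion. The first half should read ``if $s(\varsigma)\in\NP(\delta)_0^{{\rsquare}}$ and $t(\varsigma)\in C_L(\delta)_0$, then $t(\varsigma)=w_L(\delta)$'', the exact dual of the second half.
\item The version you try to prove (a red source and a target on $C_R$ force $t(\varsigma)=w_R(\delta)$) is false. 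The vertex $m_{C_R}$, typically $t(\delta)$, survives your colour filter. In the first case of \cref{rem:useful_up_s}, $\opResAc'(\delta)$ contains accordions from a ${\rsquare}$ vertex to $t(\delta)\in C_R(\delta)_0$. Your version would also clash with (iv), which forbids $w_R(\delta)$ as a target whenever $w_R(\delta)\neq s(\eta_R)$.
\item Conjugating the colouring does not turn $w_L(\delta)$ into $w_R(\delta)$, so that step is not an argument either.
\end{itemize}
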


Following further technical calculations, we obtain a geometric description of the monogeneous resolving subcategories of $\rep(Q,R)$ in terms of the monogeneous geometric resolving subcategories.

\begin{theorem}[\cite{DS251}]
\label{thm:res_clo_1}
Let $(Q,R)$ be a gentle tree, and $\Surf(Q,R) = (\pmb{\Sigma}, \mathcal{M}, \Delta^{\gpoint})$. For any $\delta \in \Accord$, we have that \[\Res(\MM(\delta)) = \add\left( \MM(\eta) \mid \eta \in \opResAc(\delta) \right) = \mathscr{U}_\delta.\]
\end{theorem}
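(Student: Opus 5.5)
The plan is to prove the two inclusions $\mathscr{U}_\delta \supseteq \Res(\MM(\delta))$ and $\mathscr{U}_\delta \subseteq \Res(\MM(\delta))$ separately, working in the projective dissection $(\pmb{\Sigma},\mathcal{M},\Prj(\Delta^{\gpoint}))$. I fix once and for all a coloration $\col_\delta$ of $\NP(\delta)_0$. By \cref{lem:uniqcolandlaterality} it is unique up to conjugation. Conjugating only swaps the roles of source and target, so $\opResAc(\delta)$ does not depend on this choice.

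For the first inclusion, it suffices to show that $\mathscr{U}_\delta$ is resolving, since it visibly contains $\MM(\delta)$ and $\proj$. By \cref{lem:othercharactresolv} and the reduction to indecomposables of \cite[Theorem 2.11]{DS251}, I only need two closure properties.

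First, closure under extensions. For $\varsigma,\varsigma' \in \opResAc(\delta)$, every accordion in $\OvExt(\varsigma,\varsigma')\cup\ArExt(\varsigma,\varsigma')$ must again lie in $\opResAc(\delta)$. On a gentle tree these are the only extensions, by \cite{BDMTY19,CPS21}.
\begin{enumerate}[label=$\bullet$, itemsep=1mm]
    \item For an overlap extension (\cref{fig:overlapextaccord}), the middle terms use exactly the four endpoints of $\varsigma$ and $\varsigma'$. A source of one is paired with a target of the other, so the conditions $s\in\NP(\delta)_0^{\rsquare}\cup\NP(\delta)_0^{\osquare}$ and $t\in\NP(\delta)_0^{\gsquare}\cup\NP(\delta)_0^{\osquare}$ are inherited. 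The only thing to check is that the orientation induced by \cref{rem:laterality} is preserved, which I would argue from planarity of the disc.
    \item For an arrow extension (\cref{fig:arrowextaccord}), the new accordion joins the two non-common endpoints. I use \cref{lem:conditionsResAc} to show that such a common endpoint cannot be a forbidden orange vertex of $C_L$ or $C_R$.
\end{enumerate}
The case where one of $\varsigma,\varsigma'$ is projective needs separate care. Here \cref{prop:CombidescripNproj} says that only neighboring projectives create extensions, so the new endpoints lie in $\NP(\delta)_0$.

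Second, closure under syzygies. I use \cref{prop:geo_mor} with $\eta=\varsigma$ and the $\delta_i$ projective. Each kernel summand $\kappa_i$ joins an endpoint of $\varsigma$ to an endpoint of a projective in the projective cover. These points are colored by construction of the coloration (items $(ii)$ of \cref{prop:CombidescripNproj}). The colors are propagated correctly precisely because of the red/green sides and the orange/pink rules in large cells.

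For the reverse inclusion, I take $\eta\in\opResAc'(\delta)$ and show $\MM(\eta)\in\Res(\MM(\delta))$ by induction. The induction is on a distance between $\eta$ and $\delta$, say the total number of projective accordions separating $s(\eta)$ from $s(\delta)$ and $t(\eta)$ from $t(\delta)$ along the boundary of the colored region. Each induction step moves one endpoint across one neighboring projective, using one of two operations:
\begin{enumerate}[label=$\bullet$, itemsep=1mm]
    \item an extension of an already-obtained accordion by a projective, either an overlap extension or, via \cref{prop:Extigeom}, an arrow extension of weight $1$;
    \item a syzygy or kernel of an epimorphism from $P\oplus\MM(\varsigma)$ onto an already-obtained object, as in \cref{fig:kerepi}.
\end{enumerate}
The key steps, in order, are:
\begin{enumerate}[label=\arabic*), itemsep=1mm]
    \item realise all red and green endpoints adjacent to $\delta$ via syzygies;
    \item realise the orange points along the paths to $C_L$ and $C_R$ via kernels of epimorphisms;
    \item combine independent source and target moves by overlap extensions.
\end{enumerate}

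I expect the main obstacle to be the large cells $C_L$ and $C_R$. There the orange and pink points, and the special points $w_L(\delta)$ and $w_R(\delta)$, must be reached exactly, and \cref{lem:conditionsResAc}\ref{3Accord}--\ref{4Accord} forbids the others. I would first treat the case where $\delta$ is contained in a single cell, and then reduce the general case to it by splitting $\delta$ at $\eta_L$ and $\eta_R$.
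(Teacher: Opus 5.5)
The paper does not prove this statement; it is quoted from \cite{DS251}, so I can only judge your argument on its own terms. Your two-inclusion strategy is the natural one. However, the half ``$\mathscr{U}_\delta$ is resolving'' has a genuine gap.

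In the syzygy step and in the extension-by-a-projective step you colour points with $\col_\delta$. But the projectives involved belong to $\NP(\varsigma)$, not $\NP(\delta)$: they are the summands in the minimal projective resolution of $\MM(\varsigma)$, or the $\rho$ with $\Ext^{>0}(\MM(\varsigma),\MM(\rho))\neq 0$. \cref{prop:CombidescripNproj}$(ii)$ describes the resolution of $\MM(\delta)$, not of $\MM(\varsigma)$, and $\col_\delta$ is only defined on $\NP(\delta)_0$. What you actually need is that for every $\varsigma\in\opResAc'(\delta)$ the relevant points of $\NP(\varsigma)_0$ lie in $\NP(\delta)_0$ with colours compatible with $\col_\delta$. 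That is essentially the statement $\opResAc'(\varsigma)\subseteq\opResAc'(\delta)$, a non-trivial geometric fact, not something built into the coloration.

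Your description of the syzygy is also inaccurate. By \cref{prop:geo_mor}, only $\kappa_0$ and $\kappa_p$ contain an endpoint of $\varsigma$. The intermediate $\kappa_i$ join endpoints of two consecutive cover summands $\delta_i,\delta_{i+1}$ lying on opposite sides of $\varsigma$, and they cross $\varsigma$. For these you must check both endpoints, and also the laterality that $\col_\delta$ induces on such a transversal arc.

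The extension step has the same flaw. Which endpoints get joined is decided by the direction in which $\Ext^1$ is non-zero, that is, by the overlap strip relative to $\Delta^{\gpoint}$. In \cref{fig:overlapextaccord} the two middle terms join the endpoints lying on the same side of that strip. So if the laterality of \cref{rem:laterality} puts the sources of $\varsigma$ and $\varsigma'$ on the same side of the strip, the middle term on that side joins two sources. Planarity alone does not exclude this.

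Likewise, for arrow extensions you must rule out two members of $\opResAc'(\delta)$ sharing their target (or their source) with weight $1$ there. By \cref{prop:Extigeom} that would produce an arc joining their two sources (or targets), which need not lie in $\opResAc'(\delta)$. \cref{lem:conditionsResAc} only lists necessary conditions on members and says nothing about weights.

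Finally, the reverse inclusion is only a plan. The large cells $C_L,C_R$ are the only place where colour rules 4)--5) and \cref{lem:conditionsResAc}\ref{3Accord}--\ref{4Accord} matter, and you leave them open. ``Splitting $\delta$ at $\eta_L,\eta_R$'' is not an operation available inside $\Res(\MM(\delta))$. To use it you would have to realise the pieces as objects of $\Res(\MM(\delta))$, for instance as the end syzygy summands $\kappa_0,\kappa_p$. You would then also need to control the direction of the overlap extensions that recombine them, which is the same issue as above. As written, neither inclusion is established.
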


\subsection{Closure operators}
\label{ss:closure}
For any given set $\mathfrak{A}$, we define a \new{closure operator} on $\mathfrak{A}$ as a map $\cl : \mathcal{P}(\mathfrak{A}) \rightarrow \mathcal{P}(\mathfrak{A})$ such that:
\begin{enumerate}[label=$\bullet$, itemsep=1mm]
    \item for all $\mathfrak{B} \in  \mathcal{P}(\mathfrak{A})$, $\mathfrak{B} \subseteq \cl(\mathfrak{B})$,
    \item for all $\mathfrak{B} \in  \mathcal{P}(\mathfrak{A})$, $\cl(\mathfrak{B}) = \cl(\cl(\mathfrak{B}))$, and,
    \item for all $\mathfrak{B}, \mathfrak{C} \in \mathcal{P}(\mathfrak{A})$, if $\mathfrak{B} \subseteq \mathfrak{C}$, then $\cl(\mathfrak{B}) \subseteq \cl(\mathfrak{C})$.
\end{enumerate}
A subset $\mathfrak{B} \in \mathcal{P}(\mathfrak{A})$ is \new{$\cl$-closed} if $\mathfrak{B} = \cl(\mathfrak{B})$.
We denote by $\mathcal{P}_{\cl}(\mathfrak{A})$ the subset of $\mathcal{P}(\mathfrak{A})$ made of $\cl$-closed sets.

In our setting, the operators $(-)^{\Res}$ and $\opResAc'$ are closure operators on respectively $\pmb{\ind \setminus \proj}(Q,R)$ and $\Accord'$. As a result, we proved the following result.

\begin{prop}[\cite{DS252}] \label{prop:folkloreres}
Let $(Q,R)$ be a gentle tree. The following assertions hold:
\begin{enumerate}[label=$(\roman*)$,itemsep=1mm]
    \item The poset $(\mathcal{P}_{\Res}(\pmb{\ind \setminus \proj}(Q,R)), \subseteq)$ is a subposet of\\ $(\mathscr{J}(\pmb{\ind \setminus \proj}(Q,R), \Resleq), \subseteq)$;
    \item The lattice $(\mathcal{P}_{\Res}(\pmb{\ind \setminus \proj}(Q,R)), \subseteq, \cap, \veebar)$  is isomorphic to \\ $(\ResOrd(Q,R), \subseteq, \cap, \underline{\cup})$;
    \item The join-irreducible elements of $(\mathcal{P}_{\Res}(\pmb{\ind \setminus \proj}(Q,R)), \subseteq, \cap, \underline{\cup})$ are the principal ideals of $(\pmb{\ind \setminus \proj}(Q,R), \Resleq)$.
\end{enumerate}
\end{prop}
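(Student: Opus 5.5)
The plan is to build everything from the order-preserving correspondence between resolving subcategories and their sets of indecomposable non-projective objects. Assertion $(ii)$ comes first, then $(i)$, and $(iii)$ is transported from \cref{thm:Monoareallthejoinirred}.

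For $(ii)$, consider the maps
\[\Phi : \mathscr{R} \longmapsto \pmb{\ind \setminus \proj}(\mathscr{R}) \qquad \text{and} \qquad \Psi : \mathcal{X} \longmapsto \add\left(\mathcal{X} \cup \proj(Q,R)\right).\]
\begin{enumerate}[label=$\bullet$, itemsep=1mm]
\item \emph{$\Phi$ lands in closed sets.} For a resolving $\mathscr{R}$, we have $\Res(\Phi(\mathscr{R})) \subseteq \mathscr{R}$, so $\Phi(\mathscr{R})^{\Res} \subseteq \Phi(\mathscr{R})$. The reverse inclusion holds because $(-)^{\Res}$ is extensive.
\item \emph{$\Psi$ inverts $\Phi$ on closed sets.} Since $\rep(Q,R)$ is Krull--Schmidt, a resolving subcategory is additively closed and contains $\proj(Q,R)$ by \cref{lem:othercharactresolv}. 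Hence it is determined by its indecomposable non-projective objects, and $\Psi\Phi(\mathscr{R}) = \mathscr{R}$. Conversely, for $\cl$-closed $\mathcal{X} = \mathcal{X}^{\Res}$ we get $\Psi(\mathcal{X}) = \Res(\mathcal{X})$, which is resolving, and $\Phi\Psi(\mathcal{X}) = \mathcal{X}$.
\item \emph{Lattice structure.} Both maps preserve inclusion, so they form an order isomorphism. Order isomorphisms of lattices preserve meets and joins. Concretely, the meet is intersection, since intersections of resolving subcategories are resolving. The join of $\mathcal{X}$ and $\mathcal{Y}$ is $\mathcal{X} \veebar \mathcal{Y} = (\mathcal{X} \cup \mathcal{Y})^{\Res}$, which corresponds to the resolving closure $\mathscr{R}_1 \underline{\cup} \mathscr{R}_2$ of the union.
\end{enumerate}

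For $(i)$, let $\mathcal{X}$ be $\Res$-closed, $Z \in \mathcal{X}$, and $Y \Resleq Z$. Then $\Res(Y) \subseteq \Res(Z) \subseteq \Res(\mathcal{X})$, so $Y \in \mathcal{X}^{\Res} = \mathcal{X}$. Thus closed sets are down-sets of $(\pmb{\ind \setminus \proj}(Q,R), \Resleq)$, and both posets are ordered by inclusion, giving the subposet claim.

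For $(iii)$, transport \cref{thm:Monoareallthejoinirred} through the isomorphism of $(ii)$. The join-irreducible closed sets are then exactly the sets $\Phi(\Res(X)) = \{X\}^{\Res}$ for $X \in \pmb{\ind \setminus \proj}(Q,R)$. It remains to check that
\[\{X\}^{\Res} = \{ Y \mid Y \Resleq X \},\]
which is the principal ideal generated by $X$. Indeed, $Y \in \{X\}^{\Res}$ if and only if $Y \in \Res(X)$, if and only if $\Res(Y) \subseteq \Res(X)$. For the last equivalence, $\Res(X)$ is resolving and contains $Y$, so it contains $\Res(Y)$; the converse is immediate since $Y \in \Res(Y)$. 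Conversely, every principal ideal arises this way, so the join-irreducibles are precisely the principal ideals.

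I expect no real obstacle. The only delicate point is in $(ii)$: one must note that the join in the lattice of closed sets is $(\mathcal{X} \cup \mathcal{Y})^{\Res}$ rather than the plain union, so that the isomorphism respects $\veebar$ and $\underline{\cup}$.
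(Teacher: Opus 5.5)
Your proof is correct and takes essentially the route the paper indicates; the paper gives no proof here, citing \cite{DS252} and noting only that $(-)^{\Res}$ is a closure operator. You identify the $\Res$-closed sets with resolving subcategories via $\mathscr{R}\mapsto\pmb{\ind \setminus \proj}(\mathscr{R})$, using Krull--Schmidt and $\proj(Q,R)\subseteq\mathscr{R}$; you show that closed sets are down-sets for $\Resleq$; and you transport \cref{thm:Monoareallthejoinirred} through the resulting lattice isomorphism, after checking that $\{X\}^{\Res}$ is the principal ideal generated by $X$.
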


\begin{prop}[\cite{DS252}] \label{prop:folkloreresaccord}
The following assertions hold:
\begin{enumerate}[label=$(\roman*)$,itemsep=1mm]
    \item The operator $\opResAc'$ is a closure operator on $\Accord'$;
    \item The poset $(\mathcal{P}_{\opResAc'}(\Accord'), \subseteq)$ is a subposet of $(\mathscr{J}(\Accord', \Accordleq), \subseteq)$;
    \item The lattice $(\mathcal{P}_{\opResAc'}(\Accord'), \subseteq, \cap, \veebar)$, where \[\forall (\mathfrak{B}, \mathfrak{C}) \in \mathcal{P}_{\opResAc'}(\Accord')^2,\ \mathfrak{B} \veebar \mathfrak{C} = \opResAc'(\mathfrak{B} \cup \mathfrak{C}),\] is isomorphic to $(\ResOrd(Q,R), \subseteq, \cap, \underline{\cup})$;
    \item The join-irreducible elements of $(\mathcal{P}_{\opResAc'}(\Accord'), \subseteq, \cap, \underline{\cup})$ are the principal ideals of $(\Accord', \Accordleq)$;
    \item All the ideals of $(\Accord', \Accordleq)$ are closed under non-projective syzygies.
\end{enumerate}
\end{prop}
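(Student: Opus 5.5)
The plan is to transport everything from the representation-theoretic side to the accordion side along the bijection $\delta \mapsto \MM(\delta)$, with inverse $M \mapsto \gamma_{(M)}$, between $\Accord'$ and $\pmb{\ind \setminus \proj}(Q,R)$. Write $\MM(\mathfrak{B}) = \{\MM(\delta) \mid \delta \in \mathfrak{B}\}$ for $\mathfrak{B} \subseteq \Accord'$. For a subset $\mathfrak{B}$, I would read $\opResAc'(\mathfrak{B})$ as the set of accordions $\gamma_{(X)}$ with $X \in \MM(\mathfrak{B})^{\Res}$, that is, $X \in \pmb{\ind \setminus \proj}(\Res(\MM(\mathfrak{B})))$. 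For a single accordion $\delta$, \cref{thm:res_clo_1} shows this agrees with the combinatorial set of \cref{def:geometricres}, since $\Res(\MM(\delta)) = \mathscr{U}_\delta$. The relation $\Accordleq$ on $\Accord'$ is then $\eta \Accordleq \delta$ if and only if $\eta \in \opResAc'(\delta)$. By \cref{thm:res_clo_1} this holds if and only if $\MM(\eta) \Resleq \MM(\delta)$. So the bijection is an isomorphism of posets $(\Accord', \Accordleq) \cong (\pmb{\ind \setminus \proj}(Q,R), \Resleq)$.

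For $(i)$, I check the three closure axioms for $(-)^{\Res}$ and pull them back along this bijection.
\begin{enumerate}[label=$\bullet$, itemsep=1mm]
\item Extensivity holds because $\mathcal{X} \subseteq \Res(\mathcal{X})$.
\item Monotonicity holds because $\Res(\mathcal{X})$ is the intersection of all resolving subcategories containing $\mathcal{X}$.
\item Idempotence holds because $\Res(\mathcal{X})$ is generated additively by the projectives together with $\mathcal{X}^{\Res}$. Hence any resolving subcategory containing $\mathcal{X}^{\Res}$ contains $\Res(\mathcal{X})$, which gives $\Res(\mathcal{X}^{\Res}) = \Res(\mathcal{X})$.
\end{enumerate}
Conjugating by the bijection gives the same three properties for $\opResAc'$.

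Assertions $(ii)$, $(iii)$ and $(iv)$ are then exactly the images of \cref{prop:folkloreres} $(i)$, $(ii)$ and $(iii)$ under this order isomorphism. Ideals correspond to ideals, closed sets correspond to closed sets, and the join $\mathfrak{B} \veebar \mathfrak{C} = \opResAc'(\mathfrak{B} \cup \mathfrak{C})$ corresponds to $\Res$ applied to the union. For $(ii)$ I would also check directly that a closed set is a down-set. If $\eta \Accordleq \delta$ with $\delta \in \mathfrak{B}$, then $\MM(\eta) \in \Res(\MM(\delta)) \subseteq \Res(\MM(\mathfrak{B}))$, so $\eta \in \opResAc'(\mathfrak{B})$. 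For $(iv)$, principal ideals are the sets $\opResAc'(\delta)$, which correspond to monogenous resolving subcategories. These are join-irreducible by \cref{thm:Monoareallthejoinirred}.

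For $(v)$, the natural first try is to take any ideal $\mathfrak{I}$, any $\delta \in \mathfrak{I}$, and any non-projective indecomposable summand $\MM(\kappa)$ of $\Omega^i(\MM(\delta))$. Since $\Res(\MM(\delta))$ is closed under syzygies (\cref{lem:othercharactresolv}) and under summands, we get $\MM(\kappa) \in \Res(\MM(\delta))$. Hence $\kappa \Accordleq \delta$, so $\kappa \in \mathfrak{I}$. The summands $\kappa$ can be identified explicitly via \cref{prop:geo_mor} if needed.

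The main obstacle I expect is in $(iii)$. The map from closed sets to $\ResOrd(Q,R)$ is $\mathfrak{B} \mapsto \add(\MM(\mathfrak{B}) \cup \proj)$, and I must show this subcategory is already resolving when $\mathfrak{B}$ is closed, so that the map is well defined and surjective. This should follow from the fact that a resolving subcategory is determined by its non-projective indecomposables (\cite[Theorem 2.11]{DS251}). I would also have to make sure the join of two closed sets is the closure of their union, not their union. This is why $(ii)$ claims only a subposet and not a sublattice of the ideal lattice.
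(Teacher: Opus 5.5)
Your proposal is correct and follows the route the paper implicitly takes in citing this result: transport \cref{prop:folkloreres} and \cref{thm:Monoareallthejoinirred} along the dictionary $\delta \mapsto \MM(\delta)$, with \cref{thm:res_clo_1} identifying $\opResAc'(\delta)$ with the non-projective part of $\Res(\MM(\delta))$. The difficulty you anticipate in $(iii)$ is not one: for a closed $\mathfrak{B}$, the subcategory $\add(\MM(\mathfrak{B}) \cup \proj)$ is literally $\Res(\MM(\mathfrak{B}))$, since a resolving subcategory of a Krull--Schmidt category is the additive closure of its indecomposables.
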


We recall that we can endow the antichains $\Anti(\Accord', \Accordleq)$ with a lattice structure inherited from $(\mathscr{J}(\Accord', \Accordleq), \subseteq)$. Set \[ \Theta : \left\{ \begin{matrix}
\Anti(\Accord', \Accordleq) & \longrightarrow & \mathscr{J}(\Accord',\Accordleq) \\
\mathfrak{B} & \longmapsto & \langle \mathfrak{B} \rangle
\end{matrix} \right.,\] where $\langle \mathfrak{B} \rangle$ is the ideal generated by $\mathfrak{B}$. Given $\mathfrak{B}, \mathfrak{C} \in \Anti(\Accord',\Accordleq)$, we say that $\mathfrak{B} \bleq \mathfrak{C}$ whenever $\Theta(\mathfrak{B}) \subseteq \Theta(\mathfrak{C})$. As $\Theta$ is a bijective map, $(\Anti(\Accord',\Accordleq), \bleq)$ is a poset, and we can define a join $\curlyvee$ and a meet $\curlywedge$ operation on this poset such that $\Theta$ becomes a lattice isomorphism. See \cite[Section 3.2]{DS252} for more details.

We adapted, therefore, \cite[Algorithm 3.17]{DS252} into an algorithm on the geometric model, to get the antichain $\mathfrak{B}$ in $(\Accord', \Accordleq)$ that generates, as an ideal, $\opResAc'(\mathcal{X})$. 

\begin{prop} \label{prop:maingeommotivation}
For any resolving subset $\mathfrak{U} \subseteq \mathcal{P}_{\opResAc'}(\Accord')$, there exists a unique antichain $\mathfrak{D}_\mathfrak{U} \in \Anti(\Accord', \Accordleq)$ such that \[\mathfrak{U} = \bigcup_{\delta \in \mathfrak{D}_{\mathfrak{U}}} \opResAc'(\delta). \]
Moreover $\mathfrak{D}_\mathfrak{U}$ can be obtained from any $\mathfrak{B} \subseteq \Accord'$ such that $\opResAc(\mathfrak{B}) = \mathfrak{U} $ by applying \cite[Algorithm 3.17]{DS252} adapted to the geometric model language.
\end{prop}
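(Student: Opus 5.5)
The statement to prove is \cref{prop:maingeommotivation}: existence and uniqueness of the antichain $\mathfrak{D}_\mathfrak{U}$, and the fact that \cite[Algorithm 3.17]{DS252} computes it. The plan is to transport everything to the poset $(\Accord', \Accordleq)$ and use \cref{prop:folkloreresaccord}.

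\emph{Existence.} Let $\mathfrak{U}$ be $\opResAc'$-closed. By \cref{prop:folkloreresaccord}(ii), $\mathfrak{U}$ is an ideal of $(\Accord',\Accordleq)$. Since $\Accord'$ is finite, every ideal is generated by its set of maximal elements. Set $\mathfrak{D}_\mathfrak{U} = \max(\mathfrak{U})$; this is an antichain with $\Theta(\mathfrak{D}_\mathfrak{U}) = \mathfrak{U}$.

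We then need the equality $\langle \mathfrak{D}_\mathfrak{U}\rangle = \bigcup_{\delta\in\mathfrak{D}_\mathfrak{U}} \opResAc'(\delta)$. This holds because the principal ideal $\langle \delta \rangle$ equals $\opResAc'(\delta)$. Indeed, by definition of $\Accordleq$ (the accordion transcription of $\Resleq$, via \cref{thm:res_clo_1}), $\eta \Accordleq \delta$ if and only if $\opResAc'(\eta)\subseteq \opResAc'(\delta)$. Since $\opResAc'$ is extensive and idempotent, this is equivalent to $\eta \in \opResAc'(\delta)$. Taking the union over $\delta \in \mathfrak{D}_\mathfrak{U}$ gives exactly the ideal generated by $\mathfrak{D}_\mathfrak{U}$.

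\emph{Uniqueness.} Suppose $\mathfrak{D}$ is any antichain with $\bigcup_{\delta\in\mathfrak{D}}\opResAc'(\delta) = \mathfrak{U}$. By the identification above, this says $\Theta(\mathfrak{D}) = \mathfrak{U}$. Since $\Theta$ is a bijection (an antichain is recovered as the maximal elements of the ideal it generates), we get $\mathfrak{D} = \mathfrak{D}_\mathfrak{U}$.

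\emph{The algorithm.} Take $\mathfrak{B}\subseteq\Accord'$ with $\opResAc(\mathfrak{B}) = \mathfrak{U}$. First, $\opResAc'(\mathfrak{B})$ is the join $\curlyvee_{\beta\in\mathfrak{B}}\opResAc'(\beta)$ in the lattice of \cref{prop:folkloreresaccord}(iii). Pulled back through $\Theta$, this corresponds to the join of the singleton antichains $\{\beta\}$ in $(\Anti(\Accord',\Accordleq),\bleq)$. \cite[Algorithm 3.17]{DS252} computes precisely this join by iterating two steps: adjoin the accordions forced by extensions and syzygies, then discard non-maximal elements. Its correctness was established in \cite{DS252} in the abstract poset $(\pmb{\ind\setminus\proj}(Q,R),\Resleq)$. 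The isomorphism of \cref{thm:res_clo_1} and \cref{prop:folkloreresaccord}(iii) translates each step into geometric operations, namely crossings (overlap extensions), shared endpoints with weight $1$ (arrow extensions), and the kernel construction of \cref{prop:geo_mor}.

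The main obstacle is checking that this translation is faithful. Each operation the algorithm performs on modules (computing extensions, kernels of epimorphisms, and comparisons under $\Resleq$) must correspond exactly to the geometric operations, so that the output antichain is the same. This is handled by \cref{prop:geom_ext}, \cref{prop:geo_mor} and \cref{thm:res_clo_1}. The remaining points, such as the fact that on a gentle tree all extensions are of overlap or arrow type, are routine bookkeeping.
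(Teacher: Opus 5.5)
The paper gives no proof of this proposition; it is recalled from \cite{DS252}. Existence and uniqueness rest there on \cref{prop:folkloreresaccord} and the bijection $\Theta$, and the algorithmic claim rests on \cite[Algorithms 3.17 and 6.10]{DS252}. Your existence and uniqueness argument is correct and is exactly that underlying argument: $\opResAc'$-closed sets are ideals, $\langle \delta\rangle=\opResAc'(\delta)$ because $\opResAc'$ is a closure operator and $\Accordleq$ is inclusion of monogenous closures, and an antichain is recovered as the set of maximal elements of the ideal it generates.

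The paragraph on the algorithm, however, contains a step that fails.
\begin{itemize}
\item The join $\curlyvee$ on $(\Anti(\Accord',\Accordleq),\bleq)$ is defined so that $\Theta$ is a lattice isomorphism onto $(\mathscr{J}(\Accord',\Accordleq),\subseteq)$, whose join is the union of ideals. So the $\curlyvee$-join of the singletons $\{\beta\}$, $\beta\in\mathfrak{B}$, is just the set of maximal elements of $\mathfrak{B}$, and it generates $\bigcup_{\beta}\opResAc'(\beta)$.
\item The join $\veebar$ of \cref{prop:folkloreresaccord}$(iii)$ is the closure of the union, and the two differ in general. If unions of $\opResAc'$-closed sets were closed, $\mathcal{P}_{\opResAc'}(\Accord')$ would be a sublattice of the distributive lattice $\mathscr{J}(\Accord',\Accordleq)$. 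Then $\ResOrd(Q,R)$ would be distributive, contradicting the failure of semidistributivity recalled in the introduction.
\item Hence $\Theta^{-1}(\opResAc'(\mathfrak{B}))$ is not the $\curlyvee$-join of singletons. The correct target is the set of $\Accordleq$-maximal elements of $\opResAc'(\mathfrak{B})$. The whole content of the algorithm is to produce the elements of $\opResAc'(\mathfrak{B})\setminus\bigcup_\beta\opResAc'(\beta)$.
\end{itemize}

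This is also why the ``faithful translation'' you flag is not routine bookkeeping from \cref{prop:geom_ext}, \cref{prop:geo_mor} and \cref{thm:res_clo_1}. Those only describe $\Ext^1$ between two indecomposables, kernels of minimal epimorphisms, and monogenous closures. In \cite{DS252} the missing elements are produced by moves on pairs of accordions. One example is the $\CoZ$-move of \cref{def:CoZ}, which records consecutive higher extensions through a common neighbouring projective $\rho\in\NP(\delta)\cap\NP(\eta)$. The correctness of that geometric algorithm has to be cited from \cite{DS252}, as the paper does. It should not be presented as following from the $\Theta$-identification, which is false as stated.
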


\subsection{Co-Z move}
\label{ss:CoZ}
In \cite{DS252}, we develop combinatorial techniques to adapt Algorithm 3.17 to the geometric model language, and determine completely with combinatorial operations, from any subset $\mathfrak{W} \subset \Accord'$, the antichain $\mathfrak{D}_\mathfrak{W}$ such that \[\opResAc'(\mathfrak{W}) = \bigcup_{\delta \in \mathfrak{D}_{\mathfrak{W}}} \opResAc'(\delta).\]
The primary operations, called \emph{moves}, applied to pairs of accordions in $\Accord'$, allow one to gradually reach this aim (see \cite[Algorithm 6.10]{DS252}. In this section, we recall essential settings about one of them: the \emph{$\CoZ$-move}. 

It records the behavior of two consecutive (higher) extensions involving a projective accordion $\rho \in \NP(\delta) \cap \NP(\eta)$ such that, by denoting $v$ and $w$ its extremities, we have  \[\col_\delta(\{v,w\}) \nsubseteq \{{\osquare}, {\psquare}\} \text{ and } \col_\eta(\{v,w\}) \nsubseteq \{{\osquare}, {\psquare}\}.\]
To define this move, we need to introduce vertices associated to $(\delta, \eta) \in (\Accord')^2$ and some given vertex $v \in \NP(\delta)_0 \cap \NP(\eta)_0$.

\begin{definition}
Let $\delta\in\Accord'$, $v\in\NP(\delta)_0^{{\gsquare}}$, $w\in\NP(\delta)_0^{{\rsquare}}$. The \new{upper source of} $\delta$ \new{associated to} $v$ denoted by $\s_{(\delta)}(v)$ as the source of the maximal accordion in  $\{\varsigma \mid \varsigma \in \opResAc'(\delta),\ t(\varsigma) = v\}$ with respect to $\Accordleq$.
Similarly, we define the \new{upper target of} $\delta$ \new{associated to} $w$, denoted by $\t_{(\delta)}(w)$, as the target of the maximal accordion in $\{\nu \mid \nu \in \opResAc'(\delta),\ s(\nu) = w\}$.
\end{definition}

We can make $\s_{(\delta)}(v)$ and $\t_{(\delta)}(v)$ explicit.

\begin{lemma}\label{rem:useful_up_s}
Let $\delta \in \Accord'$, $v \in \NP(\delta)_0^{{\gsquare}}$ and $w \in \NP(\delta)_0^{{\rsquare}}$. The following assertions hold:
\begin{enumerate}[label=$(\roman*)$, itemsep=1mm]
    \item we have $\displaystyle \s_{(\delta)}(v)= \begin{cases}
    s(\delta) &  \begin{matrix} 
    \text{if there exists } \rho \in \Prj(\Delta^{\gpoint}) \text{ such that both} \\
    t(\rho) = v \text{ and } \rho \text{ crosses } \delta; \hfill \end{matrix} \\
    w_R(\delta) & \text{otherwise;}
    \end{cases}$
    \item we have $\displaystyle \t_{(\delta)}(w)= \begin{cases}
    t(\delta) & \begin{matrix} 
    \text{if there exists } \rho \in \Prj(\Delta^{\gpoint}) \text{ such that both} \\
    s(\rho) = w \text{ and } \rho \text{ crosses } \delta; \hfill \end{matrix} \\
    w_L(\delta) & \text{otherwise.}
    \end{cases}$
\end{enumerate}
\end{lemma}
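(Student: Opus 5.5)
The statement to prove is \cref{rem:useful_up_s}, which makes $\s_{(\delta)}(v)$ explicit; part $(ii)$ is dual, so I would prove $(i)$ only. The plan is to identify exactly which accordions $\varsigma \in \opResAc'(\delta)$ have target $t(\varsigma)=v$ with $v \in \NP(\delta)_0^{\gsquare}$, and then locate the maximal one for $\Accordleq$. By \cref{thm:res_clo_1} and \cref{prop:folkloreresaccord}, $\varsigma \Accordleq \varsigma'$ means $\opResAc'(\varsigma)\subseteq \opResAc'(\varsigma')$. So the maximal element of the set should be the one whose neighboring projectives and coloration "contain" those of the others, which I expect to be the one whose source lies as far as possible from $v$ along $\delta$.

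\emph{First case:} there is $\rho\in\Prj(\Delta^{\gpoint})$ with $t(\rho)=v$ crossing $\delta$. Consider the $\rpoint$-arc $\varsigma_0$ from $s(\delta)$ to $v$ that follows $\delta$ until just after crossing $\rho$, then runs along $\rho$ to $v$.
\begin{itemize}
\item Using \cref{prop:arcsaccordions}, I would check that $\varsigma_0$ is an accordion. This is the case because $\delta$ satisfies the adjacency conditions $(a1)$, and in the last cell the endpoint $v$ is the non-common endpoint of an arc adjacent to the crossed one.
\item Since $s(\delta)$ is colored $\rsquare$ or $\osquare$ and $v$ is $\gsquare$, we get $\varsigma_0\in\opResAc'(\delta)$.
\item For maximality, take any $\varsigma\in\opResAc'(\delta)$ with $t(\varsigma)=v$. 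By \cref{lem:conditionsResAc}, $s(\varsigma)$ is constrained on the side of $C_L$ (items \ref{1Accord}, \ref{2Accord}, \ref{4Accord}). Together with the non-crossing structure in the disc, this forces $\varsigma$ to cross a subset of the projectives crossed by $\varsigma_0$.
\item The inclusion $\NP(\varsigma)\subseteq \NP(\varsigma_0)$, with compatible colorations, then gives $\opResAc'(\varsigma)\subseteq\opResAc'(\varsigma_0)$ via \cref{def:geometricres}. Hence $\s_{(\delta)}(v)=s(\delta)$.
\end{itemize}

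\emph{Second case:} no such $\rho$ exists. Then $v$ is reached only through the target side of $\delta$, namely the cell $C_R$ or the projectives $\vartheta_1^R,\vartheta_2^R$ around $t(\delta)$.
\begin{itemize}
\item By \cref{lem:conditionsResAc}\ref{3Accord} (dual form: $t(\varsigma)\in\NP(\delta)_0^{\gsquare}$ with source in $C_R$ forces $s(\varsigma)=w_R(\delta)$), the candidate with largest reach has source $w_R(\delta)$.
\item I would show that every $\varsigma$ in the set either equals the accordion from $w_R(\delta)$ to $v$ or is a syzygy-type sub-accordion of it. For the latter, I would use \cref{prop:geo_mor}, so that it lies in the ideal generated by the former (ideals being closed under non-projective syzygies, \cref{prop:folkloreresaccord}$(v)$).
\item Hence the maximum has source $w_R(\delta)$.
\end{itemize}

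The main obstacle is the maximality comparison. It requires showing $\opResAc'(\varsigma)\subseteq\opResAc'(\varsigma_{\max})$, which means comparing colorations of two different accordions. I would first try to establish it by exhibiting $\MM(\varsigma)$ inside $\Res(\MM(\varsigma_{\max}))$ directly. Concretely, I would realize $\MM(\varsigma)$ as a kernel of an epimorphism from $\MM(\varsigma_{\max})$ plus projectives, as in \cref{fig:kerepi}, which avoids a case-by-case coloration check.
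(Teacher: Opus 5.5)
The paper only recalls this lemma from \cite{DS252} and gives no proof here, so your outline has to stand on its own. It does not yet. The step that carries all the content is asserted rather than proved: that your candidate is the maximum of $\{\varsigma\in\opResAc'(\delta)\mid t(\varsigma)=v\}$ for $\Accordleq$.

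\emph{The first case.} Knowing that a competitor $\varsigma$ crosses only projectives crossed by $\varsigma_0$ does not give $\NP(\varsigma)\subseteq\NP(\varsigma_0)$. The neighbouring projectives of types (ii) and (iii) at $s(\varsigma)\neq s(\varsigma_0)$ are not controlled. Endpoint colours can also change, being orange for one accordion and only red or only green for the other. So ``compatible colorations'' is exactly the statement that has to be proved.

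Your fallback does not work as stated. A resolving subcategory is closed under kernels of epimorphisms only when the codomain already lies in it. An epimorphism from $\MM(\varsigma_0)\oplus P$ onto a projective splits, so it produces nothing new.

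What does work is closure under extensions by projectives. If $\pi\in\Prj(\Delta^{\gpoint})$ crosses $\varsigma_0$, the overlap extension exchanges endpoints. Exactly as in \cref{lem:addcurve1b}, one summand of its middle term is the arc from the upper endpoint of $\pi$ to $v$, so these competitors lie in $\opResAc'(\varsigma_0)$. The remaining competitors, with source in $C_L$, still need a syzygy or arrow-extension argument, or a direct colour check through \cref{thm:res_clo_1}.

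\emph{The second case} is essentially unargued. \cref{lem:conditionsResAc}$(iii)$ only restricts competitors whose source lies in $C_R(\delta)_0$. Nothing in your sketch excludes competitors with source $s(\delta)$, a red point above $\delta$, or an orange point of $C_L$. Your phrase ``$v$ is reached only through the target side'' is precisely what must be shown. You would do this with \cref{prop:arcsaccordions} and the orientation induced by $\col_\delta$: an arc from such a point to $v$ must either violate the accordion conditions, or be oriented with $v$ as its source, or force a projective ending at $v$ to cross $\delta$. You also never check that the arc from $w_R(\delta)$ to $v$ is a non-projective accordion whose source is red or orange. Moreover, ``syzygy-type sub-accordion'' is not something \cref{prop:geo_mor} provides.

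Finally, the boundary case $v=t(\delta)$ needs separate treatment. Suppose $t(\delta)\in\NP(\delta)_0^{{\gsquare}}$. Then $\delta$ itself belongs to the set and is its maximum, so the upper source is $s(\delta)$. Yet no projective with endpoint $t(\delta)$ can cross $\delta$, so your second case applies. Your claim that every competitor lies below the arc from $w_R(\delta)$ then fails unless $w_R(\delta)=s(\delta)$. This case must be excluded or handled on its own.
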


\begin{definition} \label{def:sourceproxy}
Let  $(\delta, \eta) \in (\Accord')^2$ be a pair of noncrossing accordions, such that $\NP(\delta)\cap \NP(\eta)\neq \varnothing$, and $\col_\delta$ and $\col_\eta$ match. Without loss of generality, consider $\delta$ above $\eta$. Let $v \in \NP(\eta)_0 \cap \NP(\delta)_0^{{\gsquare}}$, and $w \in \NP(\delta)_0 \cap \NP(\eta)_0^{{\rsquare}}$. The \new{upper source of} $(\delta,\eta)$ \new{associated to} $v$, denoted by $\s_{(\delta,\eta)}(v)$, is defined as $\s_{(\delta)}(v)$. We define the \new{upper target of} $(\delta,\eta)$ \new{associated to} $w$, denoted by $\t_{(\delta,\eta)}(w)$, to be $\t_{(\eta)}(w)$.
\end{definition}

We can show that $\s_{(\delta,\eta)}(v)$ and $\t_{(\delta,\eta)}(w)$ only depends on the pair $(\delta,\eta)$ (see \cite{DS252}). Therefore, we simply call \new{upper source of $(\delta,\eta)$} the one associated to any $v \in \NP(\eta)_0 \cap \NP(\delta)_0^{{\gsquare}}$, and we denote it by \new{$\s_{(\delta, \eta)}$}.  We also call $\t_{(\delta, \eta)}$ the \new{upper target for $(\delta, \eta)$} similarly.

\begin{figure}[!ht]
\centering 
    \begin{tikzpicture}[mydot/.style={
					circle,
					thick,
					fill=white,
					draw,
					outer sep=0.5pt,
					inner sep=1pt
				}, scale = 1]
		\tikzset{
		osq/.style={
        rectangle,
        thick,
        fill=white,
        append after command={
            node [
                fit=(\tikzlastnode),
                orange,
                line width=0.3mm,
                inner sep=-\pgflinewidth,
                cross out,
                draw
            ] {}}}}
		\draw[line width=0.7mm,black] (0,0) ellipse (4cm and 1.5cm);
		\foreach \X in {0,1,...,23}
		{
		\tkzDefPoint(4*cos(pi/12*\X),1.5*sin(pi/12*\X)){\X};
		};
		
		\draw[line width=0.9mm ,bend left =60,red](1) edge (3);
		\draw[line width=0.9mm ,bend right =60,red](5) edge (3);
		\draw[line width=0.9mm ,bend right =30,red](5) edge (23);
		\draw[line width=0.9mm ,bend left =20,red](21) edge (23);
		\draw[line width=0.9mm ,bend left =20,red](7) edge (9);
		\draw[line width=0.9mm ,bend left =40,red](19) edge (21);
		\draw[line width=0.9mm ,bend left =30,mypurple,densely dashdotted](9) edge (19);
		\draw[line width=0.9mm ,bend left =40,mypurple,densely dashdotted](9) edge (17);
		\draw[line width=0.9mm ,bend left =40,red](9) edge (15);
		\draw[line width=0.9mm ,bend left =40,red](9) edge (11);
		\draw[line width=0.9mm ,bend left=30,red](13) edge (15);
		
		\draw[line width=0.7mm ,bend right=10,blue, loosely dashed](5) edge (17);
		\draw[line width=0.7mm ,bend left=10,blue, loosely dashed](7) edge (13);
		
		\filldraw [fill=mypurple,opacity=0.1] (9) to [bend left=30] (19) to [bend left=10] (17) to [bend right=40] cycle ;

		\foreach \X in {1,3,...,23}
		{
		\tkzDrawPoints[fill =red,size=4,color=red](\X);
		};

		\tkzDefPoint(-2.1,0.5){gamma};
		\tkzLabelPoint[blue](gamma){\Large $\delta$}
		\tkzDefPoint(-0.6,0.1){gamma};
		\tkzLabelPoint[blue](gamma){\Large $\eta$}
		\tkzDefPoint(-4.3,-0.5){s};
		\tkzLabelPoint[red](s){\Large $\pmb{s}_{(\delta,\eta)}$}
		\tkzDefPoint(1,2.1){t};
		\tkzLabelPoint[red](t){\Large $\pmb{t}_{(\delta,\eta)}$}
    \end{tikzpicture}
\caption{\label{fig:exuppersourceandtarget} Example of the determination of the upper source and target of a pair $(\delta, \eta) \in (\Accord')^2$.}
\end{figure}

\begin{definition} \label{def:CoZcompl}
A pair $(\delta, \eta) \in (\Accord')^2$ \new{admits a $\CoZ$-completion} if the following assertions hold:
\begin{enumerate}[label=$\bullet$,itemsep=1mm]
\item $\delta$ and $\eta$ are not crossing;
\item $\NP(\delta)\cap \NP(\eta)\neq \varnothing$; 
\item there exists $\rho \in \Prj(\Delta^{\gpoint})$ such that $\rho \not \Extperp \delta$ and $\rho \not \Extperp \eta$; and,
\item $\delta$ is above $\eta$.
\end{enumerate}
In such a case, the $\rpoint$-arc $\xi$ such that $s(\xi)=\s_{(\delta,\eta)}$ and $t(\xi)=\t_{(\delta,\eta)}$ is an accordion, and we call it the \new{$\CoZ$-completion} of $(\delta,\eta)$.
\end{definition}

\begin{lemma} \label{lem:welldefinedCoZ}
Let $(\delta, \eta) \in (\Accord')^2$. Assume that $(\delta, \eta)$ admits a $\CoZ$-completion $\xi$. Then $\xi \in \Accord$.
\end{lemma}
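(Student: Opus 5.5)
We must show that the $\rpoint$-arc $\xi$ with $s(\xi)=\s_{(\delta,\eta)}$ and $t(\xi)=\t_{(\delta,\eta)}$ is an accordion. The natural tool is \cref{prop:arcsaccordions}, which characterizes accordions through their behaviour relative to the projective dissection $\Prj(\Delta^{\gpoint})$. So the plan is to describe precisely which projective accordions $\xi$ crosses, and then check conditions $(a)$ and $(b)$ cell by cell.

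First, we use \cref{rem:useful_up_s} to pin down the endpoints. By \cref{def:sourceproxy}, $\s_{(\delta,\eta)}=\s_{(\delta)}(v)$ for some $v\in\NP(\eta)_0\cap\NP(\delta)_0^{\gsquare}$. This endpoint is therefore either $s(\delta)$ or $w_R(\delta)$. Dually, $\t_{(\delta,\eta)}=\t_{(\eta)}(w)$ is either $t(\eta)$ or $w_L(\eta)$. This gives four cases.

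In each case we realize $\xi$, up to homotopy, as a concatenation of three pieces:
\begin{enumerate}[label=$\bullet$]
\item an initial segment following $\delta$ (or the projective path from $\vartheta_1^R,\vartheta_2^R$ leading to $w_R(\delta)$);
\item a middle segment running along the common projective accordion $\rho$ with $\rho\not\Extperp\delta$ and $\rho\not\Extperp\eta$;
\item a final segment following $\eta$ (or the path to $w_L(\eta)$).
\end{enumerate}
Since $\Surf(Q,R)$ is a disc and two accordions cross at most once, the sequence of projective arcs crossed by $\xi$ is then the concatenation of an initial piece of the crossing sequence of $\delta$ and a terminal piece of that of $\eta$, glued at a cell adjacent to $\rho$.

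Next we verify $(a1)$ at every interior cell. For cells crossed by $\xi$ strictly inside the $\delta$-portion or the $\eta$-portion, the consecutive crossed projective arcs coincide with those crossed by $\delta$ or $\eta$. These are adjacent because $\delta,\eta\in\Accord$. The only new situation is the gluing cell $C$ containing (a side of) $\rho$. Here we use \cref{prop:CombidescripNproj}: the condition $\rho\in\NP(\delta)\cap\NP(\eta)$ with $\rho$ non-orthogonal to both forces $\rho$ to be either crossed by, or on the boundary path of the end cell of, each of $\delta$ and $\eta$. The colour hypotheses $\col_\delta(\{v,w\}),\col_\eta(\{v,w\})\nsubseteq\{\osquare,\psquare\}$ exclude the large-cell intermediate situations. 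From this we expect to show that the arc where $\xi$ enters $C$ and the arc where it leaves $C$ are adjacent edges of $C$ sharing an endpoint of $\rho$. For the terminal cells, we check $(a2)$ or $(b)$: the endpoint $s(\delta)$ or $t(\eta)$ is inherited from an accordion, so it is admissible. The endpoint $w_R(\delta)$ (resp. $w_L(\eta)$) is by construction the non-common endpoint of the projective arc $\vartheta_2^R$, which is adjacent to $\vartheta_1^R$; this is exactly the shape allowed in $(a2)$/$(b1)$.

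The main obstacle is the gluing cell in the mixed cases, e.g. $\s=w_R(\delta)$ with $\t=t(\eta)$. There one must rule out $\xi$ crossing two non-adjacent edges of a large cell. Equivalently, one must show that $\xi$ is not homotopic to a projective arc or an arc contained in a cell with an inadmissible endpoint. I would first attempt this using the maximality in the definition of $\s_{(\delta)}$: since $\s_{(\delta)}(v)$ is the source of the $\Accordleq$-maximal element of $\{\varsigma\in\opResAc'(\delta)\mid t(\varsigma)=v\}$, that maximal $\varsigma$ is already an accordion sharing its source with $\xi$. Then $\xi$ is obtained from $\varsigma$ by replacing the final portion near $v$ with the initial portion of the corresponding maximal accordion for $\eta$ at $w$. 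Checking adjacency in that single cell, via \cref{lem:conditionsResAc}~\ref{3Accord}--\ref{4Accord}, should close the argument.
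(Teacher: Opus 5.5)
There is a genuine gap, and it sits where the content of the lemma lies. The paper states this lemma without proof here, recalling it from \cite{DS252}, so your argument has to stand on its own.

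Everything you actually verify is inherited from $\delta$ and $\eta$: adjacency of consecutive crossings in cells that $\xi$ traverses along $\delta$ or along $\eta$, and admissibility of an endpoint $s(\delta)$ or $t(\eta)$. Even the latter is automatic only when $\xi$ leaves its first cell through the same projective arc as $\delta$, which you have not shown. The two non-inherited checks required by \cref{prop:arcsaccordions} are only announced (``we expect to show'', ``should close the argument''). The first is that, in the cell(s) where $\xi$ passes from the $\delta$-side to the $\eta$-side, it enters and exits through adjacent arcs. The second is that $w_R(\delta)$ or $w_L(\eta)$ satisfies $(a2)$ or $(b)$. For $w_R(\delta)$, the adjacency of $\vartheta_1^R$ and $\vartheta_2^R$ is not enough. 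Condition $(a2)$ is relative to the arc through which $\xi$ leaves that cell, so your argument applies only if $\xi$ first crosses $\vartheta_1^R$, which you do not establish.

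Moreover, the structural claim behind the inheritance is unjustified and false in cases you must cover. That two accordions cross at most once says nothing about which projective arcs a third arc crosses. You also misread the hypothesis. By \cref{lem:projextortho}, $\rho\not\Extperp\delta$ means that either $\rho$ crosses $\delta$, or $\rho\prec_u\delta$ at a common endpoint $u$. The end-cell path of \cref{prop:CombidescripNproj}$(ii)$ concerns the projective resolution, not non-vanishing of $\Ext$.

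When the shared-endpoint alternative holds for both $\delta$ and $\eta$ (configuration $(2a)$, drawn on the right of \cref{fig:case2tilt}), several things happen at once:
neither $\delta$ nor $\eta$ crosses $\rho$;
the endpoints of $\xi$ are endpoints of neither, so by \cref{rem:useful_up_s} they are $w_R(\delta)$ and $w_L(\eta)$;
and $\xi$ crosses $\rho$ itself.
So the crossing sequence of $\xi$ contains an arc crossed by neither $\delta$ nor $\eta$. It is not a concatenation of pieces of theirs, $\xi$ does not ``run along'' $\rho$, and the cells on both sides of $\rho$ fall outside your inheritance argument.

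To close the gap, take each of the four endpoint cases and each position of $\rho$ (crossed by $\delta$ and/or $\eta$, or sharing endpoints with them). In each, you need an explicit description of the projective arcs $\xi$ crosses near $\rho$ and near $w_R(\delta)$, $w_L(\eta)$, followed by the adjacency check there. If you can carry it out, there is a way to bypass the case analysis. Exhibit $\MM(\xi)$ as an indecomposable summand of the middle term of (iterated) extensions built from $\MM(\rho)$ and modules $\MM(\varsigma)$ with $\varsigma\in\opResAc'(\delta)\cup\opResAc'(\eta)$. Then $\xi\in\Accord$ is automatic by \cref{prop:geom_ext}, and you obtain \cref{prop:Co-Z} at the same time.
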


\begin{definition} \label{def:CoZ}
For any $(\delta, \eta) \in (\Accord')^2$, we define the \new{$\CoZ$-move} of $(\delta,\eta)$ as follows:
\[\CoZ(\delta,\eta) = \begin{cases}
\{\delta, \eta, \xi\} & \text{if } (\delta,\eta) \text{ admits a }\CoZ\text{-completion } \xi; \\
\{\delta,\eta\} & \text{otherwise.}
\end{cases}\]
\end{definition}

\begin{figure}[ht!]
\centering 
    \begin{tikzpicture}[mydot/.style={
					circle,
					thick,
					fill=white,
					draw,
					outer sep=0.5pt,
					inner sep=1pt
				}, scale = 1]
		\tikzset{
		osq/.style={
        rectangle,
        thick,
        fill=white,
        append after command={
            node [
                fit=(\tikzlastnode),
                orange,
                line width=0.3mm,
                inner sep=-\pgflinewidth,
                cross out,
                draw
            ] {}}}}
		\draw[line width=0.7mm,black] (0,0) ellipse (4cm and 1.5cm);
		\foreach \X in {0,1,...,23}
		{
		\tkzDefPoint(4*cos(pi/12*\X),1.5*sin(pi/12*\X)){\X};
		};
		
		\draw[line width=0.9mm ,bend left =60,red](1) edge (3);
		\draw[line width=0.9mm ,bend right =60,red](5) edge (3);
		\draw[line width=0.9mm ,bend right =30,red](5) edge (23);
		\draw[line width=0.9mm ,bend left =20,red](21) edge (23);
		\draw[line width=0.9mm ,bend left =20,red](7) edge (9);
		\draw[line width=0.9mm ,bend left =40,red](19) edge (21);
		\draw[line width=0.9mm ,bend left =30,mypurple,densely dashdotted](9) edge (19);
		\draw[line width=0.9mm ,bend left =40,mypurple,densely dashdotted](9) edge (17);
		\draw[line width=0.9mm ,bend left =40,red](9) edge (15);
		\draw[line width=0.9mm ,bend left =40,red](9) edge (11);
		\draw[line width=0.9mm ,bend left=30,red](13) edge (15);
		
		\draw[line width=0.7mm ,bend right=10,blue, loosely dashed](5) edge (17);
		\draw[line width=0.7mm ,bend left=10,blue, loosely dashed](7) edge (13);
		\draw[line width=0.7mm ,bend left=10,orange, densely dashdotted](5) edge (13);
		
		\filldraw [fill=mypurple,opacity=0.1] (9) to [bend left=30] (19) to [bend left=10] (17) to [bend right=40] cycle ;

		\foreach \X in {1,3,...,23}
		{
		\tkzDrawPoints[fill =red,size=4,color=red](\X);
		};

		\tkzDefPoint(-2.1,0.6){gamma};
		\tkzLabelPoint[blue](gamma){\Large $\delta$}
		\tkzDefPoint(-.8,0.1){gamma};
		\tkzLabelPoint[blue](gamma){\Large $\eta$}
		\tkzDefPoint(-0.2,1.5){gamma};
		\tkzLabelPoint[orange](gamma){\Large $\xi$}
		\tkzDefPoint(-4.2,-0.5){s};
		\tkzLabelPoint[red](s){\Large $\pmb{s}_{(\delta,\eta)}$}
		\tkzDefPoint(1,2.1){t};
		\tkzLabelPoint[red](t){\Large $\pmb{t}_{(\delta,\eta)}$}
    \end{tikzpicture}
\caption{\label{fig:Co-Z} Construction of the $\CoZ$-completion $\xi \in \Accord$ of $(\delta, \eta)$.}
\end{figure}

\begin{prop}
\label{prop:Co-Z}
Let $(\delta, \eta) \in (\Accord')^2$ admit a $\CoZ$-completion $\xi$. Then $\xi \in \opResAc'(\delta, \eta)$. 
\end{prop}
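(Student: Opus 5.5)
Here $\opResAc'(\delta,\eta)$ means $\opResAc'(\{\delta,\eta\})$, i.e.\ the non-projective indecomposables of $\Res(\MM(\delta),\MM(\eta))$ by \cref{prop:folkloreresaccord}. The plan is to exhibit $\MM(\xi)$ as obtained from $\MM(\delta)$, $\MM(\eta)$ and projectives by extensions and kernels of epimorphisms; this is what the name $\CoZ$ suggests. Since $\opResAc'(\delta)\cup\opResAc'(\eta)\subseteq\opResAc'(\delta,\eta)$, it suffices to find accordions $\delta'\in\opResAc'(\delta)$ and $\eta'\in\opResAc'(\eta)$ with $\xi\in\OvExt(\delta',\eta')\cup\ArExt(\delta',\eta')$. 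The resolving closure is extension-closed and additively closed, so the middle term, and hence its summand $\MM(\xi)$, lies in the closure.

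\textbf{Choosing $\delta'$ and $\eta'$.} Take $\delta$ above $\eta$ and pick $v\in\NP(\eta)_0\cap\NP(\delta)_0^{\gsquare}$. By definition of the upper source, $\s_{(\delta,\eta)}=\s_{(\delta)}(v)$ is the source of the $\Accordleq$-maximal accordion $\delta'\in\opResAc'(\delta)$ with $t(\delta')=v$. Dually, pick $w\in\NP(\delta)_0\cap\NP(\eta)_0^{\rsquare}$ and let $\eta'\in\opResAc'(\eta)$ be the maximal accordion with $s(\eta')=w$ and $t(\eta')=\t_{(\delta,\eta)}$. Both exist by \cref{rem:useful_up_s} and \cref{def:geometricres}. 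The choice of $v$ and $w$ is free because the upper source and target depend only on the pair.

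\textbf{Realising $\xi$.} Use the projective $\rho$ with $\rho\not\Extperp\delta$ and $\rho\not\Extperp\eta$, whose endpoints are not both orange or pink for either coloration. Choose $v,w$ among the endpoints of $\rho$, or along the chain of neighbouring projectives through the common region, so that $\delta'$ and $\eta'$ meet in one of two ways: they cross once, or they share the endpoint $v=w$ with $\wt=1$.
\begin{enumerate}[label=$\bullet$]
\item If they cross, \cref{fig:overlapextaccord} gives $\OvExt(\delta',\eta')=\{\xi,\xi'\}$, where $\xi$ joins $s(\delta')$ to $t(\eta')$.
\item If they share an endpoint with $\wt=1$, \cref{prop:Extigeom} gives $\ArExt(\delta',\eta')\neq\varnothing$, and \cref{fig:arrowextaccord} shows the middle term is the arc from $s(\delta')$ to $t(\eta')$, namely $\xi$.
\end{enumerate}
\cref{lem:welldefinedCoZ} confirms that $\xi$ is a genuine accordion, so the middle term is identified correctly.

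\textbf{Fallback via syzygies.} If no such direct configuration exists, I would first form the extension of $\MM(\delta)$ by $\MM(\eta)$ through $\rho$. I would then take the kernel of the epimorphism onto $\MM(\rho)$-type terms using \cref{prop:geo_mor}, and read off $\xi$ as one of the $\kappa_i$.

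\textbf{Main obstacle.} The hard step is the case analysis guaranteeing that $\delta'$ and $\eta'$ really cross, or share an endpoint with weight exactly one. This splits according to the cases of \cref{rem:useful_up_s}, namely whether $\s_{(\delta,\eta)}$ equals $s(\delta)$ or $w_R(\delta)$, and likewise for the target. In the $w_R$ and $w_L$ cases, large cells intervene, and I would use \cref{lem:conditionsResAc} to control where $\delta'$ and $\eta'$ sit relative to $C_R$ and $C_L$.
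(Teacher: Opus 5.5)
Your reduction is sound: it does suffice to exhibit $\xi$ as a summand of the middle term of a non-split extension between an accordion of $\opResAc'(\delta)$ and one of $\opResAc'(\eta)$. The gap is the choice of that pair. The step you flag as the main obstacle is not just unproved; it is false.

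Suppose $\rho$ crosses both $\delta$ and $\eta$, and take $w$ and $v$ to be its endpoints above $\delta$ and below $\eta$. Then \cref{rem:useful_up_s} gives $\s_{(\delta,\eta)}=s(\delta)$ and $\t_{(\delta,\eta)}=t(\eta)$. Since the colorations match, the points $w,\,s(\delta),\,v,\,t(\eta)$ occur in this cyclic order on $\partial\pmb{\Sigma}$. Write $[a,b]$ for the $\rpoint$-arc joining $a$ and $b$. Your $\delta'=[s(\delta),v]$ and $\eta'=[w,t(\eta)]$ are then opposite sides of this quadrilateral, whose diagonals are $\rho$ and $\xi$. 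They neither cross nor share an endpoint, so $\MM(\delta')\Extperp\MM(\eta')$ by \cref{prop:Extorthoaccord}, and no extension between them can produce $\xi$.

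This already happens in \cref{fig:Co-Z}. Number the $\rpoint$-points $1,3,\dots,23$ counterclockwise, starting from the one just above the rightmost point. Then $\delta=[7,13]$, $\eta=[5,17]$, $\rho=[9,19]$ and $\xi=[5,13]$. The only possible choices are $v\in\{17,19\}$ and $w=9$, so your pair is $\delta'\in\{[13,19],[13,17]\}$ and $\eta'=[5,9]$, which are disjoint.

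Your fallback does not rescue this. There $\delta$ and $\eta$ are themselves Ext-orthogonal (no crossing, no common endpoint), so there is no extension of $\MM(\delta)$ by $\MM(\eta)$ to start from. Kernels of epimorphisms onto projectives split, so they give nothing new.

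The missing idea is to replace only one of the two accordions, that is, to use two consecutive extensions through $\rho$. This is how the paper describes the $\CoZ$-move; the proposition itself is imported from \cite{DS252} and is not reproved here. In the quadrilateral above, $\delta'$ joins a point above $\eta$ to a point below it, so $\delta'$ crosses $\eta$. Symmetrically, $\eta'$ crosses $\delta$.

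In the example one checks the following sequences:
\begin{itemize}
\item $0\to\MM(\rho)\to\MM([7,9])\oplus\MM([13,19])\to\MM(\delta)\to0$, with $[7,9]$ projective, which puts $\delta'=[13,19]$ in the closure of $\delta$;
\item $0\to\MM([13,19])\to\MM(\xi)\oplus\MM([17,19])\to\MM(\eta)\to0$, which then puts $\xi$ in the closure;
\item alternatively, $0\to\MM([5,9])\to\MM(\xi)\oplus\MM([7,9])\to\MM(\delta)\to0$.
\end{itemize}

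A complete argument along your lines must therefore prove two things. First, that $\delta'$ crosses $\eta$. In the configurations where $\rho$ meets $\delta$ or $\eta$ only at an endpoint, $\delta'$ is a syzygy or arrow-extension term, and you need instead a common endpoint with weight one. Second, that the smoothing kept by the overlap extension (the one preserving the common segments) is the one containing $\xi$.
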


	\section{Combinatorial realization of the Auslander--Reiten correspondence}
	\label{sec:Tilt}
	\pagestyle{plain}

For $\Lambda$ an Artin algebra of finite global dimension, M. Auslander and I. Reiten \cite{Auslander1991} establish a one-to-one correspondence from cotilting objects (of finite projective dimension) to contravariantly finite resolving subcategories of the category of all finitely generated $\Lambda$-modules.

\begin{theorem}[{\cite[Cor. 5.6]{Auslander1991} }]
\label{thm:Auslander}
Let $\Lambda$ be an Artin algebra. Assume that $\Lambda$ is of finite global dimension. The morphism $T \mapsto {}^{\perp_>{0}}T$ gives a one-to-one correspondence between isomorphism classes of basic cotilting modules and contravariantly finite resolving subcategories of $\mod{\Lambda}$
\end{theorem}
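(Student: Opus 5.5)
The statement is the Auslander--Reiten correspondence, which the paper cites from \cite{Auslander1991}. The plan follows the classical argument through Auslander--Buchweitz approximation theory. Throughout, $\Lambda$ has finite global dimension, so every module has finite projective and finite injective dimension. For a class $\omega$, write $\widehat{\omega}$ for the modules admitting a finite $\omega$-resolution.

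\emph{Step 1: the map is well defined.} Let $T$ be a basic cotilting module and put $\mathcal{X} = {}^{\perp_{>0}}T$.
\begin{enumerate}[label=$\bullet$]
\item $\mathcal{X}$ contains the projectives, trivially.
\item $\mathcal{X}$ is closed under extensions and under kernels of epimorphisms. This follows from the long exact $\Hom(-,T)$-sequence, exactly as in the first half of the proof of \cref{prop:projorthgen}.
\item $\omega := \add(T)$ lies in $\mathcal{X} \cap \mathcal{X}^{\perp_{>0}}$, so it is Ext-injective in $\mathcal{X}$.
\item $\omega$ is a cogenerator of $\mathcal{X}$. The cotilting condition gives a finite resolution $0 \to T_r \to \cdots \to T_0 \to D\Lambda \to 0$ with $T_i \in \omega$. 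Dimension shifting along it embeds every $X \in \mathcal{X}$ into some $T_0$ with cokernel again in $\mathcal{X}$.
\item Since $\Lambda$ has finite global dimension and $\proj \subseteq \mathcal{X}$, every module lies in $\widehat{\mathcal{X}}$.
\end{enumerate}
Auslander--Buchweitz theory then gives, for every $M$, a sequence $0 \to Y_M \to X_M \to M \to 0$ with $X_M \in \mathcal{X}$ and $Y_M \in \widehat{\omega} \subseteq \mathcal{X}^{\perp_{>0}}$. This is a right $\mathcal{X}$-approximation, so $\mathcal{X}$ is contravariantly finite.

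\emph{Step 2: injectivity.} I will show that $T$ can be recovered as $\add(T) = \mathcal{X} \cap \mathcal{X}^{\perp_{>0}}$. The inclusion $\subseteq$ is Step 1. Conversely, let $E \in \mathcal{X} \cap \mathcal{X}^{\perp_{>0}}$. Take the $\omega$-coresolution of $E$ built in Step 1; it has finite length because $E \in \widehat{\mathcal X}$ and injective dimension is finite. Its last cosyzygy lies in $\mathcal{X}$, and $E \in \mathcal{X}^{\perp_{>0}}$. Hence the successive short exact sequences split one by one, and $E$ is a summand of an object of $\omega$. So $T \mapsto {}^{\perp_{>0}}T$ is injective on basic cotilting modules.

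\emph{Step 3: surjectivity.} Let $\mathcal{X}$ be a contravariantly finite resolving subcategory and set $\omega = \mathcal{X} \cap \mathcal{X}^{\perp_{>0}}$. By Wakamatsu's lemma, a minimal right $\mathcal{X}$-approximation $X_M \to M$ is an epimorphism (projectives lie in $\mathcal{X}$) whose kernel lies in $\mathcal{X}^{\perp_1}$. Since $\mathcal{X}$ is closed under syzygies, this kernel is even in $\mathcal{X}^{\perp_{>0}}$.

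Iterating this construction and using finite global dimension, I will show that the kernels lie in $\widehat{\omega}$. This is the main obstacle, and I would try to handle it as follows. For every $M$ one builds a finite $\mathcal{X}$-resolution, from which one extracts the Auslander--Buchweitz sequences $0 \to Y \to X \to M \to 0$ with $Y \in \widehat{\omega}$. This uses the standard pushout/pullback induction on the length of the resolution. The delicate points are:
\begin{enumerate}[label=$\bullet$]
\item first showing that $\omega$ cogenerates $\mathcal{X}$, which is done by applying the approximations to cosyzygies;
\item keeping the kernels inside $\mathcal{X}^{\perp_{>0}}$ at each step.
\end{enumerate}

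Granting these sequences, the argument concludes as follows.
\begin{enumerate}[label=$\bullet$]
\item Apply them to each injective $I$. In $0 \to Y_I \to X_I \to I \to 0$, both $Y_I$ and $I$ lie in $\mathcal{X}^{\perp_{>0}}$, hence so does $X_I$, and therefore $X_I \in \omega$. Since $Y_I \in \widehat{\omega}$, this gives a finite $\omega$-resolution of $D\Lambda$.
\item Consequently $\omega$ has only finitely many indecomposables, so $\omega = \add(T)$ for a basic module $T$.
\item $\Ext^{>0}(T,T) = 0$ holds by definition of $\omega$, and $T$ has finite injective dimension automatically. Together with the resolution of $D\Lambda$, this shows that $T$ is cotilting.
\end{enumerate}

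Finally, I check that ${}^{\perp_{>0}}T = \mathcal{X}$.
\begin{enumerate}[label=$\bullet$]
\item The inclusion $\mathcal{X} \subseteq {}^{\perp_{>0}}T$ holds because $T \in \mathcal{X}^{\perp_{>0}}$.
\item Conversely, let $M \in {}^{\perp_{>0}}T$ and take $0 \to Y \to X \to M \to 0$ with $Y \in \widehat{\omega}$. Dimension shifting along a finite $\omega$-resolution of $Y$ gives $\Ext^1(M,Y) = 0$. So the sequence splits, and $M$ is a summand of $X \in \mathcal{X}$.
\end{enumerate}
This gives surjectivity and completes the correspondence.
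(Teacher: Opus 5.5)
The paper gives no proof of this statement; it is quoted from \cite{Auslander1991}. So your proposal has to stand on its own as a reconstruction of the classical Auslander--Buchweitz/Auslander--Reiten argument. The overall structure is right, but there is a genuine gap in Step 3, at the sentence ``Consequently $\omega$ has only finitely many indecomposables''. Nothing before that sentence implies it. A finite $\omega$-resolution $0\to T_r\to\cdots\to T_0\to D\Lambda\to 0$ only produces a cotilting module $T$, namely the basic module with $\add T=\add(\bigoplus_i T_i)$, with $\add T\subseteq\omega$. You have not ruled out further indecomposables in $\omega$. Your final inclusion ${}^{\perp_{>0}}T\subseteq\mathcal{X}$ depends on exactly this point. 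For $M\in{}^{\perp_{>0}}T$ and $Y$ with a finite $\omega$-resolution, dimension shifting gives $\Ext^1(M,Y)=0$ only if $\Ext^{>0}(M,\omega)=0$, whereas you only know $\Ext^{>0}(M,\add T)=0$. As written, the surjectivity argument assumes what it needs at its key step.

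The repair uses only tools you already have.
\begin{itemize}
\item Take $T$ as above; it is cotilting.
\item Apply your Step 1 to $T$. There, the embedding $X\hookrightarrow T_0$ should be a left $\add T$-approximation; an arbitrary lift need not have cokernel in ${}^{\perp_{>0}}T$. This gives, for every $M\in{}^{\perp_{>0}}T$, exact sequences $0\to M^j\to T^j\to M^{j+1}\to 0$ with $M^0=M$, $T^j\in\add T$ and $M^{j}\in{}^{\perp_{>0}}T$.
\item Applying $\Hom(M^k,-)$ to these sequences and using $\Ext^{>0}(M^k,T^j)=0$ gives $\Ext^1(M^k,M^{k-1})\cong\Ext^k(M^k,M)$. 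This vanishes for $k>\mathrm{gldim}\,\Lambda$.
\item Hence $0\to M^{k-1}\to T^{k-1}\to M^{k}\to 0$ splits, so $M^{k-1}\in\add T$ and $M$ has a finite $\add T$-coresolution.
\item Since $\mathcal{X}\supseteq\add T$ and $\mathcal{X}$ is closed under kernels of epimorphisms, $M\in\mathcal{X}$.
\end{itemize}
Thus ${}^{\perp_{>0}}T=\mathcal{X}$. The equality $\omega=\mathcal{X}\cap\mathcal{X}^{\perp_{>0}}=\add T$ then follows afterwards from your Step 2, instead of being assumed.

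A smaller point on Step 2: the finite-length discussion there is unnecessary and not justified as stated. The first sequence $0\to E\to T^0\to X^1\to 0$ already splits, because $X^1\in\mathcal{X}$ and $E\in\mathcal{X}^{\perp_{>0}}$.
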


Assuming that $(Q,R)$ is representation-finite, all the subcategories of $\rep(Q,R)$ are functorially finite. 
In our setting, we can state this result as follows.

\begin{cor} \label{cor:Auslander} Let $(Q,R)$ be a representation-finite gentle quiver. The map: \[ \left\{\begin{matrix}
\Tilt(Q,R) & \longrightarrow & \ResOrd(Q,R) \\
\mathfrak{T} & \longmapsto & {}^{\perp_{> 0}} \left(\mathfrak{T}^{\perp_{> 0}} \right)
\end{matrix} \right.\]
is bijective.
\end{cor}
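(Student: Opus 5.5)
This is a direct specialisation of \cref{thm:Auslander}. The plan is to check its hypotheses in the setting of $\rep(Q,R)$, translate cotilting modules into tilting collections, and identify the inverse map with the one displayed.

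\textbf{Step 1: hypotheses.}
\begin{enumerate}[label=$\bullet$, itemsep=1mm]
\item Take $\Lambda = \mathbb{K}Q/\langle R\rangle$. This is a finite-dimensional algebra over $\mathbb{K}$, hence an Artin algebra, and $\rep(Q,R) \simeq \mod\Lambda$.
\item Global dimension: the statement of the corollary should be read under the standing assumption of finite global dimension. This is the setting of \cref{def:rigidtilting}, and for gentle trees it holds automatically since $\mathcal{M}\subset\partial\pmb{\Sigma}$.
\item Representation-finiteness: since $\ind(Q,R)$ is finite, every additive subcategory $\add(\mathcal{X})$ is functorially finite. Indeed, a right $\add(\mathcal X)$-approximation of $M$ is obtained by summing, over a basis of each $\Hom(X,M)$ with $X$ indecomposable in $\mathcal X$, the corresponding maps. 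Hence every resolving subcategory is contravariantly finite, and the target of Auslander--Reiten's bijection is all of $\ResOrd(Q,R)$.
\end{enumerate}

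\textbf{Step 2: from cotilting modules to tilting collections.}
\begin{enumerate}[label=$\bullet$, itemsep=1mm]
\item Under finite global dimension, tilting and cotilting modules coincide. Both are characterised by being basic, having finite projective and finite injective dimension (automatic here), satisfying $\Ext^{i}(T,T)=0$ for $i>0$, and having exactly $\#Q_0$ indecomposable summands. This last count is the rank of the Grothendieck group, by the Happel / Miyashita counting theorem.
\item By \cref{def:rigidtilting} and \cref{rem:rigicolancrigidobj}, basic tilting modules correspond bijectively to tilting collections $\mathfrak{T}\in\Tilt(Q,R)$, via $\mathfrak{T}\mapsto\bigoplus_{T\in\mathfrak T}T$. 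Note that ${}^{\perp_{>0}}$ of this module equals ${}^{\perp_{>0}}\mathfrak{T}$.
\end{enumerate}

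\textbf{Step 3: identifying the map.} \cref{thm:Auslander} gives the bijection $\mathfrak T\mapsto {}^{\perp_{>0}}\mathfrak T$, but the corollary displays ${}^{\perp_{>0}}(\mathfrak T^{\perp_{>0}})$. I will reconcile the two as follows.
\begin{enumerate}[label=$\bullet$, itemsep=1mm]
\item Recall Auslander--Reiten's theory: for a resolving contravariantly finite $\mathscr{X}$ with $\mathscr{Y}=\mathscr{X}^{\perp_{>0}}$, the associated cotilting module $T$ satisfies $\add T=\mathscr X\cap\mathscr Y$, and $\mathscr X={}^{\perp_{>0}}\mathscr Y$.
\item In the tilting formulation, one has $\mathfrak T^{\perp_{>0}}=\mathscr Y$ and ${}^{\perp_{>0}}\mathscr Y=\mathscr X$. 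Hence $\mathfrak T\mapsto {}^{\perp_{>0}}(\mathfrak T^{\perp_{>0}})$ is the same correspondence, after dualising the cotilting statement to the tilting one (for instance via $D=\Hom_{\mathbb K}(-,\mathbb K)$ and passing to $\Lambda^{\mathrm{op}}$, which is again gentle and representation-finite of finite global dimension).
\item Alternatively, I would verify the equality ${}^{\perp_{>0}}(\mathfrak T^{\perp_{>0}})={}^{\perp_{>0}}\mathfrak T$ directly from the cotorsion pair $({}^{\perp_{>0}}T, T^{\perp_{>0}}\cap\ldots)$ in the finite global dimension case.
\end{enumerate}

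\textbf{Main obstacle.} The delicate point is Step 3: matching the exact form ${}^{\perp_{>0}}(\mathfrak T^{\perp_{>0}})$ with Auslander--Reiten's formulation, that is, checking that the double orthogonal returns the resolving subcategory and that this is compatible with the tilting/cotilting dictionary. I would first try to cite \cite{Auslander1991} directly for $\mathscr X={}^{\perp_{>0}}(\mathscr X^{\perp_{>0}})$ together with $\mathscr X\cap\mathscr X^{\perp_{>0}}=\add T$. From these two facts, injectivity and surjectivity are formal.
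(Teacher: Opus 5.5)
Your argument is correct in substance, and you are right that finite global dimension must be assumed: for the gentle algebra $\mathbb{K}[x]/(x^2)$ the map misses $\mod\Lambda$. Your route differs from the paper's, though.

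The paper never passes through cotilting modules. It applies the \emph{dual} of \cref{thm:Auslander}, so that $\mathfrak{T}\mapsto\mathfrak{T}^{\perp_{>0}}$ is a bijection onto covariantly finite coresolving subcategories. It then composes this with the bijection $\mathscr{Y}\mapsto{}^{\perp_{>0}}\mathscr{Y}$ from coresolving to resolving subcategories given by hereditary complete cotorsion pairs. The displayed map is literally this composite, so nothing has to be identified.

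Using \cref{thm:Auslander} as stated instead costs you two extra facts: tilting $=$ cotilting in finite global dimension, and ${}^{\perp_{>0}}(\mathfrak{T}^{\perp_{>0}})={}^{\perp_{>0}}\mathfrak{T}$. In return you get a simpler description of the image. The resolving subcategory attached to $\mathfrak{T}$ is cut out by one-sided $\Ext$-vanishing against the finitely many accordions of $\mathfrak{T}$, which \cref{prop:Extorthoaccord} makes directly checkable.

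Two justifications need tightening.

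First, the identity is not formal from $\mathscr{X}={}^{\perp_{>0}}(\mathscr{X}^{\perp_{>0}})$ and $\mathscr{X}\cap\mathscr{X}^{\perp_{>0}}=\add T$. Those two facts give only the easy inclusion ${}^{\perp_{>0}}(T^{\perp_{>0}})\subseteq{}^{\perp_{>0}}T$, which already follows from $T\in T^{\perp_{>0}}$. For the reverse inclusion, use the completeness of the Auslander--Reiten cotorsion pair. Each $X\in{}^{\perp_{>0}}T$ admits a sequence $0\to X\to T^0\to X^1\to 0$ with $T^0\in\add T$ and $X^1\in{}^{\perp_{>0}}T$. Then for $Y\in T^{\perp_{>0}}$ and $d$ the global dimension, dimension shifting gives $\Ext^i(X,Y)\cong\Ext^{i+d}(X^d,Y)=0$.

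Second, Miyashita's counting theorem only says that a tilting module has $\#Q_0$ summands. Matching the paper's $\Tilt(Q,R)$, which is defined by counting, with basic tilting modules needs the converse. Here the converse follows from \cref{thm:Auslander} itself. A rigid $T$ lies in ${}^{\perp_{>0}}T\cap({}^{\perp_{>0}}T)^{\perp_{>0}}=\add T'$, where $T'$ is the cotilting module attached to ${}^{\perp_{>0}}T$. So if $T$ has $\#Q_0$ summands, then $\add T=\add T'$.
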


\begin{proof}
Let $\mathfrak{T}$ be a tilting object of $\rep(Q,R)$. Using the dual of $\cref{thm:Auslander}$, $\mathfrak{T}$ is in bijection with a coresolving subcategory $\mathfrak{T}^{\perp_{>0}}$. Coresolving subcategories are in bijection with the resolving subcategory completing the hereditary cotorsion pair ${}^{\perp_{> 0}} \left(\mathfrak{T}^{\perp_{> 0}} \right)$.
\end{proof}

In this section, we construct a combinatorial inverse to this map using tools we developed, which allows us to describe any resolving subcategories in terms of their upper join decomposition when $(Q,R)$ is a gentle tree.

\begin{conv} \label{conv:gentletree} Fix a gentle tree $(Q,R)$, and let $(\pmb{\Sigma},\mathcal{M}, \Delta^{\gpoint})$ be its associated dualizable $\gpoint$-dissected marked disc. Given a resolving subcategory $\mathscr{R} \subseteq \rep_\mathbb{K}(Q,R)$ associated with a set of accordions $\pmb{\mathscr{R}}$, we write:
\begin{enumerate}[label = $\bullet$, itemsep=0.1em]
    \item $\mathfrak{I}_\mathscr{R}$ for the set of all the  non-projective indecomposable representations in $\mathscr{R}$: we recall that $\mathfrak{I}_\mathscr{R}$ is a $\Res$-closed ideal of $(\pmb{\ind \setminus \proj}(Q,R), \Resleq)$ by \cref{prop:folkloreres};
    \item $\bbDelta'_{(\mathscr{R})}$ for the set of accordions in $\bbDelta_{(\mathfrak{I}_\mathscr{R})} \setminus \Prj(\Delta^{\gpoint})$: $\bbDelta'_{(\mathscr{R})}$ is a $\opResAc'$-closed ideal of $(\Accord', \Accordleq)$ by \cref{prop:folkloreresaccord}. Where $\opResAc'$ is the set of nonprojective accordions associated with the indecomposable objects of $\mathscr{R}$;
    \item $\mathfrak{E}_\mathscr{R}$ for the antichain of maximal elements of $\mathfrak{I}_\mathscr{R}$ in $(\pmb{\ind \setminus \proj}(Q,R), \Resleq)$; and,
    \item $\mathfrak{D}_\mathscr{R}$ for the antichain of maximal elements of $\bbDelta'_{(\mathscr{R})}$ in $(\Accord', \Accordleq)$.
\end{enumerate}
\end{conv} 
Note that the sets appearing in the first two points are the same up to the correspondence between indecomposable objects and accordions. 

\subsection{Canonical rigid collection}
\label{ss:canonrigid} 
In this subsection, we prove that $\mathfrak{E}_{\mathscr{R}}$ can be rigidified in a canonical way, which defines another antichain $\mathfrak{E}^{\mathrm{r}}_{\mathscr{R}}$ such that $\opResAc'\left(\mathfrak{E}^{\mathrm{r}}_{\mathscr{R}} \right) = \mathfrak{I}_{\mathscr{R}}$. Then, from the indecomposable representations in $\mathfrak{E}^{\mathrm{r}}_{\mathscr{R}}$, we construct the unique tilting collection $\mathfrak{T}_\mathscr{R}$ such that $\Res(\mathfrak{T}_{\mathscr{R}})=\mathscr{R}$. This will guarantee the unicity of $\mathfrak{E}_{\mathscr{R}}^{\mathrm{r}}$.

Recall that given $X,Y \in \ind(Q,R)$, we write $X \THomleq Y$ whenever there exists a finite sequence of nonzero morphisms from $X$ to $Y$. As $\rep(Q,R)$ is \emph{directed}, the relation $\THomleq$ is an order on $\ind(Q,R)$. We refer the reader to \cite[Section 5.3]{DS251} for more details. We recall the following result.

\begin{prop}[\cite{DS251}]\label{prop:ResandHom}
    Let $X,Y \in \pmb{\ind \setminus \proj}(Q,R)$. If $\Res(X) \subseteq \Res(Y)$, then $X \THomleq Y$.
\end{prop}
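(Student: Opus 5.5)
The plan is to use the geometric description of monogenous resolving subcategories rather than an abstract closure argument. Write $\delta = \gamma_{(Y)}$ and $\varsigma = \gamma_{(X)}$. Since $X \in \Res(X) \subseteq \Res(Y)$ and $X$ is non-projective, \cref{thm:res_clo_1} gives $\varsigma \in \opResAc'(\delta)$. By \cref{def:geometricres}, this means $s(\varsigma) \in \NP(\delta)_0^{\rsquare} \cup \NP(\delta)_0^{\osquare}$ and $t(\varsigma) \in \NP(\delta)_0^{\gsquare} \cup \NP(\delta)_0^{\osquare}$, and $\varsigma$ also satisfies the constraints of \cref{lem:conditionsResAc}. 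The claim $X \THomleq Y$ then becomes purely combinatorial: we must exhibit a finite chain of accordions $\varsigma = \varsigma_0, \varsigma_1, \dots, \varsigma_k = \delta$ such that each pair $(\varsigma_{i}, \varsigma_{i+1})$ carries a basis morphism in the sense of \cref{prop:HomCrossing}, or more generally a nonzero morphism in direction $\MM(\varsigma_i) \to \MM(\varsigma_{i+1})$.

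The chain will be built by moving one endpoint at a time. First fix the target $t(\varsigma)$ and rotate the source from $s(\varsigma)$ toward $s(\delta)$. Then fix the new source and rotate the target toward $t(\delta)$. At each step the new accordion is obtained by sliding an endpoint along the neighbouring projective accordions of $\NP(\delta)$, using the path of projective curves described in \cref{prop:CombidescripNproj}(ii). In a disc, two accordions sharing an endpoint, or crossing in the configuration of \cref{fig:HomCrossing}, support a nonzero morphism whose direction is read off from the counter-clockwise order $\preccurlyeq_v$ at the common endpoint $v$.

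The colour conditions are exactly what should force the direction to be correct. A red (resp.\ green) endpoint lies above (resp.\ below) $\delta$ for the laterality of \cref{rem:laterality}, so moving a source located at a red point toward $s(\delta)$, or a target located at a green point toward $t(\delta)$, should always go "downhill" toward $\delta$ in the Hom-order. Intermediate curves must remain accordions; I would check this with \cref{prop:arcsaccordions}, possibly choosing intermediate curves inside $\opResAc'(\delta)$ itself, which are accordions by \cref{thm:res_clo_1}.

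The main obstacle is the orange and pink points in the large cells $C_L$ and $C_R$. There, the direction of the morphism between consecutive curves is not governed solely by the red/green side, and \cref{lem:conditionsResAc}\ref{3Accord}--\ref{4Accord} restrict which endpoints can occur. My first attempt will be a case analysis on $\sc(\delta)$ and $\tc(\delta)$. If $\varsigma$ crosses $\eta_L$ (resp.\ $\eta_R$), item \ref{1Accord} forces $s(\varsigma) = s(\delta)$ (resp.\ $t(\varsigma) = t(\delta)$) unless $w_L(\delta) = t(\eta_L)$, so only one endpoint needs moving. If the endpoint is $w_R(\delta)$ or $w_L(\delta)$, I would route the chain through the auxiliary projectives $\vartheta_1^R, \vartheta_2^R$ (resp.\ their left duals), whose morphisms to and from $\delta$ are explicit. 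If $\varsigma$ is contained in one of the cells, item \ref{2Accord} again pins one endpoint. As a fallback, one could instead prove directly that $\add(\proj \cup \{Z : Z \THomleq Y\})$ is closed under extensions and syzygies. However, projective summands in the middle terms break the transitivity, so I expect the geometric route to be the workable one.
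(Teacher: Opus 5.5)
\textbf{There is a genuine gap in your main (geometric) route.} The statement is only quoted here from \cite{DS251}, so I judge your argument on its own. Everything that would constitute the proof is announced rather than carried out: the explicit chain $\varsigma=\varsigma_0,\dots,\varsigma_k=\delta$, the check that each intermediate $\rpoint$-arc is an accordion, and the direction of each morphism (``should go downhill''). The orange/pink configurations in $C_L$ and $C_R$ are left as an open case analysis.

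Worse, the tool you intend to use is false as stated. \cref{prop:HomCrossing} only treats interior crossings, and two accordions sharing an endpoint need not carry any morphism at all. Take the quiver $1\to 2$: its disc has three $\rpoint$-points and all three arcs are accordions. The accordions of $S_1$ and $S_2$ share an endpoint, yet $\Hom$ vanishes in both directions. They carry an arrow extension instead, as in \cref{prop:Extigeom}. So the counter-clockwise order $\preccurlyeq_v$ alone does not produce a morphism. You would need at least a weight-zero condition at every shared endpoint along the chain, and nothing in your sketch controls this.

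\textbf{The fallback you discarded is in fact a complete proof, and it does not even need the gentle-tree hypothesis.} Put $\mathcal{S}=\{Z\in\pmb{\ind \setminus \proj}(Q,R)\mid Z\THomleq Y\}$ and $\mathscr{S}=\add(\proj\cup\mathcal{S})$. By \cref{lem:othercharactresolv}(ii), it suffices to check closure under syzygies and extensions, not under arbitrary kernels of epimorphisms.

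\emph{Syzygies.} Let $B\in\mathcal{S}$ and let $\bigoplus_j P_j\twoheadrightarrow B$ be a minimal projective cover. Every component $P_j\to B$ is nonzero by minimality. Since $\Omega(B)\rightarrowtail\bigoplus_j P_j$ is a monomorphism, each indecomposable summand $Z$ of $\Omega(B)$ has a nonzero component $Z\to P_j$. Because $\THomleq$ is defined on all of $\ind(Q,R)$, the chain $Z\to P_j\to B$ may pass through a projective, so $Z\THomleq Y$.

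\emph{Extensions.} Let $A\rightarrowtail E\twoheadrightarrow B$ with $A,B\in\mathscr{S}$, and write $B=B_0\oplus B_P$ with $B_P$ projective. Lifting $B_P$ to $E$ splits it off, so $E\cong E_0\oplus B_P$ with an exact sequence $A\rightarrowtail E_0\twoheadrightarrow B_0$. Let $E_i$ be a nonprojective indecomposable summand of $E_0$. Either $E_i$ maps nonzero to some indecomposable summand of $B_0$, which lies in $\mathcal{S}$. Or $E_i$ lies in the image of $A$; then by the modular law it is a direct summand of $A$, hence lies in $\mathcal{S}$.

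\emph{Conclusion.} $\mathscr{S}$ is resolving and contains $Y$, so $X\in\Res(X)\subseteq\Res(Y)\subseteq\mathscr{S}$ gives $X\THomleq Y$. The projective summands you worried about only break transitivity if you test kernels of arbitrary epimorphisms; with syzygies of minimal covers and the splitting-off above, they cause no trouble.
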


\begin{lemma}
\label{lem:Ext_max}
Let $(Q,R)$ be a gentle tree. Consider a resolving subcategory $\mathscr{R} \subseteq \rep(Q,R)$. Let $(M,N) \in (\mathfrak{E}_{\mathscr{R}})^2$ and $i\in\mathbb{N}$ such that there exists a non-split short exact sequence: \[\begin{tikzcd}
	M & E & \omega^i(N),
	\arrow[tail, from=1-1, to=1-2]
	\arrow[two heads, from=1-2, to=1-3]
\end{tikzcd}\]  
with some $E = \bigoplus_{j=1}^k  E_j \in \rep(Q,R)$, $k \in \{1,2\}$ and $\omega^i(N)$ a nonprojective indecomposable summand of $\Omega^i(N)$. Then, for all $j \in \{1,\ldots,k\}$, if $E_j$ is nonprojective there exists $K_j \in \mathfrak{C}_\mathscr{R}$ such that $E_j \Resleq K_j$, and $M \THomleq K_j$.
\end{lemma}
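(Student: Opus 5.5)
The plan is to use that $\mathscr{R}$ is resolving, so that every middle term $E_j$ lies in $\mathscr{R}$. Here $\mathfrak{C}_\mathscr{R}$ is read as the antichain $\mathfrak{E}_\mathscr{R}$ of maximal elements. Since $N \in \mathscr{R}$ and $\mathscr{R}$ is closed under syzygies (\cref{lem:othercharactresolv}), $\Omega^i(N) \in \mathscr{R}$. By \ref{R0}, the summand $\omega^i(N)$ is also in $\mathscr{R}$. As $M \in \mathscr{R}$ and $\mathscr{R}$ is closed under extensions \ref{R2}, the middle term $E$ lies in $\mathscr{R}$, and hence so does each summand $E_j$. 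If $E_j$ is nonprojective, then $E_j \in \mathfrak{I}_\mathscr{R}$. Because $\mathfrak{I}_\mathscr{R}$ is a finite ideal of $(\pmb{\ind \setminus \proj}(Q,R), \Resleq)$, there is a maximal element $K_j \in \mathfrak{E}_\mathscr{R}$ with $E_j \Resleq K_j$.

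The substantive part is to show that $M \THomleq K_j$. First, \cref{prop:ResandHom} applied to $E_j \Resleq K_j$ gives $E_j \THomleq K_j$. It therefore suffices to prove $M \THomleq E_j$ and then compose, using transitivity of $\THomleq$. Since the sequence is non-split, I will use the explicit description of its middle terms from \cref{prop:geom_ext}, recalling that for gentle trees all extensions are arrow or overlap extensions.

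In the arrow case ($k=1$), the middle term $E$ is an indecomposable string obtained by gluing the string of $\omega^i(N)$ and that of $M$ along an arrow. The string of $M$ is then a substring of $E$ at the end that makes it a submodule. So the inclusion $M \rightarrowtail E$ is a nonzero morphism, and $M \THomleq E_1$ directly.

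In the overlap case ($k=2$), I will read off from \cref{fig:overlapextaccord} that each $\gamma_{(E_j)}$ shares with $\delta=\gamma_{(M)}$ the overlap region together with one of its two tails. This should produce a crossing of the type in \cref{fig:HomCrossing}, and hence a nonzero morphism $M \to E_j$. Concretely, I expect that the components $M \to E_j$ of the monomorphism in the non-split sequence are each nonzero. If one of them vanished, the sequence would split off a summand and $\omega^i(N)$ would fail to be indecomposable or the sequence would split, which I will check against the standard string description of overlap extensions.

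I expect the main obstacle to be this overlap case: justifying that both components $M \to E_1$ and $M \to E_2$ are nonzero. If the direct argument fails, the fallback is to rely on the geometric picture, checking that $\gamma_{(E_j)}$ and $\delta$ do not cross and share an endpoint in the configuration of the figure, which gives a nonzero morphism $M \to E_j$ via the homotopic shaded segment. In either case, composing $M \THomleq E_j \THomleq K_j$ gives the claim.
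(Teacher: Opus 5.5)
Your proposal is correct and follows the paper's proof: you place the nonprojective $E_j$ in $\mathscr{R}$, take a maximal $K_j \in \mathfrak{E}_\mathscr{R}$ above it (reading $\mathfrak{C}_\mathscr{R}$ as $\mathfrak{E}_\mathscr{R}$, as the paper's proof does), apply \cref{prop:ResandHom} to get $E_j \THomleq K_j$, and compose with $M \THomleq E_j$. The paper asserts $M \THomleq E_j$ without proof, and your splitting argument already proves it with no string or geometric case analysis: if the component $M \to E_1$ were zero, then $\omega^i(N) \cong E_1 \oplus \operatorname{coker}(M \to E_2)$, so indecomposability forces $M \to E_2$ to be an isomorphism and the sequence to split.
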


\begin{proof}
Let $j \in \{1,\ldots, k\}$. As $\mathscr{R}$ is resolving, because $E_j$ is nonprojective, we have that $E_j \in \mathfrak{I}_\mathscr{R}$. By \cref{prop:folkloreres}, $\mathfrak{I}_\mathscr{R}$ is an ideal, and so there exists $K_j \in \mathfrak{E}_\mathscr{R}$ such that $E_j \Resleq K_j$. By \cref{prop:ResandHom}, we have that $E_j \THomleq K_j$. Since $M \THomleq E_j$, transitivity yields $M \THomleq K_j$.
\end{proof}

We introduce an algorithm that yields a rigid collection $\mathfrak{E}^{\mathrm{r}}_\mathscr{R}$ from $\mathfrak{E}_\mathscr{R}$. 
\begin{algo} \label{algo:rigidER}
Let $(Q,R)$ be a gentle tree, and $\mathscr{R} \subseteq \rep(Q,R)$ be a resolving subcategory. 
\begin{enumerate}[label=$(\arabic*)$, itemsep=1mm]
    \item We input $\mathfrak{F}_0 = \mathfrak{E}_\mathscr{R}$.
    \item At the $j$th iteration of the algorithm, we define $\mathfrak{F}_{j+1}$ from $\mathfrak{F}_j$ as follows: let $\mathfrak{X}_j$ be the set of $M \in \mathfrak{F}_j$ such that there exists $N \in \mathfrak{F}_j$ and a non-split short exact sequence: \[\begin{tikzcd}
	M & E & \omega^i(N),
	\arrow[tail, from=1-1, to=1-2]
	\arrow[two heads, from=1-2, to=1-3]
\end{tikzcd}\]
with $E \in \rep(Q,R)$ and $\omega^i(N)$ a nonprojective indecomposable summand of $\Omega^i(N)$ for some $i \in \mathbb{N}$. 
\begin{enumerate}[label=$(2 \alph*)$, itemsep=1mm]
\item If $\mathfrak{X}_j \neq \varnothing$, we let $M_j$ be a minimal element of $\mathfrak{X}_j$ with respect to $\THomleq$, and we set $\mathfrak{F}_{j+1} = \mathfrak{F}_j  \setminus \{M_j\}$; 

\item otherwise, we set $\mathfrak{F}_{j+1} = \mathfrak{F}_j$.
\end{enumerate}
    \item If $\mathfrak{F}_{j+1} \neq \mathfrak{F}_j$, then go back to Step $(2)$;
    \item otherwise, return $\mathfrak{F}_{j+1}$.
\end{enumerate}
\end{algo}

\begin{prop}
\label{prop:rigidResgen}
Let $(Q,R)$ be a gentle tree. For any $\mathscr{R} \in \ResOrd(Q,R)$, \cref{algo:rigidER} terminates, and returns a collection $\mathfrak{F}$ such that all the following assertions hold:
\begin{enumerate}[label=$(\roman*)$, itemsep=1mm]
    \item $\mathfrak{F}$ is rigid;
    \item  $\mathfrak{F}$ is an antichain of $(\pmb{\ind \setminus \proj}(Q,R), \Resleq)$; and,
    \item $\Res(\mathfrak{F})=\mathscr{R}$.
\end{enumerate}
\end{prop}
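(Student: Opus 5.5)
Termination is immediate: each iteration either removes one element from the finite set $\mathfrak{F}_j \subseteq \mathfrak{E}_\mathscr{R}$ or stops. So the algorithm halts after at most $\#\mathfrak{E}_\mathscr{R}$ steps and returns some $\mathfrak{F} \subseteq \mathfrak{E}_\mathscr{R}$.

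Assertion $(ii)$ then follows at once, since a subset of the antichain $\mathfrak{E}_\mathscr{R}$ is again an antichain.

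For $(i)$, let $\mathfrak{F}$ be the output, so that $\mathfrak{X} = \varnothing$ at the last step. Take $M,N \in \mathfrak{F}$ and $i \geqslant 1$. By dimension shifting,
\[\Ext^{i}(N,M) \cong \Ext^1(\Omega^{i-1}(N),M),\]
and $\Ext^1$ is additive in the first variable. Projective summands of $\Omega^{i-1}(N)$ contribute nothing. Hence $\Ext^{i}(N,M)\neq 0$ would give a non-split short exact sequence $M \rightarrowtail E \twoheadrightarrow \omega^{i-1}(N)$ with $\omega^{i-1}(N)$ a nonprojective indecomposable summand of $\Omega^{i-1}(N)$ (for $i=1$ we take $\omega^0(N)=N$). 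This would put $M \in \mathfrak{X}$, a contradiction. Exchanging the roles of $M$ and $N$ gives $\Ext^i(M,N)=0$ as well, so $M \Extperp N$ and $\mathfrak{F}$ is rigid.

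The real content is $(iii)$, where I would show by induction on $j$ that $\Res(\mathfrak{F}_j)=\mathscr{R}$. The case $j=0$ holds because $\mathfrak{I}_\mathscr{R}$ is the ideal generated by its maximal elements (\cref{prop:folkloreres}). For the inductive step, $\Res(\mathfrak{F}_{j+1}) \subseteq \Res(\mathfrak{F}_j)=\mathscr{R}$, so it suffices to show $M_j \in \Res(\mathfrak{F}_{j+1})$. Choose $N \in \mathfrak{F}_j$ and a non-split sequence $M_j \rightarrowtail E \twoheadrightarrow \omega^i(N)$.

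First, $N \neq M_j$: a nonzero $\Ext^{i+1}(M_j,M_j)$ is impossible, since the category is directed and there are no self-extensions along a directed order. Also $\omega^i(N)\in \Res(N) \subseteq \Res(\mathfrak{F}_{j+1})$, because resolving subcategories are closed under syzygies (\cref{lem:othercharactresolv}).

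It then suffices to get $E \in \Res(\mathfrak{F}_{j+1})$, since $M_j$ is the kernel of the epimorphism $E \twoheadrightarrow \omega^i(N)$. Here \cref{lem:Ext_max} applies. Each nonprojective summand $E_l$ of $E$ satisfies $E_l \Resleq K_l$ for some $K_l \in \mathfrak{E}_\mathscr{R}$ with $M_j \THomleq K_l$, and $K_l \neq M_j$ because the arrow $M_j \to E$ does not factor back. What remains is to place $K_l$ in $\Res(\mathfrak{F}_{j+1})$.

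If $K_l$ is still in $\mathfrak{F}_{j+1}$, we are done. If $K_l$ was removed at an earlier step $j'<j$, then by induction $K_l \in \Res(\mathfrak{F}_{j'+1}) = \mathscr{R}$. However, this does not yet put $K_l$ in $\Res(\mathfrak{F}_{j+1})$, which is the claim we are proving.

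So the correct induction is the statement $\Res(\mathfrak{F}_{j})=\mathscr{R}$ for all $j$. Under that hypothesis $K_l \in \Res(\mathfrak{F}_j)$ automatically, and we must still exclude $M_j$ from generating $K_l$. This is where the minimality of $M_j$ for $\THomleq$ is used. Since $M_j$ is minimal in $\mathfrak{X}_j$ and $M_j \THomleq K_l$ strictly, I would argue that $\Res(K_l)$ can be reached without $M_j$. Concretely, $K_l \in \mathfrak{F}_{j}\setminus\{M_j\}$ or $K_l$ was removed earlier and is generated by elements strictly $\THomleq$-above $M_j$. Then \cref{prop:ResandHom} forbids $M_j$ from lying in $\Res(K_l)$-type generation chains for those elements.

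I expect the main obstacle to be this bookkeeping: showing that removing a $\THomleq$-minimal element never destroys generation of higher elements. I would first try to prove, as an auxiliary claim, that for every $j$ and every $K\in\mathfrak{E}_\mathscr{R}$ with $M_j \THomleq K$ strictly, $K \in \Res(\mathfrak{F}_{j+1})$. This would be done by descending induction on $\THomleq$ combined with \cref{prop:ResandHom}.
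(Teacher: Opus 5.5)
Termination, $(ii)$ and $(i)$ are fine. For $(i)$ you actually spell out the dimension-shifting argument that the paper calls obvious by construction. The skeleton of $(iii)$ is also the paper's: induction on $j$, $N\neq M_j$ by directedness, $\omega^i(N)\in\Res(N)$, \cref{lem:Ext_max}, and $M_j$ as the kernel of $E\twoheadrightarrow\omega^i(N)$.

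\textbf{The gap.} You stop at the one step that carries the content: you never show $K_l\in\Res(\mathfrak{F}_{j+1})$. The route you propose for it does not work as stated.
\begin{itemize}
\item \cref{prop:ResandHom} only compares monogenous closures $\Res(X)\subseteq\Res(Y)$.
\item $\Res(\mathfrak{F}_{j'+1})$ is in general not the union of the $\Res(F)$ for $F\in\mathfrak{F}_{j'+1}$, since the lattice is not distributive.
\item So relations between single objects cannot tell you whether generating an already removed $K_l$ from $\mathfrak{F}_{j'+1}$ goes through $M_j$.
\item The minimality of $M_j$ in $\mathfrak{X}_j$ says nothing about $K_l$, which need not even lie in $\mathfrak{F}_j$.
\end{itemize}
Your ``descending induction on generation chains'' is therefore not an argument. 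It is also aimed at a case that cannot occur.

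\textbf{The missing idea.} The problematic case is vacuous, by monotonicity of the sets $\mathfrak{X}$.
\begin{itemize}
\item For $j'\leqslant j$ we have $\mathfrak{F}_j\subseteq\mathfrak{F}_{j'}$, and the witness $N$ for $M_j\in\mathfrak{X}_j$ still lies in $\mathfrak{F}_{j'}$. Hence $\mathfrak{X}_j\subseteq\mathfrak{X}_{j'}$.
\item Suppose $K_l$ had been removed at some step $j'<j$, i.e.\ $K_l=M_{j'}$. Then $M_j\in\mathfrak{X}_{j'}$, $M_j\THomleq M_{j'}$ and $M_j\neq M_{j'}$. This contradicts the $\THomleq$-minimality of $M_{j'}$ in $\mathfrak{X}_{j'}$.
\item Therefore $K_l\in\mathfrak{F}_j\setminus\{M_j\}=\mathfrak{F}_{j+1}$, so $E_l\in\Res(K_l)\subseteq\Res(\mathfrak{F}_{j+1})$ directly. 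Together with $N\in\mathfrak{F}_{j+1}$, this gives $M_j\in\Res(\mathfrak{F}_{j+1})$.
\end{itemize}
This is where the choice of a minimal element is genuinely used. The paper simply asserts ``$K_k\in\Res(\mathfrak{F}_{\ell+1})$'' at this point, and the argument above is what justifies it.

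\textbf{A smaller point.} Make ``$K_l\neq M_j$'' precise. If $K_l=M_j$, then \cref{prop:ResandHom} gives $E_l\THomleq M_j$, while the nonzero component $M_j\to E_l$ gives $M_j\THomleq E_l$. By antisymmetry $E_l\cong M_j$. That component is then an isomorphism of a brick, so the sequence splits.
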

\begin{proof}
Let $\mathscr{R}$ be as assumed. Since $\mathfrak{E}_\mathscr{R}$ is finite, the algorithm terminates. Let $j \geqslant 0$ be the smallest index for which $\mathfrak{F}_j = \mathfrak{F}_{j+1} = \mathfrak{F}$. The fact that  $\mathfrak{F}$ satisfies $(i)$ and $(ii)$ is obvious by construction. Let us show by induction that $\mathfrak{F}$ satisfies $(iii)$.

By hypothesis, $\Res(\mathfrak{E}_\mathscr{R}) = \mathscr{R}$. Let us assume by induction that, for some $\ell \geqslant 0$, we have $\Res(\mathfrak{F}_\ell) = \mathscr{R}$. Assume that $\mathfrak{X}_\ell \neq \varnothing$, and let $M_\ell$ be a minimal element of $\mathfrak{X}_\ell$ with respect to $\THomleq$. We will check that $M_\ell \in \Res(\mathfrak{F_{\ell+1}})$, which will imply the result. Let us consider a nonsplit short exact sequence: \[\begin{tikzcd}
	M_\ell & E & \omega^i(N),
	\arrow[tail, from=1-1, to=1-2]
	\arrow[two heads, from=1-2, to=1-3]
\end{tikzcd}\] with $E = \oplus_{s=1}^k E_s \in \rep(Q,R)$, for $k \in \{1,2\}$, $N \in \mathfrak{F}_\ell$ and $\omega^i(N)$ an indecomposable nonprojective summand of $\Omega^i(N)$, for some $i \in \mathbb{N}$. Then, by \cref{lem:Ext_max}, if $E_k$ is nonprojective there exists $K_k \in \mathfrak{E}_\mathscr{R}$ such that $M_\ell \THomleq K_i$ and $E_k\Resleq K_k$.  We can still state that $M_\ell \THomleq N$ and $\omega^i(N)\Resleq N$. It follows that $K_k\neq M_l$ else the extension is split. The same statement holds for $N$. It follows that $K_k\in\Res(\mathfrak{F}_{\ell+1})$ and $N\in\Res(\mathfrak{F}_{\ell+1})$.

Therefore $M$ is a kernel of an epimorphism of the subcategory $\Res(\mathfrak{F}_{\ell+1})$ as $K_\ell$ and $N$ are in $\Res(\mathfrak{F}_{\ell+1})$. It follows that $\Res(\mathfrak{F}_{\ell+1}) = \Res(\mathfrak{F}_\ell)$ and, by induction hypothesis, we get that $\Res(\mathfrak{F}_j) = \mathscr{R}$. We get that $\mathfrak{F}$ satisfies $(iii)$.
\end{proof}

\begin{remark} \label{rem:anywayalgorigid} In Step $(2)$ of \cref{algo:rigidER}, we could choose to delete any element of $\mathfrak{X}_i$. Since $\THomleq$ is an order on $\ind(Q,R)$, we obtain the same antichain $\mathfrak{F}$.
\end{remark}

\begin{definition}
\label{def:rigidmax}
Let $(Q,R)$ be a gentle tree, and $\mathscr{R} \in \ResOrd(Q,R)$. We call \new{the canonical rigid collection of $\mathscr{R}$} the rigid collection $\mathfrak{E}_\mathscr{R}^{\mathrm{r}}$ obtained as a result of \cref{algo:rigidER} on $\mathscr{R}$.
\end{definition}

\begin{cor} \label{cor:rigidERAccord}
Let $(Q,R)$ be a gentle tree, and set $(\pmb{\Sigma},\mathcal{M}, \Delta^{\gpoint}) = \Surf(Q,R)$. Consider a resolving subset $\mathscr{R} \subseteq \Accord'$. Then \cref{algo:rigidER}, rewritten in terms of the geometric model, using \cref{prop:Extorthoaccord}, returns a collection $\mathfrak{D}_\mathscr{R}^{\mathrm{r}}$ such that:
\begin{enumerate}[label=$(\roman*)$, itemsep=1mm]
    \item $\mathfrak{D}_\mathscr{R}^r$ is a rigid admissible $\rpoint$-dissection of $(\pmb{\Sigma}, \mathcal{M})$;
    \item $\mathfrak{D}_\mathscr{R}^\mathrm{r} \in \Anti(\Accord', \Accordleq)$; and,
    \item $\opResAc(\mathfrak{D}_\mathscr{R}^{\mathrm{r}}) = \mathscr{R}$.
\end{enumerate}
We call $\mathfrak{D}_\mathscr{R}^{\mathrm{r}}$ the \new{canonical geometric rigid collection} of $\mathscr{R}$.
\end{cor}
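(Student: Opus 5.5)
The plan is to transport \cref{prop:rigidResgen} through the dictionary between $\ind(Q,R)$ and $\Accord$ of \cref{thm:GeomandRep}. The geometric version of \cref{algo:rigidER} will start from $\mathfrak{D}_\mathscr{R} = \bbDelta_{(\mathfrak{E}_\mathscr{R})}$. At each step it will delete a $\THomleq$-minimal accordion $\delta$ such that some accordion of the current set, or some nonprojective accordion of one of its syzygies, fails to be $\Ext$-orthogonal to $\delta$. By \cref{prop:Extorthoaccord}, this failure happens exactly when the two accordions cross, or when they share an endpoint $e$ with nonzero weight $\wt_e$. The iterates $\mathfrak{F}_j$ correspond bijectively to $\bbDelta_{(\mathfrak{F}_j)}$. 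Since $\THomleq$ is read off from crossings by \cref{prop:HomCrossing}, and syzygies from \cref{prop:geo_mor}, the combinatorial output is exactly $\mathfrak{D}_\mathscr{R}^{\mathrm{r}} = \bbDelta_{(\mathfrak{E}_\mathscr{R}^{\mathrm{r}})}$. So properties $(ii)$ and $(iii)$ will follow directly from \cref{prop:rigidResgen}$(ii)$--$(iii)$.

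For $(ii)$, I will use the order isomorphism between $(\pmb{\ind \setminus \proj}(Q,R), \Resleq)$ and $(\Accord', \Accordleq)$. This isomorphism is implicit in \cref{prop:folkloreres} and \cref{prop:folkloreresaccord}, since both posets index the same join-irreducibles by \cref{thm:res_clo_1}. Under it, antichains go to antichains.

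For $(iii)$, the identity $\Res(\mathfrak{E}_\mathscr{R}^{\mathrm{r}}) = \mathscr{R}$ together with \cref{thm:res_clo_1} gives $\opResAc(\mathfrak{D}_\mathscr{R}^{\mathrm{r}}) = \mathscr{R}$ as sets of accordions, after adjoining $\Prj(\Delta^{\gpoint})$.

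For $(i)$, I need rigidity in the sense of \cref{def:tiltdissec}, and I will check it by \cref{thm:tiltgeom}$(i)$. There is a subtlety: the termination condition of the algorithm involves extensions $\Ext^1(M,\omega^i(N))$ rather than $\Ext^i(M,N)$ directly. I will use dimension shifting to show these are equivalent: for $i \geqslant 1$, $\Ext^{i+1}(N,M) \cong \Ext^1(\Omega^i N, M)$, and projective summands contribute nothing. This identification needs checking carefully, because the algorithm's short exact sequence has $M$ on the left and $\omega^i(N)$ on the right. Both directions of $\Ext$ are covered because $M$ and $N$ range symmetrically over $\mathfrak{F}$. With this, the stopping condition is precisely pairwise $\Ext$-orthogonality, including the case $M = N$, which gives self-orthogonality. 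By \cref{prop:Extorthoaccord}, the accordions are then pairwise noncrossing with zero weights at common endpoints. By \cref{thm:tiltgeom}$(i)$, the set is a rigid admissible $\rpoint$-dissection. Admissibility of a partial dissection means every cell contains a $\gpoint$-point, which I take to be included in that theorem's characterization.

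The main obstacle I expect is the faithfulness of the translation. I must make sure that "the set $\mathfrak{X}_j$ is empty" is detected geometrically without computing $\Ext$ explicitly. Concretely, this means checking that crossings or weighted common endpoints between $\delta$ and the accordions $\kappa$ of its iterated syzygies (from \cref{prop:geo_mor}) capture all higher extensions. For a gentle tree these are only overlap and arrow extensions, by \cref{prop:Extigeom}. If this bookkeeping proves delicate, my fallback is to define $\mathfrak{D}_\mathscr{R}^{\mathrm{r}}$ as $\bbDelta_{(\mathfrak{E}_\mathscr{R}^{\mathrm{r}})}$ and invoke the geometric criteria only a posteriori to certify $(i)$.
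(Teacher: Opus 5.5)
Your proposal is correct and follows the paper's proof. That proof notes $\mathfrak{D}_\mathscr{R}^{\mathrm{r}} = \bbDelta_{(\mathfrak{E}_\mathscr{R}^{\mathrm{r}})}$, reads off $(ii)$ and $(iii)$ from \cref{prop:rigidResgen}, and obtains $(i)$ from the rigidity of $\mathfrak{E}_\mathscr{R}^{\mathrm{r}}$ together with \cref{thm:tiltgeom}, which is exactly your fallback. The non-split sequence gives a class in $\Ext^1(\omega^i(N),M)$, a summand of $\Ext^{i+1}(N,M)$, not in $\Ext^1(M,\omega^i(N))$ as you first wrote. With that correction, your dimension shift already makes the stopping condition equivalent to pairwise $\Ext$-orthogonality of the members themselves, so the geometric tracking of syzygy accordions you flagged as the main obstacle is unnecessary. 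Finally, $\Ext^{>0}(N,M)\neq 0$ forces $M$ strictly $\THomleq$-below $N$, so your symmetric rewriting (delete a $\THomleq$-minimal member not $\Ext$-orthogonal to some member) picks the same elements as the original one-directional rule.
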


\begin{proof}
This is a consequence of \cref{prop:rigidResgen}, by noticing that $\mathfrak{D}_\mathscr{R}^\mathrm{r} = \bbDelta_{(\mathfrak{E}_\mathscr{R}^{\mathrm{r}})}$, and of \cref{thm:tiltgeom}.
\end{proof}

\subsection{Projective completions}
\label{ss:projcompl}

In this subsection, we introduce a \emph{projective completion} operation, which consists of adding projective accordions that are Ext-orthogonal to all the accordions in a given subset of $\Accord'$. We focus on the projective completion of a rigid collection and highlight the behavior of the cells that prevents it from being a tilting collection, in comparison with well-chosen projective accordions. In the next subsection, we use these results to gradually transform a rigid collection into a tilting collection while preserving the resolving subcategory it generates.

\begin{definition} \label{def:projcompl}
    Let $\mathfrak{D} \subseteq \Accord'$. We define the \new{projective completion} $\widehat{\mathfrak{D}}$ of $\mathfrak{D}$ as the union of $\mathfrak{D}$ with the projective accordions $\rho$ that are $\Ext$-orthogonal to all the accordions in $\mathfrak{D}$.
\end{definition}

Given $\mathfrak{D} \subseteq \Accord'$, one can construct $\widehat{\mathfrak{D}}$ combinatorially thanks to the following result, which is a restatement of $(i)$ and $(iii)$ of \cref{prop:CombidescripNproj}.

\begin{lemma} \label{lem:projextortho}
    Let $\delta \in \Accord'$. Then $\rho \in \Prj(\Delta^{\gpoint})$ is not Ext-orthogonal to $\delta$ if and only if either:
    \begin{enumerate}[label=$\bullet$, itemsep=1mm]
        \item $\rho$ crosses $\delta$; or,
        \item $\rho$ and $\delta$ share a common endpoint $u \in \mathcal{M}_{\rpoint}$, and $\rho \prec_u \delta$.
    \end{enumerate}
\end{lemma}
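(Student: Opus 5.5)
The plan is to reduce everything to \cref{prop:Extorthoaccord}, since $\rho$ is projective. First I will translate Ext-orthogonality of the pair $(\rho,\delta)$ into the two geometric conditions of that proposition: $\rho$ and $\delta$ do not cross, and $\wt_u(\rho,\delta)=0$ at every common endpoint $u$. Hence $\rho \not\Extperp \delta$ if and only if either $\rho$ crosses $\delta$, or they share an endpoint $u$ with $\wt_u(\rho,\delta)\neq 0$. The first alternative is literally the first bullet. So the work is to identify the second alternative with ``$\rho \prec_u \delta$''.

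For the endpoint condition I will work locally at a common endpoint $u \in \mathcal{M}_{\rpoint}$. The tree surface is a disc, and $\Prj(\Delta^{\gpoint})$ is homeomorphic to the $\gpoint$-dissection. So the $\gpoint$-cell $C$ containing $u$ has, between consecutive projective accordions ending at $u$, exactly one marked point of $\mathcal{M}_{\gpoint}$ in the region they cut out. From this I will derive the following: for two accordions $\delta,\rho$ at $u$, the weight $\wt_u$ counts the $\gpoint$-points of $\partial C$ swept between them in the counterclockwise direction.

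Now suppose $\rho$ is projective. If $\delta \prec_u \rho$, the region between $\delta$ and $\rho$ on the relevant side contains no $\gpoint$-point. This is because, by \cref{prop:arcsaccordions}, a non-projective accordion sits inside the fan of projective arcs at $u$ and cannot jump past the $\gpoint$-point of $C$ adjacent to $\rho$. So $\wt_u=0$ in that case. Conversely, if $\rho \prec_u \delta$, the region contains at least the $\gpoint$-point lying between $\rho$ and the next projective arc, so the weight is nonzero. Then \cref{prop:Extigeom} gives a nonvanishing $\Ext^i$, which matches the description in \cref{prop:CombidescripNproj}$(iii)$: all projective accordions at $v$ smaller than $\delta$ are neighbouring projectives, producing arrow extensions or higher extensions. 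I will also use \cref{lem:counterclockandwt} to pass from consecutive pairs to arbitrary pairs in the order.

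Alternatively, one could argue directly that the statement is exactly \cref{prop:CombidescripNproj} restricted to the Ext-part. Item $(i)$ (crossing gives an overlap extension) and item $(iii)$ (common endpoint with $\rho \prec_u \delta$) are precisely the projectives $\rho$ with some $\Ext^i(\MM(\delta),\MM(\rho))\neq 0$. Item $(ii)$ only concerns summands of the projective resolution, not extensions. I also need $\Ext^i(\MM(\rho),\MM(\delta))=0$, which holds since $\MM(\rho)$ is projective. So non-orthogonality reduces to the Ext-part, giving the lemma immediately.

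The main obstacle is the separation between items $(ii)$ and $(iii)$. A projective appearing in the projective resolution via $(ii)$ might also share an endpoint with $\delta$. I must check that it then satisfies $\rho\prec_u\delta$ only if it genuinely carries an extension, and not merely a hom to the resolution. I would handle this with the weight computation above: $(ii)$-type projectives at $u$ lie on the $\succ_u$ side, where the weight is $0$.
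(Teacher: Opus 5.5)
Your reduction is correct as far as it goes: for projective $\rho$ only $\Ext^i(\MM(\delta),\MM(\rho))$ matters, and crossing is the first bullet. Your ``alternative'' route is exactly the paper's, which only calls the lemma a restatement of $(i)$ and $(iii)$ of \cref{prop:CombidescripNproj}. But the common-endpoint step fails in both routes. In the weight route you claim that, when $\rho\prec_u\delta$, the region contains ``the $\gpoint$-point lying between $\rho$ and the next projective arc''. This confuses two kinds of cells. $\wt_u$ counts $\gpoint$-points on the boundary of the $\gpoint$-cell $C$ containing $u$. The $\gpoint$-point between two consecutive projective accordions at $u$ lies in a cell of $\Prj(\Delta^{\gpoint})$ and is not on $\partial C$. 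In fact every projective accordion at $u$ leaves $C$ through the same $\gpoint$-arc, so projectives at $u$ have pairwise weight $0$. They must, since $\Prj(\Delta^{\gpoint})$ is tilting (\cref{thm:tiltgeom}); your claim, applied with $\delta$ equal to the next projective, would contradict this. Consequently, by \cref{lem:counterclockandwt}, if $\rho\prec_u\delta\prec_u\rho'$ with $\rho,\rho'$ projective, then $\wt_u(\rho,\delta)+\wt_u(\delta,\rho')=\wt_u(\rho,\rho')=0$. \Cref{prop:Extorthoaccord} then gives $\rho\Extperp\delta$, even though $\rho\prec_u\delta$.

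This configuration actually occurs. Take $1\xrightarrow{\alpha}2\xrightarrow{\beta}3$ with $\beta\alpha=0$. The accordions of $P_1,P_2,P_3$, together with one boundary segment, bound a quadrilateral cell of $\Prj(\Delta^{\gpoint})$. The accordion of $S_2$ is its diagonal issued from the common endpoint of the accordions of $P_1$ and $P_2$, so it lies strictly between them. Yet $P_1\cong I_2$ and $P_2\cong I_3$ are injective, so $S_2$ is Ext-orthogonal to both. The same happens in \cref{ss:examplePart2}: the accordion of $\begin{smallmatrix}2\\3\end{smallmatrix}$ lies strictly between $\rho_1$ and $\rho_2$ at their common endpoint and is Ext-orthogonal to both.

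So the implication ``$\rho\prec_u\delta\Rightarrow\rho\not\Extperp\delta$'' cannot be proved as literally worded. Your reading of \cref{prop:CombidescripNproj}$(iii)$ drops its actual hypothesis, namely that \emph{all} projective accordions at $u$ are smaller than $\delta$. What your weight argument proves, once corrected, is the version with ``$\rho'\prec_u\delta$ for every projective accordion $\rho'$ ending at $u$''. This holds exactly when $\delta$ leaves $C$ through a $\gpoint$-arc different from the common exit arc of the projectives at $u$. In that case $\wt_u(\rho,\delta)\neq 0$ for every such $\rho$; otherwise all these weights vanish.

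Your ``main obstacle'' paragraph is therefore aimed at the wrong issue. The troublesome projectives are not specifically those of type $(ii)$. They are any projectives at $u$ that precede a $\delta$ lying strictly inside the projective fan, whatever their type.
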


The following statement is an obvious consequence of the definition of projective completions, \cref{lem:projextortho} and \cref{thm:tiltgeom}.

\begin{lemma} \label{lem:projcompladmdiss}
    Let $\mathfrak{D} \subseteq \Accord'$. The following assertions hold:
    \begin{enumerate}[label=$\bullet$, itemsep=1mm]
        \item $\mathfrak{D}$ is an admissible $\rpoint$-dissection if and only if so is $\widehat{\mathfrak{D}}$; and,
        \item $\mathfrak{D}$ is rigid if and only if so is $\widehat{\mathfrak{D}}$.
    \end{enumerate}
\end{lemma}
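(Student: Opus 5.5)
The statement to prove is \cref{lem:projcompladmdiss}: for $\mathfrak{D} \subseteq \Accord'$, $\mathfrak{D}$ is an admissible $\rpoint$-dissection if and only if $\widehat{\mathfrak{D}}$ is, and $\mathfrak{D}$ is rigid if and only if $\widehat{\mathfrak{D}}$ is. Everything will be read off geometrically from \cref{lem:projextortho}, \cref{prop:Extorthoaccord} and \cref{thm:tiltgeom}.

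\textbf{Rigidity.} If $\widehat{\mathfrak{D}}$ is rigid, then $\mathfrak{D} \subseteq \widehat{\mathfrak{D}}$ is rigid, since rigidity is a pairwise condition. Conversely, assume $\mathfrak{D}$ is rigid. We check $\Ext$-orthogonality for every pair in $\widehat{\mathfrak{D}}$ by cases.
\begin{enumerate}[label=$\bullet$, itemsep=1mm]
\item Pairs inside $\mathfrak{D}$ are $\Ext$-orthogonal by hypothesis.
\item A pair $(\rho,\delta)$ with $\rho$ added and $\delta \in \mathfrak{D}$ is $\Ext$-orthogonal by the very definition of $\widehat{\mathfrak{D}}$.
\item A pair of two projective accordions $\rho,\rho'$ is $\Ext$-orthogonal because $\Ext^i(P,-)=0$ for projective $P$ gives one direction. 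For the other direction, note that $\Prj(\Delta^{\gpoint})$ is itself a $\rpoint$-dissection, so $\rho$ and $\rho'$ do not cross. It is also a tilting dissection (the direct sum of the indecomposable projectives is tilting), so by \cref{thm:tiltgeom} all weights at common endpoints vanish. Hence \cref{prop:Extorthoaccord} gives $\rho \Extperp \rho'$.
\end{enumerate}
Thus every pair in $\widehat{\mathfrak{D}}$ is $\Ext$-orthogonal, and $\widehat{\mathfrak{D}}$ is rigid.

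\textbf{Noncrossing.} Added projectives never cross elements of $\mathfrak{D}$: by \cref{lem:projextortho}, a projective crossing some $\delta \in \mathfrak{D}$ is not $\Ext$-orthogonal to it, so it is not added. Two projectives never cross each other. Hence $\mathfrak{D}$ is a noncrossing collection of simple $\rpoint$-arcs if and only if $\widehat{\mathfrak{D}}$ is.

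\textbf{Admissibility.} The remaining point is that adding arcs to a noncrossing family only subdivides cells.
\begin{enumerate}[label=$\bullet$, itemsep=1mm]
\item \emph{From $\mathfrak{D}$ to $\widehat{\mathfrak{D}}$.} Each new projective arc lies inside a cell of $\mathfrak{D}$ and splits it into smaller polygons. We must show each resulting piece still contains a $\gpoint$ point. I expect this to be the main obstacle. The approach: on the disc, any simple $\rpoint$-arc $\rho$ separates the boundary into two sides. Each side contains a $\gpoint$ marked point because the colours alternate and $\rho$ joins two $\rpoint$ points; in the projective case this is a boundary point of the $\gpoint$ cells adjacent to $\rho$. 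Since $\rho$ is noncrossing with $\mathfrak{D}$, these boundary $\gpoint$ points lie in the two subcells cut by $\rho$ from the old cell, so both pieces contain a $\gpoint$ point.
\item \emph{From $\widehat{\mathfrak{D}}$ to $\mathfrak{D}$.} Removing arcs merges cells, and merging polygons along shared arcs in a disc yields polygons each containing a $\gpoint$ point. So if $\widehat{\mathfrak{D}}$ is an admissible $\rpoint$-dissection, so is $\mathfrak{D}$.
\end{enumerate}
If the paper's convention requires a $\rpoint$-dissection to cut the surface into polygons, simple connectivity is automatic on a disc for noncrossing arcs, so this imposes no extra condition in either direction.
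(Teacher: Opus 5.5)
\textbf{Assessment.} Your rigidity argument and your non-crossing argument are fine. For two projectives you do not even need the geometric model, since $\Ext^i(P,-)=0$ for projective $P$ already gives $\Ext$-orthogonality in both orders. The implication from $\widehat{\mathfrak{D}}$ admissible to $\mathfrak{D}$ admissible is also fine.

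\textbf{The gap.} It lies in the step from $\mathfrak{D}$ to $\widehat{\mathfrak{D}}$. You only use that the added projectives cross nothing, and your claim that the $\gpoint$-points of the two cells of $\Prj(\Delta^{\gpoint})$ adjacent to $\rho$ ``lie in the two subcells'' does not follow from that. An arc of $\mathfrak{D}$, or a previously added projective, that merely shares an endpoint with $\rho$ can separate $\rho$ from such a $\gpoint$-point without crossing it.

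For instance, let $\rho,\rho'$ be two consecutive sides of a cell $C$ of $\Prj(\Delta^{\gpoint})$ meeting at a corner. Let $\delta\in\mathfrak{D}$ be an accordion inside $C$ joining their other endpoints; such accordions occur, cf.\ $(b1)$ in \cref{prop:arcsaccordions}. Neither $\rho$ nor $\rho'$ crosses $\delta$. Yet $\delta,\rho,\rho'$ bound a triangle with no boundary segment of $\partial\pmb{\Sigma}$, hence with no $\gpoint$-point. Your argument would add both $\rho$ and $\rho'$ and still conclude admissibility. What actually prevents this is that one of $\rho,\rho'$ is not $\Ext$-orthogonal to $\delta$, and nothing in your argument detects that.

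\textbf{The fix.} The missing ingredient is the second bullet of \cref{lem:projextortho}, which you never use.
\begin{enumerate}[label=$\bullet$, itemsep=1mm]
\item Since $\rpoint$- and $\gpoint$-points alternate on $\partial\pmb{\Sigma}$, a cell of $\widehat{\mathfrak{D}}$ contains no $\gpoint$-point if and only if its boundary consists of arcs only.
\item Go once around such an inner polygon in a fixed direction. At every corner $u$, the two sides meeting there compare the same way under $\prec_u$: either the side you arrive on is always $\prec_u$ the side you leave on, or always the reverse.
\item If all sides lie in $\mathfrak{D}$, the polygon is a cell of $\mathfrak{D}$, contradicting admissibility of $\mathfrak{D}$.
\item If all sides are projective, the polygon is subdivided into cells of $\Prj(\Delta^{\gpoint})$, none of which contains a $\gpoint$-point. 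This contradicts dualizability of $\Prj(\Delta^{\gpoint})$.
\item Otherwise, going around the cycle you pass both from a projective side to a side in $\mathfrak{D}$ and back. So at some corner $u$ you find a projective $\rho\in\widehat{\mathfrak{D}}$ and some $\delta\in\mathfrak{D}$ with $\rho\prec_u\delta$. Then \cref{lem:projextortho} gives $\rho\not\Extperp\delta$, contradicting $\rho\in\widehat{\mathfrak{D}}$.
\end{enumerate}
If $\mathfrak{D}$ is rigid, which is the only case used later, you could shortcut via rigidity of $\widehat{\mathfrak{D}}$ and \cref{thm:tiltgeom}. The lemma, however, is stated for arbitrary $\mathfrak{D}\subseteq\Accord'$, so the argument above is needed.
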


Now, given $\mathfrak{D}_{\mathscr{R}}^{\mathrm{r}}$, we explain categorically why $\widehat {\mathfrak{D}_{\mathscr{R}}^{\mathrm{r}}} \subset \mathfrak{T}_\mathscr{R}$. To do so, let us prove the following statement, which is available in the setting of abelian categories.

\begin{prop} \label{prop:projExtorthosummands} 
   Let $\mathscr{R} \subseteq \rep(Q,R)$ be a resolving subcategory. Then all the projective accordions that are $\Ext$-orthogonal to $\mathfrak{D}_{\mathscr{R}}^{\mathrm{r}}$ are in $\bbDelta_{(\mathfrak{T}_\mathscr{R})}$.
\end{prop}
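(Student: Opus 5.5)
The plan is to use the description of $\mathfrak{T}_\mathscr{R}$ from the Auslander--Reiten correspondence. By \cref{cor:Auslander}, $\mathscr{R} = {}^{\perp_{>0}}\left(\mathfrak{T}_\mathscr{R}^{\perp_{>0}}\right)$. In this correspondence the indecomposable summands of $\mathfrak{T}_\mathscr{R}$ are exactly the indecomposable Ext-injective objects of $\mathscr{R}$. Concretely, $\add(\mathfrak{T}_\mathscr{R}) = \mathscr{R} \cap \mathscr{R}^{\perp_{>0}}$, since $(\mathscr{R}, \mathscr{R}^{\perp_{>0}})$ is a complete hereditary cotorsion pair with kernel $\add(T)$. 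So it suffices to show the following: if $\rho \in \Prj(\Delta^{\gpoint})$ is $\Ext$-orthogonal to every accordion of $\mathfrak{D}_\mathscr{R}^{\mathrm{r}}$, then $\Ext^i(X,\MM(\rho)) = 0$ for all $X \in \mathscr{R}$ and all $i>0$. Membership $\MM(\rho) \in \mathscr{R}$ is automatic, because $\mathscr{R}$ contains all projectives.

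The key step is to pass from the generators to all of $\mathscr{R}$. Set $\mathfrak{P} = \{\MM(\rho)\}$ and consider the subcategory $\mathscr{E}_\mathfrak{P}$ of \cref{prop:projorthgen}. That proposition says $\mathscr{E}_\mathfrak{P}$ is resolving. By hypothesis, and by \cref{prop:Extorthoaccord} and \cref{lem:projextortho} translating the accordion condition into $\Ext$-orthogonality, every $\MM(\delta)$ with $\delta \in \mathfrak{D}_\mathscr{R}^{\mathrm{r}}$ lies in $\mathscr{E}_\mathfrak{P}$. Since $\Res$ is the smallest resolving subcategory containing a set, \cref{cor:rigidERAccord}(iii) (equivalently \cref{prop:rigidResgen}(iii)) gives
\[\mathscr{R} = \Res\left(\MM(\delta) \mid \delta \in \mathfrak{D}_\mathscr{R}^{\mathrm{r}}\right) \subseteq \mathscr{E}_\mathfrak{P}.\]
Hence every $X \in \mathscr{R}$ satisfies $X \Extperp \MM(\rho)$, and in particular $\Ext^i(X,\MM(\rho))=0$ for $i>0$. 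So $\MM(\rho) \in \mathscr{R}\cap\mathscr{R}^{\perp_{>0}} = \add(\mathfrak{T}_\mathscr{R})$. Because $\MM(\rho)$ is indecomposable and $\mathfrak{T}_\mathscr{R}$ is basic, $\rho \in \bbDelta_{(\mathfrak{T}_\mathscr{R})}$.

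The main obstacle is justifying the identification $\add(\mathfrak{T}_\mathscr{R}) = \mathscr{R} \cap \mathscr{R}^{\perp_{>0}}$ in the conventions of \cref{cor:Auslander}. That corollary is phrased through the coresolving category $\mathfrak{T}^{\perp_{>0}}$ and its left perpendicular. I would argue as follows. Write $\mathscr{Y} = \mathfrak{T}_\mathscr{R}^{\perp_{>0}}$. Then $\mathscr{R} = {}^{\perp_{>0}}\mathscr{Y}$, and $\mathscr{Y} = \mathscr{R}^{\perp_{>0}}$ because the cotorsion pair is complete. The objects in $\mathscr{R}\cap\mathscr{Y}$ are the relative projective-injectives of this pair. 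By the standard tilting theory argument (Auslander--Reiten, Section 5), these objects are precisely $\add(\mathfrak{T}_\mathscr{R})$: each such object $Z$ has a finite $\add(\mathfrak{T}_\mathscr{R})$-coresolution inside $\mathscr{Y}$ that splits because $Z \in {}^{\perp_{>0}}\mathscr{Y}$. If this identification is taken as known from \cite{Auslander1991}, the rest is the short closure argument above.
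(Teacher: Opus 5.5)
Your proposal is correct and follows the paper's proof. In both, the key step is that, by \cref{prop:projorthgen}, the objects $\Ext$-orthogonal to $\MM(\rho)$ form a resolving subcategory containing the canonical rigid generators, hence containing all of $\mathscr{R}$. The paper then concludes that $\MM(\rho)$ is $\Ext$-orthogonal to $\mathfrak{T}_\mathscr{R}\subseteq\mathscr{R}$ and is therefore a summand, while you write $\MM(\rho)\in\mathscr{R}\cap\mathscr{R}^{\perp_{>0}}=\add(\mathfrak{T}_\mathscr{R})$; these rest on the same standard tilting fact. One small fix to your justification of that identification: since $Z\in\mathscr{Y}$, take an $\add(\mathfrak{T}_\mathscr{R})$-\emph{resolution} $0\to K\to T_0\to Z\to 0$ with $K\in\mathscr{Y}$; it splits because $\Ext^1(Z,K)=0$ for $Z\in{}^{\perp_{>0}}\mathscr{Y}$, whereas a coresolution of $Z$ would not be split by that hypothesis.
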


\begin{proof}
    By \cref{prop:projorthgen}, in the geometric model, any projective accordion Ext-orthogonal $\mathfrak{D}_\mathscr{R}^{\mathrm{r}}$ is Ext-orthogonal to all the objects in $\bbDelta_\mathscr{R}' = \opResAc(\mathfrak{D}_\mathscr{R}^{\mathrm{r}})$. Such a projective accordion is Ext-orthogonal to $\mathfrak{T}_\mathscr{R}$, and, therefore, lives in $\bbDelta_{(\mathfrak{T}_\mathscr{R})}$.
\end{proof}

From $\widehat{\mathfrak{D}_{\mathscr{R}}^{\mathrm{r}}}$, we will proceed by gradually adding accordions to construct $\bbDelta_{(\mathfrak{T}_\mathscr{R})}$. Note that we do not know yet that $\mathfrak{D}_{\mathscr{R}}^{\mathrm{r}} \subseteq \bbDelta_{(\mathfrak{T}_\mathscr{R})}$.

\subsection{Cell configurations in rigid projective completions}
\label{ss:Cellrigiddissec}

Let $v,w \in \mathcal{M}_{\gpoint}$ such that $v \neq w$. An accordion $\delta \in \Accord$ \new{separates} $v$ and $w$ if $v$ and $w$ are in different connected components of $\pmb{\Sigma} \setminus \delta$. We denote by $(v|w)$ the set of accordions that separate $v$ and $w$. Set $(v | w)_{\Prj} = (v|w) \cap \Prj(\Delta^{\gpoint})$. Note that  $(v | w)_{\Prj} \neq \varnothing$, as $\Prj(\Delta^{\gpoint})$ is a dualizable $\rpoint$-dissection of the marked disc $(\pmb{\Sigma}, \mathcal{M})$.

Consider a rigid $\rpoint$-dissection $\mathfrak{D} \subset \Accord'$. By \cref{lem:projcompladmdiss}, $\widehat{\mathfrak{D}}$ is also a rigid $\rpoint$-dissection, hence admissible, of $(\pmb{\Sigma}, \mathcal{M})$ that contains $\mathfrak{D}$. Given any $C \in \pmb{\Gamma}(\widehat{\mathfrak{D}})$, recall that we write $\partial C$ for the boundary of $C$. If $\widehat{\mathfrak{D}}$ is not dualizable, then there exists $C \in \pmb{\Gamma}(\widehat{\mathfrak{D}})$ such that $ \#(\mathcal{M}_{\gpoint} \cap \partial C) >1$. We set: \[\bbGamma(\widehat{\mathfrak{D}}) = \{ C \in \pmb{\Gamma}(\widehat{\mathfrak{D}}) \mid  \#(\mathcal{M}_{\gpoint} \cap \partial C) > 1\}.\]

In the following, we establish some results on the behavior of cells in $\bbGamma(\widehat{\mathfrak{D}})$ compared to the projective accordions that separate the  $\gpoint$-points in these cells. In the next subsection, we use these results to gradually construct the desired tilting collection $\mathfrak{T}_\mathscr{R}$ from $\widehat{\mathfrak{D}_\mathscr{R}^{\mathrm{r}}}$.

\begin{lemma} \label{lem:cellboundary}
Let $\mathfrak{D} \subseteq \Accord'$ be an admissible $\rpoint$-dissection of $(\pmb{\Sigma},\mathcal{M})$. Then, for any $C \in \bbGamma(\widehat{\mathfrak{D}})$, there exists $\delta \in \mathfrak{D}$ such that $\delta \subset \partial C$.
\end{lemma}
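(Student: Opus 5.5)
We argue by contradiction. Fix $C \in \bbGamma(\widehat{\mathfrak{D}})$ and assume that every edge of $C$ lying in the interior of the disc is a projective accordion, i.e.\ $\partial C \cap \mathfrak{D} = \varnothing$, so that all interior edges of $C$ lie in $\widehat{\mathfrak{D}} \setminus \mathfrak{D} \subseteq \Prj(\Delta^{\gpoint})$. The remaining edges of $C$ are boundary segments of $\pmb{\Sigma}$. Since $C \in \bbGamma(\widehat{\mathfrak{D}})$, there are two distinct points $v,w \in \mathcal{M}_{\gpoint} \cap \partial C$. The goal is to produce a projective accordion $\rho \in (v|w)_{\Prj}$ that is $\Ext$-orthogonal to all of $\mathfrak{D}$, hence belongs to $\widehat{\mathfrak{D}}$, and that cuts through $C$. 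This contradicts $C$ being a cell of $\widehat{\mathfrak{D}}$.

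\textbf{Step 1: $C$ sits inside a single projective cell.} Because $\Prj(\Delta^{\gpoint})$ is a dualizable dissection, $(v|w)_{\Prj}\neq\varnothing$, so some projective accordion $\rho$ separates $v$ and $w$. Both $v,w$ lie on $\partial C$ and $C$ is connected, so $\rho$ must meet the interior of $C$. The only edges of $\widehat{\mathfrak{D}}$ that could stop $\rho$ from entering $C$ are its bounding edges, and these are projective. Projective accordions pairwise do not cross. Hence $\rho$ can only enter $C$ through an endpoint, i.e.\ as a diagonal of $C$ joining two of its $\rpoint$-vertices. In particular $\rho \notin \widehat{\mathfrak{D}}$.

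\textbf{Step 2: this diagonal is $\Ext$-orthogonal to $\mathfrak{D}$.} By \cref{lem:projextortho}, $\rho$ fails to be $\Ext$-orthogonal to some $\delta\in\mathfrak{D}$ only if $\rho$ crosses $\delta$, or $\rho$ and $\delta$ share an endpoint $u$ with $\rho\prec_u\delta$.

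No $\delta \in \mathfrak{D}$ crosses $\rho$. Since $\rho$ lies inside $C$, any crossing arc would have to enter $C$, which is impossible for an element of the dissection $\widehat{\mathfrak{D}}$.

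For a shared endpoint $u$, which is a vertex of $C$, all arcs of $\mathfrak{D}_{[u]}$ lie outside $C$, beyond the two edges $\pi_1,\pi_2$ of $C$ at $u$. Both $\pi_1$ and $\pi_2$ are projective and lie in $\widehat{\mathfrak{D}}$, so they are $\Ext$-orthogonal to $\delta$. By \cref{lem:projextortho}, this forces $\delta \prec_u \pi_i$ whenever $\delta$ and $\pi_i$ share the endpoint $u$. Since $\rho$ lies between $\pi_1$ and $\pi_2$ in the counterclockwise order at $u$, one gets $\delta\prec_u\rho$, so the condition $\rho \prec_u \delta$ does not occur.

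Therefore $\rho\in\widehat{\mathfrak{D}}$, which contradicts Step 1.

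\textbf{Main obstacle.} The hard step is the ordering argument at $u$. Since $\preccurlyeq_u$ is a total order, $\rho$ does lie between $\pi_1$ and $\pi_2$, and the real problem is that $\delta$ may share the endpoint $u$ with only one of $\pi_1,\pi_2$. We then need to know on which side of $\rho$ the arcs of $\mathfrak{D}_{[u]}$ lie. I would settle this with the weight additivity of \cref{lem:counterclockandwt} together with rigidity of $\widehat{\mathfrak{D}}$ (\cref{thm:tiltgeom}). All weights between $\delta$ and the $\pi_i$ vanish, and the $\gpoint$-points $v,w$ lie on the $C$ side. This shows that $\rho$ comes after every $\delta \in \mathfrak{D}_{[u]}$ in the order at $u$. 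If this case analysis fails, I would instead choose $\rho$ among $(v|w)_{\Prj}$ to be $\preccurlyeq_u$-maximal and rerun the argument.
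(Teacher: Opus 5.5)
Your argument is correct and is essentially the paper's proof, recast as a single contradiction. The paper also takes $\rho \in (v|w)_{\Prj}$ and uses that $\rho$ cannot cross the projective edges of $C$ to force $\rho \subset C$; when $\rho$ shares an endpoint $u$ with some $\eta \in \mathfrak{D}$, it finds an edge $\rho'$ of $C$ at $u$ with $\rho \prec_u \rho' \prec_u \eta$, which contradicts $\rho' \in \widehat{\mathfrak{D}}$ by \cref{lem:projextortho}.

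Your ``main obstacle'' is not a real one: you only need one projective edge of $C$ at $u$ lying between $\delta$ and $\rho$ in $\preccurlyeq_u$. This also covers the case your Step 2 overlooks, where the other side of $C$ at $u$ is a boundary segment rather than an arc. So no weights or rigidity are needed, and rigidity is not even a hypothesis of the lemma.
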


\begin{proof}
Let $C \in \bbGamma(\widehat{\mathfrak{D}})$. Let $v,w \in \mathcal{M}_{\gpoint} \cap \partial C$ be two distinct points. As $\Prj(\Delta^{\gpoint})$ is a dualizable $\rpoint$-dissection of $(\pmb{\Sigma},\mathcal{M})$, there exists $\rho \in (v|w)_{\Prj}$, and such a $\rho$ does not intersect any projective accordions in $\partial C$. As $\rho \notin \widehat{\mathfrak{D}}$, there exists $\eta \in \mathfrak{D}$ such that $\eta \not\Extperp \rho$. By \cref{lem:projextortho}, either $\rho$ crosses $\eta$, or they share a common endpoint $u \in \mathcal{M}_{\rpoint}$, and $\rho \prec_u \eta$.

In the first case, as $(\pmb{\Sigma}, \mathcal{M})$ is a disc, this means that $\rho$ leaves $C$ by crossing an accordion $\delta \in \partial C$. So we must have $\delta \in \mathfrak{D}$.

In the second case, assume by contradiction that all the accordions in $\partial C$ are projective accordions. Therefore $\rho \subset C$. As $\eta \nsubseteq \partial C$, there exists $\rho' \in \partial C$ such that there exists $u \in \mathcal{M}_{\rpoint}$ such that $\rho,\rho',\eta \in \Accord_{(u)}$, and $\rho \prec_u \rho' \prec_u \eta$. However, by \cref{lem:projextortho}, $\rho' \not\Extperp \eta$. This contradicts the fact that $\rho' \in \widehat{\mathfrak{D}}$. 
\end{proof}

\begin{lemma} \label{lem:cellbehaviorwithaccordions}
    Let $\mathfrak{D} \subseteq \Accord'$ be an admissible $\rpoint$-dissection of $(\pmb{\Sigma}, \mathcal{M})$. Let $C \in \pmb{\Gamma}(\widehat{\mathfrak{D}})$. Consider three distinct accordions $\delta,\eta,\mu \in \Accord$ such that $\delta, \eta, \mu \subset \partial C$. Then $\eta$ and $\mu$ must be contained in the same connected component of $\pmb{\Sigma} \setminus \delta$.
\end{lemma}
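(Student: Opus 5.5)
The statement is essentially topological. I would first make precise what the cell $C$ is, and then argue directly from the fact that $\pmb{\Sigma}$ is a disc. The relevant facts about $C$ are these:

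\begin{enumerate}[label=$\bullet$, itemsep=1mm]
    \item $\widehat{\mathfrak{D}}$ is an admissible $\rpoint$-dissection of the marked disc $(\pmb{\Sigma},\mathcal{M})$, by \cref{lem:projcompladmdiss}.
    \item Every accordion in $\widehat{\mathfrak{D}}$ is a simple $\rpoint$-arc whose two endpoints lie in $\mathcal{M}_{\rpoint} \subset \partial\pmb{\Sigma}$.
    \item Two accordions of $\widehat{\mathfrak{D}}$ do not cross in their relative interiors.
    \item Each cell $C\in\pmb{\Gamma}(\widehat{\mathfrak{D}})$ is a simply connected polygon. Its boundary $\partial C$ is a closed curve that alternates between arcs of $\widehat{\mathfrak{D}}$ and pieces of $\partial\pmb{\Sigma}$.
\end{enumerate}

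Every arc $\delta$ in $\widehat{\mathfrak{D}}$ joins two boundary points of the disc and is simple, so by the Jordan curve theorem (or the Schoenflies theorem for the disc) $\pmb{\Sigma}\setminus\delta$ has exactly two connected components.

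The key step is to show that the interior of $C$ lies entirely in one of these two components. The interior of $C$ is connected, since $C$ is a polygon. It is also disjoint from $\delta$, because $\delta$ is an arc of the dissection and cells are the connected components of the complement of the dissection arcs. Hence $\operatorname{int}(C)$ sits in a single component $U$ of $\pmb{\Sigma}\setminus\delta$.

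Next, take $\eta,\mu\subset\partial C$ distinct from $\delta$. The relative interior of $\eta$ does not meet $\delta$, since distinct arcs of the dissection are noncrossing and distinct up to homotopy. Each point of the relative interior of $\eta$ is a limit of points of $\operatorname{int}(C)\subset U$. It follows that the relative interior of $\eta$, which is connected and avoids $\delta$, lies in the closure of $U$ minus $\delta$, that is, in $U$ itself. The same argument applies to $\mu$.

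The main obstacle is the endpoints. The arcs $\eta$ or $\mu$ may share an endpoint with $\delta$, so as closed curves they are not literally contained in $\pmb{\Sigma}\setminus\delta$. I would handle this by interpreting ``contained in the same connected component'' up to endpoints, that is, via relative interiors; this is the convention fixed earlier for crossings. I would also check that $\eta$ cannot coincide with $\delta$ up to homotopy, which holds by the distinctness hypothesis.

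A second subtlety is whether $\delta$ could appear twice on $\partial C$, once from each side. That would put $\eta$ and $\mu$ on opposite sides of $\delta$. On a disc this cannot happen: if $C$ touched $\delta$ from both sides, then $C\cup\delta$ would contain a closed loop through $\delta$ meeting both components. That is impossible because $\delta$ separates the disc and $C$ meets only one component, as shown above.
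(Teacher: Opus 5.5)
Your proof is correct and follows the same route as the paper, whose entire proof is that the lemma follows from $(\pmb{\Sigma},\mathcal{M})$ being a disc without punctures; you make this explicit by noting that $\delta$ separates the disc into two components, that the connected interior of $C$ lies in one of them, and that the relative interiors of the other edges of $C$ lie in the closure of that component minus $\delta$. Your ``second subtlety'' ($\delta$ appearing on $\partial C$ from both sides) is already settled by that key step, so the closing loop argument is unnecessary.
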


\begin{proof}
    This result follows from the fact that $(\pmb{\Sigma},\mathcal{M})$ is a marked disc without puncture.
\end{proof}

\begin{lemma} \label{lem:cellbehaviorwithseparation}
    Let $\mathfrak{D} \subseteq \Accord'$ be an admissible $\rpoint$-dissection of $(\pmb{\Sigma}, \mathcal{M})$. Let $C \in \bbGamma(\widehat{\mathfrak{D}})$, and $(v,w) \in (\mathcal{M} \cap \partial C)^2$ be such that $v \neq w$. Then, for any $\rho \in (v|w)_{\Prj}$, there are either one or two accordions contained in $\partial C$ that are not $\Ext$-orthogonal to $\rho$.
\end{lemma}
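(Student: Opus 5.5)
The plan is to prove a lower bound (at least one) and an upper bound (at most two) separately. Throughout, the disc structure of $(\pmb{\Sigma},\mathcal{M})$ and the combinatorial criterion of \cref{lem:projextortho} do the work.

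\emph{Lower bound.} Fix $\rho \in (v|w)_{\Prj}$. Since $v,w \in \partial C$ are distinct $\gpoint$-points and $\rho$ separates them, $\rho$ cannot lie in $\widehat{\mathfrak{D}}$. If it did, it would be an arc of the dissection not crossing the interior of $C$, so $v$ and $w$ would lie on the same side of it. Hence there is some $\eta \in \mathfrak{D}$ with $\eta \not\Extperp \rho$. I then rerun the argument of \cref{lem:cellboundary}, keeping track of which accordion of $\partial C$ is produced.
\begin{itemize}
\item If $\rho$ crosses the interior of $C$, then, being a curve in a disc with endpoints on the boundary, it must enter and leave $C$. It does so by crossing accordions of $\partial C$, and by \cref{lem:projextortho} these are not $\Ext$-orthogonal to $\rho$.
\item If $\rho$ does not meet the interior of $C$, then, since it separates $v$ from $w$, it shares an endpoint $u$ with $C$. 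The ordering argument around $u$ in \cref{lem:cellboundary} then gives an accordion $\delta \subset \partial C$, adjacent to $u$, with $\rho \prec_u \delta$. Here I use \cref{lem:counterclockandwt} and the fact that projective members of $\partial C$ are orthogonal to $\mathfrak{D}$.
\end{itemize}
Either way, at least one accordion of $\partial C$ is not $\Ext$-orthogonal to $\rho$.

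\emph{Upper bound.} An accordion $\mu \subset \partial C$ fails to be $\Ext$-orthogonal to $\rho$ in one of two ways: $\rho$ crosses $\mu$, or they share an endpoint $u$ with $\rho \prec_u \mu$.

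For crossings, $\rho$ is a simple arc in the disc and $C$ is a convex-like polygon, with boundary arcs in the same side configuration by \cref{lem:cellbehaviorwithaccordions}. So $\rho \cap C$ is a single segment, and $\rho$ crosses at most two edges of $\partial C$: the entry edge and the exit edge. Two accordions in a gentle tree cross at most once, which rules out re-entry.

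For shared endpoints, $\rho$ can share at most its two endpoints with vertices of $C$. At each such endpoint $u$, only the edge of $\partial C$ that is closest to $\rho$ in the order $\preccurlyeq_u$ can be a witness. If a farther edge $\mu'$ were a witness, the intermediate edge $\mu$ would also satisfy $\rho \prec_u \mu$, which is fine. However, $\mu$ and $\mu'$ are both edges of $C$ at $u$, and there are only two such edges. I will also need to argue that at most one of them satisfies $\rho \prec_u (\cdot)$ once the position of $\rho$ relative to $C$ is taken into account.

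Finally, I combine the cases: each endpoint of $\rho$ and each entry/exit point accounts for at most one edge. The main obstacle is the mixed case, where $\rho$ enters $C$ through one edge and also has an endpoint at a vertex of $C$. My first attempt will be to show that an endpoint on $\partial C$ uses up one of the two "passages" of $\rho$ through $C$, so the count stays at most two. This is a topological counting of how a simple arc in a disc meets a polygon: it meets $\overline{C}$ in one connected piece, whose two ends each contribute at most one edge.
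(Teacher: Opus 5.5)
Your upper-bound argument takes a different route from the paper. The paper argues by contradiction. It assumes three witnesses $\delta,\eta,\mu\subset\partial C$ and splits according to how many of them cross $\rho$ (none, exactly one, at least two). In each case it combines pigeonhole on the two endpoints of $\rho$ with \cref{lem:cellbehaviorwithaccordions} (no side of a cell separates two other sides).

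You instead realize all arcs as straight chords, so that $\overline{C}$ is convex and $\rho\cap\overline{C}$ is one segment $[p,q]$ with $\rho\cap\partial C=\{p,q\}$. Each end is either a transversal crossing with exactly one side of $C$, or an endpoint $u$ of $\rho$ at a vertex of $C$. Any side sharing an endpoint with $\rho$ must do so at such an end. This direct count is more transparent. It shows at once why mixed configurations cannot yield a third witness. It also gives the identity (sides crossed by $\rho$) $+$ (endpoints of $\rho$ on $\partial C$) $=2$, which underlies the case list of \cref{cor:wheretoseparatecellsallcases}. The paper stays inside the lemmas already stated, but leaves the angular facts at a vertex implicit.

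Two steps need repair. The first is an actual error in your lower-bound case split.
\begin{itemize}
\item The branch ``$\rho$ does not meet the interior of $C$'' is vacuous: in that case $v$ and $w$ would lie on the same side of $\rho$.
\item The other branch claims that $\rho$ must enter and leave $C$ by crossing sides of $C$. This fails when both endpoints of $\rho$ are vertices of $C$.
\end{itemize}
This configuration really occurs. In \cref{ex:Tiltcalc2}, $\rho_4$ joins two vertices of the cell bounded by $\eta$ and $\varsigma$ and crosses none of its sides.

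Your ordering argument is what is needed in this case. Such a $\rho$ crosses no arc of $\widehat{\mathfrak{D}}$. So the accordion $\theta\in\mathfrak{D}$ with $\theta\not\Extperp\rho$ shares an endpoint $u$ with $\rho$, and $\rho\prec_u\theta$ by \cref{lem:projextortho}. Since $\rho$ enters the angle of $C$ at $u$ and $\theta$ does not, a side $e$ of $C$ at $u$ lies between them around $u$. This $e$ cannot be a boundary segment, so $e\in\widehat{\mathfrak{D}}$ with $\rho\prec_u e\preccurlyeq_u\theta$. If $e\neq\theta$ were projective, \cref{lem:projextortho} would give $e\not\Extperp\theta$, contradicting $e\in\widehat{\mathfrak{D}}$. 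Hence $e\in\mathfrak{D}$ is a witness.

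The second step is one you postpone: at a shared endpoint $u$, at most one side of $C$ comes after $\rho$ in $\preccurlyeq_u$. Derive it from your segment picture, not from the closest/farther-edge reasoning, which proves nothing. The segment $[p,q]$ starts at $u$ and enters the interior of $C$. So $\rho$ lies strictly between the two sides of $C$ at $u$, and at most one of them is an accordion after $\rho$. Also, ``two accordions cross at most once'' is not what rules out re-entry, since a re-entry would cross different sides. Convexity of $\overline{C}$ is what rules it out.
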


\begin{proof}
    By the proof of \cref{lem:cellboundary}, there is at least one accordion satisfying this property. Assume, by contradiction, that there exist three accordions $\delta, \eta$, and $\mu$ contained in $\partial C$ that are not Ext-orthogonal to $\rho$. 
    
    If none of them crosses $\rho$, by \cref{lem:projextortho}, and by pigeonhole principle, two out of those three accordions -- let us say $\delta$ and $\eta$ --, together with $\rho$, share a common endpoint $u \in \mathcal{M}_{\rpoint}$ and they are both greater that $\rho$ for $\prec_u$. As $\prec_u$ is a total order, up to exchanging their roles, we have $\delta \prec_u \eta$. This implies a contradiction with \cref{lem:cellbehaviorwithaccordions}, since $\eta$ and $\mu$ lie in distinct connected components of $\pmb{\Sigma} \setminus \delta$.

    Assume that exactly one of those three accordions -- let us say $\delta$ -- crosses $\rho$. If $\eta$ and $\mu$ do not cross $\delta$, as $(\pmb{\Sigma}, \mathcal{M})$ is a marked disc, and by \cref{lem:projextortho}, they must share, together with $\rho$, a common endpoint. As $\eta$ and $\mu$ cannot share a common endpoint, we obtained a contradiction by \cref{lem:cellbehaviorwithaccordions} for similar reasons.

    Then at least two out of those three accordions must cross $\rho$. In such a case, as $(\pmb{\Sigma}, \mathcal{M})$ is a marked disc, there is one of those accordions -- let us say $\delta$ -- that cuts $\Sigma$ into two connected components where $\eta$ and $\mu$ are in distinct connected components, which contradicts \cref{lem:cellbehaviorwithaccordions}.
\end{proof}

So, if we want to construct a dualizable $\rpoint$-dissection of $(\pmb{\Sigma}, \mathcal{M})$ from $\widehat{\mathfrak{D}}$, by \cref{lem:projextortho,lem:cellbehaviorwithseparation}, we are reduced to treating the cases enumerated in the following corollary.

\begin{cor} \label{cor:wheretoseparatecellsallcases}
Let $\mathfrak{D} \subset \Accord'$ be an admissible $\rpoint$-dissection of $(\pmb{\Sigma}, \mathcal{M})$. Consider $C \in \bbGamma(\widehat{\mathfrak{D}})$, and $(v,w) \in (\mathcal{M}_{\gpoint} \cap \partial C)^2$ such that $v \neq w$. For any $\rho \in (v|w)_{\Prj}$, exactly one of the following cases occurs:
\begin{enumerate}[label=$(\arabic*)$, itemsep=1mm]
    \item \label{1} there is exactly one accordion $\delta \in \Accord'$ contained in $\partial C$ which is not Ext-orthogonal to $\rho$, and either:
    \begin{enumerate}[label=$(1\alph*)$, itemsep=1mm]
        \item \label{1a} $\delta$ and $\rho$ share a common endpoint $u$, and $\rho \prec_u \delta$; or,
        \item \label{1b} $\delta$ crosses $\rho$; or
    \end{enumerate}
    \item \label{2}  there are exactly two accordion $\delta, \eta \in \Accord'$ contained in $\partial C$ which are not Ext-orthogonal to $\rho$, and either:
    \begin{enumerate}[label=$(2\alph*)$, itemsep=1mm]
        \item \label{2a} by denoting $u_1$ and $u_2$ the endpoints of $\rho$, we have that $\rho$ shares $u_1$ as a common endpoint with $\delta$ with $\rho \prec_{u_1} \delta$, and $\rho$ shares $u_2$ as a common endpoint with $\eta$ with $\rho \prec_{u_2} \eta$;
        \item \label{2b} $\eta$ crosses $\rho$, and $\delta$ and $\rho$ share a common endpoint $u$ with $\rho \prec_u \delta$; or,
        \item \label{2c} $\rho$ crosses both $\delta$ and $\eta$.
    \end{enumerate}
\end{enumerate}
We illustrate all those cases in \cref{fig:wheretoseparate}.
\begin{figure}[!ht]
\centering 
    \begin{tikzpicture}[mydot/.style={
					circle,
					thick,
					fill=white,
					draw,
					outer sep=0.5pt,
					inner sep=1pt
				}, scale = .8]
		\tikzset{
		osq/.style={
        rectangle,
        thick,
        fill=white,
        append after command={
            node [
                fit=(\tikzlastnode),
                orange,
                line width=0.3mm,
                inner sep=-\pgflinewidth,
                cross out,
                draw
            ] {}}}}
        \begin{scope}[scale=.7]
            		\foreach \X in {0,1,...,35}
		{
		\tkzDefPoint(3*cos(pi/18*\X),2*sin(pi/18*\X)){\X};
		};
        \draw [line width=0.7mm,domain=80:100] plot ({3*cos(\x)}, {2*sin(\x)});
        \draw [line width=0.7mm,domain=130:150] plot ({3*cos(\x)}, {2*sin(\x)});
        \draw [line width=0.7mm,domain=170:190] plot ({3*cos(\x)}, {2*sin(\x)});
        \draw [line width=0.7mm,domain=260:280] plot ({3*cos(\x)}, {2*sin(\x)});
        \draw [line width=0.7mm,domain=350:370] plot ({3*cos(\x)}, {2*sin(\x)});

		\draw[line width=0.9mm ,bend left=10,red](9) edge (27);
		
		\draw[line width=0.7mm,blue,bend right=30, loosely dashed](14) edge (9);

            \filldraw [pattern=dots, pattern color=red,opacity=0.3,line width=0mm] (9) to [bend left=30] (14) to [bend left=40] (17) to [bend right=20] (19) to [bend left=30] (26)  to [bend right=20] (28) to [bend left=10] (35) to [bend right=10] (1) to [bend left=10] cycle ;

		\foreach \X in {9,14,27}
		{
		\tkzDrawPoints[fill =red,size=4,color=red](\X);
		};

        \foreach \X in {0,18}
		{
		\tkzDrawPoints[size=4,color=dark-green,mydot](\X);
		};
		
		\tkzDefPoint(-1.4,2.1){g};
		\tkzLabelPoint[blue](g){$\delta$}
            \tkzDefPoint(.6,0){h};
		\tkzLabelPoint[red](h){\Large $\rho$}
            \tkzDefPoint(0,2.9){j};
		\tkzLabelPoint[red](j){$u$}
            \tkzDefPoint(-1.5,.1){k};
        \tkzLabelPoint[red](k){\Large $C$}
            \tkzDefPoint(-3.7,2.5){k};
		\tkzLabelPoint[black](k){\Large $(1a)$}
            \tkzDefPoint(-3.4,0.4){l};
		\tkzLabelPoint[dark-green](l){$v$}
            \tkzDefPoint(3.4,0.4){l};
		\tkzLabelPoint[dark-green](l){$w$}
        \end{scope}

    \begin{scope}[scale=.7,xshift=8.5cm]
            		\foreach \X in {0,1,...,35}
		{
		\tkzDefPoint(3*cos(pi/18*\X),2*sin(pi/18*\X)){\X};
		};
        \draw [line width=0.7mm,domain=30:50] plot ({3*cos(\x)}, {2*sin(\x)});
        \draw [line width=0.7mm,domain=80:100] plot ({3*cos(\x)}, {2*sin(\x)});
        \draw [line width=0.7mm,domain=130:150] plot ({3*cos(\x)}, {2*sin(\x)});
        \draw [line width=0.7mm,domain=170:190] plot ({3*cos(\x)}, {2*sin(\x)});
        \draw [line width=0.7mm,domain=260:280] plot ({3*cos(\x)}, {2*sin(\x)});
        \draw [line width=0.7mm,domain=350:370] plot ({3*cos(\x)}, {2*sin(\x)});

		\draw[line width=0.9mm ,bend left=10,red](9) edge (27);
		
		\draw[line width=0.7mm,blue,bend right=30, loosely dashed](14) edge (4);
        
            \filldraw [pattern=dots, pattern color=red,opacity=0.3,line width=0mm] (4) to [bend left=30] (14) to [bend left=40] (17) to [bend right=20] (19) to [bend left=30] (26)  to [bend right=20] (28) to [bend left=10] (35) to [bend right=10] (1) to [bend left=30] cycle ;

		\foreach \X in {4,9,14,27}
		{
		\tkzDrawPoints[fill =red,size=4,color=red](\X);
		};

        \foreach \X in {0,18}
		{
		\tkzDrawPoints[size=4,color=dark-green,mydot](\X);
		};
		
		\tkzDefPoint(-1.2,1.7){g};
		\tkzLabelPoint[blue](g){$\delta$}
            \tkzDefPoint(.6,0){h};
		\tkzLabelPoint[red](h){\Large $\rho$}
            \tkzDefPoint(-1.5,.1){k};
        \tkzLabelPoint[red](k){\Large $C$}
            \tkzDefPoint(-3.7,2.5){k};
		\tkzLabelPoint[black](k){\Large $(1b)$}
            \tkzDefPoint(-3.4,0.4){l};
		\tkzLabelPoint[dark-green](l){$v$}
            \tkzDefPoint(3.4,0.4){l};
		\tkzLabelPoint[dark-green](l){$w$}
        \end{scope}

         \begin{scope}[scale=.7, xshift=0cm, yshift=-5.5cm]
            		\foreach \X in {0,1,...,35}
		{
		\tkzDefPoint(3*cos(pi/18*\X),2*sin(pi/18*\X)){\X};
		};
        \draw [line width=0.7mm,domain=80:100] plot ({3*cos(\x)}, {2*sin(\x)});
        \draw [line width=0.7mm,domain=130:150] plot ({3*cos(\x)}, {2*sin(\x)});
        \draw [line width=0.7mm,domain=170:190] plot ({3*cos(\x)}, {2*sin(\x)});
        \draw [line width=0.7mm,domain=260:280] plot ({3*cos(\x)}, {2*sin(\x)});
        \draw [line width=0.7mm,domain=310:330] plot ({3*cos(\x)}, {2*sin(\x)});
        \draw [line width=0.7mm,domain=350:370] plot ({3*cos(\x)}, {2*sin(\x)});

		\draw[line width=0.9mm ,bend left=10,red](9) edge (27);
		
		\draw[line width=0.7mm,blue,bend right=30, loosely dashed](14) edge (9);
        \draw[line width=0.7mm,blue,bend left=30, loosely dashed](27) edge (32);
        
            \filldraw [pattern=dots, pattern color=red,opacity=0.3,line width=0mm] (9) to [bend left=30] (14) to [bend left=40] (17) to [bend right=20] (19) to [bend left=30] (27) to [bend left=30] (32) to [bend left=30] (35) to [bend right=10] (1) to [bend left=10] cycle ;

		\foreach \X in {9,14,27,32}
		{
		\tkzDrawPoints[fill =red,size=4,color=red](\X);
		};

        \foreach \X in {0,18}
		{
		\tkzDrawPoints[size=4,color=dark-green,mydot](\X);
		};
		
		\tkzDefPoint(-1.4,2.2){g};
		\tkzLabelPoint[blue](g){$\delta$}
            \tkzDefPoint(.6,0){h};
		\tkzLabelPoint[red](h){$\rho$}
            \tkzDefPoint(0,2.9){j};
		\tkzLabelPoint[red](j){$u_1$}
            \tkzDefPoint(-1.5,.3){k};
        \tkzLabelPoint[red](k){\Large $C$}
            \tkzDefPoint(-3.7,2.5){k};
		\tkzLabelPoint[black](k){\Large $(2a)$}
            \tkzDefPoint(-3.4,0.4){l};
		\tkzLabelPoint[dark-green](l){$v$}
            \tkzDefPoint(3.4,0.4){l};
		\tkzLabelPoint[dark-green](l){$w$}
            \tkzDefPoint(1.4,-1.2){g};
		\tkzLabelPoint[blue](g){$\eta$}
            \tkzDefPoint(0,-2.1){j};
		\tkzLabelPoint[red](j){$u_2$}
        \end{scope}

         \begin{scope}[scale=.7, xshift=8.5cm, yshift=-5.5cm]
            		\foreach \X in {0,1,...,35}
		{
		\tkzDefPoint(3*cos(pi/18*\X),2*sin(pi/18*\X)){\X};
		};
        \draw [line width=0.7mm,domain=80:100] plot ({3*cos(\x)}, {2*sin(\x)});
        \draw [line width=0.7mm,domain=130:150] plot ({3*cos(\x)}, {2*sin(\x)});
        \draw [line width=0.7mm,domain=170:190] plot ({3*cos(\x)}, {2*sin(\x)});
        \draw [line width=0.7mm,domain=210:230] plot ({3*cos(\x)}, {2*sin(\x)});
        \draw [line width=0.7mm,domain=260:280] plot ({3*cos(\x)}, {2*sin(\x)});
        \draw [line width=0.7mm,domain=310:330] plot ({3*cos(\x)}, {2*sin(\x)});
        \draw [line width=0.7mm,domain=350:370] plot ({3*cos(\x)}, {2*sin(\x)});

		\draw[line width=0.9mm ,bend left=10,red](9) edge (27);
		
		\draw[line width=0.7mm,blue,bend right=30, loosely dashed](14) edge (9);
        \draw[line width=0.7mm,blue,bend left=30, loosely dashed](22) edge (32);
        
        \filldraw [pattern=dots, pattern color=red,opacity=0.3,line width=0mm] (9) to [bend left=30] (14) to [bend left=40] (17) to [bend right=20] (19) to [bend left=30] (22) to [bend left=30] (32) to [bend left=30] (35) to [bend right=10] (1) to [bend left=10] cycle ;

		\foreach \X in {9,14,22,27,32}
		{
		\tkzDrawPoints[fill =red,size=4,color=red](\X);
		};

        \foreach \X in {0,18}
		{
		\tkzDrawPoints[size=4,color=dark-green,mydot](\X);
		};
		
		\tkzDefPoint(-1.4,2.2){g};
		\tkzLabelPoint[blue](g){$\delta$}
            \tkzDefPoint(-.3,-.9){h};
		\tkzLabelPoint[red](h){$\rho$}
            \tkzDefPoint(0,2.9){j};
		\tkzLabelPoint[red](j){$u$}
            \tkzDefPoint(-1.5,.3){k};
        \tkzLabelPoint[red](k){\Large $C$}
            \tkzDefPoint(-3.7,2.5){k};
		\tkzLabelPoint[black](k){\Large $(2b)$}
            \tkzDefPoint(-3.4,0.4){l};
		\tkzLabelPoint[dark-green](l){$v$}
            \tkzDefPoint(3.4,0.4){l};
		\tkzLabelPoint[dark-green](l){$w$}
            \tkzDefPoint(1.4,-1){g};
		\tkzLabelPoint[blue](g){$\eta$}
        \end{scope}

        \begin{scope}[scale=.7, xshift=4.25cm, yshift=-11cm]
            		\foreach \X in {0,1,...,35}
		{
		\tkzDefPoint(3*cos(pi/18*\X),2*sin(pi/18*\X)){\X};
		};
        \draw [line width=0.7mm,domain=40:60] plot ({3*cos(\x)}, {2*sin(\x)});
        \draw [line width=0.7mm,domain=80:100] plot ({3*cos(\x)}, {2*sin(\x)});
        \draw [line width=0.7mm,domain=130:150] plot ({3*cos(\x)}, {2*sin(\x)});
        \draw [line width=0.7mm,domain=170:190] plot ({3*cos(\x)}, {2*sin(\x)});
        \draw [line width=0.7mm,domain=210:230] plot ({3*cos(\x)}, {2*sin(\x)});
        \draw [line width=0.7mm,domain=260:280] plot ({3*cos(\x)}, {2*sin(\x)});
        \draw [line width=0.7mm,domain=310:330] plot ({3*cos(\x)}, {2*sin(\x)});
        \draw [line width=0.7mm,domain=350:370] plot ({3*cos(\x)}, {2*sin(\x)});

		\draw[line width=0.9mm ,bend left=10,red](9) edge (27);
		
		\draw[line width=0.7mm,blue,bend right=30, loosely dashed](14) edge (4);
        \draw[line width=0.7mm,blue,bend left=30, loosely dashed](22) edge (32);
        
        \filldraw [pattern=dots, pattern color=red,opacity=0.3,line width=0mm] (4) to [bend left=30] (14) to [bend left=40] (17) to [bend right=20] (19) to [bend left=30] (22) to [bend left=30] (32) to [bend left=30] (35) to [bend right=10] (1) to [bend left=10] cycle ;

		\foreach \X in {4,9,14,22,27,32}
		{
		\tkzDrawPoints[fill =red,size=4,color=red](\X);
		};

        \foreach \X in {0,18}
		{
		\tkzDrawPoints[size=4,color=dark-green,mydot](\X);
		};
		
		\tkzDefPoint(-1.4,1.8){g};
		\tkzLabelPoint[blue](g){$\delta$}
            \tkzDefPoint(-.3,-.9){h};
		\tkzLabelPoint[red](h){$\rho$}
            \tkzDefPoint(-1.5,.3){k};
        \tkzLabelPoint[red](k){\Large $C$}
            \tkzDefPoint(-3.7,2.5){k};
		\tkzLabelPoint[black](k){\Large $(2c)$}
            \tkzDefPoint(-3.4,0.4){l};
		\tkzLabelPoint[dark-green](l){$v$}
            \tkzDefPoint(3.4,0.4){l};
		\tkzLabelPoint[dark-green](l){$w$}
            \tkzDefPoint(1.4,-1){g};
		\tkzLabelPoint[blue](g){$\eta$}
        \end{scope}
  
    \end{tikzpicture}
\caption{\label{fig:wheretoseparate} All possible distinct behavior we can have between $\rho \in (v|w)_{\Prj}$ and the accordions non Ext-orthogonal accordions to $\rho$ contained in $\partial C$, for $C \in \protect\bbGamma(\widehat{\mathfrak{D}})$.}
\end{figure}
\end{cor}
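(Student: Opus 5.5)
The plan is to combine \cref{lem:cellbehaviorwithseparation} with the dichotomy of \cref{lem:projextortho}, and then sort the possible configurations. Fix $C \in \bbGamma(\widehat{\mathfrak{D}})$, distinct $v,w \in \mathcal{M}_{\gpoint}\cap\partial C$, and $\rho \in (v|w)_{\Prj}$.

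First, I will argue that $\rho \notin \widehat{\mathfrak{D}}$. Since $\rho$ separates $v$ and $w$, which lie on the boundary of the same cell $C$, the arc $\rho$ cannot be disjoint from the interior of $C$. So either $\rho$ passes through $C$, or it is a diagonal of $C$. In both cases $\rho$ is not an edge of the dissection $\widehat{\mathfrak{D}}$.

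Next, I will show that every accordion of $\partial C$ failing to be Ext-orthogonal to $\rho$ is non-projective. Accordions of $\partial C$ belong to $\widehat{\mathfrak{D}}$. A projective accordion of $\partial C$ neither crosses $\rho$ (projective accordions pairwise do not cross) nor has nonzero weight with $\rho$ (both lie in the rigid dissection $\Prj(\Delta^{\gpoint})$). Hence the offending accordions lie in $\mathfrak{D}\subseteq \Accord'$. By \cref{lem:cellbehaviorwithseparation}, there are exactly one or two of them, which gives the split into \ref{1} and \ref{2}.

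For each offending accordion, \cref{lem:projextortho} says it either crosses $\rho$, or shares an endpoint $u$ with $\rho$ and satisfies $\rho \prec_u \delta$. These alternatives are mutually exclusive, because in a disc two accordions crossing at most once cannot also share an endpoint in that way. When there is one offender, this gives \ref{1a} and \ref{1b}. When there are two offenders $\delta,\eta$, the pairs of types give \ref{2a}, \ref{2b} (up to renaming $\delta$ and $\eta$), and \ref{2c}.

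The delicate point is \ref{2a}: I must show that two endpoint-sharing offenders use distinct endpoints $u_1\neq u_2$ of $\rho$. Suppose instead that $\delta,\eta,\rho$ all share an endpoint $u$, with $\rho\prec_u\delta\prec_u\eta$ say. Both $\delta$ and $\eta$ are edges of $C$ at $u$, so they are the two consecutive edges of $C$ around $u$. Then $\rho$, coming before both in the counterclockwise order, lies outside the angle of $C$ at $u$. This contradicts the fact that $\rho$ meets the interior of $C$; this is the same argument as in the proof of \cref{lem:cellbehaviorwithseparation}, using \cref{lem:cellbehaviorwithaccordions}.

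Exclusivity of the listed cases is then immediate from the count of offenders and the crossing-versus-endpoint dichotomy. I expect the main obstacle to be making this ordering argument at a common vertex rigorous.
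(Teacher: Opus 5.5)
Your proposal is correct and is essentially the paper's argument, which simply combines \cref{lem:cellbehaviorwithseparation} with the dichotomy of \cref{lem:projextortho}. Your extra checks make explicit what the paper leaves implicit: the offending accordions in $\partial C$ are non-projective; crossing and sharing an endpoint are mutually exclusive, because two arcs of the disc with a common endpoint never cross (this, rather than ``crossing at most once'', is the right justification); and the two endpoint-sharing offenders in $(2a)$ sit at distinct endpoints of $\rho$. For that last point, the work is done by your direct observation that $\rho$ and $C$ lie on opposite sides of $\delta$, not by the three-accordion pigeonhole argument in the proof of \cref{lem:cellbehaviorwithseparation}.
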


\subsection{Induction steps}
\label{ss:inductionstep}

In the following, we present results that allow the addition of accordions to a rigid $\rpoint$-dissection $\mathfrak{D}$, preserving rigidity and the resolving subset generated by $\mathfrak{D}$.

\begin{lemma}
\label{lem:addcurve1a}
    Let $\mathfrak{D} \subseteq \Accord'$ be a rigid admissible $\rpoint$-dissection. Consider $C \in \bbGamma(\widehat{\mathfrak{D}})$. Assume that there exist $(v,w) \in (\mathcal{M}_{\gpoint} \cap \partial C)^2$ such that $v \neq w$, and $\rho \in (v|w)_{\Prj}$ satisfying $(1a)$. Let $\delta \in \Accord'$ be contained in $\partial C$, sharing with $\rho$ a common endpoint $u \in \mathcal{M}_{\rpoint}$, and satisfying $\rho \prec_u \delta$. Denote by $e \neq u$ the other endpoint of $\rho$ in $\partial C$. Fix an orientation of $\pmb{\Sigma}$ around $\delta$ and so a coloration $\col_\delta$ of $\NP(\delta)_0$ such that $e\in\NP(\delta)_0^{{\gsquare}}$. Consider $\nu$ the $\rpoint$-arc defined by $s(\nu) = \s_{(\delta)}(e)$ and $t(\nu) = e$. Then the following assertions hold:
    \begin{enumerate}[label=$(\roman*)$, itemsep=1mm]
        \item $\nu \in \opResAc'(\mathfrak{D})$; and,
        \item $\{\nu\} \cup \widehat{\mathfrak{D}}$ is an admissible $\rpoint$-dissection.
    \end{enumerate}
\end{lemma}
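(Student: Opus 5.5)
The plan is to prove the two assertions separately, beginning with the geometric situation near $\rho$. Since $\rho \in (v|w)_{\Prj}$ satisfies $(1a)$, $\delta$ is the only accordion of $\partial C$ that is not $\Ext$-orthogonal to $\rho$, and $\rho \prec_u \delta$. By \cref{prop:CombidescripNproj}$(iii)$, this gives $\rho \in \NP(\delta)$. In particular the other endpoint $e$ of $\rho$ lies in $\NP(\delta)_0$, and the coloration $\col_\delta$ can be chosen so that $e \in \NP(\delta)_0^{\gsquare}$. By \cref{rem:useful_up_s}, $\s_{(\delta)}(e)$ is then either $s(\delta)$ or $w_R(\delta)$.

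For $(i)$, the idea is that $\s_{(\delta)}(e)$ is by definition the source of the maximal accordion $\varsigma \in \opResAc'(\delta)$ with $t(\varsigma)=e$. So I would identify $\nu$ with that maximal accordion. Its source is $\s_{(\delta)}(e)$ and its target is $e$, and since we are in a disc, an $\rpoint$-arc is determined up to homotopy by its endpoints. It remains to check that $\{\varsigma \in \opResAc'(\delta) \mid t(\varsigma) = e\}$ is nonempty.
\begin{enumerate}[label=$\bullet$, itemsep=1mm]
\item When $\rho$ crosses $\delta$ this is immediate, but in case $(1a)$ $\rho$ does not cross $\delta$.
\item So I would use the second case of \cref{rem:useful_up_s}: $\s_{(\delta)}(e) = w_R(\delta)$.
\item I would then verify with \cref{prop:arcsaccordions} that the arc from $w_R(\delta)$ to $e$ is an accordion, and with \cref{lem:conditionsResAc} that it belongs to $\opResAc'(\delta)$.
\end{enumerate}
Then $\nu \in \opResAc'(\delta) \subseteq \opResAc'(\mathfrak{D})$ by monotonicity of the closure operator (\cref{prop:folkloreresaccord}).

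For $(ii)$, \cref{lem:projcompladmdiss} already makes $\widehat{\mathfrak{D}}$ an admissible $\rpoint$-dissection. It therefore suffices to show that $\nu$ crosses no accordion of $\widehat{\mathfrak{D}}$, that $\nu$ lies in $C$, and that both cells of $C$ cut by $\nu$ contain a $\gpoint$-point.
\begin{enumerate}[label=$\bullet$, itemsep=1mm]
\item \emph{Endpoints on $\partial C$.} $e \in \partial C$ by hypothesis. For $\s_{(\delta)}(e)$, either it is $s(\delta)$, or it is $w_R(\delta)$, which is adjacent to $e$ through projective accordions $\vartheta_1^R,\vartheta_2^R$. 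These projectives are not crossed by $\delta$, so by \cref{lem:projextortho} they should lie in $\widehat{\mathfrak{D}}$ and hence on $\partial C$.
\item \emph{No crossings.} Because both endpoints of $\nu$ lie on $\partial C$ and $C$ is a cell of a dissection of a disc, $\nu$ can be homotoped into $C$. Thus it crosses no element of $\widehat{\mathfrak{D}}$.
\item \emph{Admissibility.} $\nu$ is homotopic to the concatenation $\delta\cdot\rho$, or to the path through $\vartheta_2^R,\vartheta_1^R$. It therefore separates the $\gpoint$-point of the projective cell bounded by $\rho$ and $\delta$ from the rest of $C$. Each side contains at least one of $v,w$, by \cref{lem:cellbehaviorwithaccordions} and the fact that $\rho$ separates $v$ from $w$.
\end{enumerate}

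The main obstacle I expect is the case analysis in $(i)$ and in the endpoint step of $(ii)$. There I have to determine precisely which case of \cref{rem:useful_up_s} occurs and check membership in $\opResAc'(\delta)$ against each clause of \cref{lem:conditionsResAc}, in particular clause \ref{3Accord} involving $w_L, w_R$. I would first handle the generic case where $e \notin C_R$ and $\s_{(\delta)}(e) = w_R(\delta)$, then treat the degenerate case where $e$ is a vertex of $C_R$ separately using clause \ref{4Accord}.
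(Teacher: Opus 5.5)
\textbf{There is a genuine gap, in the step that decides which case of \cref{rem:useful_up_s} applies.}

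Your reduction of $(i)$ is fine. Arcs in the disc are determined by their endpoints, so $\nu$ is by definition the maximal element of $\{\varsigma\in\opResAc'(\delta)\mid t(\varsigma)=e\}$, and everything rests on this set being nonempty. The error is in how you then settle the value of $\s_{(\delta)}(e)$. The first case of \cref{rem:useful_up_s} asks whether \emph{some} projective accordion with target $e$ crosses $\delta$, not whether $\rho$ does. In configuration $(1a)$ such a projective can exist, and then $\s_{(\delta)}(e)=s(\delta)$, not $w_R(\delta)$.

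This already happens in the running example of \cref{fig:ExsurfTilt}, with the weight-one situation:
\begin{itemize}
\item Take $\mathfrak{D}=\{\gamma_{(S_2)}\}$. Then $\widehat{\mathfrak{D}}=\mathfrak{D}\cup\{\rho_1\}$, and the cell $C$ cut off by $\gamma_{(S_2)}$ contains two $\gpoint$-points.
\item $\rho_4$ is in configuration $(1a)$ with $\delta=\gamma_{(S_2)}$, and $\wt_u(\rho_4,\delta)=1$ since $\Ext^1(S_2,P_4)\neq 0$.
\item Its other endpoint $e$ is also an endpoint of $\rho_2$, which crosses $\delta$ (overlap extension $\Ext^1(S_2,P_2)\neq 0$).
\item Hence $\nu$ joins $s(\delta)$, the endpoint of $\delta$ other than $u$, to $e$. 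So $\nu=\gamma_{(\begin{smallmatrix}2\\4\end{smallmatrix})}$, the middle term of the arrow extension $P_4\rightarrowtail\begin{smallmatrix}2\\4\end{smallmatrix}\twoheadrightarrow S_2$.
\item By contrast, $w_R(\delta)$ is reached through $\vartheta_1^R=\rho_4$ and $\vartheta_2^R=\rho_2$. The arc from $w_R(\delta)$ to $e$ is therefore $\rho_2$ itself: projective, crossing $\delta$, and not contained in $C$.
\end{itemize}
So the accordion you propose to check with \cref{prop:arcsaccordions} is the wrong one, and the verification would fail.

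The same misreading breaks the endpoint step of $(ii)$. The projectives $\vartheta_1^R,\vartheta_2^R$ need not lie in $\widehat{\mathfrak{D}}$. \cref{lem:projextortho} also excludes projectives that share an endpoint with $\delta$ and precede it there; in the example $\vartheta_1^R=\rho$, which is excluded by hypothesis. Moreover $\vartheta_2^R$ may cross $\delta$.

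Separately, \cref{lem:conditionsResAc} only gives necessary conditions, so it cannot certify membership in $\opResAc'(\delta)$. For that you need \cref{def:geometricres} (colours of the endpoints) together with \cref{thm:res_clo_1}.

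The paper avoids the case analysis by arguing categorically. Since $\rho\prec_u\delta$, $\MM(\delta)$ has an extension by $\MM(\rho)$ at the shared endpoint $u$:
\begin{itemize}
\item if $\wt_u(\rho,\delta)=1$, it is an arrow extension by \cref{prop:Extigeom};
\item otherwise one uses the extension between $\MM(\delta)$ and the non-projective summand of its highest nontrivial syzygy that ends at $u$.
\end{itemize}
The paper identifies $\nu$ with the resulting extension, and closure under extensions and syzygies gives $(i)$. For $(ii)$, $\nu$ is then a diagonal of $C$ with a piece of $\partial\pmb{\Sigma}$ on each side. It therefore crosses nothing in $\widehat{\mathfrak{D}}$ and leaves a $\gpoint$-point in each new cell.
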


\begin{proof}
Let $\delta$ and $\rho$ be as above. One can either compute an arrow extension from $\delta$ to $\rho$ or from $\delta$ to the non-projective summand of the highest nontrivial syzygy of $\MM(\delta)$ that ends in $u$. Then the $\rpoint$-arc defined by $s(\nu) = \s_{(\delta)}(e)$ and $t(\nu) = e$ is the extension mentioned before. Thus $\nu \in \opResAc'(\mathfrak{D})$. Then $\{\nu\} \cup \widehat{\mathfrak{D}}$ is an admissible $\rpoint$-dissection as $\nu$ does not cross any arc of  $\widehat{\mathfrak{D}}$ and does not close an inner cell as there is a part of a border component on either side of $\nu$.
\end{proof}    

\begin{lemma}
\label{lem:addcurve1b}
    Let $\mathfrak{D} \subseteq \Accord'$ be a rigid admissible $\rpoint$-dissection. Consider $C \in \bbGamma(\widehat{\mathfrak{D}})$. Assume that  there exist $(v,w) \in (\mathcal{M}_{\gpoint} \cap \partial C)^2$ such that $v \neq w$, and $\rho \in (v|w)_{\Prj}$  satisfying $(1b)$. Let $\delta \in \Accord'$ contained in $\partial C$ crossed by $\rho$. Denote by $e$ the endpoint of $\rho$ in $\partial C$. Fix an orientation of $\pmb{\Sigma}$ around $\delta$ and so a coloration $\col_\delta$ of $\NP(\delta)_0$ such that $e\in\NP(\delta)_0^{{\gsquare}}$. Consider the $\rpoint$-arc $\nu$ defined by $s(\nu) = s(\delta)$ and $t(\nu) = e$. Then the following assertions hold:
    \begin{enumerate}[label=$(\roman*)$, itemsep=1mm]
        \item $\nu$ separates $v$ and $w$;
        \item $\nu \in \opResAc'(\mathfrak{D})$; and,
        \item $\{\nu\} \cup \widehat{\mathfrak{D}}$ is an admissible $\rpoint$-dissection.
    \end{enumerate}
\end{lemma}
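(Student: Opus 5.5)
We follow the pattern of the proof of \cref{lem:addcurve1a}, treating the three assertions in the order $(i)$, $(ii)$, $(iii)$.

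\emph{Assertion $(i)$.} Since $\rho$ crosses $\delta$ and $\rho$ has an endpoint $e$ in $\partial C$, the curve $\delta$ cuts $\rho$ into two segments. One segment runs from $e$ to the crossing point inside $C$; the other lies on the opposite side of $\delta$. As $\rho \in (v|w)_{\Prj}$ and $\delta \subset \partial C$, the points $v$ and $w$ lie in $C$, on the side of $\delta$ containing $e$ (the green side for $\col_\delta$). Let $\nu$ be the arc from $s(\delta)$ to $e$, following $\delta$ and then the segment of $\rho$ from the crossing to $e$. Up to homotopy, $\nu$ and the segment of $\rho$ from the crossing to $e$ bound a region containing whichever of $v,w$ the arc $\rho$ isolates on that side. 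Hence $\nu$ still separates $v$ from $w$. Concretely, I would check that $v$ and $w$ stay on opposite sides of the concatenation, using that $\Sigma$ is a disc, so connected components of the complement are determined by the boundary arcs between endpoints.

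\emph{Assertion $(ii)$.} Here $\rho \in \NP(\delta)$ crosses $\delta$, so $\OvExt(\delta,\rho)\neq\varnothing$ by \cref{prop:CombidescripNproj}$(i)$. The overlap extension of \cref{fig:overlapextaccord} produces two accordions. One is the arc joining $s(\delta)$ to the endpoint $e$ of $\rho$ on the green side, and this is $\nu$. By \cref{lem:othercharactresolv} the resolving closure contains middle terms of extensions of objects it contains. Since $\MM(\rho)$ is projective and $\MM(\delta)\in\Res(\mathfrak{D})$, we get $\MM(\nu)\in\Res(\mathfrak{D})$, provided $\nu$ is non-projective.

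Alternatively, and more safely, I would verify $(ii)$ directly from \cref{def:geometricres} and \cref{thm:res_clo_1}. Since $e$ is an endpoint of $\rho$, which crosses $\delta$, we have $e \in \NP(\delta)_0^{\gsquare}$. Also $s(\nu)=s(\delta)$, which lies in $\NP(\delta)_0^{\rsquare}\cup\NP(\delta)_0^{\osquare}$. So $\nu \in \opResAc'(\delta)\subseteq\opResAc'(\mathfrak{D})$, once $\nu$ is shown to be an accordion. For that I would use \cref{prop:arcsaccordions}: $\nu$ agrees with $\delta$ up to the cell where $\rho$ is crossed, and then exits toward $e$ along an edge adjacent to $\rho$. \Cref{lem:welldefinedCoZ} and \cref{rem:useful_up_s} give the analogous statement, since $\s_{(\delta)}(e)=s(\delta)$ in the crossing case. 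If $\nu$ happened to be projective it would in fact be $\rho$'s neighbour, so I expect to rule this out using $v\neq w$.

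\emph{Assertion $(iii)$.} I would first show that $\nu$ crosses no arc of $\widehat{\mathfrak{D}}$:
\begin{itemize}
\item it cannot cross $\delta$, since it shares a segment with $\delta$ up to homotopy;
\item an arc crossing $\nu$ inside $C$ would have to cross $\partial C$ or $\rho$'s segment, contradicting that $C$ is a cell of $\widehat{\mathfrak{D}}$.
\end{itemize}
Admissibility then follows as in \cref{lem:addcurve1a}. The new arc splits $C$ into two cells, each containing one of $v,w$ by $(i)$, so each contains a $\gpoint$-point. The other cells are unchanged, and \cref{lem:projcompladmdiss} handles the rest.

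The main obstacle is $(ii)$, specifically certifying that $\nu$ is a (non-projective) accordion with the correct colouring of its endpoints. This needs a careful case check against \cref{prop:arcsaccordions} and \cref{lem:conditionsResAc}.
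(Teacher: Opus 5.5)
\paragraph{Verdict: there is a genuine gap, in part (ii).}
Your arguments for (i) and (iii), and your first route to (ii) through the overlap extension of $\delta$ by $\rho$, agree with the paper. Two small corrections:
\begin{itemize}
\item Closure under middle terms is \ref{R2} together with \ref{R0}, not \cref{lem:othercharactresolv}.
\item In (iii), say directly that $\nu$ is homotopic to a chord of the polygon $C$. An arc of $\widehat{\mathfrak{D}}$ may well cross $\rho$, as $\delta$ does, so ``crossing $\rho$'s segment'' contradicts nothing.
\end{itemize}

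\paragraph{The gap.}
You postpone proving that $\nu$ is not projective. This is required, because $\opResAc'(\mathfrak{D})\subseteq\Accord'$ by \cref{prop:folkloreresaccord}, and it is not automatic. An overlap extension of a non-projective module by a projective one can have a projective summand in its middle term. For $1\to2\to3\to4$, with $P_i$ the indecomposable projective at $i$, there is a non-split sequence $0\to P_2\to P_1\oplus\begin{smallmatrix}2\\3\end{smallmatrix}\to\begin{smallmatrix}1\\2\\3\end{smallmatrix}\to0$. So nothing about $\delta$, $\rho$ and $v\neq w$ alone excludes a projective $\nu$. Observing that a projective $\nu$ would be a neighbour of $\rho$ gives no contradiction. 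Your second route through \cref{prop:arcsaccordions} and \cref{def:geometricres} only shows that $\nu$ is an accordion with suitably coloured endpoints, so it does not address projectivity either. Your proposal never uses the two hypotheses that make the lemma work: $C$ is a cell of the projective completion $\widehat{\mathfrak{D}}$, and $\rho$ is in configuration $(1b)$.

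\paragraph{The missing idea.}
Argue by contradiction with the maximality of $\widehat{\mathfrak{D}}$. Suppose $\nu\in\Prj(\Delta^{\gpoint})$.
\begin{itemize}
\item Since $\nu$ lies in $C$, it crosses no arc of $\mathfrak{D}$.
\item At $s(\delta)$, $\wt_{s(\delta)}(\nu,\delta)=0$; the paper obtains this from $\nu\Extperp\delta$ via \cref{prop:Extorthoaccord}. For any other $\delta'\in\mathfrak{D}_{[s(\delta)]}$, rigidity of $\mathfrak{D}$ gives $\wt_{s(\delta)}(\delta',\delta)=0$. The additivity of \cref{lem:counterclockandwt} then gives $\wt_{s(\delta)}(\delta',\nu)=0$.
\item At $e$, configuration $(1b)$ forces $\nu\prec_e\rho\prec_e\delta'$ for every $\delta'\in\mathfrak{D}_{[e]}$, so these weights vanish too.
\end{itemize}
By \cref{prop:Extorthoaccord}, $\nu$ is then $\Ext$-orthogonal to all of $\mathfrak{D}$, so $\nu\in\widehat{\mathfrak{D}}$ by \cref{def:projcompl}. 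But by (i), $\nu$ separates two $\gpoint$-points on the boundary of the single cell $C$ of $\widehat{\mathfrak{D}}$, so it cannot be an arc of $\widehat{\mathfrak{D}}$. This contradiction shows $\nu$ is not projective.

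The same fact, $\nu\notin\widehat{\mathfrak{D}}$, is what you implicitly use in (iii) to know that $\nu$ is a genuinely new arc.
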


\begin{proof}
    In this configuration, it is clear that $(i)$ is satisfied by construction. This also implies that $\nu \notin \widehat{\mathfrak{D}}$.

     \begin{figure}[ht!]
\centering 
    \begin{tikzpicture}[mydot/.style={
					circle,
					thick,
					fill=white,
					draw,
					outer sep=0.5pt,
					inner sep=1pt
				}, scale = 1]
		\tikzset{
		osq/.style={
        rectangle,
        thick,
        fill=white,
        append after command={
            node [
                fit=(\tikzlastnode),
                orange,
                line width=0.3mm,
                inner sep=-\pgflinewidth,
                cross out,
                draw
            ] {}}}}
        \draw [line width=0.7mm,domain=10:20] plot ({4*cos(\x)}, {1.5*sin(\x)});
		\draw [line width=0.7mm,domain=100:110] plot ({4*cos(\x)}, {1.5*sin(\x)});
		\draw [line width=0.7mm,domain=130:140] plot ({4*cos(\x)}, {1.5*sin(\x)});
		\draw [line width=0.7mm,domain=160:200] plot ({4*cos(\x)}, {1.5*sin(\x)});
		\draw [line width=0.7mm,domain=250:290] plot ({4*cos(\x)}, {1.5*sin(\x)});
		\draw [line width=0.7mm,domain=312:350] plot ({4*cos(\x)}, {1.5*sin(\x)});
		\foreach \X in {0,1,...,23}
		{
		\tkzDefPoint(4*cos(pi/12*\X),1.5*sin(pi/12*\X)){\X};
		};
		
		\draw[line width=0.9mm ,bend left =30,red] (19) edge (7);
		\draw[line width=0.9mm ,bend right =20,mypurple,densely dashdotted] (9) edge (19);
		
		\draw[line width=0.7mm ,bend right=10,blue, loosely dashed](9) edge (1);
		
		\filldraw [fill=blue,opacity=0.1] (9) to [bend left=50] (11) to [bend right=30] (13) to [bend left=30] (17) to [bend right=10] (19) to [bend left=30] (21) to [bend right=20] (23) to [bend left=30] (1) to [bend left=10] cycle ;

		\foreach \X in {1,7,9,11,13,17,19,21,23}
		{
		\tkzDrawPoints[fill =red,size=4,color=red](\X);
		};
		
		\foreach \X in {12,18,22}
		{
		\tkzDrawPoints[mydot,size=6,color=dark-green,thick,fill=white](\X);
		};

		\tkzDefPoint(-2,1.35){gamma};
		\tkzLabelPoint[blue](gamma){\Large $\delta$}
		\tkzDefPoint(2.2,0){gamma};
		\tkzLabelPoint[blue](gamma){\huge $C$}
		\tkzDefPoint(-1.6,0.3){gamma};
		\tkzLabelPoint[mypurple](gamma){\Large $\nu$}
		\tkzDefPoint(-0.6,1.3){gamma};
		\tkzLabelPoint[red](gamma){\Large $\rho$}
		\tkzDefPoint(-4.5,0.25){s};
		\tkzLabelPoint[dark-green](s){\Large $v$}
		\tkzDefPoint(3.7,-.8){t};
		\tkzLabelPoint[dark-green](t){\Large $w$}
		\tkzDefPoint(1.2,-1.5){u};
		\tkzLabelPoint[red](u){\Large $e$}
    \end{tikzpicture}
\caption{\label{fig:case1tilt} Construction of $\nu$ (the densely-dotted purple line) in the case where $\rho \in (v|w)_{\Prj}$ (the continuous red line) crosses a unique $\delta \in \mathfrak{D}$ (the  loosely-dotted blue line) contained in $\partial C$ (with $C$ the blue colored area).}
\end{figure}

    By assumption, $\OvExt(\delta,\rho) \neq \varnothing$, and by construction, $\nu \in \OvExt(\delta,\rho)$. Therefore $\nu \in \opResAc(\delta) \subset \opResAc(\mathfrak{D})$. 
    
    Assume that $\nu \in \Prj(\Delta^{\gpoint})$. By construction, $\nu \subset C$, and therefore $\nu$ does not cross any arc within $\mathfrak{D}$. Moreover, by \cref{prop:Extorthoaccord}, we have $\nu \Extperp \delta$, and so $\wt_{s(\delta)}(\nu, \delta)=0$. Moreover, as $\mathfrak{D}$ is rigid, we have:
    \begin{enumerate}[label=$\bullet$,itemsep=1mm]
        \item for any $\delta' \in \mathfrak{D}_{[s(\delta)]}$ such that $\delta' \prec_{s(\delta)} \delta \prec_{s(\delta)} \nu$, \[\wt_{s(\delta)}(\delta',\nu) = \wt_{s(\delta)}(\delta',\delta) + \wt_{s(\delta)}(\delta,\nu) = 0;\]
        \item for any $\delta' \in \mathfrak{D}_{[s(\delta)]}$ such that $\delta \prec_{s(\delta)} \nu \prec_{s(\delta)} \delta'$, \[\wt_{s(\delta)}(\delta',\nu) = \wt_{s(\delta)}(\delta',\delta) -  \wt_{s(\delta)}(\delta,\nu) = 0;\]
        \item  moreover, for $\delta' \in \mathfrak{D}_{[e]}$, we must have $\nu \prec_e \rho \prec_e  \delta'$ as $\rho$ was satisfying $(1b)$, and so $\delta' \Extperp \nu$, and so $\wt_e(\delta',\nu) = 0$.
    \end{enumerate}
    Therefore, by \cref{prop:Extorthoaccord}, we have $\nu \Extperp \mathfrak{D}$, which leads us to a contradiction as it implies that $\nu \in \widehat{\mathfrak{D}}$.
\end{proof}
\begin{lemma} \label{lem:CoZcaseseparation1}
    Let $\mathfrak{D} \subseteq \Accord'$ be an admissible $\rpoint$-dissection of $(\pmb{\Sigma}, \mathcal{M})$. Let $C \in \bbGamma(\widehat{\mathfrak{D}})$, and $(v,w) \in (\mathcal{M} \cap \partial C)^2$ be such that $v \neq w$. Consider   $\rho \in (v|w)_{\Prj}$. If there are two accordions $\delta, \eta \in \Accord'$ contained in $\partial C$ that are not $\Ext$-ortogonal to $\rho$, then either $(\delta,\eta)$ or $(\eta,\delta)$ admits a $\CoZ$-completion.
\end{lemma}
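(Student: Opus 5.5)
The plan is to verify, one by one, the four conditions of \cref{def:CoZcompl} for a suitable ordering of the pair $\{\delta,\eta\}$. We work in the setting of \cref{cor:wheretoseparatecellsallcases}\ref{2}, since by hypothesis exactly the two accordions $\delta,\eta\subset\partial C$ fail to be $\Ext$-orthogonal to $\rho$ (by \cref{lem:cellbehaviorwithseparation} there cannot be more). The last condition of \cref{def:CoZcompl} is only a normalisation, so the choice of ordering is harmless. Once the coloration is fixed, \cref{lem:cellbehaviorwithaccordions} puts $\delta$ and $\eta$ on the same side of each other. Hence one of them is above the other, and we order the pair accordingly; this is exactly why the statement says ``either $(\delta,\eta)$ or $(\eta,\delta)$''.

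The first three conditions are then checked as follows.
\begin{enumerate}[label=$\bullet$, itemsep=1mm]
\item \emph{Non-crossing.} Both $\delta$ and $\eta$ lie on $\partial C$, and $C$ is a cell of the admissible $\rpoint$-dissection $\widehat{\mathfrak{D}}$. Hence they do not cross.
\item \emph{Common neighboring projective.} The projective accordion $\rho$ is not $\Ext$-orthogonal to $\delta$ nor to $\eta$. By \cref{lem:projextortho} this means $\rho$ crosses each of them or precedes it at a shared endpoint. This is precisely conditions $(i)$ or $(iii)$ of \cref{prop:CombidescripNproj}, so $\rho\in\NP(\delta)\cap\NP(\eta)$.
\item \emph{A projective non-orthogonal to both.} We simply take $\rho$ itself.
\end{enumerate}

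The delicate point is not the list of conditions but compatibility. \cref{def:sourceproxy} and \cref{lem:welldefinedCoZ} use the upper source $\s_{(\delta,\eta)}$ and upper target $\t_{(\delta,\eta)}$, and these require the colorations $\col_\delta$ and $\col_\eta$ to match. We also need a point of $\NP(\eta)_0\cap\NP(\delta)_0^{\gsquare}$ and a point of $\NP(\delta)_0\cap\NP(\eta)_0^{\rsquare}$.

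To secure this, orient $\pmb{\Sigma}$ so that $v$ is above $\rho$ and $w$ below. Then go through the subcases \ref{2a}, \ref{2b}, \ref{2c}.
\begin{enumerate}[label=$\bullet$, itemsep=1mm]
\item In each subcase, the endpoints of $\rho$ that are shared with, or separated by, $\delta$ and $\eta$ receive colors via \cref{def:colourendpoints}.
\item The fact that $\delta$ and $\eta$ lie in the same component of the complement of each other (\cref{lem:cellbehaviorwithaccordions}) should force the endpoint of $\rho$ on the far side of the upper accordion to be $\gsquare$ for it. The same endpoint should also lie in $\NP$ of the lower one, and dually for $\rsquare$.
\item By the uniqueness up to conjugation in \cref{lem:uniqcolandlaterality}, we may choose the representatives so that the two colorations agree.
\end{enumerate}

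I expect this color-matching in case \ref{2a} to be the main obstacle. There, both endpoints of $\rho$ are shared endpoints, and we must exclude that one of them is colored $\osquare$ or $\psquare$ (large-cell behaviour) for $\delta$ or $\eta$. I would handle this as in the proof of \cref{lem:cellboundary}. Suppose an endpoint were orange for $\delta$. Then some projective in $\partial C$ would lie strictly between $\rho$ and $\delta$ for $\prec_u$. That projective would not be $\Ext$-orthogonal to $\delta$ by \cref{lem:projextortho}, contradicting its membership in $\widehat{\mathfrak{D}}$.
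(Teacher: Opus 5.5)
Your check of the four conditions in \cref{def:CoZcompl} is exactly the paper's proof:
\begin{itemize}
\item $\delta,\eta$ do not cross, since both are sides of a cell of the dissection $\widehat{\mathfrak{D}}$;
\item $\rho\in\NP(\delta)\cap\NP(\eta)$, via \cref{lem:projextortho} and \cref{prop:CombidescripNproj};
\item $\rho$ itself is the common non-orthogonal projective;
\item the pair is ordered so that the first accordion is above the second.
\end{itemize}
That is all the statement requires, because admitting a $\CoZ$-completion is defined by these conditions alone.

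The colour-matching part is therefore superfluous, and the argument you sketch for it does not work. In configuration $(2a)$, $\rho$ is a diagonal of $C$, and $\delta$ is one of the two sides of $C$ at $u_1$. So nothing in $\partial C$ can lie strictly between $\rho$ and $\delta$ for $\prec_{u_1}$. Your implication (``orange endpoint $\Rightarrow$ a projective of $\partial C$ strictly between $\rho$ and $\delta$'') would then show that ${\osquare}$ never occurs there.

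But ${\osquare}$ and ${\psquare}$ are produced by large cells of $\Prj(\Delta^{\gpoint})$ at the ends of $\delta$, independently of $\widehat{\mathfrak{D}}$. Suppose $\rho$ is the largest projective below $\delta$ at $u_1$, and the cell of $\Prj(\Delta^{\gpoint})$ containing the initial segment of $\delta$ is large. Then $u_1$ is ${\osquare}$ and the other endpoint of $\rho$ is ${\psquare}$ for $\delta$; compare $s(\delta)$ and its neighbour along $C_L$ in \cref{fig:exsctc}. This situation is compatible with $(2a)$.

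So the points used to define $\s_{(\delta,\eta)}$ and $\t_{(\delta,\eta)}$ need not be endpoints of $\rho$. The well-definedness of the completion is what the paper takes from its earlier work (cf.\ \cref{lem:welldefinedCoZ}). You should drop this part rather than rely on it.
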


\begin{proof}
    In such a case, the following assertions hold: 
\begin{enumerate}[label=$\bullet$, itemsep=1mm]
    \item $\delta$ and $\eta$ are not crossing by assumption on $\mathfrak{D}$;
    \item $\rho \in \NP(\delta) \cap \NP(\eta)$ as $\rho$ is having extensions with both $\delta$ and $\eta$; and, 
    \item without loss of generality, we can assume that $\delta$ is above $\eta$ with respect to the orientation we endowed $\NP(\delta) \cap \NP(\eta)$ with.
\end{enumerate}
Thus $(\delta,\eta)$ admits a $\CoZ$-completion.
\end{proof}

\begin{lemma}
\label{lem:addcurve2}
    Let $\mathfrak{D} \subseteq \Accord'$ be a rigid admissible $\rpoint$-dissection. Consider $C \in \bbGamma(\widehat{\mathfrak{D}})$. Assume there exists $(v,w) \in (\mathcal{M}_{\gpoint} \cap \partial C)^2$ such that $v \neq w$, and $\rho \in (v|w)_{\Prj}$  satisfying $(2a)$,$(2b)$ and $(2c)$. 
    Consider $\nu$ to be the $\CoZ$-completion of the pair $(\delta, \eta)$.  Then the following assertions hold:
    \begin{enumerate}[label=$(\roman*)$, itemsep=1mm]
        \item $\nu \in \opResAc'(\mathfrak{D})$; and,
        \item $\{\nu\} \cup \widehat{\mathfrak{D}}$ is an  admissible $\rpoint$-dissection.
    \end{enumerate}
\end{lemma}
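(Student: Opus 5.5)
The plan has two parts, one for each assertion. The statement should be read as: $\rho$ satisfies one of $(2a)$, $(2b)$ or $(2c)$, and $\delta,\eta\in\mathfrak{D}$ are the two accordions in $\partial C$ that are not $\Ext$-orthogonal to $\rho$. By \cref{lem:CoZcaseseparation1}, up to swapping $\delta$ and $\eta$, the pair $(\delta,\eta)$ admits a $\CoZ$-completion $\nu$. By \cref{lem:welldefinedCoZ}, $\nu\in\Accord$.

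For $(i)$, I would invoke \cref{prop:Co-Z} to get $\nu\in\opResAc'(\delta,\eta)$. Since $\delta,\eta\in\mathfrak{D}$ and $\opResAc'$ is a closure operator (\cref{prop:folkloreresaccord}), monotonicity gives $\opResAc'(\delta,\eta)\subseteq\opResAc'(\mathfrak{D})$. One point needs checking: $\nu$ must be non-projective, i.e. $\nu\in\Accord'$. I would argue this as in the proof of \cref{lem:addcurve1b}. Suppose $\nu\in\Prj(\Delta^{\gpoint})$.
\begin{enumerate}[label=$\bullet$, itemsep=1mm]
\item $\nu$ lies inside $C$, so it crosses nothing in $\mathfrak{D}$.
\item The weights at $\s_{(\delta,\eta)}$ and $\t_{(\delta,\eta)}$ against accordions of $\mathfrak{D}$ vanish, by rigidity and the additivity of \cref{lem:counterclockandwt}.
\end{enumerate}
Then \cref{prop:Extorthoaccord} gives $\nu\Extperp\mathfrak{D}$, so $\nu\in\widehat{\mathfrak{D}}$, contradicting the next paragraph. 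In any case, projectivity of $\nu$ would not harm the conclusion that $\nu\in\opResAc(\mathfrak{D})$.

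For $(ii)$, I would locate the endpoints of $\nu$ in each case. Using \cref{rem:useful_up_s}, $\s_{(\delta,\eta)}=\s_{(\delta)}(v')$ is either $s(\delta)$ or $w_R(\delta)$, and dually $\t_{(\delta,\eta)}$ is either $t(\eta)$ or $w_L(\eta)$. In each of $(2a)$, $(2b)$, $(2c)$, I would check from the configurations of \cref{fig:wheretoseparate} that both endpoints are vertices of $\partial C$. The candidates are the endpoints of $\delta$ or $\eta$, or the far endpoint of an adjacent projective arc in $\partial C$, which lies in $\widehat{\mathfrak{D}}$ because it is $\Ext$-orthogonal to $\mathfrak{D}$. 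Since $C$ is a polygon in a disc, an arc joining two vertices of $\partial C$ through the interior of $C$ crosses no arc of $\widehat{\mathfrak{D}}$.

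Next, I would show $\nu$ separates $v$ and $w$, which gives $\nu\notin\widehat{\mathfrak{D}}$. The reason is that $\nu$ runs parallel to $\rho$ between $\delta$ and $\eta$: its source is on the $\delta$ side and its target on the $\eta$ side. Admissibility of $\{\nu\}\cup\widehat{\mathfrak{D}}$ then follows because $\nu$ cuts $C$ into two cells. One contains $v$ and the other $w$, so each still has a $\gpoint$-point, just as at the end of the proof of \cref{lem:addcurve1a}.

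The main obstacle is the case-by-case identification of $\s_{(\delta,\eta)}$ and $\t_{(\delta,\eta)}$ as vertices of $\partial C$ on the correct sides of $\rho$. In case $(2c)$ in particular, $w_R(\delta)$ might a priori leave $C$. I would first try to rule this out using \cref{lem:cellbehaviorwithaccordions} together with the fact that the projective arcs between $\delta$ and $\rho$ near those endpoints are $\Ext$-orthogonal to $\mathfrak{D}$, hence belong to $\partial C$.
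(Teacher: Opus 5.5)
Your argument for $(i)$ matches the paper's: \cref{prop:Co-Z} plus monotonicity of $\opResAc'$. For $(ii)$, the paper likewise only asserts that $\nu$ crosses nothing in $\widehat{\mathfrak{D}}$ and does not close an inner cell. The step that fails is your claim that $\nu$ separates $v$ and $w$ because it ``runs parallel to $\rho$ between $\delta$ and $\eta$''. In configuration $(2a)$ this is false, and the caption of \cref{fig:case2tilt} says so explicitly: there $\nu\notin(v|w)$. In the right-hand picture, $\delta$ and $\eta$ leave $u_1$ and $u_2$ on opposite sides of $\rho$. The points $\s_{(\delta,\eta)}$ and $\t_{(\delta,\eta)}$ are the vertices of $\partial C$ just beyond the far endpoints of $\delta$ and of $\eta$. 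So $\nu$ actually crosses $\rho$, with $v$, $w$ and $\eta$ on one side of $\nu$ and $\delta$ and $u_1$ on the other. Hence your admissibility argument (``one piece contains $v$ and the other $w$'') does not cover $(2a)$, and neither does your derivation of $\nu\notin\widehat{\mathfrak{D}}$.

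The second of these is dispensable. Since $\opResAc'$ is a closure operator on $\Accord'$ (\cref{prop:folkloreresaccord}), \cref{prop:Co-Z} already gives $\nu\in\opResAc'(\delta,\eta)\subseteq\Accord'$. Moreover, if $\nu$ were an edge of $C$, then $(ii)$ would hold trivially.

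The first is not dispensable. You need each of the two pieces into which $\nu$ cuts $C$ to still carry a $\gpoint$-point, i.e.\ a portion of $\partial\pmb{\Sigma}$ on either side of $\nu$, as at the end of the proof of \cref{lem:addcurve1a}. In case $(2a)$ this means exhibiting a $\gpoint$-point of $\partial C$ on the side of $\nu$ containing $\delta$ and $u_1$. In the figure, this is the third green point between $\t_{(\delta,\eta)}$ and $u_1$. This is consistent with \cref{cor:augdissecrigid}$(iii)$, which only claims $\nu\in(v_0|w_0)$ for some pair of $\gpoint$-points of $\partial C$, not for the original $(v,w)$.

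Your justification that the endpoints of $\nu$ are vertices of $\partial C$ is also only heuristic. The point $w_R(\delta)=s(\vartheta_2^R)$ sits two projective arcs away from $t(\delta)$, and you give no reason why those arcs are $\Ext$-orthogonal to all of $\mathfrak{D}$. On this point, however, the paper gives no more detail than you do.
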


 \begin{figure}[ht!]
\centering 
\scalebox{.65}{
    \begin{tikzpicture}[mydot/.style={
					circle,
					thick,
					fill=white,
					draw,
					outer sep=0.5pt,
					inner sep=1pt
				}, scale = 1]
		\tikzset{
		osq/.style={
        rectangle,
        thick,
        fill=white,
        append after command={
            node [
                fit=(\tikzlastnode),
                orange,
                line width=0.3mm,
                inner sep=-\pgflinewidth,
                cross out,
                draw
            ] {}}}}
        \draw [line width=0.7mm,domain=10:20] plot ({4*cos(\x)}, {1.5*sin(\x)});
		\draw [line width=0.7mm,domain=100:110] plot ({4*cos(\x)}, {1.5*sin(\x)});
		\draw [line width=0.7mm,domain=130:140] plot ({4*cos(\x)}, {1.5*sin(\x)});
		\draw [line width=0.7mm,domain=160:200] plot ({4*cos(\x)}, {1.5*sin(\x)});
		\draw [line width=0.7mm,domain=280:290] plot ({4*cos(\x)}, {1.5*sin(\x)});
		\draw [line width=0.7mm,domain=312:350] plot ({4*cos(\x)}, {1.5*sin(\x)});
		\foreach \X in {0,1,...,23}
		{
		\tkzDefPoint(4*cos(pi/12*\X),1.5*sin(pi/12*\X)){\X};
		};
		
		\draw[line width=0.9mm ,bend left =30,red] (19) edge (7);
		\draw[line width=0.9mm ,bend right =5,mypurple,densely dashdotted] (9) edge (21);
		
		\draw[line width=0.7mm ,bend right=10,blue, loosely dashed](9) edge (1);
		\draw[line width=0.7mm ,bend left=10,blue, loosely dashed](13) edge (21);
		
		\filldraw [fill=blue,opacity=0.1] (9) to [bend left=50] (11) to [bend right=30] (13) to [bend left=10] (21) to [bend right=20] (23) to [bend left=30] (1) to [bend left=10] cycle ;

		\foreach \X in {1,7,9,11,13,19,21,23}
		{
		\tkzDrawPoints[fill =red,size=4,color=red](\X);
		};
		
		\foreach \X in {12,22}
		{
		\tkzDrawPoints[mydot,size=6,color=dark-green,thick,fill=white](\X);
		};

		\tkzDefPoint(-2,1.35){gamma};
		\tkzLabelPoint[blue](gamma){\Large $\delta$}
		\tkzDefPoint(-2.8,-.3){gamma};
		\tkzLabelPoint[blue](gamma){\Large $\eta$}
		\tkzDefPoint(2.2,0){gamma};
		\tkzLabelPoint[mypurple](gamma){\huge $C$}
		\tkzDefPoint(-2.2,0.7){gamma};
		\tkzLabelPoint[mypurple](gamma){\Large $\nu$}
		\tkzDefPoint(-0.6,1.3){gamma};
		\tkzLabelPoint[red](gamma){\Large $\rho$}
        \tkzDefPoint(-3,1.7){gamma};
		\tkzLabelPoint[red](gamma){\Large $\s_{(\delta,\eta)}$}
        \tkzDefPoint(3,-1.1){gamma};
		\tkzLabelPoint[red](gamma){\Large $\t_{(\delta,\eta)}$}
		\tkzDefPoint(-4.5,0.25){s};
		\tkzLabelPoint[dark-green](s){\Large $v$}
		\tkzDefPoint(3.7,-.8){t};
		\tkzLabelPoint[dark-green](t){\Large $w$}
    \begin{scope}[xshift = 9.5cm]
        \draw [line width=0.7mm,domain=10:47] plot ({4*cos(\x)}, {1.5*sin(\x)});
		\draw [line width=0.7mm,domain=100:110] plot ({4*cos(\x)}, {1.5*sin(\x)});
		\draw [line width=0.7mm,domain=130:140] plot ({4*cos(\x)}, {1.5*sin(\x)});
		\draw [line width=0.7mm,domain=160:200] plot ({4*cos(\x)}, {1.5*sin(\x)});
		\draw [line width=0.7mm,domain=280:290] plot ({4*cos(\x)}, {1.5*sin(\x)});
		\draw [line width=0.7mm,domain=312:350] plot ({4*cos(\x)}, {1.5*sin(\x)});
		\foreach \X in {0,1,...,23}
		{
		\tkzDefPoint(4*cos(pi/12*\X),1.5*sin(pi/12*\X)){\X};
		};
		
		\draw[line width=0.9mm ,bend left =30,red] (19) edge (7);
		\draw[line width=0.9mm ,bend right =5,mypurple,densely dashdotted] (11) edge (23);
		
		\draw[line width=0.7mm ,bend right=20,blue, loosely dashed](9) edge (7);
		\draw[line width=0.7mm ,bend left=20,blue, loosely dashed](19) edge (21);
		
		\filldraw [fill=blue,opacity=0.1] (9) to [bend left=50] (11) to [bend right=30] (13) to [bend left=10] (19) to [bend left=20] (21) to [bend right=20] (23) to [bend left=30] (1) to [bend right=20] (3) to [bend left=10] (7) to [bend left=20] cycle ;

		\foreach \X in {1,3,7,9,11,13,19,21,23}
		{
		\tkzDrawPoints[fill =red,size=4,color=red](\X);
		};
		
		\foreach \X in {2,12,22}
		{
		\tkzDrawPoints[mydot,size=6,color=dark-green,thick,fill=white](\X);
		};

		\tkzDefPoint(-2,1.6){gamma};
		\tkzLabelPoint[blue](gamma){\Large $\delta$}
		\tkzDefPoint(2.4,-1){gamma};
		\tkzLabelPoint[blue](gamma){\Large $\eta$}
		\tkzDefPoint(2.2,1){gamma};
		\tkzLabelPoint[mypurple](gamma){\huge $C$}
		\tkzDefPoint(-2.2,0.1){gamma};
		\tkzLabelPoint[mypurple](gamma){\Large $\nu$}
		\tkzDefPoint(-0.6,1.3){gamma};
		\tkzLabelPoint[red](gamma){\Large $\rho$}
        \tkzDefPoint(-4.3,.9){gamma};
		\tkzLabelPoint[red](gamma){\Large $\s_{(\delta,\eta)}$}
        \tkzDefPoint(4.4,-.3){gamma};
		\tkzLabelPoint[red](gamma){\Large $\t_{(\delta,\eta)}$}
		\tkzDefPoint(-4.5,0.25){s};
		\tkzLabelPoint[dark-green](s){\Large $v$}
		\tkzDefPoint(3.7,-.8){t};
		\tkzLabelPoint[dark-green](t){\Large $w$}
    \end{scope}
    \end{tikzpicture}}
\caption{\label{fig:case2tilt} Two pictures illustrating the construction of $\nu$ (the densely-dotted purple line) in the case where $\rho \in (v|w)_{\Prj}$ (the continuous red line) satisfies condition $(2c)$ (on the left) and condition $(2a)$ (on the right): in both cases, $\delta,\eta \in \mathfrak{D}$ (the loosely-dotted blue line) are the two accordions contained in $\partial C$ (with $C$ the blue colored area) such that $\delta \not\Extperp \rho$ and $\eta \not\Extperp \rho$. Note that, in the $(2a)$ configuration, $\nu \notin (v|w)$.}
\end{figure}

\begin{proof}
    The fact that $\nu$ belongs to $ \opResAc'(\mathfrak{D})$ comes from \cref{prop:Co-Z}. This $\rpoint$-arc $\nu$ neither crosses any curve in $\widehat{\mathfrak{D}}$ nor closes an inner cell. 
\end{proof}

\begin{prop}
\label{prop:weightadded}
    Let $\mathfrak{D} \subseteq \Accord'$ be a rigid admissible $\rpoint$-diisection. Consider $C \in \bbGamma(\widehat{\mathfrak{D}})$. Assume that there exists $(v,w) \in (\mathcal{M}_{\gpoint} \cap \partial C)^2$ such that $v \neq w$, and $\rho \in (v|w)_{\Prj}$. In the setups $(1a)$,$(1b)$,$(2a)$,$(2b)$ and $(2c)$ let $\nu$ be the added curve defined in \cref{lem:addcurve1a,lem:addcurve1b,lem:addcurve2}. Then, for any endpoint $e$ of $\nu$, and for any $\delta \in \mathfrak{D}_{[e]}$, we have 
    $\wt_e(\delta,\nu)=0$.
\end{prop}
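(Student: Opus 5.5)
We argue case by case, following the five configurations of \cref{cor:wheretoseparatecellsallcases}. The basic tool is the additivity of weights from \cref{lem:counterclockandwt}, combined with the rigidity of $\mathfrak{D}$: by \cref{thm:tiltgeom}, $\wt_e(\delta',\delta'')=0$ for any $\delta',\delta''\in\mathfrak{D}_{[e]}$. Fix an endpoint $e$ of $\nu$ and some $\delta'\in\mathfrak{D}_{[e]}$. The idea is to find an accordion $\theta\in\mathfrak{D}_{[e]}\cup\widehat{\mathfrak{D}}_{[e]}$ with $\wt_e(\theta,\nu)=0$, and then to transport this vanishing to $\delta'$. If $\delta'$ and $\nu$ lie on the same side of $\theta$ for $\preccurlyeq_e$, we have
\[\wt_e(\delta',\nu)=\wt_e(\delta',\theta)\pm\wt_e(\theta,\nu)=0.\]
If instead $\theta$ lies between them, we have
\[\wt_e(\delta',\nu)=\wt_e(\delta',\theta)+\wt_e(\theta,\nu)=0.\]
This is exactly the computation already carried out in the proof of \cref{lem:addcurve1b}.

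\emph{Endpoints shared with an accordion of $\mathfrak{D}$.} In case $(1b)$ we have $s(\nu)=s(\delta)$, and in case $(1a)$ the source $\s_{(\delta)}(e)$ is $s(\delta)$ or $w_R(\delta)$ by \cref{rem:useful_up_s}. At $s(\delta)$ we take $\theta=\delta$. Both $\delta$ and $\nu$ are edges of the same region cut out in $C$, and no $\gpoint$-point of the $\gpoint$-cell at $s(\delta)$ lies between them. Indeed, if one did, a projective accordion with endpoint $s(\delta)$ would lie strictly between $\delta$ and $\nu$; this projective would lie in $\partial C$, and by \cref{lem:projextortho} it would fail to be Ext-orthogonal to $\delta$, contradicting $\partial C\subseteq\widehat{\mathfrak{D}}$. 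Hence $\wt_{s(\delta)}(\delta,\nu)=0$, and additivity gives the claim at this endpoint. The same argument applies in the $\CoZ$ cases $(2a)$, $(2b)$, $(2c)$ at the endpoints $\s_{(\delta,\eta)}$ and $\t_{(\delta,\eta)}$, whenever these coincide with endpoints of $\delta$ or $\eta$. For this we read off, from \cref{def:sourceproxy} and \cref{rem:useful_up_s}, that these points are either $s(\delta)$, $t(\eta)$, or the points $w_R(\delta)$, $w_L(\eta)$.

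\emph{Endpoints at which $\nu$ is adjacent to projectives.} At the remaining endpoints, namely $e$ in cases $(1a)$ and $(1b)$, and $w_R$ or $w_L$ in the other cases, we use a projective $\theta$. In cases $(1a)$ and $(1b)$ we take $\theta=\rho$ (or the projective $\vartheta_2^R$ defining $w_R(\delta)$). By construction $\nu$ covers $\theta$ in $\preccurlyeq_e$ with no $\gpoint$-point in between, so $\wt_e(\theta,\nu)=0$. For $\delta'\in\mathfrak{D}_{[e]}$, we know $\delta'\Extperp\theta$ whenever $\theta\in\widehat{\mathfrak{D}}$. When $\theta=\rho\notin\widehat{\mathfrak{D}}$, the hypothesis of case $(1)$ says that $\delta$ is the only accordion of $\partial C$ not Ext-orthogonal to $\rho$. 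By \cref{lem:projextortho} this forces every such $\delta'$ to satisfy $\nu\prec_e\rho\prec_e\delta'$ with $\wt_e(\rho,\delta')=0$, as in the third bullet of the proof of \cref{lem:addcurve1b}. In either situation additivity then gives $\wt_e(\delta',\nu)=0$.

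\emph{Main obstacle.} The hardest part will be the $\CoZ$ cases, and especially $(2a)$, where $\nu\notin(v|w)$ and its endpoints $w_R(\delta)$, $w_L(\eta)$ are reached through two successive projectives $\vartheta_1,\vartheta_2$. There one must check that no accordion of $\mathfrak{D}$ ends at $w_R(\delta)$ on the far side of $\vartheta_2$ with a nonzero weight. I would first try to show that such a $\delta'$ would cross, or fail to be Ext-orthogonal to, $\vartheta_1$ or $\vartheta_2$. That would put it in $\partial C$ as a third accordion non-orthogonal to $\rho$, contradicting \cref{lem:cellbehaviorwithseparation}.
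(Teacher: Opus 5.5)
Your split of the endpoints of $\nu$ is essentially the paper's. Endpoints where $\nu$ locally follows a generating accordion $\mu$ are handled by rigidity and \cref{lem:counterclockandwt}; the endpoints $w_L(\mu)$, $w_R(\mu)$ are handled through projectives. But the second family is never actually treated, so there is a genuine gap.
\begin{itemize}
\item In the $\CoZ$ cases you leave it open.
\item In case $(1a)$ with $\theta=\vartheta_2^R$ you need $\wt_e(\vartheta_2^R,\delta')=0$, and nothing puts $\vartheta_2^R$ in $\widehat{\mathfrak{D}}$.
\item The repair you sketch cannot work. \cref{lem:cellbehaviorwithseparation} only limits the accordions of $\partial C$ that fail to be $\Ext$-orthogonal to the fixed $\rho\in(v|w)_{\Prj}$. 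An accordion non-orthogonal to $\vartheta_1$ or $\vartheta_2$ (other projectives), which need not lie in $\partial C$, contradicts nothing.
\end{itemize}
The missing idea, which the paper uses, is a sandwich rather than an exclusion. At $e\in\{w_L(\mu),w_R(\mu)\}$, $\nu$ crosses neither of the two projectives $\rho_1,\rho_2\in\NP(\mu)$ ending at $e$ and lies between them. Projectives are pairwise $\Ext$-orthogonal, so $\wt_e(\rho_1,\rho_2)=0$ by \cref{prop:Extorthoaccord}. Then \cref{lem:counterclockandwt} gives $\wt_e(\nu,\rho_1)\leqslant\wt_e(\rho_1,\rho_2)=0$.

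A bound against $\rho_1$ alone does not yet give the statement for $\delta'\in\mathfrak{D}_{[e]}$, but the same device handles that transport. Since $\nu\subset C$ crosses no arc of $\widehat{\mathfrak{D}}$, suppose both sides of $C$ at $e$ are arcs $\epsilon_1\prec_e\epsilon_2$ of $\widehat{\mathfrak{D}}$. Then $\epsilon_1\prec_e\nu\prec_e\epsilon_2$, and rigidity of $\widehat{\mathfrak{D}}$ (\cref{lem:projcompladmdiss}) gives $\wt_e(\epsilon_1,\epsilon_2)=0$. Additivity then yields $\wt_e(\delta',\nu)=0$ for every $\delta'\in\widehat{\mathfrak{D}}_{[e]}$, whatever the type of $e$. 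Only endpoints where a side of $C$ is a boundary segment still need the local analysis.

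Separately, your reason for $\wt_{s(\delta)}(\delta,\nu)=0$ is not valid. The projective accordions ending at a $\rpoint$-point are pairwise $\Ext$-orthogonal and non-crossing, hence pairwise of weight $0$. So a $\gpoint$-point between two accordions does not produce a projective between them. For example, take $1\to2\to3$ with its length-two relation, at the $\rpoint$-point of the $\gpoint$-cell bounded by the three arcs. There $\gamma_{(S_1)}$ and $\gamma_{(S_2)}$ have weight $1$, while the only projective ending there is $\gamma_{(P_3)}=\gamma_{(S_3)}$. Even if such a projective existed, nothing would force it into $\partial C$. The correct reason is the paper's: by construction of the overlap or arrow extension, or of the $\CoZ$-completion, $\nu$ is homotopic to $\mu$ near that endpoint. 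Hence $\wt_e(\nu,\delta')=\wt_e(\mu,\delta')=0$ by rigidity.

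Finally, in case $(1)$ your conclusion ``$\rho\prec_e\delta'$ with $\wt_e(\rho,\delta')=0$'' contradicts \cref{lem:projextortho}, which says a projective preceding a non-crossing accordion at a common endpoint is not $\Ext$-orthogonal to it. That conclusion also only concerns $\delta'\subset\partial C$. Comparing with the sides of $C$ at $e$, as above, avoids both problems.
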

\begin{proof}
    Let $\delta\in\mathfrak{D}_{[e]}$.
    Consider that in each of the setups the local configuration of $\nu$ endpoints is either the same as a curve $\mu$ which enables the creation of $\nu$ and hence $\wt_e(\nu,\delta) = \wt_e(\mu,\delta)$ or $e$ is $w_R(\mu)$ or $w_L(\mu)$.
    \begin{enumerate}[label=$\bullet$, itemsep=1mm]
        \item If $e$ is an endpoint of $\mu$, then $\wt_e(\mu,\delta)=0$ as $\mathfrak{D}$ is rigid.
        \item  If $e = w_L(\mu)$ or $e = w_R(\mu)$, then $\nu$ is not crossing the two projective accordions from $\NP(\mu)$ that admit $e$ as an endpoint denoted by $\rho_1$ and $\rho_2$; by \cref{lem:counterclockandwt}, and by using \cref{thm:tiltgeom} on $\{\rho_1,\rho_2\}$, we have $\wt_e(\nu,\rho_1) \leqslant \wt_e(\rho_1,\rho_2) = 0$.
    \end{enumerate}
    In either case, we got the desired result.
\end{proof}
\begin{cor}
    \label{cor:augdissecrigid}
    Let $\mathfrak{D} \subseteq \Accord'$ be a rigid admissible $\rpoint$-dissection. Consider $C \in \bbGamma(\widehat{\mathfrak{D}})$. Assume that there exists $(v,w) \in (\mathcal{M}_{\gpoint} \cap \partial C)^2$ such that $v \neq w$, and $\rho \in (v|w)_{\Prj}$ in the setups $(1a)$,$(1b)$,$(2a)$,$(2b)$ and $(2c)$. Consider $\nu$ the added curve from \cref{lem:addcurve1a,lem:addcurve1b,lem:addcurve2}.
    Then:
    \begin{enumerate}[label=$(\roman*)$, itemsep=1mm]
        \item we have $\opResAc(\mathfrak{D} \cup \{\nu\}) = \opResAc(\mathfrak{D})$;
        \item the collection $\widehat{\mathfrak{D}}\cup \{\nu \}$ is a rigid $\rpoint$-dissection; and,
        \item there exists $(v_0, w_0) \in (\mathcal{M}_{\gpoint} \cap \partial C)^2$ such that $v_0 \neq w_0$ and $\nu \in (v_0 | w_0)$.
    \end{enumerate} 
\end{cor}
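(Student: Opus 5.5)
We prove the three assertions separately, relying on the lemmas just established.

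For $(i)$, the inclusion $\opResAc(\mathfrak{D}) \subseteq \opResAc(\mathfrak{D} \cup \{\nu\})$ follows from the monotonicity of the closure operator $\opResAc'$ (\cref{prop:folkloreresaccord}). For the reverse inclusion, recall that in each setup $\nu \in \opResAc'(\mathfrak{D})$:
\begin{enumerate}[label=$\bullet$, itemsep=1mm]
    \item by \cref{lem:addcurve1a} in case $(1a)$;
    \item by \cref{lem:addcurve1b} in case $(1b)$;
    \item by \cref{lem:addcurve2}, combined with \cref{lem:CoZcaseseparation1} and \cref{prop:Co-Z}, in cases $(2a)$, $(2b)$ and $(2c)$.
\end{enumerate}
Hence $\mathfrak{D} \cup \{\nu\} \subseteq \opResAc'(\mathfrak{D})$. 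Idempotence and monotonicity then give $\opResAc'(\mathfrak{D} \cup \{\nu\}) \subseteq \opResAc'(\opResAc'(\mathfrak{D})) = \opResAc'(\mathfrak{D})$. Adding $\Prj(\Delta^{\gpoint})$ to both sides yields the equality for $\opResAc$.

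For $(ii)$, we apply \cref{thm:tiltgeom}$(i)$: it suffices to show that $\widehat{\mathfrak{D}} \cup \{\nu\}$ is an admissible $\rpoint$-dissection with vanishing weights at shared endpoints. Admissibility is part $(ii)$ (resp. $(iii)$) of the three lemmas above. Pairs inside $\widehat{\mathfrak{D}}$ are handled by \cref{lem:projcompladmdiss}, since $\mathfrak{D}$ is rigid. It remains to show $\nu \Extperp \mu$ for every $\mu \in \widehat{\mathfrak{D}}$, which by \cref{prop:Extorthoaccord} means: no crossing, and zero weight at every common endpoint. Non-crossing again comes from the admissible dissection statements. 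For $\mu \in \mathfrak{D}$, vanishing weight is exactly \cref{prop:weightadded}.

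The main obstacle is the case where $\mu$ is a projective accordion of $\widehat{\mathfrak{D}} \setminus \mathfrak{D}$ sharing an endpoint $e$ with $\nu$. I would argue by contradiction. If $\wt_e(\nu,\mu) \neq 0$, take $\mu' \in \mathfrak{D}_{[e]}$ adjacent to $\nu$ in $\partial C$; such a $\mu'$ exists by the construction of $\nu$ from an accordion of $\partial C$ (or via \cref{lem:cellboundary}). Since $\wt_e(\nu,\mu') = 0$, additivity of weights (\cref{lem:counterclockandwt}) gives $\wt_e(\mu',\mu) \neq 0$, or forces the order $\mu \prec_e \mu'$. Either way, \cref{lem:projextortho} shows $\mu \not\Extperp \mu'$, contradicting $\mu \in \widehat{\mathfrak{D}}$. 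When $e$ is of the form $w_L(\mu')$ or $w_R(\mu')$, we use instead the pair $\rho_1,\rho_2$ appearing in the proof of \cref{prop:weightadded}.

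For $(iii)$, we distinguish setups. In case $(1b)$, $\nu \in (v|w)$ directly by \cref{lem:addcurve1b}$(i)$. In the other cases, $\nu$ lies in $C$ and joins two $\rpoint$-points of $\partial C$ that are not endpoints of a common boundary edge; otherwise $\nu$ would be an edge of $\partial C$, hence in $\widehat{\mathfrak{D}}$, contradicting the construction. Thus $\nu$ cuts $C$ into two pieces. Because marked points alternate on the boundary of the disc, each piece contains a boundary arc of $\partial C$ carrying a $\gpoint$-point. This remains true in case $(2a)$, where $\nu \notin (v|w)$; there one picks the pair $(v_0,w_0)$ on either side, as in \cref{fig:case2tilt}.
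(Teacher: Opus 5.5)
Your overall route matches the paper's. Part $(i)$ comes from \cref{lem:addcurve1a,lem:addcurve1b,lem:addcurve2} together with extensivity, monotonicity and idempotence of $\opResAc'$. Part $(ii)$ comes from the admissibility statements of those lemmas, \cref{prop:weightadded} and \cref{thm:tiltgeom}. Part $(iii)$ comes from $\nu \subset C$ splitting $C$. Part $(i)$ is fine as written. There is, however, a genuine gap in $(ii)$, and a faulty justification in $(iii)$.

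In $(ii)$ you rightly notice that \cref{prop:weightadded} only controls weights against $\mathfrak{D}$, not against the projective accordions of $\widehat{\mathfrak{D}}\setminus\mathfrak{D}$; the paper's one-line proof leaves this case implicit. Your argument for it needs some $\mu' \in \mathfrak{D}_{[e]}$ at every endpoint $e$ of $\nu$, and nothing supplies one. In setups $(1a)$ and $(1b)$, the endpoint $t(\nu)=e$ is an endpoint of the projective $\rho$, not of the accordion $\delta$ used to build $\nu$. It is also in general not of the form $w_L$ or $w_R$. Moreover, \cref{lem:cellboundary} only yields an element of $\mathfrak{D}$ somewhere in $\partial C$, not one through $e$. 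If every arc of $\widehat{\mathfrak{D}}$ at $e$ is projective, your contradiction argument has nothing to compare with.

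The missing idea is to compare with a projective rather than with $\mathfrak{D}$:
\begin{enumerate}[label=$\bullet$, itemsep=1mm]
\item Near $e$, $\nu$ has the same local configuration as $\rho$ (the observation underlying \cref{prop:weightadded}), so $\wt_e(\nu,\rho)=0$.
\item For every projective $\mu$ ending at $e$, $\wt_e(\rho,\mu)=0$, because $\Prj(\Delta^{\gpoint})$ is itself a tilting dissection (\cref{thm:tiltgeom}).
\item Since weights are nonnegative, \cref{lem:counterclockandwt} then forces $\wt_e(\nu,\mu)=0$, whatever the relative position of $\nu,\rho,\mu$ around $e$.
\end{enumerate}
The same zero-propagation, with $\rho_1$ in place of $\rho$, is what actually makes your $w_L/w_R$ fallback work.

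In $(iii)$, alternation of colours along $\partial\pmb{\Sigma}$ only guarantees a $\gpoint$-point on each segment of the disc boundary lying in $\partial C$. It does not exclude that one of the two sides of $\partial C$ between the endpoints of $\nu$ consists solely of arcs of $\widehat{\mathfrak{D}}$ (for instance two consecutive arcs). The corresponding piece of $C$ would then contain no $\gpoint$-point. What rules this out is admissibility of $\widehat{\mathfrak{D}}\cup\{\nu\}$, which is exactly how the paper deduces $(iii)$ from $(ii)$. Since you have already established admissibility in $(ii)$, invoke it there instead of the alternation argument.
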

\begin{proof}
    The assertion $(i)$ occurs from \cref{lem:addcurve1a,lem:addcurve1b,lem:addcurve2}. The assertion  $(ii)$ follows from, in addition to the previous lemmas \cref{prop:weightadded}. The assertion $(iii)$ is an obvious consequence of $(ii)$ and the fact that, by construction, $\nu \subset C$: the accordion $\nu$ must, therefore, cut $C$ into two connected components that contain, in their respective boundary, at least one green-marked point.
\end{proof}

\subsection{Auslander--Reiten correspondence}
\label{ss:CombAuslander}

Now, let us consider another algorithm applied in the geometric model that allows us to realize the Auslander-Reiten bijection. 

\begin{algo} \label{algo:tiltgeom} Let $(\pmb{\Sigma},\mathcal{M}, \Delta^{\gpoint})$ be the $\gpoint$-dissected marked disc associated to a gentle tree $(Q,R)$. We set $n = \#Q_0$. Let $\mathscr{R}$ be a resolving subcategory of $(Q,R)$, and consider $\mathfrak{D}_\mathscr{R}^{\mathrm{r}}$ the canonical geometric rigid antichain of $\mathscr{R}$.

\begin{enumerate}[label=$(\arabic*)$,itemsep=1mm]
    \item We input $\mathfrak{G}_\mathscr{R}^{(0)} =  \widehat{\mathfrak{D}_\mathscr{R}^{\mathrm{r}}}$.
    
    \item At the $i$th iteration, we define $\mathfrak{G}_\mathscr{R}^{(i+1)}$ from $\mathfrak{G}_{\mathscr{R}}^{(i)}$ as follows: if $\#\mathfrak{G}_\mathscr{R}^{(i)} = n$, then set $\mathfrak{G}_{\mathscr{R}}^{(i+1)} = \mathfrak{G}_\mathscr{R}^{(i)}$; Otherwise $\mathfrak{G}_{\mathscr{R}}^{(i)}$ is not a dualizable $\rpoint$-dissection of $(\pmb{\Sigma},\mathcal{M})$, and so:
    \begin{enumerate}[label=$(2 \alph*)$,itemsep=1mm]
    \item select a connected component $C_i \in \bbGamma(\mathfrak{G}_\mathscr{R}^{(i)})$, a pair $(v_i,w_i) \in (\mathcal{M}_{\gpoint} \cap \partial C_i)^2$ such that $v_i\neq w_i$, and a projective accordion $\rho_i \in (v_i | w_i)_{\Prj}$.

    \item If $\rho_i$ has extensions with two accordions in $\partial C$ (Configurations \ref{2} of \cref{cor:wheretoseparatecellsallcases}): denoted by them $\delta$ and $\eta$, the pair $(\delta,\eta)$ admits a $\CoZ$-completion $\mu_i$. We set $\mathfrak{G}_\mathscr{R}^{(i+1)}=\mathfrak{G}_\mathscr{R}^{(i)}\cup\lbrace \mu_i\rbrace$;

    \item otherwise, $\rho_i$ has extensions with only one accordion in $\partial C$ (Configurations \ref{1} of \cref{cor:wheretoseparatecellsallcases}): by denoting it $\delta$ there exists an endpoint $e_i$ of $\rho_i$ in $\partial C$ such that $\wt_{e_i}(\rho_i,\mu)=0$ for any $\mu \in (\mathfrak{G}_\mathscr{R}^{(i)})_{[e_i]}$. Fix a coloration of $\delta$ such that $e_i$ is below $\delta$. Consider $\nu_i$ such that $s(\nu_i)=\s_{(\delta)}(e_i)$ and $t(\nu_i)=e_i$. Set $\mathfrak{G}_\mathscr{R}^{(i+1)}=\mathfrak{G}_\mathscr{R}^{(i)}\cup\lbrace \nu_i\rbrace$.
    \end{enumerate}
    
    \item Go back to Step $(2)$ while $\mathfrak{G}_{\mathscr{R}}^{(i+1)} \neq \mathfrak{G}_{\mathscr{R}}^{(i)}$;
    
    \item return $\mathfrak{G}_{\mathscr{R}}^{(i+1)}$ otherwise.
\end{enumerate}
\end{algo}

\begin{theorem} \label{thm:Tiltgenerator}
Let $(Q,R)$ be a gentle tree, and $\mathscr{R}$ be a resolving subcategory. Then \cref{algo:tiltgeom} ends and returns $\mathfrak{G} \subset \Accord'$ such that both:
\begin{enumerate}[label=$(\roman*)$, itemsep=1mm]
    \item $\mathfrak{T} = \{\MM(\delta) \mid \delta \in \mathfrak{G}\}$ is a tilting collection of $(Q,R)$; and,
    \item $\Res(\mathfrak{T}) = \mathscr{R}$.
\end{enumerate}
Therefore, $\mathfrak{T} = \mathfrak{T}_{\mathscr{R}}$ is the collection defined via the Auslander--Reiten correspondence recalled in \cref{thm:Auslander}.
\end{theorem}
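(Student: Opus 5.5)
The proof is an induction on the iterations of \cref{algo:tiltgeom}, carrying three invariants for every $i$:
\begin{enumerate}[label=$(\mathrm{I}\arabic*)$, itemsep=1mm]
    \item $\mathfrak{G}_\mathscr{R}^{(i)}$ is a rigid admissible $\rpoint$-dissection of $(\pmb{\Sigma},\mathcal{M})$;
    \item $\opResAc(\mathfrak{G}_\mathscr{R}^{(i)}) = \opResAc(\mathfrak{D}_\mathscr{R}^{\mathrm{r}})$;
    \item $\#\mathfrak{G}_\mathscr{R}^{(i+1)} = \#\mathfrak{G}_\mathscr{R}^{(i)}+1$ whenever $\#\mathfrak{G}_\mathscr{R}^{(i)}<n$.
\end{enumerate}

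For the base case $i=0$, we use \cref{cor:rigidERAccord}: $\mathfrak{D}_\mathscr{R}^{\mathrm{r}}$ is a rigid admissible dissection with $\opResAc(\mathfrak{D}_\mathscr{R}^{\mathrm{r}})=\mathscr{R}$. By \cref{lem:projcompladmdiss}, the projective completion $\widehat{\mathfrak{D}_\mathscr{R}^{\mathrm{r}}}$ is still rigid and admissible. Adding projective accordions does not change the resolving closure, since projectives already lie in every resolving subcategory. So (I1) and (I2) hold at $i=0$.

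For the inductive step, suppose $\#\mathfrak{G}^{(i)}<n$. A dissection of the disc into cells each containing exactly one $\gpoint$-point has exactly $n$ arcs. Hence $\mathfrak{G}^{(i)}$ is not dualizable, and $\bbGamma(\mathfrak{G}^{(i)})\neq\varnothing$, which makes step $(2a)$ possible.

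The algorithm is applied to $\mathfrak{G}^{(i)}$ itself rather than to a set of the form $\widehat{\mathfrak{D}}$. To use \cref{cor:wheretoseparatecellsallcases}, I would therefore first check that $\mathfrak{G}^{(i)}$ is projectively complete, i.e.\ $\widehat{\mathfrak{G}^{(i)}\setminus\Prj}=\mathfrak{G}^{(i)}$. This should hold because each added curve $\nu$ lies inside a cell $C$, so it crosses no projective accordion outside $C$. By \cref{prop:weightadded}, it also creates no new nonzero weights at its endpoints.

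Once this is checked, \cref{cor:wheretoseparatecellsallcases} guarantees that the chosen $\rho_i$ falls into one of the configurations $(1a)$, $(1b)$, $(2a)$, $(2b)$, $(2c)$. Lemmas \ref{lem:CoZcaseseparation1}, \ref{lem:addcurve1a}, \ref{lem:addcurve1b} and \ref{lem:addcurve2} then produce the curve $\nu$ added by the algorithm, and \cref{cor:augdissecrigid} gives three facts about it:
\begin{itemize}
    \item $\mathfrak{G}^{(i)}\cup\{\nu\}$ is a rigid $\rpoint$-dissection, which gives (I1) at $i+1$;
    \item the resolving closure is unchanged, which gives (I2) at $i+1$;
    \item $\nu$ separates two $\gpoint$-points of $C$, so $\nu\notin\mathfrak{G}^{(i)}$, which gives (I3).
\end{itemize}

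Termination follows from (I3). A rigid set is a dissection, and by \cref{thm:tiltgeom} its size is bounded by $n$, since $\rep(Q,R)$ has $n$ simples and a rigid module has at most $n$ nonisomorphic summands. So after finitely many steps $\#\mathfrak{G}^{(i)}=n$ and the algorithm stops.

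At that point $\mathfrak{G}$ is rigid with $n$ elements, and is therefore tilting by \cref{def:rigidtilting}; this is (i). By (I2), $\Res(\mathfrak{T})=\mathscr{R}$, which is (ii).

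For the final identification, \cref{cor:Auslander} gives $\mathscr{R}={}^{\perp_{>0}}(\mathfrak{T}_\mathscr{R}^{\perp_{>0}})$. I would show that $\mathfrak{T}\subseteq\mathscr{R}$ and that $\mathfrak{T}$ is Ext-orthogonal to all of $\mathscr{R}$. The second point holds because $\mathscr{R}$ is built from $\mathfrak{T}$ by extensions and kernels of epimorphisms, and, as in the proof of \cref{prop:projorthgen}, long exact sequences preserve ${}^{\perp_{>0}}$ and $^{\perp_{>0}}$-type vanishing against the tilting $\mathfrak{T}$. This makes $\mathfrak{T}$ the Ext-injective generator of $\mathscr{R}$, i.e.\ the tilting module attached to $\mathscr{R}$ in \cite{Auslander1991}. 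Uniqueness in \cref{cor:Auslander} then forces $\mathfrak{T}=\mathfrak{T}_\mathscr{R}$.

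The main obstacle is the projective-completeness invariant, together with the hypothesis of \cref{cor:wheretoseparatecellsallcases} that the relevant $\delta,\eta\subset\partial C$ lie in $\Accord'$ and are genuinely in the current set. The lemmas of \cref{ss:inductionstep} are stated for $\widehat{\mathfrak{D}}$ with $\mathfrak{D}$ rigid, so I would reapply them with $\mathfrak{D}=\mathfrak{G}^{(i)}\setminus\Prj(\Delta^{\gpoint})$. The key claim to prove carefully is that no new projective becomes Ext-orthogonal after adding $\nu$. I would argue this directly via \cref{lem:projextortho}: any $\rho$ orthogonal to $\mathfrak{G}^{(i)}\cup\{\nu\}$ was already orthogonal to $\mathfrak{G}^{(i)}$, so it already belongs to $\mathfrak{G}^{(i)}$.
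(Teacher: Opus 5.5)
Your proposal is correct and is essentially the paper's argument. The paper's proof is a single appeal to \cref{cor:augdissecrigid}, and your induction unpacks exactly that appeal: rigidity and unchanged $\opResAc$ at each step, strict growth from part $(iii)$, preservation of projective completeness so that the lemmas of \cref{ss:inductionstep} reapply to $\mathfrak{G}^{(i)}$, and the bound $n$ on admissible dissections.

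One correction to your final identification. Only the one-sided vanishing $\Ext^{>0}(\mathscr{R},\mathfrak{T})=0$ propagates along extensions and kernels of epimorphisms. The vanishing of $\Ext^{>0}(\mathfrak{T},-)$ does not, unlike in \cref{prop:projorthgen}, where the objects are projective. So you cannot claim that $\mathfrak{T}$ is Ext-orthogonal to all of $\mathscr{R}$ in the paper's two-sided sense. The one-sided statement is all you need, though: it gives $\mathfrak{T}\subseteq\mathscr{R}\cap\mathscr{R}^{\perp_{>0}}=\add(\mathfrak{T}_\mathscr{R})$, and $\#\mathfrak{T}=n$ then forces $\mathfrak{T}=\mathfrak{T}_\mathscr{R}$.
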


\begin{proof}
This is a direct consequence of \cref{cor:augdissecrigid}. 
\end{proof}

\begin{cor} \label{cor:uniqucanonrigidR}
Let $(Q,R)$ be a gentle tree, and $\mathscr{R}$ be a resolving subcategory. Then $\mathfrak{E}_{\mathscr{R}}^{\mathrm{r}}$ is the unique subcollection of $\mathscr{R}$ such that the following assertions hold:
\begin{enumerate}[label=$(\roman*)$, itemsep=1mm]
    \item  $\mathfrak{E}_\mathscr{R}^{\mathrm{r}}$ is an antichain of $(\pmb{\ind \setminus \proj}(Q,R), \Resleq)$;
    \item $\mathfrak{E}_\mathscr{R}^{\mathrm{r}}$ is rigid; and,
    \item $\Res(\mathfrak{E}_\mathscr{R}^{\mathrm{r}})=\mathscr{R}$.
\end{enumerate}
\end{cor}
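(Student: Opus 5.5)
Existence is already given by \cref{prop:rigidResgen}: the output $\mathfrak{E}_\mathscr{R}^{\mathrm{r}}$ of \cref{algo:rigidER} satisfies $(i)$--$(iii)$. The work is uniqueness. Let $\mathfrak{F}$ be any collection satisfying $(i)$--$(iii)$; I want $\mathfrak{F} = \mathfrak{E}_\mathscr{R}^{\mathrm{r}}$. The plan is to show that both collections coincide with the set of non-projective summands of the Auslander--Reiten tilting collection $\mathfrak{T}_\mathscr{R}$ that are maximal for $\Resleq$ among the non-projective summands. Since $\mathfrak{T}_\mathscr{R}$ is intrinsic to $\mathscr{R}$, this forces equality.

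First, I run \cref{algo:tiltgeom} starting from $\widehat{\bbDelta_{(\mathfrak{F})}}$ instead of $\widehat{\mathfrak{D}_\mathscr{R}^{\mathrm{r}}}$. The lemmas of \cref{ss:inductionstep} and \cref{cor:augdissecrigid} only use that the input is a rigid admissible $\rpoint$-dissection of non-projective accordions. So the same argument as in \cref{thm:Tiltgenerator} gives a tilting collection $\mathfrak{T}'$ containing $\mathfrak{F}$ with $\Res(\mathfrak{T}') = \Res(\mathfrak{F}) = \mathscr{R}$. The rigidity of $\mathfrak{F}$ enters via \cref{thm:tiltgeom} and \cref{lem:projcompladmdiss}.

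Second, I identify $\mathfrak{T}'$ with $\mathfrak{T}_\mathscr{R}$. Since $\mathfrak{T}'$ is rigid with $\Res(\mathfrak{T}')=\mathscr{R}$, every object of $\mathscr{R}$ lies in ${}^{\perp_{>0}}(\mathfrak{T}'^{\perp_{>0}})$, because that category is resolving and contains $\mathfrak{T}'$. This gives the inclusion $\mathscr{R} \subseteq {}^{\perp_{>0}}(\mathfrak{T}'^{\perp_{>0}})$. For the reverse inclusion, I would argue that $\mathfrak{T}' \subseteq \mathscr{R}$ is a rigid $\Ext$-injective generator-like collection in $\mathscr{R}$, hence the tilting module attached to $\mathscr{R}$ by the Auslander--Reiten theory. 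Concretely, a tilting $T\in\mathscr{R}$ with $T \in \mathscr{R}^{\perp_{>0}}$ is exactly $\mathfrak{T}_\mathscr{R}$, by the uniqueness part of \cref{thm:Auslander}. The membership $\mathscr{R} \subseteq {}^{\perp_{>0}}\mathfrak{T}'$ follows from the fact that ${}^{\perp_{>0}}\mathfrak{T}'$ is resolving and contains $\mathfrak{T}'$, which generates $\mathscr{R}$. Then \cref{cor:Auslander} gives $\mathfrak{T}' = \mathfrak{T}_\mathscr{R}$, and the same applies to the collection obtained from $\mathfrak{E}_\mathscr{R}^{\mathrm{r}}$.

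Third, I recover $\mathfrak{F}$ from $\mathfrak{T}_\mathscr{R}$. I claim that the accordions added by \cref{algo:tiltgeom} all lie in $\opResAc'(\mathfrak{F})$ and are strictly below some element of $\mathfrak{F}$ for $\Resleq$. By construction, each added accordion is an extension term, an upper source/target completion, or a $\CoZ$-completion built from curves already present, so it lies in the ideal generated by them. Moreover, it is never equal to a maximal element, since it separates green points inside a cell bounded by elements of $\mathfrak{F}$. Granting the claim, the $\Resleq$-maximal non-projective summands of $\mathfrak{T}_\mathscr{R}$ are exactly $\mathfrak{F}$, using that $\mathfrak{F}$ is an antichain by $(i)$. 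Hence $\mathfrak{F} = \mathfrak{E}_\mathscr{R}^{\mathrm{r}}$.

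The main obstacle is this third step, namely the strictness $\nu \not\!\Resleq$-maximal for added curves. An added $\nu$ might a priori be incomparable to everything and thus also maximal. To handle it, I would use \cref{prop:ResandHom} together with the explicit shapes in \cref{lem:addcurve1a,lem:addcurve1b,lem:addcurve2}: $\nu$ is either a syzygy-type summand or a middle term of an extension ending in some $\delta\in\mathfrak{F}$, so $\nu \in \opResAc'(\delta)$ or $\nu \in \opResAc'(\delta,\eta)$. In the $\CoZ$ case $(2a)$–$(2c)$, where two generators are involved, I would check directly, via \cref{lem:conditionsResAc}, that $\nu$ already lies in $\opResAc'(\delta)$ for one of them.
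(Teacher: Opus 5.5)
\paragraph{Steps 1 and 2.} These are fine, and together they are all the paper itself provides. There is no written proof of this corollary, only the remark in \cref{ss:canonrigid} that building the unique tilting collection $\mathfrak{T}_\mathscr{R}$ from $\mathfrak{E}^{\mathrm{r}}_\mathscr{R}$ ``will guarantee the unicity''. Step 2 is quicker via ${}^{\perp_{>0}}(T^{\perp_{>0}})=\Res(T)$ for tilting $T$ (use finite $\add T$-coresolutions), followed by injectivity in \cref{thm:Auslander}.

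\paragraph{The gap is in Step 3, and it is fatal.} You pass from $\nu\in\opResAc'(\mathfrak{F})$ to ``$\nu$ lies in the ideal generated by $\mathfrak{F}$''. That inference is invalid: resolving-closed sets are not unions of the principal ideals of their generators. Already in \cref{ex:Tiltcalc2}, $\mu$ lies in $\opResAc'(\{\eta,\varsigma\})$ and is maximal. $\CoZ$-completions exist precisely to produce such extra elements. Concretely, take the pair of \cref{fig:Co-Z} and set $\mathfrak{F}=\{\delta,\eta\}$.
\begin{itemize}
\item $\delta$ and $\eta$ neither cross nor share an endpoint, so $\delta\Extperp\eta$ by \cref{prop:Extorthoaccord}.
\item Each has an endpoint outside the other's $\NP(-)_0$, so they are $\Resleq$-incomparable.
\item $\xi$ has one endpoint outside $\NP(\delta)_0$ and the other outside $\NP(\eta)_0$. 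Hence $\xi\notin\opResAc'(\delta)\cup\opResAc'(\eta)$ by \cref{thm:res_clo_1}.
\item The cell of $\widehat{\mathfrak{F}}$ between $\delta$ and $\eta$ contains two $\gpoint$-points, separated by the two purple projective accordions (configurations $(2b)$ and $(2c)$). So \cref{algo:tiltgeom} adds exactly $\xi$.
\end{itemize}
Hence the $\Resleq$-maximal non-projective summands of the output are not $\mathfrak{F}$. Your planned check via \cref{lem:conditionsResAc} cannot succeed. The sentence ``it separates green points inside a cell, hence is not maximal'' is a non sequitur.

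\paragraph{The step cannot be repaired.} No other way of reading $\mathfrak{F}$ off $\mathfrak{T}_\mathscr{R}$ will work, because this example contradicts the uniqueness claim itself. Let $\mathscr{R}=\Res(\MM(\delta),\MM(\eta))$.
\begin{itemize}
\item ${}^{\perp_{>0}}\MM(\xi)$ is resolving and contains $\MM(\delta)$ and $\MM(\eta)$. So $\MM(\xi)$ is $\Ext$-injective in $\mathscr{R}$: it is a summand of $\mathfrak{T}_\mathscr{R}$ and never enters any $\mathfrak{X}_j$ of \cref{algo:rigidER}.
\item A direct computation in the underlying gentle tree (eleven vertices, one per projective accordion of the figure) shows the following. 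The only non-projective indecomposable strictly $\Resleq$-above $\MM(\xi)$ has non-zero $\Ext^1$ into a projective that is $\Ext$-orthogonal to both $\MM(\delta)$ and $\MM(\eta)$. So it is not in $\mathscr{R}$.
\end{itemize}
Thus $\MM(\xi)\in\mathfrak{E}_\mathscr{R}$, and therefore $\MM(\xi)\in\mathfrak{E}^{\mathrm{r}}_\mathscr{R}$. Meanwhile $\{\MM(\delta),\MM(\eta)\}$, which does not contain $\MM(\xi)$, also satisfies $(i)$--$(iii)$. A correct statement needs an extra condition, such as irredundancy of the generating antichain, and Step 3 has to be rebuilt around it.
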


\subsection{An example}
\label{ss:examplePart2}

Let us consider $(Q,R)$ the gentle tree shown in \cref{fig:ExsurfTilt}. We also give its Res-poset $(\pmb{\ind \setminus \proj}, \Resleq)$, and its $\rpoint$-dissected marked surface $(\pmb{\Sigma}, \mathcal{M}, \Prj(\Delta^{\gpoint}))$.

\begin{figure}[!ht]
\centering 
    \begin{tikzpicture}
    \begin{scope}[->,scale=1.4]
		\node (a) at (0,0) {$1$};
		\node (b) at (1,0) {$2$};
		\node (c) at (2,0) {$3$};
		\node (d) at (1,1) {$4$};
		
		\draw (a) -- (b);
		\draw (b) -- (c);
		\draw (b) -- (d);

		\draw[dashed,-] ([xshift=-.05cm,yshift=.35cm]b.north) arc[start angle = 100, end angle = 170, x radius=.6cm, y radius =.6cm];
		
		\node at (1,-.5) {$(Q,R)$};
    \end{scope}
    
    \begin{scope}[xshift=5cm,yshift=-1.25cm]
	    \node (b) at (-0.5,1) {\scalebox{0.6}{$\delta =
	    \begin{tikzpicture}[baseline={(0,-.2)},scale=0.2]
	    \node at (0,0){$2$};
	    \node at (1,-1){$4$};
	    \end{tikzpicture}$}};
	    \node (c) at (0.5,1)  {\scalebox{0.6}{$ \nu = 
	    \begin{tikzpicture}[baseline={(0,-.2)},scale=0.2]
	    \node at (0,0){$2$};
	    \node at (-1,-1){$3$};
	    \end{tikzpicture}$}};
	    \node (e) at (0,3) {\scalebox{0.6}{$ \varsigma =
	    \begin{tikzpicture}[baseline={(0,-.1)},scale=0.2]
	    \node at (0,0){$2$};
	    \end{tikzpicture}$}};
	    \node (f) at (-1,3) {\scalebox{0.6}{$ \eta =
	    \begin{tikzpicture}[baseline={(0,-.2)},scale=0.2]
	    \node at (0,0){$1$};
	    \node at (-1,-1){$2$};
	    \end{tikzpicture}$}};
	    \node (h) at (1,3) {\scalebox{0.6}{$\mu = \begin{tikzpicture}[baseline={(0,-.1)},scale=0.2]
	    \node at (0,0){$1$};
	    \end{tikzpicture}$}};

		\draw[line width=0.7mm,black] (b) -- (e);
		\draw[line width=0.7mm,black] (b) -- (f);
		\draw[line width=0.7mm,black] (c) -- (e);
		\draw[line width=0.7mm,black] (c) -- (h);
		
		\node at (0,.5) {$(\pmb{\ind \setminus \proj}(Q,R), \Resleq)$};

		\draw[line width=0.7mm,black] (b) -- (e);
		\draw[line width=0.7mm,black] (b) -- (f);
		\draw[line width=0.7mm,black] (c) -- (e);
		\draw[line width=0.7mm,black] (c) -- (h);
		
		\node at (0,.5) {$(\pmb{\ind \setminus \proj}(Q,R), \Resleq)$};
	\end{scope}
	\begin{scope}[xshift=3.5cm,yshift=-3cm,mydot/.style={
					circle,
					thick,
					fill=white,
					draw,
					outer sep=0.5pt,
					inner sep=1pt
				}, scale = 0.7]
		\tikzset{
		osq/.style={
        rectangle,
        thick,
        fill=white,
        append after command={
            node [
                fit=(\tikzlastnode),
                orange,
                line width=0.3mm,
                inner sep=-\pgflinewidth,
                cross out,
                draw
            ] {}}}}
		\draw[line width=0.7mm,black] (0,0) ellipse (2cm and 2cm);
		\foreach \X in {0,1,...,9}
		{
		\tkzDefPoint(2*cos(pi/5*\X),2*sin(pi/5*\X)){\X};
		};

		\draw[line width=0.9mm ,bend left =10,red](1) edge (3);
		\draw[line width=0.9mm ,bend left =10,red](3) edge (7);
		\draw[line width=0.9mm ,bend left =10,red](3) edge (5);
		\draw[line width=0.9mm ,bend left =10,red](7) edge (9);
		
	
		

		\foreach \X in {1,3,5,7,9}
		{
		\tkzDrawPoints[fill =red,size=4,color=red](\X);
		};
		
		\foreach \X in {0,2,4,6,8}
		{
		\tkzDrawPoints[size=5,dark-green,mydot](\X);
		};

		\tkzDefPoint(1,1.2){gamma};
		\tkzLabelPoint[red](gamma){\Large $\mathbf{1}$}
		\tkzDefPoint(-0.8,-0.3){gamma};
		\tkzLabelPoint[red](gamma){\Large $\mathbf{2}$}
		\tkzDefPoint(-1.3,0.6){gamma};
		\tkzLabelPoint[red](gamma){\Large $\mathbf{3}$}
		\tkzDefPoint(1,-0.5){gamma};
		\tkzLabelPoint[red](gamma){\Large $\mathbf{4}$}
		\node at (0,-2.6) {$(\pmb{\Sigma}, \mathcal{M}, \Prj(\Delta^{\gpoint}))$};
	\end{scope}
    \end{tikzpicture}
\caption{\label{fig:ExsurfTilt} A gentle tree $(Q,R)$ with the Res-relation order on its non-projective indecomposable representations, and the dualizable $\rpoint$-dissected marked surface associated to $(Q,R)$.}
\end{figure}
We identify the notation for strings and the one for accordions. In what follows, denote by $\rho_i$ the projective accordion corresponding to the projective representation $P_i$ for all $i \in \{1,2,3,4\}$. 

\begin{ex}\label{ex:Tiltcalc1}
In \cref{fig:Exsurf4}, we compute the two tilting representations from the two resolving subcategories already calculated in \cite[Section 6.3]{DS252}. Both are obtained by only taking the projective accordions that are $\Ext$-ortogonal to the antichain that generates the resolving sets as ideals in $(\Accord', \Accordleq)$. \qedhere
\begin{figure}[!ht]
\centering 
    \begin{tikzpicture}
	\begin{scope}[xshift=5cm,yshift=-2cm,mydot/.style={
					circle,
					thick,
					fill=white,
					draw,
					outer sep=0.5pt,
					inner sep=1pt
				}, scale = 0.7]
		\tikzset{
		osq/.style={
        rectangle,
        thick,
        fill=white,
        append after command={
            node [
                fit=(\tikzlastnode),
                orange,
                line width=0.3mm,
                inner sep=-\pgflinewidth,
                cross out,
                draw
            ] {}}}}
		\draw[line width=0.7mm,black] (0,0) ellipse (2cm and 2cm);
		\foreach \X in {0,1,...,9}
		{
		\tkzDefPoint(2*cos(pi/5*\X),2*sin(pi/5*\X)){\X};
		};
		\draw[line width=0.9mm ,bend left =10,red](1) edge (3);
		\draw[line width=0.9mm ,bend left =10,red](3) edge (5);
		
		\draw[line width=0.7mm ,bend right=10,blue, loosely dashed](1) edge (9);
		\draw[line width=0.7mm ,bend right=-10,blue, loosely dashed](5) edge (7);
		

		\foreach \X in {1,3,5,7,9}
		{
		\tkzDrawPoints[fill =red,size=4,color=red](\X);
		};
		
		
		\foreach \X in {0,2,4,6,8}
		{
		\tkzDrawPoints[size=5,dark-green,mydot](\X);
		};

		\tkzDefPoint(-.8,-0.5){gamma};
		\tkzLabelPoint[blue](gamma){\Large $\delta$}
		\tkzDefPoint(-.9,1){j};
		\tkzLabelPoint[red](j){\Large $\rho_3$}
		\tkzDefPoint(1,-0.2){s};
		\tkzLabelPoint[blue](s){\Large $\mu$}
		\tkzDefPoint(0.3,1.3){t};
		\tkzLabelPoint[red](t){\Large $\rho_1$}
	\end{scope}
	\begin{scope}[xshift=0cm,yshift=-2cm,mydot/.style={
					circle,
					thick,
					fill=white,
					draw,
					outer sep=0.5pt,
					inner sep=1pt
				}, scale = 0.7]
		\tikzset{
		osq/.style={
        rectangle,
        thick,
        fill=white,
        append after command={
            node [
                fit=(\tikzlastnode),
                orange,
                line width=0.3mm,
                inner sep=-\pgflinewidth,
                cross out,
                draw
            ] {}}}}
		\draw[line width=0.7mm,black] (0,0) ellipse (2cm and 2cm);
		\foreach \X in {0,1,...,9}
		{
		\tkzDefPoint(2*cos(pi/5*\X),2*sin(pi/5*\X)){\X};
		};
		\draw[line width=0.9mm ,bend left =10,red](1) edge (3);
		\draw[line width=0.9mm ,bend left =10,red](7) edge (9);
		
		\draw[line width=0.7mm ,bend right=10,blue, loosely dashed](1) edge (5);
		\draw[line width=0.7mm ,bend right=10,blue, loosely dashed](5) edge (9);
		

		\foreach \X in {1,3,5,7,9}
		{
		\tkzDrawPoints[fill =red,size=4,color=red](\X);
		};
		
		
		\foreach \X in {0,2,4,6,8}
		{
		\tkzDrawPoints[size=5,dark-green,mydot](\X);
		};

		\tkzDefPoint(-1,0.5){gamma};
		\tkzLabelPoint[blue](gamma){\Large $\eta$}
		\tkzDefPoint(-.4,-1){j};
		\tkzLabelPoint[red](j){\Large $\rho_4$}
		\tkzDefPoint(0,-0.2){s};
		\tkzLabelPoint[blue](s){\Large $\varsigma$}
		\tkzDefPoint(-.4,1.6){t};
		\tkzLabelPoint[red](t){\Large $\rho_1$}
	\end{scope}
    \end{tikzpicture}
\caption{\label{fig:Exsurf4} The geometric tilting collections $\mathfrak{T}_1 = \{\eta, \varsigma,\rho_1, \rho_4\}$, and $\mathfrak{T}_2 = \{\delta, \mu,\rho_1, \rho_3\}$ associated to the geometric resolving subcategory $\opResAc(\eta,\varsigma) = \opResAc(\eta) \cup \opResAc(\varsigma)$ and $\opResAc(\delta,\mu) = \opResAc(\delta) \cup \opResAc(\mu)$ calculated in \cite[Section 6.3]{DS252}.}
\end{figure}
\end{ex}

\begin{ex} \label{ex:Tiltcalc2} See \cref{fig:Exsurf6} for an illustration. Consider the resolving set obtained by taking all the accordions of $(\pmb{\Sigma}, \mathcal{M}, \Delta^{\gpoint})$. Then $\mathfrak{E}_{\Accord'} = \left\{\eta, \varsigma, \mu \right\}$. Then $\mathfrak{E}_{\Accord'}$ is not rigid. Using \cref{algo:tiltgeom}, we obtain $\{\eta, \varsigma\}$. 

Here, taking the projective accordions that are $\Ext$-orthogonal to $\{\eta,\varsigma\}$ is not enough to obtain the desired tilting representation; we must also add $\rho_1$ to our rigid collection. So $\widehat{\mathfrak{D}} = \{\eta,\varsigma,\rho_1\}$ is an admissible $\rpoint$-dissection of $(\pmb{\Sigma}, \mathcal{M})$ which is not dualizable.

 Let us denote by $v$ and $w$ the two green-marked points that are not separated in $\widehat{\mathfrak{D}}$. Then $(v|w)_{\Prj} = \{\rho_2,\rho_4\}$. We note that $\rho_2$ crosses $\eta$ and so, using the laterality induced by $\varsigma$, we set $\xi$ to be the accordion such that $s(\xi) = s(\eta)$ and $t(\xi) = t(\rho_2)$. Here $\xi = \delta$. Note that if we use $\rho_4$, then the accordion $\xi$ is such that $M(\xi) \in \Ext^2(\MM(\eta), \MM(\rho))$. Moreover $\Ext^i(\MM(\eta), \MM(\rho)) = 0$ for $i > 3$. So either way, we get that $\xi = \delta$.

In the end, we get that $\mathfrak{T} = \{\eta,\varsigma,\delta,\rho_1\}$ is the tilting collection associated to the resolving set $\Accord'$. \qedhere

\begin{figure}[!ht]
\centering 
    \begin{tikzpicture}
	\begin{scope}[scale=1.2,mydot/.style={
					circle,
					thick,
					fill=white,
					draw,
					outer sep=0.5pt,
					inner sep=1pt
				}, scale = 0.7]
		\tikzset{
		osq/.style={
        rectangle,
        thick,
        fill=white,
        append after command={
            node [
                fit=(\tikzlastnode),
                orange,
                line width=0.3mm,
                inner sep=-\pgflinewidth,
                cross out,
                draw
            ] {}}}}
		\draw[line width=0.7mm,black] (0,0) ellipse (2cm and 2cm);
		\foreach \X in {0,1,...,9}
		{
		\tkzDefPoint(2*cos(pi/5*\X),2*sin(pi/5*\X)){\X};
		};
		
		\draw[line width=0.9mm ,bend left =10,red](1) edge (3);
		\draw[line width=0.9mm ,bend left =10,red,densely dashdotted](3) edge (7);
		
		\draw[line width=0.7mm ,bend right=10,blue, loosely dashed](1) edge (9);
		\draw[line width=0.7mm ,bend right=10,blue,loosely dashed](5) edge (1);

		\draw[line width=0.7mm ,bend left=10,orange, loosely dotted](5) edge (9);
		
		\draw[line width=0.7mm ,bend right=-10,purple, dash pattern={on 5pt off 2pt on 1pt off 2pt}](5) edge (7);
		

		\foreach \X in {1,3,5,7,9}
		{
		\tkzDrawPoints[fill =red,size=4,color=red](\X);
		};
		
		
		\foreach \X in {0,2,4,6,8}
		{
		\tkzDrawPoints[size=5,dark-green,mydot](\X);
		};

		\tkzDefPoint(-.9,-0.3){gamma};
		\tkzLabelPoint[purple](gamma){\Large $\delta$}
		\tkzDefPoint(1.2,0){i};
		\tkzLabelPoint[blue](i){\Large $\varsigma$}
		\tkzDefPoint(0,-0.4){j};
		\tkzLabelPoint[orange](j){\Large $\mu$}
		\tkzDefPoint(0,0.5){k};
		\tkzLabelPoint[blue](k){\Large $\eta$}
		\tkzDefPoint(0,1.6){t};
		\tkzLabelPoint[red](t){\Large $\rho_1$}
            \tkzDefPoint(-1,1.4){v};
		\tkzLabelPoint[red](v){\Large $\rho_2$}
	\end{scope}
    \end{tikzpicture}
\caption{\label{fig:Exsurf6} The construction of the geometric tilting collection $\mathfrak{T} = \{\eta, \varsigma, \delta, \rho_1\}$ associated to the geometric resolving subcategory given by taking the set of all accordions $\Accord$.}
\end{figure}
\end{ex}

	\section*{Acknowledgements}
	
    B.D. thanks the \emph{Institut des Sciences Mathematiques (UQAM)} and the \emph{Engineering and Physical Sciences Research Council (EP/W007509/1)} for their partial funding support. The authors acknowledge the \emph{CHARMS program grant (ANR-19-CE40-0017-02)} for their funding support.
    
	The authors thank Claire Amiot, Frédéric Chapoton, Yann Palu and Baptiste Rognerud,  for their discussions and advice throughout this work.

	\bibliography{Article}
	\bibliographystyle{alpha}
	
\end{document}